\date{\today}
\author{Danielle Bednarski\orcidlink{0000-0001-9440-4468} and Tim Roith\orcidlink{0000-0001-8440-2928}}
\title{Introduction to Regularization and Learning Methods for Inverse Problems}
\keywords{topicA, topicB, topicC} 
\newdimen{\algindent}
\renewcommand{\emph}[1]{\textit{#1}}
\crefname{prob}{Problem}{Probleme}
\crefname{alg}{Algorithmus}{Algorithmen}
\newcommand{\fixed@sra}{$\vrule height 2\fontdimen22\textfont2 width 0pt\shortrightarrow$}
\newcommand{\shortarrow}[1]{%
  \mathrel{\text{\rotatebox[origin=c]{\numexpr#1*45}{\fixed@sra}}}
}
\newif\ifpub
\begin{document}
\frontmatter
\maketitle

\tableofcontents

\clearpage

These lecture notes evolve around mathematical concepts arising in inverse problems. We start by introducing inverse problems through examples such as differentiation, deconvolution, computed tomography and phase retrieval. This then leads us to the framework of well-posedness and first considerations regarding reconstruction and inversion approaches. Apart from the references mentioned therein, the first chapter mainly uses material from the following lectures:
\begin{itemize}
\item \cite{Lellmann2022bild}: \emph{Mathematische Methoden der Bildverarbeitung} held by Jan Lellmann 2022/23 at the Universität zu Lübeck.
\item \cite{burger2007inverse}: \emph{Inverse problems} held by Martin Burger 2007/2008 at the Westfälische Wilhelms- Universität Münster.
\end{itemize}
The second chapter then first deals with classical regularization theory of inverse problems in Hilbert spaces. After introducing the pseudo-inverse, we review the concept of convergent regularization. While our notes here again take inspiration from \cite{burger2007inverse}, the main point of reference is the work of Engl, Hanke and Neubauer:
\begin{itemize}
\item \cite{Engl2000}: \fullcite{Engl2000}
\end{itemize}
Within this chapter we then proceed to ask the question of how to realize practical reconstruction algorithms. Here, we mainly focus on Tikhonov and sparsity promoting regularization in finite dimensional spaces. This shifts the lecture more towards optimization theory and convex analysis. Apart from standard textbooks, like \textcite{Bauschke2017,rockafellar2015convex}, we mainly take inspiration from the following lecture notes:
\begin{itemize}
\item \cite{Brandt22}: \emph{Convex optimization \& applications} held by Christina Brandt 2022 at the University of Hamburg.
\end{itemize}
In the third chapter, we dive into modern deep-learning methods, which allow solving inverse problems in a data-dependent approach. We first introduce the basic concepts of learning, where we take inspiration from \cite{roith2024consistency}. We then deal with question of how to extend the concept of convergent regularization to the data-driven setting. Here we follow the framework introduced in the following two papers:
\begin{itemize}
\item \cite{kabri2024convergent}: \fullcite{kabri2024convergent}
\item \cite{burger2023learned}: \fullcite{burger2023learned}
\end{itemize}
After that, we proceed by reviewing some practical data-driven approaches. The intersection between inverse problems and machine learning is a rapidly growing field and our exposition here restricts itself to a very limited selection of topics. Among them are learned regularization, fully-learned Bayesian estimation, post-processing strategies and plug-n-play methods. In this case we borrow ideas from the following overview paper, which we also refer to for a more in depth review of methods used in practice:
\begin{itemize}
\item \cite{arridge2019solving}: \fullcite{arridge2019solving}
\end{itemize}
Throughout these notes we highlight the importance of the Bayesian formulation of inverse problem. Unfortunately these notes do not go as far as to make use of this framework to illustrate the recent trend of employing diffusion models for reconstruction tasks, see e.g. \cite{Mardani2024Variational}. We intend to close this gap in future iterations of the lecture.
\\[1em]

Finally, we want to extend our thanks to the many people that shared crucial insights when preparing these notes. Among them, we want to thank Martin Burger, Samira Kabri and Audrey Repetti for the helpful discussions.

\clearpage

\ifpub%
\else%
\section{Organization}

\subsection{Time Table}

\begin{tabular}{c c c p{10cm}}
Date    &  KW & Chapter & Topic \\ \hline
[1] 15.10.  &  42 & Ch 1.1 - 1.2 & Was ist ein Inverses Problem? Was ist ein Bild (kontinuierlich/diskret)? 4 Beispiele mit Code und GUI. Auf Probleme hinweisen.
\\ \hline
[2] 22.10 & 43 & Ch 1.3 - 1.4 & Korrektgestelltheit mit numerischen Diff Beispiel und Code+GUI. Beginn Inversion Approaches. \\ \hline
\hl{[1] 24.10} & 43 &  &  \\ \hline
[3] 29.10 & 44 & Ch 1.4 - 2.1  & Fortsetzung Inversion Approaches und Generalized Inverses \\ \hline
[4] 05.11 & 45 & Ch. 2.2-2.3 & Generalized Inverses for Compact Operators and Regularization \\ \hline
\hline \hl{[2] 07.11} & 45 &  &  \\ \hline
[5] 12.11 & 46 & Ch. 2.3 & Regularization \\ 
[6] 19.11 & 47 & Ch. 2.4 & Convex analysis and Tikhonov regularization \\ \hline
\hl{[3] 21.11} & 47 &  &  \\ \hline
[7] 26.11 & 48 & Ch. 3.2 & Sparsity-based regulariaztion and ISTA \\ \hline
[8] 03.12 & 49 & Ch.  & Total-variation based denoising and iterative methods \\ \hline
\hl{[4] 05.12} & 49 &  &  \\ \hline
[9] 10.12 & 50 & Ch. 3.1 & Neural Networks, Approximation results, Perceptron, CNN, U-Net, Examples of NNs \\ \hline
[10] 17.12 & 51 &   & Backpropagation, Optimization, SGD, convergence SGD, momentum, Adam  \\ \hline
\hl{[] } & 51 & ENTFALLEN &  \\ \hline
[X] 24.12 & 52 && \\ \hline
[X] 31.12 & 52 && \\ \hline
[11] 07.01 & 02 &  Ch. 3.3 & Learning for IP: Post Processing, Unrolling, PnP, Deep Image Prior, RED \\ \hline
\hl{[5] 09.01} & 02 &  &  \\ \hline
[12] 14.01 & 03 & Ch. 3.2-3.3 & Convergent Data-Driven Regularization, Learning in statistical Regularization, \\ \hline
[13] 21.01 & 04 & Ch 3.3.1-3.3.2  & Fully learned Bayes Estimation, Post-Processing, \\ \hline
\hl{[6] 23.01} & 04  &  &  \\ \hline
[14] 28.01 & 05 &&  Unrolling, PnP \\ \hline
\end{tabular}

Vorlesung:

D: [1], [2], [3], [4], [5],  [12], [13]

T: [6], [7], [8], [9], [10], [11]

Übung:

D: [1], [2], [3]

T: [4], [5],

\fi

\mainmatter

\chapter{Introduction to Inverse Problems}

\emph{Inverse problems} describe the process of recovering a \alert{quantity of interest $u$} from observed \alert{data $f$}. The governing relation between these objects is the \alert{forward process $\mcA$}. Mathematically, the forward process is modeled by some \emph{operator} $\mcA:\mcU\to\mcV$ which maps between, e.g., Banach spaces $\mcU$ and $\mcV$. It describes how the observed data is obtained:
\beq
f = \mcA u.    
\eeq
In practice, measurements are often disturbed by some unknown noise $\noise$. In this case, the inverse problem is to find $u$ given the disturbed measurement 
\beq
    \fd = \mcA u + \eps,
\eeq
where we assume that the noise is bounded by a noise level $\noiselvl>0$, e.g., $\|\noise\|\leq\noiselvl$. We call $\|\fd - f\| = \|\noise\|$ the \emph{measurement error}.

\begin{memo}{Inverse Problems}{}
\begin{itemize}
\item[Q]: What is an inverse problem?
\item[A]: Obtain a quantity $u$, where only $f=\mcA u$ is given.
\end{itemize}
\end{memo}
\noindent%
In this course, we will assume that $\mcA$ is a linear operator if not stated differently. Note, however, that non-linear problems and related theory also exist in the literature and in practice. When we are explicitly working in a fully discrete setting, this will be indicated by the use of non-cursive symbols, e.g., $A$ can then always be understood as a matrix.

\section{Quick Reminder of Important Mathematical Definitions}

\begin{definition}{Banach and Hilbert Space}{}
Let $\mcU$ denote a vector space. 
\begin{enumerate}
    \item A mapping $\| \cdot \| :\mcU \to \R^+$ is a called \emph{norm} on $\mcU$ iff 
    \begin{itemize}
        \item[a)] the mapping is definite, i.e., for all $u\in\mcU$ it holds $\| u \| = 0 \Leftrightarrow u=0$,
        \item[b)] the mapping is absolute homogeneous, i.e., for all $u\in\mcU$ and $\lambda\in\R$ it holds $\| \lambda u\| = |\lambda| \| u \|$,
        \item[c)] and the mapping fulfills the triangle inequality, i.e., for all $u,v\in\mcU$ it holds $\| u + v \| \leq \| u \| + \| v \|$.
    \end{itemize}
    \item A normed vector space $\mcU$ is called \emph{complete} iff every Cauchy sequence converges in $\mcU$, i.e., for any sequence $(u_n)\subseteq\mcU$ which fulfills 
    \beq
         \forall \eps > 0, \, \exists k \in \N \,\text{ such that } \|u_m-u_n\|\leq \eps \, \text{ for all } m,n \geq k,
    \eeq
    there exists $u\in\mcU$ such that $\lim_{n\to\infty} \| u_n -u \| = 0$.
    \item A complete normed vector space is called \emph{Banach space}.
    \item A complete normed vector space which has a norm that is induced by an inner product is called a \emph{Hilbert space}.
\end{enumerate}
\end{definition}

\begin{definition}{Linear Operators}{}
Let $\mcU\subseteq\{u:\Omega \to \Gamma \}$ and $\mcV \subseteq\{v: \Omega' \to \Gamma' \}$ denote two Banach spaces.
\begin{enumerate}
    \item A mapping $f:\mcU\to\R$ is called \emph{functional}. 
    \item A mapping $\mcA: \mcU \to \mcV$ is called \emph{operator} between $\mcU$ and $\mcV$. We use the following notation:
    \beq
        \mcA: \mcU \to \mcV, \quad u \in \mcU \mapsto \mcA u := \mcA(u) \in \mcV.
    \eeq
    \item An operator is called \emph{bounded} iff
    \beq
        \| \mcA \| := \| \mcA \|_{\mcU,\mcV} := \sup_{u\in\mcU, \|u\|_\mcU = 1} \| \mcA u \|_\mcV < \infty.
    \eeq
    \item The set of linear bounded operators is denoted by
    \beq
        \mcL (\mcU, \mcV) := \ls \mcA:\mcU\to\mcV \, \middle| \, \mcA \text{ is linear and bounded} \rs.
    \eeq
\end{enumerate}
\end{definition}
Boundedness is an important property as it implies continuity of the operator. This is further elaborated in the following theorem: 
\begin{theorem}{}{thm:bounded_linear_operators}
    Let $\mcU, \mcV$ be normed spaces and let $\mcA:\mcU\to\mcV$ be linear. Then the following are equivalent:
    \begin{itemize}
        \item[a)] $\mcA$ is bounded, i.e., it holds $\mcA\in\mcL(\mcU, \mcV)$.
        \item[b)] $\mcA$ is continuous, i.e., it holds 
        \beq
            \|u^k - u\|_{\mcU} \overset{k\to\infty}{\longrightarrow} 0 \quad \Rightarrow \quad \| \mcA u^k - \mcA u \|_{\mcV} \overset{k\to\infty}{\longrightarrow} 0.
        \eeq
        \item[c)] If $U\subset \mcU$ is bounded in $\mcU$, then $\mcA(U)$ is bounded in $\mcV$.
    \end{itemize} 
\end{theorem}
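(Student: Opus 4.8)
The plan is to prove the cyclic chain of implications a) $\Rightarrow$ b) $\Rightarrow$ c) $\Rightarrow$ a), which is the standard route for such equivalences and avoids proving all six directions separately. Throughout I would use linearity crucially, most notably the identity $\mcA u^k - \mcA u = \mcA(u^k - u)$, which reduces continuity everywhere to continuity at the origin.

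For a) $\Rightarrow$ b): assume $\|\mcA\| < \infty$ and take a sequence with $\|u^k - u\|_{\mcU} \to 0$. For any $w \neq 0$ the definition of the operator norm gives $\|\mcA w\|_{\mcV} \leq \|\mcA\|\,\|w\|_{\mcU}$ (applying the supremum bound to the normalized vector $w/\|w\|_{\mcU}$ and using absolute homogeneity of the norm together with linearity of $\mcA$); the case $w = 0$ is trivial since $\mcA 0 = 0$. Applying this with $w = u^k - u$ yields $\|\mcA u^k - \mcA u\|_{\mcV} \leq \|\mcA\|\,\|u^k - u\|_{\mcU} \to 0$, which is continuity.

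For b) $\Rightarrow$ c): let $U \subseteq \mcU$ be bounded, so there is $R > 0$ with $\|u\|_{\mcU} \leq R$ for all $u \in U$. Suppose for contradiction that $\mcA(U)$ is unbounded. Then there exist $u^k \in U$ with $\|\mcA u^k\|_{\mcV} \to \infty$; in particular $\|\mcA u^k\|_{\mcV} \neq 0$, and we may pass to a subsequence (not relabeled) along which $\|\mcA u^k\|_{\mcV} \geq k$. Consider $v^k := u^k / \sqrt{\|\mcA u^k\|_{\mcV}}$. Then $\|v^k\|_{\mcU} = \|u^k\|_{\mcU} / \sqrt{\|\mcA u^k\|_{\mcV}} \leq R / \sqrt{k} \to 0$, so $v^k \to 0 = v$. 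By continuity (applied at the point $0$) we get $\mcA v^k \to \mcA 0 = 0$, hence $\|\mcA v^k\|_{\mcV} \to 0$. But by linearity and homogeneity $\|\mcA v^k\|_{\mcV} = \|\mcA u^k\|_{\mcV} / \sqrt{\|\mcA u^k\|_{\mcV}} = \sqrt{\|\mcA u^k\|_{\mcV}} \to \infty$, a contradiction. Hence $\mcA(U)$ is bounded.

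For c) $\Rightarrow$ a): apply the hypothesis to the unit sphere (or closed unit ball) $U = \{u \in \mcU : \|u\|_{\mcU} = 1\}$, which is bounded in $\mcU$. Then $\mcA(U)$ is bounded in $\mcV$, meaning $\sup_{\|u\|_{\mcU} = 1} \|\mcA u\|_{\mcV} < \infty$, which is exactly the statement $\|\mcA\| < \infty$, i.e., $\mcA \in \mcL(\mcU, \mcV)$. I do not expect any genuine obstacle here; the only mild subtlety is the scaling trick in b) $\Rightarrow$ c), where one must choose the normalization so that the rescaled sequence tends to zero while its image does not — an alternative is to prove the contrapositive directly by extracting from unboundedness on a bounded set a null sequence whose image does not converge to zero, which contradicts continuity at $0$.
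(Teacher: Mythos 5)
Your proof is correct, but it follows a different decomposition than the paper's. The paper establishes the equivalence with a) as a hub, proving four implications: a) $\Rightarrow$ b), b) $\Rightarrow$ a), a) $\Rightarrow$ c), and c) $\Rightarrow$ a). You instead close the cycle a) $\Rightarrow$ b) $\Rightarrow$ c) $\Rightarrow$ a), so only three implications are needed. The steps a) $\Rightarrow$ b) and c) $\Rightarrow$ a) are essentially identical in both arguments (operator-norm estimate via $\mcA(u^k-u)$, and testing c) on the unit sphere, respectively). The genuine difference is the middle link: the paper proves b) $\Rightarrow$ a) by contraposition with the standard trick of taking $\|u^k\|_{\mcU}=1$, $\|\mcA u^k\|_{\mcV}\geq k$, and observing that $\tfrac{1}{k}u^k\to 0$ while $\|\mcA(\tfrac{1}{k}u^k)\|_{\mcV}\geq 1$; the implication a) $\Rightarrow$ c) is then a one-line consequence of the norm bound. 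You instead prove b) $\Rightarrow$ c) directly, which forces the slightly more delicate square-root normalization $v^k:=u^k/\sqrt{\|\mcA u^k\|_{\mcV}}$ so that $v^k\to 0$ while $\|\mcA v^k\|_{\mcV}=\sqrt{\|\mcA u^k\|_{\mcV}}\to\infty$ — a correct and rather elegant choice, since a naive normalization by $\|\mcA u^k\|_{\mcV}$ itself would keep the image bounded. In short, the cycle buys you one fewer implication at the price of concentrating the difficulty in a single step, whereas the paper's hub-and-spoke structure keeps every individual step elementary; both are complete proofs.
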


\begin{exercise}{}{}
    Show Thm.\ref{thm:bounded_linear_operators}.

    \comment{
    \begin{proof}
        We show equivalence in four steps:
        \paragraph{a) $\Rightarrow$ b):} Assume that $A$ is bounded and $u^k$ converges to $u^*$. Then it holds 
        \beq
             \|Au^k-Au^* \|_{\mcV} = \|A(u^k-u^*) \|_{\mcV} \leq \underbrace{\|A\|_{\mcU,\mcV}}_{< \infty} \underbrace{\|u^k-u^*\|_{\mcU}}_{\to 0} \overset{k\to\infty}{\longrightarrow} 0.
        \eeq
        \paragraph{b) $\Rightarrow$ a):} Assume that $A$ is \textbf{not} bounded, i.e., there exists some sequence $(u^k)$ such that $\|Au^k\|_{\mcV} \geq k \|u^k\|_{\mcU}$. W.l.o.g. assume that $\|u^k\|_{\mcU}=1$. Then $\frac{1}{k}u^k \overset{k\to\infty}{\longrightarrow} 0$ but $\|A \frac{1}{k}u^k\|_{\mcV} \geq k \frac{1}{k} \|u^k\|_{\mcU} =1$ and thus $A$ is not continuous.
        \paragraph{a) $\Rightarrow$ c):} Assume that $A$ is bounded and we are given some $u$ such that $\|u\|_{\mcU}\leq C$. Then it directly follows $\|Au\|_{\mcV} \leq C\|A\|$. Thus, $Au$ is bounded in $\mcV$.
        \paragraph{c) $\Rightarrow$ a):} As $\overline{\mcB_1(0)}$ is bounded, there exists a $C$ such that $\|Au\|_{\mcV} \leq C$ for any $\|u\|_{\mcU} = 1$. It directly follows that $\|A\|_{\mcU, \mcV} = \sup_{u\in\mcU, \|u\|_\mcU = 1} \| Au \|_\mcV \leq C < \infty$. 
    \end{proof}
    }
    
\end{exercise}

\section{Some Examples of Inverse Problems}
\label{sec:examples_for_ip}

Before looking at some concrete examples of inverse problems we introduce some terminology and notation related to the imaging setting.

In the \emph{continuous setting} we write $u:\Omega\to\Gamma$, where 
\begin{itemize}
    \item the \emph{domain} $\Omega\subset\R^n$ is assumed to be open,
    \item the \emph{range} $\Gamma\subset\R^d$ is assumed to be compact.
\end{itemize}

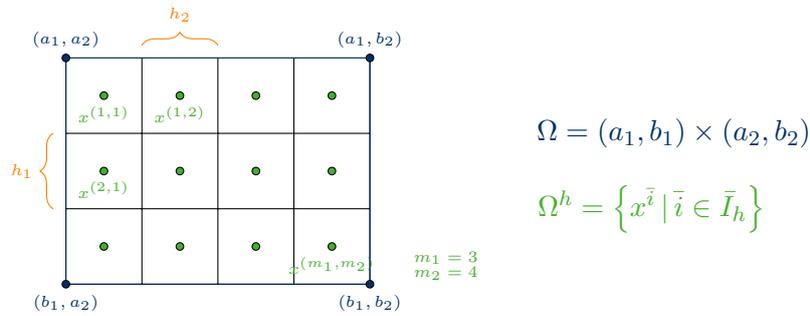
\begin{figure}
\begin{center}
\begin{tikzpicture}
    \draw[step=1cm,black,thin] (0,0) grid (4,3);

    \foreach \x/\y in {0.5/0.5, 0.5/1.5, 0.5/2.5, 1.5/0.5, 1.5/1.5, 1.5/2.5, 2.5/0.5, 2.5/1.5, 2.5/2.5, 3.5/0.5, 3.5/1.5, 3.5/2.5}{\draw [fill=NatGreen, thin] (\x,\y) circle [radius=0.05];}
    
    \node[NatGreen] at (0.5,2.25)  {\tiny $x^{(1,1)}$};
    \node[NatGreen] at (0.5,1.25) {\tiny $x^{(2,1)}$};
    \node[NatGreen] at (1.5,2.25) {\tiny $x^{(1,2)}$};
    \node[NatGreen] at (3.5,0.25) {\tiny $x^{(m_1,m_2)}$};

    \node[NatGreen] at (5,0.35) {\tiny $m_1=3$};
    \node[NatGreen] at (5,0.15) {\tiny $m_2=4$};

    \draw[FAUBlue, thin] (0,0) -- (4,0);
    \draw[FAUBlue, thin] (4,0) -- (4,3);
    \draw[FAUBlue, thin] (4,3) -- (0,3);
    \draw[FAUBlue, thin] (0,3) -- (0,0);
    
    \draw[fill=FAUBlue, thin] (0, 0) circle [radius=0.05];
    \node[FAUBlue] at (0,-0.25) {\tiny $(b_1,a_2)$};

    \draw[fill=FAUBlue, thin] (4, 0) circle [radius=0.05];
    \node[FAUBlue] at (4,-0.25) {\tiny $(b_1,b_2)$};
    
    \draw[fill=FAUBlue, thin] (0, 3) circle [radius=0.05];
    \node[FAUBlue] at (0,3.25) {\tiny $(a_1,a_2)$};

    \draw[fill=FAUBlue, thin] (4, 3) circle [radius=0.05];
    \node[FAUBlue] at (4,3.25) {\tiny $(a_1,b_2)$};

    \draw [orange,decorate,decoration={brace,amplitude=5pt,raise=1ex}] (1,3) -- (2,3) node[midway,yshift=1.5em]{\tiny $h_2$};

    \draw [orange,decorate,decoration={brace,amplitude=5pt,raise=1ex}] (0,1) -- (0,2) node[midway,xshift=-1.5em]{\tiny $h_1$};

    \node[FAUBlue] at (8,2) {$\Omega = (a_1,b_1) \times (a_2,b_2)$};
    \node[NatGreen] at (7.7,1) {$\Omega^h = \ls x^{\bar{i}} \,|\, \bar{i} \in \bar{I}_h\rs$};
    
\end{tikzpicture} 
\end{center}
    \caption{\textbf{Cell-centered grid.} For an open domain $\Omega$, the cell-centered grid with equidistant discretization using the step sizes $h_i$ is given by the green dots.}
    \label{fig:discrete_grid}
\end{figure}

In the \emph{discrete setting} we write $u_h:\Omega_h\to\Gamma$. If not specified differently, we assume that $\Omega_h$ is a \emph{cell-centered} grid. The cells of the grid are called \emph{pixels} in the 2D and \emph{voxels} in the 3D setting. To make it more precise, assume that the continuous domain is given by $\Omega=(a_1,b_1)\times(a_2,b_2)\times\cdots\times(a_n,b_n)$. In the discrete setting this carries over as follows: 
\begin{itemize}
    \item We denote by $m=(m_1,m_2,\ldots,m_n)$ the number of cells in each direction.
    \item The step size of the grid is denoted by $h=(h_1,\ldots,h_n)$, where $h_i=\frac{b_i-a_i}{m_i}$.
    \item The cells can be numerated using a multiindex $\bar{i}\in \bar{I}_h := \{1,\ldots,m_1\}\times\cdots\times\{1,\ldots,m_n\}$ and the cell-centered coordinates are given by $x^{\bar{i}}=x^{(\bar{i}_1,\ldots,\bar{i}_n)}$, $(x^{\bar{i}})_j = a_j + (\bar{i}_j - \frac{1}{2}) h_j$.
    \item The set  $\Omega_h := \{x^{\bar{i}}|\bar{i}\in \bar{I}_h\}$ then is called the cell-centered grid.
    \item The discrete image  $u_h:\Omega_h \to \Gamma$ assigns each cell a value and we can write it as a matrix
    \beq
    u_h =
    \begin{pmatrix}u_h^{(1,1)} & \cdots & u_h^{(1,m_2)} \\
    \vdots & \ddots & \vdots \\
    u_h^{(m_1,1)} & \cdots & u_h^{(m_1,m_2)} \\
    \end{pmatrix}
    \eeq
    where $u_h^{\bar{i}} \in \Gamma$.    
\end{itemize}

The domain $\Omega$ might be 2D in case of an image (photography, X-ray images), 3D in case of a volumetric measurements (CT, MRT) or even 4D in case of time-dependent volumetric measurements. The range $\Gamma$ might be 1D in case of gray-scale images, 3D in case of RGB images, 4D in case of CMYK images or even higher-dimensional in case of hyperspectral images. Furthermore, the range is often restricted to, e.g., $\Gamma\subset[0,1]$ to encode intensities from black (= 0) to white (= 1), positive numbers $\Gamma\subset(\R^+_0)^d$ for intensities which rely on photon counting or absorption or to a quantized range $\Gamma\subset\ls0,1,2,...,255\rs$ which is often used for RGB images. 

\subsection{Differentiation}\label{subsec:ex_diff}

Differentiation is mathematically easy to describe and offers a detailed insight into typical features of inverse problems. While it certainly does not motivate the practical importance of inverse problems in imaging tasks, it is often used as a toy example. We refer to \cite{engl2015regularization,hanke2001inverse} for a detailed study of this problem.

In \emph{differentiation}, the measurement $f\in L^2([0,1])$ is assumed to be the integral of the sought signal $u\in L^2([0,1])$. The operator $\mcA:L^2([0,1])\to L^2([0,1])$ is defined as
\beq
\mcA u := \left[t\mapsto \int_0^t u(x) \dint x\right]
\label{eq:diffop}
\eeq
and its inverse $\mcA^{-1}:L^2([0,1])\to L^2([0,1])$ can be written as
\beq
\mcA^{-1} f = \frac{\dint f}{\dint x}.
\eeq

In the discrete setting, the domain $[0,1]$ is discretized using equidistant grid points $x_i = \frac{i-1}{N-1}$ for $i=1,\ldots,N$. The goal is to recover the values $u_h^i = u(x_i)$ from the measurements
\beq
f_i := \frac{1}{N-1}\sum_{j=1}^{i} u(x_j).
\eeq
This corresponds to the discrete measurement operator $A\in\R^{N,N}$
\beq
A := \frac{1}{N-1}
\begin{pmatrix}
1 & 0 & \hdots & & 0\\
\vdots & \ddots & \ddots  &  & \vdots \\
& &  & & 0\\
1 &  &  \cdots & & 1
\end{pmatrix},
\eeq
where the factor $N-1$ accounts for the length of the discretized intervals. Its inverse is 
\beq
A^{-1} = (N-1) 
\begin{pmatrix}
1  & 0  & \cdots &  &  & 0 \\
-1  & 1 & 0 &  &  & \vdots \\
0  & -1 & 1 & 0  &  &  \\
\vdots  & \ddots & \ddots & \ddots & \ddots &  \\
  &  &  &  &  & 0 \\
0  & \hdots &  & 0 & -1 & 1 \\
\end{pmatrix},
\label{eq:num_diff_Ainv}
\eeq
which corresponds to the backward finite-difference approximation
\beq
\frac{\dint f}{\dint x}(x_{i}) \approx \frac{f(x_{i}) - f(x_{i-1})}{\abs{x_{i} - x_{i-1}}}.
\eeq

\subsection{Deconvolution}\label{subsec:ex_decon}

In \emph{deconvolution}, the measurement is assumed to be a convolved version of the true image; The operator is thus defined as the convolution $\mcA : u \mapsto g*u$ with some kernel $g$. In the easiest setting, the convolution kernel $g$ is assumed to be known. One of the most prominent examples is the one of \emph{deblurring} where~$g$ is the so-called Gaussian kernel, see Fig.~\ref{fig:blur}.

\begin{definition}{Convolution}{def:convolution} Let $u: \R^n\to\R$ and let $g:\R^n\to\R$ denote a kernel. The \emph{continuous convolution} $g\ast u$ is defined as
\beq
    (g\ast u)(x) := \int_{\R^n} g(y) u(x-y) d y \quad \forall x\in\R^n.
\eeq
For a fixed kernel $g$, the convolution is an operator $\mcA: u \mapsto g\ast u$ which maps functions to functions.

Let $\Omega_h:=\{1,\ldots,m_1\}\times\cdots\times\{1,\ldots,m_n\}$, $\Omega'_h := \{-r,\ldots,r\}^n$ for $r\in\N_0$ denote (discretized) domains and assume $u_h:\Omega_h\to\R$ and a kernel $g_h : \Omega'_h \to \R$ (both can be thought of as matrices) are given. The \emph{discrete convolution} $g_h \ast u_h$ is defined as 
\beq
(g_h \ast u_h)(\bar i) := \sum_{\bar j \in \Omega'_h} g_h(\bar j) u_h(\bar i - \bar j),
\eeq
where missing values of $u_h$ and $g_h$ outside of $\Omega_h$ and $\Gamma_h$ need to be specified by boundary conditions.
\end{definition}

\begin{figure}
    \centering
    \begin{subfigure}[b]{0.35\textwidth}
         \centering
         \includegraphics[width=\textwidth]{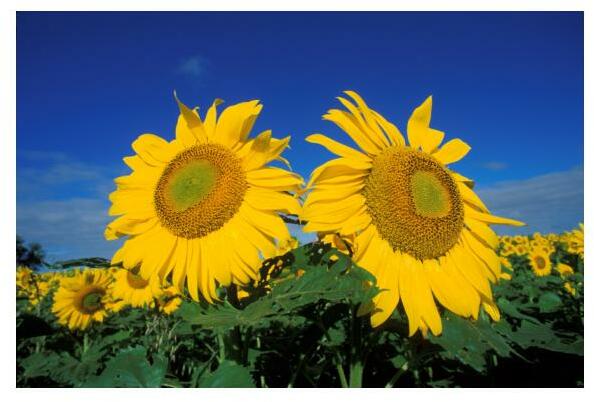}
        \caption{Input image}
    \end{subfigure}
    \hspace{2cm}
    \begin{subfigure}[b]{0.35\textwidth}
         \centering
         \includegraphics[width=\textwidth]{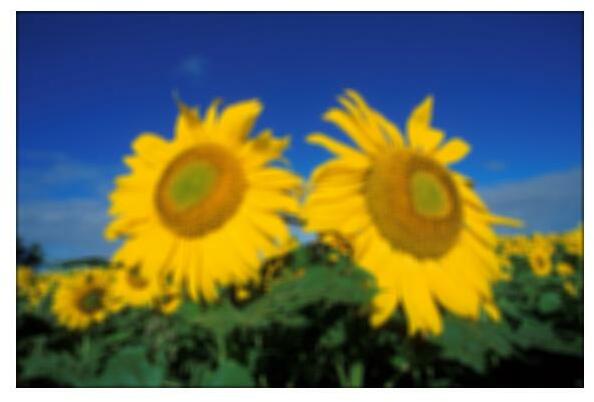}
         \caption{Blurred image}
    \end{subfigure}
    \caption{In deblurring the task is to find a sharp input image which is associated to a given blurred image via a convolution with the Gaussian kernel.}
    \label{fig:blur}
\end{figure}
\noindent%
A prominent example is the one of \emph{deblurring} where $g$ denotes the Gaussian kernel, e.g., in a 2D setting with variance $\sigma^2$ it is defined as
\beq
    g(x) = \frac{1}{2\pi\sigma^2}\exp{-\frac{x^\top x}{2\sigma^2}}.
\eeq

\begin{exercise}{}{}
    Show that the convolution is a linear operator.
\end{exercise}

\subsection{Computed Tomography}\label{subsec:ex_ct}

\emph{Computed tomography} (CT) scans are a popular way to obtain internal images of the body, for a detailed introduction we refer to \cite{buzug2011computed}. Roughly speaking, X-rays are sent through the body from different directions. While traveling through the body, the X-rays are damped depending on the densities of the materials they pass through. This intensity decay is then measured on the opposite side of the body. The collection of all measurements is called \emph{sinogram}. 

The linear operator that is used to describe the scan is called the \emph{Radon transform} $\mcR$ and was studied by Johann Radon way before it was used in practice, \cite{radon20051}.
\begin{center}
\begin{tikzpicture}
    \draw[white] (2, 0) coordinate (A) -- (0, 0) coordinate (B) -- (1.932, 1) coordinate (C)
        pic ["$\varphi$", NatGreen, draw, ->, angle eccentricity=2, thick] {angle};

    \draw[black, ->] (0, 0) -- (2, 0);
    \draw[black, ->] (0, 0) -- (0, 2);
    \node[] at (0.2, 2) {y};
    \node[] at (2.2, 0) {x};
    
    \draw[FAUBlue, thick, ->] (0, 0) -- (1.732, 1);
    \draw[FAUBlue, thick,  ->] (0, 0) -- (-1, 1.732);
    \node[FAUBlue] at (1.932, 1) {s};
    \node[FAUBlue] at (-0.8, 1.732) {t};

    \foreach \a in {-0.3, -0.1, ..., 1.3}{
    \draw[gray, -] (-1+\a*1.732, 1.732+\a)  -- (0.2+\a*1.732, -0.3464+\a);
    }

    \node[opacity=0.1, anchor=south west] at (0,0) {\includegraphics[scale=0.23]{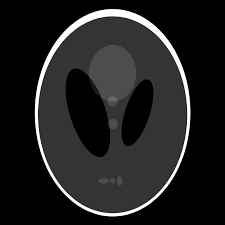}};
\end{tikzpicture} 
\end{center}
The Radon transform is an integral of the original image $u:\Omega\to\Gamma$, $\Gamma\subset\R$ over lines (gray) which are defined over their angle $\phi$ with respect to the $x$-axis and their distance $s$ from the origin; the lines can be parameterized as 
\beq 
    \begin{pmatrix} x(t) \\ y(t) \end{pmatrix} = \begin{pmatrix}
        \cos(\phi) & \sin(\phi) \\ \sin(\phi) & -\cos(\phi)
    \end{pmatrix} \begin{pmatrix} s \\ t \end{pmatrix}
\eeq
and the Radon transform of $u$ is then defined as
\beq
    \mcR u(s,\phi) = \intii u(s\cos(\phi)+t\sin(\phi), s\sin(\phi)-t\cos(\phi)) \dint t.
\eeq
This is further illustrated in Fig.~\ref{fig:ct}. The left column illustrates the rotation of the X-ray source (the angle $\phi$), the middle column the Radon transform for fixed $\phi$ (the dense object absorbs X-rays while the X-rays are allowed to pass around the object resulting in higher values of the Radon transform) and the right column the evolution of the sinogram where the position is $s$ and the angle $\phi$.

\begin{figure}
    \begin{subfigure}{\textwidth}
        \includegraphics[width=\textwidth]{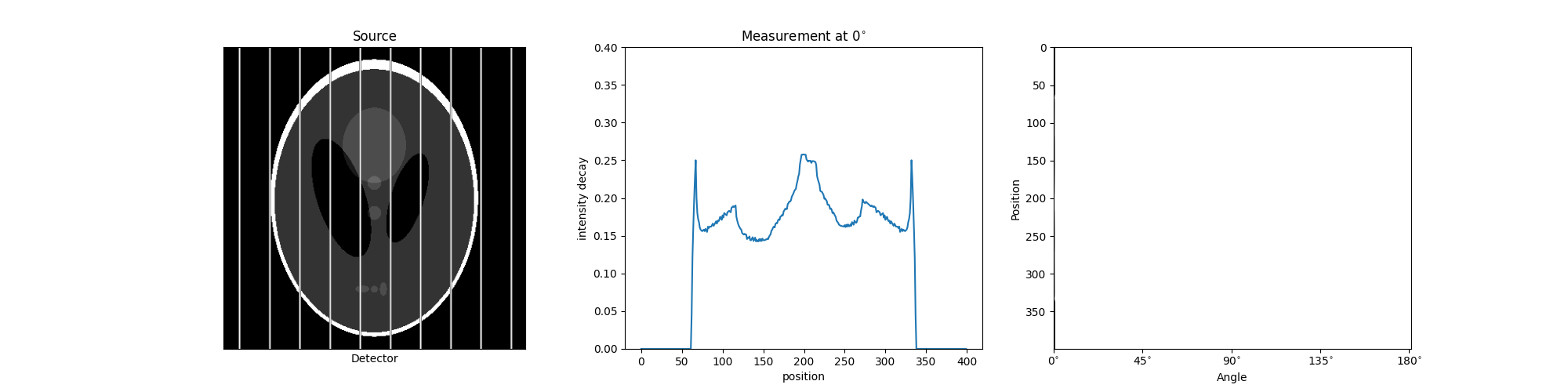}
    \end{subfigure}
    \begin{subfigure}{\textwidth}
        \includegraphics[width=\textwidth]{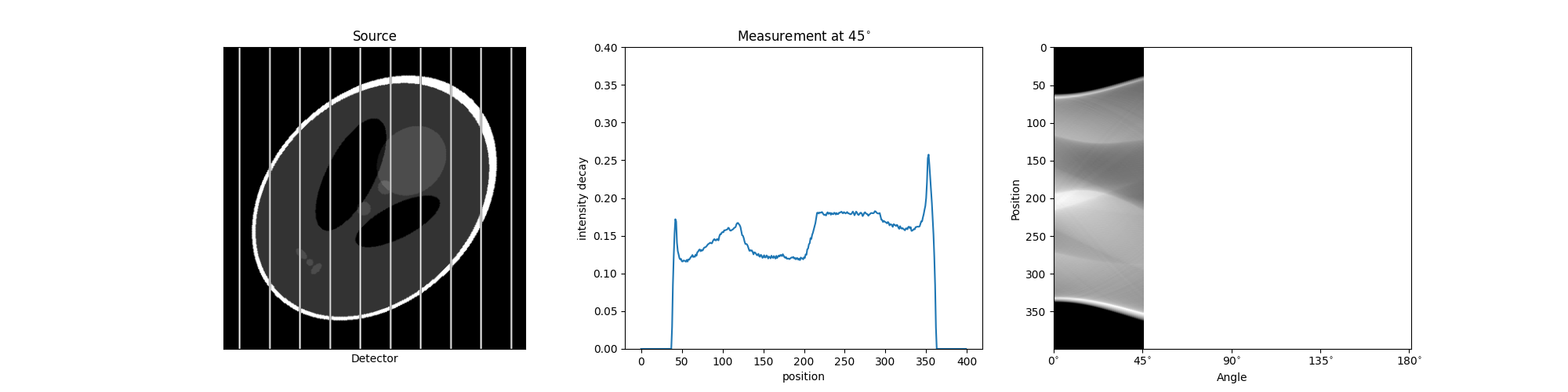}
    \end{subfigure}
    \begin{subfigure}{\textwidth}
        \includegraphics[width=\textwidth]{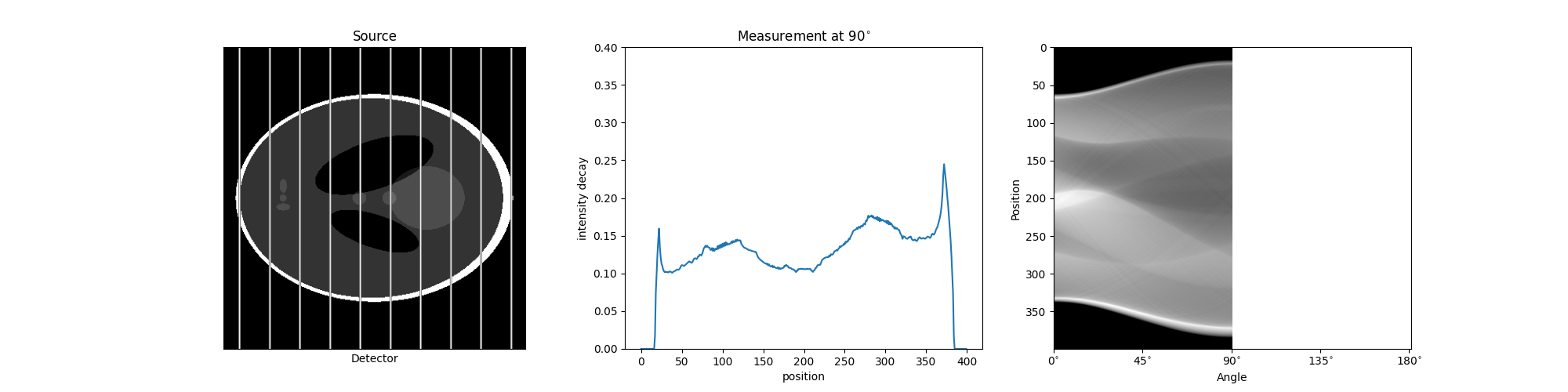}
    \end{subfigure}
    \begin{subfigure}{\textwidth}
        \includegraphics[width=\textwidth]{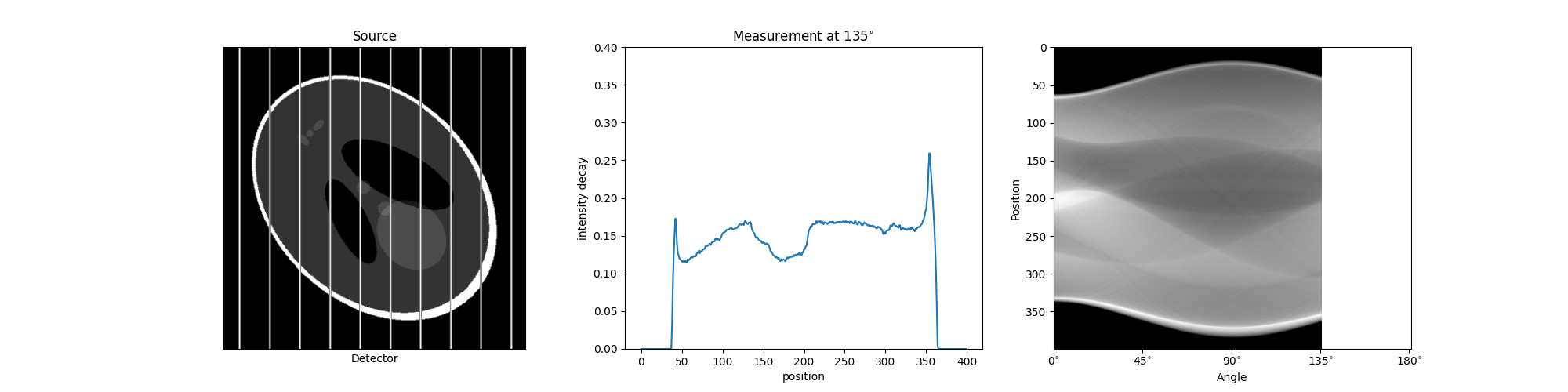}
    \end{subfigure}
    \begin{subfigure}{\textwidth}
        \includegraphics[width=\textwidth]{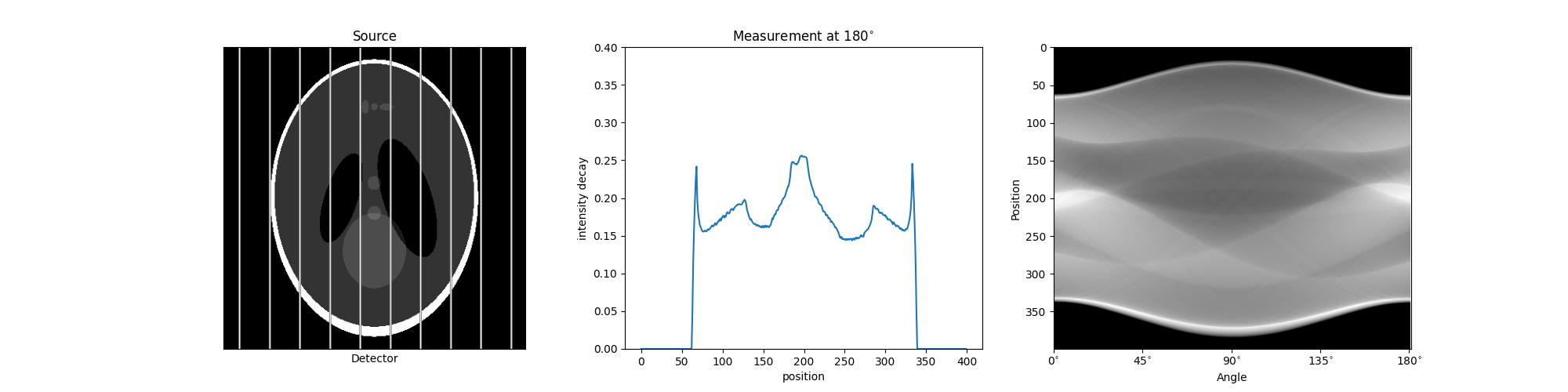}
    \end{subfigure}
    \caption{Computed tomography (CT) example for the Shepp Logan model. The left column shows the rotation of the radiation, the middle column the measurements from the detector for the specific angle and the right column the evolution of the sinogram.}
    \label{fig:ct}
\end{figure}

\subsection{Phase Retrieval}\label{subsec:ex_phase}

\emph{Coherent diffraction imaging} (CDI) is one of the many applications of \emph{phase retrieval}. It is used to image very small objects. We refer to \cite{paganin2006coherent} for a detailed introduction into the topic. For a 2D setting, the experimental setup is depicted in Fig.\ref{fig:cdi}: A sample is placed into a parallel beam with wavelength $\lambda$ and the scattered wave is recorded in the \emph{far field}. The scattered wave in the far field is described by the \emph{Fraunhofer diffraction} regime, i.e., it is given by a scaled Fourier transform of the wavefront distribution behind the sample $u_0(x, y)$, which is also called the \emph{exit wave}. The recorded intensity of the diffraction pattern at distance $z$ from the object is given by magnitude $f(x',y') = |u_z(x',y')|^2$, where according to the Fraunhofer diffraction regime it holds that
\begin{align}
    u_z(x',y')
    &\approx \frac{\exp{ikz}\exp{\frac{ik(x'x' + y'y')}{2z}}}{i\lambda z} \mcF u_0\lb\frac{x'}{\lambda z}, \frac{y'}{\lambda z} \rb.
    \label{eq:cdi_intro}
\end{align}
Given a measurement $f:\Omega'\to\R$ in the detector plane, the goal is to reconstruct the image $u_0:\Omega\to\C$ in the object plane. The operator is thus described by
\beq
    \mcA : u_0 \mapsto \left|\frac{\exp{ikz}\exp{\frac{ik(x'x' + y'y')}{2z}}}{i\lambda z} \mcF u_0\lb\frac{x'}{\lambda z}, \frac{y'}{\lambda z} \rb \right| .
\eeq
It is well-established that the solution to CDI is not unique since a \emph{global phase shift}, \emph{conjugate inversion} and \emph{spatial translation} preserve the Fourier magnitude $|\mcF u_0 |^2$.

\begin{exercise}{}{}
Show that the solution to coherent diffraction imaging is not unique by demonstrating that global phase shift, conjugate inversion and spatial translation preserve the Fourier magnitude.
\end{exercise}


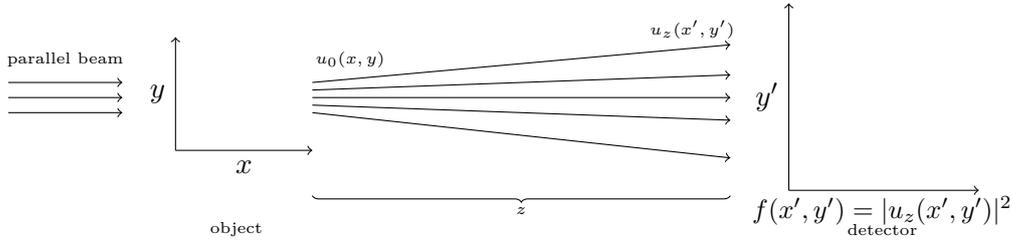
\begin{figure}[t]
    \scalebox{1}{
    \begin{tikzpicture}
        \node (align) at (-1, 0) {}; 
        
        \node (source) at (0, 0) {};    
        \node (object) at (4, 0) {};
        \node (detector) at (11, 0) {};
        
    
        \node at (3.5, 0) {
        };
        \path[->] (3.5-0.8, -0.7) edge node[midway, below] {$x$} (3.5+1, -0.7);
        \path[->] (3.5-0.8, -0.7) edge node[midway, left] {$y$} (3.5-0.8, +0.8);
        
        \node at (3.5, -1.75) {\tiny object};

        \path[->] (0.5,0) edge (2,0); 
        \path[->] (0.5,0.2) edge (2,0.2); 
        \path[->] (0.5,-0.2) edge (2,-0.2); 
    
        \path[->] (4.5,0) edge (10,0); 
        \path[->] (4.5,0.1) edge (10,0.3); 
        \path[->] (4.5,-0.1) edge (10,-0.3); 
        \path[->] (4.5,0.2) edge (10,0.7); 
        \path[->] (4.5,-0.2) edge (10,-0.8); 
    
        \node (pb) at (1.25, 0.5) {\tiny parallel beam};
        \node (uo) at (5, 0.5) {\tiny $u_0(x, y)$};
        \node (uz) at (9.5, 0.9) {\tiny $u_z(x', y')$};
        \draw[decoration={brace, mirror, raise=0.5cm}, decorate] (4.5, -0.8) -- (10, -0.8); 
        \node (z) at (7.25, -1.5) {\tiny $z$};
    
        \node[] at (12, 0) {
        };
        \path[->] (12-1.23, -1.23) edge node[midway, below] {} (12+1.27, -1.23);
        \path[->] (12-1.23, -1.23) edge node[midway, left] {$y'$} (12-1.23, +1.25);
        
        \node at (12, -1.5) {\small$f(x',y') = |u_z(x',y')|^2$};
        \node at (12, -1.75) {\tiny detector};
        
    \end{tikzpicture}
    }
    \caption{\textbf{Coherent Diffraction Imaging.} A sample is placed into a parallel beam and the intensity~$f$ of the scattered wave behind the object is recorded in the \emph{far field}. In the latter, the scattered wave $u_z$ can be approximated using the \emph{Fraunhofer diffraction} regime and thus is given by a scaled Fourier transform of the \emph{exit wave} $u_0(x, y)$ directly behind the sample.
    }
    \label{fig:cdi}
\end{figure}


\section{Theoretical Framework for Well-Posed Inverse Problems}

When solving inverse problems, we have to face certain problems: 
\begin{itemize}
    \item The first problem arises if no solution to the inverse problem \textbf{exists}. This can happen if the measurement is noisy and $\fd$ does not lie within the assumed data range. The problem of non-existence can often be overcome by an appropriate modeling concept.
    \item The second problem arises if the solution to the inverse problem is not \textbf{unique}, i.e., if there exist multiple inputs $u$ which generate the same measurement $\fd$.
    \item The third and most difficult problem arises if solving the inverse problem is not \textbf{stable}, i.e., if the solution's behaviour does not change continuously w.r.t. the measurement $\fd$. If the problem is unstable, even small noise disturbances $\noise$ in the measurement might lead to severe artifacts in the solution.
\end{itemize}

\begin{exercise}{}{}
    Which problems arise in the examples introduced in \cref{sec:examples_for_ip}?
\end{exercise}

The above-mentioned problems have led to the formulation of well-posedness criteria, which date back to Jacques Hadamard\footnote{Jacques Hadamard (1865--1963) was a French mathematician.}, \cite{hadamard1902problemes}.

\begin{definition}{Well- and Ill-Posedness (Hadamard \cite{hadamard1902problemes})}{}
We say an inverse problem with associated forward model $\mcA$ is \alert{well-posed} if the following criteria are fulfilled:
\begin{enumerate}[label=(H\arabic*)]
\item\label{it:H1} The problem has a solution for each data point $f\in\mcV$.
\item\label{it:H2} The solution is unique.
\item\label{it:H3} The solution's behavior changes continuously with the data 
(\emph{stability}).
\end{enumerate}
We say a problem is \alert{ill-posed} if one of the above conditions is not met.
\end{definition}

\begin{example}{Differentiation is Ill-Posed}{ex:numdiff}
It is well-known that differentiation as introduced in \cref{subsec:ex_diff} is an unstable inverse problem. Clearly, $\mcA^{-1}$ is linear but we can show that it is unbounded and thus not continuous: Consider $v^*(x):= \sqrt{2}\sin(2\pi k x)$ for which $\|v^*(x)\|_{L^2}= 1$ holds. We can  estimate 
\beq
    \|\mcA^{-1}\| = \sup_{v\in L^2([0,1]), \|v\|= 1} \|\mcA^{-1}v\| \geq \|\mcA^{-1}v^*\| = 2\pi k
\eeq
and thus $\lim_{k\to\infty} \| \mcA^{-1}\| \to \infty$. Implications: Assume that we have the following noisy measurements 
\beq 
\fd_k = \mcA u + \noise, \quad \noise(x) := \noiselvl \sin(2\pi k x), \quad k\in\N.
\eeq
We observe that both the $L^2$- and the $L^\infty$-\emph{reconstruction error} $\norm{u - \mcA^{-1}\fd_k}_{L^2}, \norm{u - \mcA^{-1}\fd_k}_{L^\infty}$ can become arbitrarily large for large $k$, although the noise level $\noiselvl=\|\noise\|_\infty$ stays fixed. 
\end{example}

\begin{exercise}{}{}
    Calculate the following $L^2$- and $L^\infty$-errors 
\beq
\|f - f^\delta_k \|_2, \quad \|f - f^\delta_k\|_\infty, \quad \|u - \mcA^{-1} f^\delta_k \|_2, \quad \|u - \mcA^{-1} f^\delta_k\|_{\infty}
\eeq
for the differentiation operator \cref{eq:diffop} and interpret the results.
\end{exercise}

Note that instability is a subtle but severe problem. Once a problem has been discretized, any linear (finite) operator is continuous. Still, we can encounter cases were minor measurement errors result in relative big reconstruction errors. 

\begin{memo}{}{mem:bounded}
Linear maps $A:U \to V$ on finite-dimensional spaces $U, V$ are continuous.
\end{memo}

\begin{figure}
    \centering
    \includegraphics[width=0.75\linewidth]{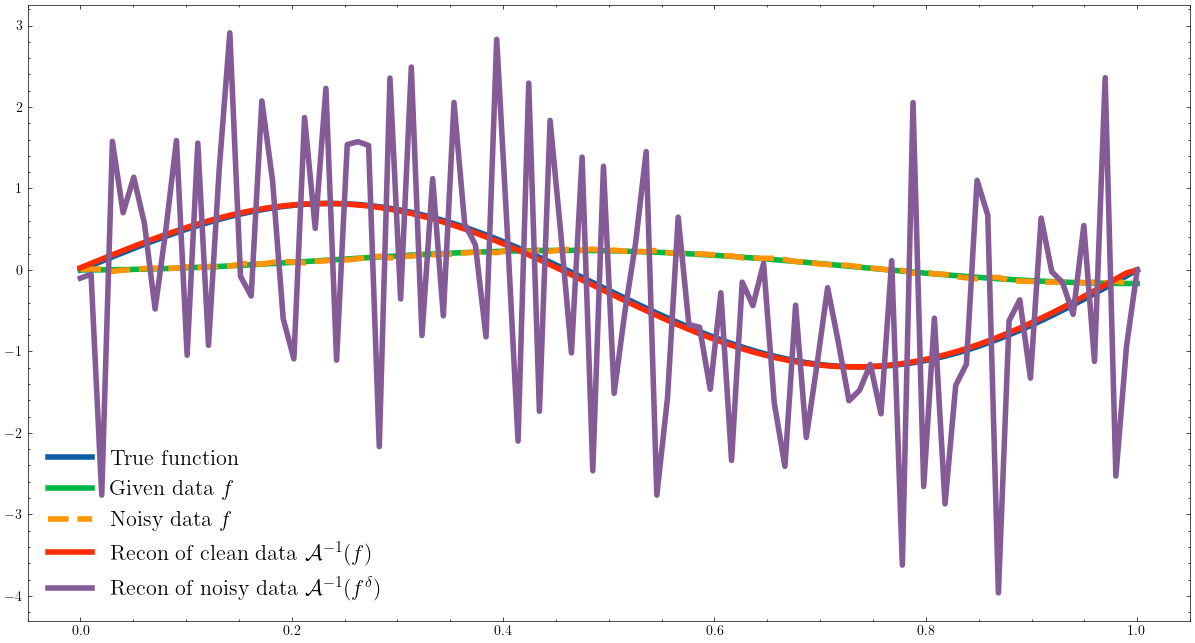}
    \caption{Numerical differentiation is ill-posed \cref{ex:numdiff} / ill-conditioned \cref{ex:numdiff_cont}. Here, we disturbed the true function $u(t) := \sin(2\pi t) + (t-0.5)^2 - 0.25$ by adding Gaussian noise of variance $0.01$. Comparing the clean and noisy data, we can barely see any differences. The reconstructions using either the clean or the noisy data, however, show large differences!}
    \label{fig:numdiff}
\end{figure}

\begin{example}{Differentiation is Ill-Posed -- Continued}{ex:numdiff_cont}
Recalling the definition of the discrete inverse operator in \cref{eq:num_diff_Ainv} as well as the definition of the spectral norm we have for $A\in\R^{N,N}$

\beq
\norm{A^{-1}}_2 =  \sqrt{\lambda_{\text{max}} \left( A^{-T}A^{-1} \right)} > N-1.
\eeq
In order to see the above estimate we define 
\beq
M:= \frac{1}{(N-1)^2} A^{-T} A^{-1} = 
\begin{pmatrix}
2 & -1 &  0 & & \hdots &   & 0 \\
-1 & \ddots & \ddots & \ddots & & & \\
0 & \ddots&  & &  \ddots &  & \vdots  \\
 & \ddots &&  &  &  \ddots & \\
\vdots & & \ddots &  & \ddots & \ddots & 0 \\
& &&  \ddots &  \ddots &  2 & -1 \\
0   &  & \hdots & & 0 & -1 & 1 
\end{pmatrix}.
\eeq
We can directly verify that $\lim_{k\to\infty} M^k \neq 0$, which yields that there exists an eigenvalue bigger than $1$ (see, e.g., \cite{oldenburger1940infinite}) and thus $1 < \lambda_{\max}(M)$. This implies $(N-1)^2 < \lambda_{\max}((N-1)^2 M) = \lambda_{\max}(A^{-T}A^{-1})$. While being finite, the norm can become arbitrarily large for large $N$ as $\norm{A^{-1}}_2 > N-1 \xrightarrow[]{N\to\infty} \infty$. 
Even though the discrete problem is stable by definition, one can use this and similar considerations for $\|A\|_2$ to show that $\text{cond}(A) \gg N$ and thus that the problem is ill-conditioned for $N>1$. The implications are demonstrated numerically in \cref{fig:numdiff}. 
\end{example}

\section{Inversion Approaches}

When solving inverse problems, the goal is to retrieve an unknown quantity $u$ from given data $\fd$. As previous examples have shown, this can be a challenging task. Even if the operator $\mcA$ has a well-defined inverse on its range, i.e., $\mcA^{-1}:\mcR(\mcA)\to\mcU$ exists, there is no guarantee that the perturbed data still lies in the range of our operator. One may also only  obtain partial measurements, which leads to underdetermined systems and renders the direct inversion impossible also on the clean data. Furthermore, many interesting problems also violate the third Hadamard criterion \labelcref{it:H3}.

\paragraph{Regularization.}
One approach to obtain meaningful solutions in the described scenarios is called \alert{regularization}. A regularization $\mfRa:\mcV\to\mcU$ maps an arbitrary data point to a favorable solution. Intuitively, one hopes that the regularization approximately extends the notion of the inverse to the noisy and possibly ill-posed setting, namely
\beq
\mfRa(\mcA u + \noise) \approx u.
\eeq
A typical strategy is so-called \alert{Tikhonov-type} or \alert{variational regularization}, where the output of the regularization map is defined as the solution of the following problem
\begin{align}\label{eq:tik}
\argmin_{u\in\mcU} \underbrace{\norm{\mcA u -\fd}^2_{L^2}}_{\text{data fidelity}} + \regp \underbrace{\mcJ(u)}_{\text{regularizer}}.
\end{align}
The terms have the following interpretation:
\begin{itemize}
\item \alert{Data fidelity}: The term tells us how well our guess $u$ fits to the observed data $\fd$. While we want to minimize the data fidelity, it is not always meaningful to have it equal to zero in the noisy case, since we look for $u^*$ such that $\mcA u^* = \fd - \noise $, where potentially $\mcA u^* \neq \fd$.
\item \alert{Regularizer}: The regularizer allows us to incorporate additional information about our sought solution. In the classical situation, we already know that $u$ should be close to some point $\mu$, i.e., we want to penalize the distance between the solution $u$ and $\mu$. Furthermore, one often wants to penalize certain directions more than others, for which we consider a unitary operator $\mcQ:\mcU\to\mcU$ and then choose
\beq
\mcJ(u) = \norm{u - \mu}^2_{L^2, \mcQ} := \ll u- \mu, \mcQ (u -\mu)\rr,
\eeq
which is referred to as \alert{Tikhonov regularization}\footnote{Andrey Nikolayevich Tikhonov (1906--1993) was a Soviet mathematician.}.
\item \alert{Regularization parameter $\regp$}: The parameter $\regp>0$ controls the strength of the regularizer. In some formulations, this parameter is included into the definition of the regularizer.
\end{itemize}

We explore the theory and practice of regularizers in \cref{ch:reg}. Let us already highlight that classical regularization follows the paradigm 
\begin{center}
\textit{Given a data point $\fd\in\mcV$, find 
a favorable quantity $u\in\mcU$.}
\end{center}
\paragraph{Bayesian inversion.} Another inversion approach is the probabilistic or \alert{Bayesian} one. We refer to \cite{stuart2010inverse} for a detailed introduction into this topic. Here, $\fd, u$ and $\noise$ are not considered as points in a vector space, but rather as random variables that follow a certain probability distribution. This approach follows the paradigm 
\begin{center}
\textit{Given a data point $\fd\in\mcV$, find the probability that it was caused by the quantity $u\in\mcU$.}
\end{center}
This question is usually expressed in the so-called \alert{posterior distribution}, which is the conditional probability $p(u|\fd)$. Employing Bayes' rule\footnote{Thomas Bayes (1701--1761) was an English statistician.}, the posterior can be expressed as
\beq
p(u| \fd) = \frac{p(\fd| u)\, p(u)}{p(\fd)}.
\label{eq:posterior}
\eeq
The quantities involved in the above equation have the following interpretation:
\begin{itemize}
\item \alert{The likelihood $p(\fd| u)$}: The likelihood describes how well the given data fits to the quantity $u$. In our case we know that $\fd=\mcA u + \noise$ and thus
\beq
p(\fd| u) = p(\fd = \mcA u + \noise | u)
= p_\noise(\fd - \mcA u).
\eeq
Here, $p_\noise$ denotes the distribution of our noise. This means, that in order to compute the likelihood, we need to choose a noise model. In finite dimensions, a typical example is a zero-mean Gaussian noise model with diagonal covariance matrix $\Sigma= \sigma^2 \Id$, i.e., $\noise\sim\mcN(0, \sigma^2 \Id)$ and 
\beq
p_\noise(v) = \frac{1}{(2\pi\sigma)^{n/2}}\exp{-\frac{\norm{v}_2^2}{2\sigma^2}}.
\eeq
In this case, the likelihood is given as 
\begin{align}\label{eq:likelihood}
p(\fd|u) = 
\frac{1}{(2\pi\sigma)^{n/2}}\exp{-\frac{\norm{\fd-A u}_2^2}{2\sigma^2}}.
\end{align}
The likelihood directly relates to the data fidelity term in the classical regularization case.
\item \alert{The prior $p(u)$}: The prior models the belief in our desired quantity that we have before any measurement is taken. Intuitively, $p(u)$ tells us how likely we think the quantity $u$ is among all possible quantities in $\mcU$.
\item \alert{The normalizing constant $p(\fd)$}: This quantity can be expressed as 
\beq
p(\fd) = \int p(\fd| u) p(u) \dint u
\eeq
and is often hard to compute and thus not directly available.
\end{itemize}

\paragraph{MAP Estimation.}
From the Bayesian view-point, the posterior distribution is the solution to the inverse problem. At first, this seems completely different to the previously mentioned regularization approach. 
However, the classical/deterministic inversion approach can directly be related to the probabilistic one via the \alert{maximum-a-posteriori (MAP) estimator}.

\begin{definition}{MAP}{}
The \emph{maximum-a-posteriori estimate} is defined as
\beq
u^{\text{MAP}} \in \argmax_{u\in \mcU} \ls p(u| \fd) \rs.
\eeq
\end{definition}\noindent%
The MAP approach follows the paradigm
\begin{center}
\textit{Given a data point $\fd\in\mcV$, find $u\in\mcU$ which most probably lead to the measurement.}
\end{center}
Compared to the Bayesian approach, the MAP approach offers the computational advantage that the normalization constant does not need to be considered, since
\beq
\argmax_{u\in\mcU} \ls p(u|\fd) \rs = \argmax_{u\in\mcU} \ls \frac{p(\fd|u)\, p(u)}{p(\fd)} \rs = \argmax_{u\in\mcU} \ls p(\fd|u)\, p(u)\rs.
\eeq
Usually, one can further simplify the computations by considering the (negative) logarithm of the involved terms. Using the monotonicity of the logarithm, one obtains
\beq
u^{\text{MAP}} \in 
\argmin_{u\in\mcU} \ls - \log(p(\fd|u)) - \log(p(u))\rs ,
\eeq
where $- \log(p(\fd|u))$ is referred to as the \alert{negative log-likelihood}. Consider again the finite dimensional setting with a zero-mean Gaussian noise model. With \cref{eq:likelihood} we directly concur %
\begin{align}\label{eq:MAPGauss}
u^{\text{MAP}} \in 
\argmin_{u\in U} \ls \frac{1}{2\sigma^2}\norm{\fd - A u}^2_2 - \log(p(u))\rs.
\end{align}
Looking at the Tikhonov-type regularization in \cref{eq:tik}, this highlights the connection between the regularizer in the classic and the prior in the Bayesian setting. We will further explore the usage of Bayesian inversion and related estimates in the context of machine learning approaches in \cref{sec:mlforip}.
\begin{memo}{}{}
Assuming a zero-mean Gaussian noise model for the measurement error, the MAP corresponds to Tikhonov-type regularization with an appropriately chosen prior.
\end{memo}
Let us now consider two different priors, namely the uniform and Gaussian distribution. 

\begin{example}{Maximum Likelihood Estimator}{ex:mle}
Often, the probability distribution of $u$ is unknown and the easiest approach appears to assume a uniform distribution. In the finite dimensional case $U=\R^d$, this means that we have a compact set of finite measure $U' \subset U$ with $p(u)=\frac{1}{\abs{U'}}\mathbf{1}_{U'}(u)$. The MAP then reduces to
\bfl
    \argmax_{u\in U} \ls p(u | \fd) \rs  = \argmax_{u\in U} \ls p(\fd | u) p(u) \rs =
    \argmax_{u\in U'} \ls p_\noise(\fd - Au) \rs = \argmin_{u\in U'} \ls \| \fd- A u \|^2_2 \rs.
\efl
Seeing that only the likelihood term remains, this is referred to as the \alert{maximum likelihood estimator (MLE)}.
 This least-squares problem will further be explored in \cref{ch:reg}. For now, we would like to point out that the MLE can be obtained as $u^\dagger = A^\dagger \fd$, where $A^\dagger := (A^\top A)^{-1} A^\top$ is the pseudo inverse of $A$.
\end{example}

\begin{example}{Recovering the Tikhonov Regularization}{ex:gauss_prior_tik_reg}
We can also recover the classic Tikhonov regularization by choosing a Gaussian prior for $u$, i.e.,
\beq
u\sim \mcN(\mu, \sigma_u^2 \Id).
\eeq
Using the computation in \cref{eq:MAPGauss} we have 
\begin{align}
\argmax_{u\in U} \ls p(\fd|u)\, p(u) \rs &= 
\argmin_{u\in U} \ls \frac{1}{2\sigma^2}\norm{\fd - A u}^2_2 - \log(p(u)) \rs \\ 
&= 
\argmin_{u\in U} \ls \frac{1}{2\sigma^2}\norm{\fd - A u}^2_2 + \frac{1}{2\sigma_u^2} \norm{u - \mu}^2_2 \rs\\
&= 
\argmin_{u\in U} \ls \norm{\fd - A u}^2_2 + \frac{\sigma^2}{\sigma_u^2} \norm{u - \mu}^2_2\rs.
\end{align}
The MAP estimate in this setting can be obtained as $u^\dagger = \lb A^\top A + \frac{\sigma^2}{\sigma_u^2} I \rb^{-1}\lb A^\top \fd + \frac{\sigma^2}{\sigma_u^2} \mu\rb$.
\end{example}

While finite-dimensional linear problems never directly violate the stability condition, they can certainly approximate instable behavior in terms of numerical instabilities, as we have seen in \cref{ex:numdiff_cont}. We can now ask ourselves if and how the choice of the prior --- or in other words, the use of regularization --- can affect these numerical instabilities. 
Let us thus consider the condition number of the term $A^\top A$ appearing in the MLE of \cref{ex:mle} (uniform prior, no regularization) as well as the condition number of the term $A^\top A + \frac{\sigma^2}{\sigma_u^2}I$ appearing in the MAP of  \cref{ex:gauss_prior_tik_reg} (Gaussian prior, Tikhonov regularization). Using the singular value decomposition $A = UDV^\top$, we have
\beq
    \cond (A^\top A) = \cond ((UDV^\top)^\top(UDV^\top)) = 
    \cond (V D^\top D V^\top) = \frac{ \sigma_{\max}^2 }{ \sigma_{\min}^2 },
    \label{eq:cond_mle}
\eeq
where $\sigma_{\max}$ and $\sigma_{\min}$ denote the biggest and smallest singular values of $A$. Thus, the problem becomes numerically ill-conditioned if the smallest singular value goes to zero.
On the other hand,
\beq
    \cond \lb A^\top A+ \frac{\sigma^2}{\sigma_u^2} I \rb = \cond \lb V \lb D^\top D+ \frac{\sigma^2}{\sigma_u^2} I \rb V^\top \rb = \frac{ \sigma_{\max}^2 +\frac{\sigma^2}{\sigma_u^2} }{ \sigma_{\min}^2+\frac{\sigma^2}{\sigma_u^2} }.
    \label{eq:cond_map}
\eeq
For small $\sigma_{\min}$, the condition number of the regularized problem is clearly better, especially if $\frac{\sigma^2}{\sigma_u^2}$ is not too small! This means that small changes in the data do not affect the reconstruction as much.

\begin{exercise}{Ill-Conditioned Matrix Equations}{}
Consider the following linear matrix equation
\beq
	Au = f,
\eeq
where $u, f\in\R^n$ and $A\in\R^{n,n}$. Use that the matrix has a representation
\beq
	A = \sum_{i=1}^n \lambda_i x_i x_i^\top
\eeq
where $0\leq\lambda_1\leq...\leq\lambda_n$ are eigenvalues  and $x_i\in\R^n$ with $\|x_i\|=1$ are eigenvectors to show the following:
\begin{itemize}
    \item Noise which is orthogonal to all eigenvectors $x_i$ for which $\lambda_i \neq 0$ does not lead to reconstruction artifacts.
    \item High-frequency noise leads to bigger reconstruction artifacts than low-frequency noise of the same magnitude.
\end{itemize}
\end{exercise}

\begin{exercise}{Parameter Choice Rule}{}
    For some matrix $A\in\R^{n,n}$ and unknown quantity $u\in\R^n$ we are given the noisy measurement 
    \beq
    	f^\noiselvl = Au + \noise,
    	\label{eq:ip}
    \eeq
    where we assume that the noise is bounded, i.e., $\|\noise\| \leq \noiselvl$. In order to improve the condition of the matrix $A$, we shift its eigenvalues away from zero by defining
    \beq
    	A_\alpha := A + \alpha I.
    \eeq
    For $u_\alpha^\delta := A_\alpha^{-1}f^\delta$:
    \begin{itemize}
    \item[a)] Calculate the \alert{reconstruction error} between the exact solution $u^0_0$ and the solution $u_0^\delta$ for noisy data which is introduced by the noise in the measurement, i.e.,
    \beq
    	E_0^\delta = \| u_0^0 - u_0^\delta \|,
    \eeq
    \item[b)] Calculate the \alert{approximation error} between the exact solution $u_0^0$ and the regularized solution $u_\alpha^0$ which is introduced by the approximation (regularization) of the matrix, i.e.,
    \beq
    	E_\alpha^0 = \| u_0^0 - u_\alpha^0 \|.
    \eeq
    \item[c)] Calculate the error between the exact solution $u_0^0$ and the regularized solution $u_\alpha^\delta$ for noisy data, i.e.,
    \beq
    	E_\alpha^\delta = \| u_0^0 - u_\alpha^\delta \|.
    \eeq
    \end{itemize}
    Discuss the role of the approximation (regularization) of the matrix in the presence of noise. How should $\alpha$ be chosen?
    
    \paragraph{Tip:} Use the triangle inequality of norms and the representation 
    \beq
    	A = \sum_{i=1}^n \lambda_i x_i x_i^\top
    \eeq
    where $0\leq\lambda_1\leq...\leq\lambda_n$ are eigenvalues  and $x_i\in\R^n$ with $\|x_i\|=1$ are eigenvectors.
\end{exercise}

\newpage
\chapter{Generalized Solutions and Regularization}\label{ch:reg}

In this chapter we introduce concepts from the theory of regularization for inverse problems. The exposition is loosely based on the lecture notes \cite{burger2007inverse} as well as \cite{korolev2020inverse,hanke2001inverse,engl2015regularization,Engl2000,benning2018modern}. In the following, we consider \alert{bounded} linear operators $\mcA:\mcU\to\mcV$ for \alert{Hilbert spaces} $\mcU$ and $\mcV$ and associated \alert{general linear operator equations} (GLOE) of the form 
\beq
    \mcA u = f.
    \label{eq:gloe}\tag{GLOE}
\eeq
We denote the respective scalar products associated to $\mcU$ and $\mcV$ by $\ll\cdot,\cdot\rr_\mcU$ and $\ll\cdot,\cdot\rr_\mcV$. The (unique) \alert{adjoint operator} of $\mcA$, denoted with $\mcA^*:\mcV\to\mcU$, is defined as follows:
\beq
    \ll \mcA u, v \rr_\mcV = \ll  u, \mcA^* v \rr_\mcU, \quad \forall u\in\mcU, v\in\mcV.
\eeq
We denote by 
\begin{itemize}
    \item $\domain(\mcA)$ the \alert{domain} of $\mcA$, i.e. $\domain(\mcA) = \mcU$ if not noted differently, but $\domain(\mcA) =  U \subset \mcU$ also possible,
    \item $\kernel(\mcA)$ the \alert{null space} of $\mcA$, i.e., $\kernel(\mcA) := \ls u\in\domain(\mcA): \mcA u = 0\rs$,
    \item $\range(\mcA)$ the \alert{range} of $\mcA$, i.e., $ \range(\mcA) := \ls v\in \mcV: \exists u\in\domain(\mcA)\text{ such that } \mcA u = v \rs$.
\end{itemize}
Note, that $\range(\mcA)$ is not closed in general (it is never closed for a compact
operator, unless the range is finite-dimensional). We, furthermore, denote by 
\begin{itemize}
    \item $\overline{Z}$ the \alert{closure} of some space $Z$,
    \item $(Z')^\perp$ the \alert{orthogonal complement} of some subspace $Z'\subset Z$ in $Z$, i.e., \beq (Z')^\perp = \{w\in Z: \langle w, z\rangle = 0\quad \forall z \in Z'\}.\eeq 
\end{itemize} 
One can show that $(Z')^\perp$ is a closed subspace and that $Z^\perp = \ls 0 \rs$. If $Z'$ is closed, it furthermore holds that $Z' = \lb(Z')^\perp\rb^\perp$ and $Z=Z' \oplus (Z')^\perp$. The latter means that for any $z\in Z$, there exists a unique pair $(z', z^\perp) \in Z' \times (Z')^\perp$ such that $z=z'+z^\perp$.
If $Z'$ is not closed, then the weaker version $\overline{Z'} = \lb(Z')^\perp\rb^\perp$ holds. 
With respect to our bounded linear operators this translates to $\kernel(\mcA) = \range(\mcA^*)^\perp$ and thus $\kernel(\mcA)^\perp = \overline{\range(\mcA^*)}$, as well as $\kernel(\mcA^*) = \range(\mcA)^\perp$ and thus $\kernel(\mcA^*)^\perp = \overline{\range(\mcA)}$. Consequently, we have the following decomposition of the Hilbert spaces:
\beq
\mcU = \kernel(A) \oplus \overline{\range(A^*)},\qquad 
\mcV = \kernel(A^*)\oplus \overline{\range(A)}.
\eeq
Lastly, we denote by $\mcP: \mcU \to \kernel(\mcA)$ and $\mcQ: \mcV \to \overline{\range(\mcA)}$ the orthogonal projections onto $\kernel(\mcA)$ and $\overline{\range(\mcA)}$.

\begin{exercise}{}{}
Show that $\kernel(\mcA^*) = \range(\mcA)^\perp$.
\end{exercise}

\comment{
\section{Ill-Posed Matrix Equations}

We start this section with some introductory discussions in the finite dimensional case with $U=V = \R^d$. For simplicity, let us first assume that $A$ is symmetric and positive definite and that, therefore, there exists an orthonormal basis $(v_1,\ldots, v_d)$ of eigenvectors with associated eigenvalues $0<\lambda_1 \leq\ldots\leq\lambda_d$. For clarity, we also assume $\lambda_n = 1$, since typically the largest eigenvalue is not the problematic one, such that $\cond(A) = \lambda_1^{-1}$. Given measurement data $f=A u$ and the noisy version $\fd$, we see that the reconstruction error can be estimated as
\begin{align}\label{eq:orthprop}
\norm{u^\noiselvl - u}_2 \leq 
\lambda_1^{-1} \noiselvl,
\end{align}
and, therefore, we see that the smallest eigenvalue can amplify the reconstruction error. We also observe that the estimate is sharp, since for $\noise = \delta v_1$ we obtain
\begin{align}
\norm{u^\noiselvl - u}_2 = \delta \norm{A^{-1} v_1} = \delta \lambda_1^{-1}.
\end{align}
We briefly discuss the situation, where the operator $A$ has a non-trivial nullspace, i.e., $\kernel(A)\neq \{0\}$. We still assume $f\in \range(A)$, however, the noise might have nullspace components, such that $\fd\notin \range(A)$. Here, and in the following, we utilize the Banach's \alert{closed range theorem}, which yields
\begin{align}
\mcU = \kernel(A) \oplus \overline{\range(A^*)},\qquad 
\mcV = \kernel(A^*)\oplus \overline{\range(A)}.
\end{align}
In the finite dimensional case, the range of a linear operator is always closed and, therefore, the projection $P:\mcV\to\range(A)$ is continuous. In the spectral representation, we can assume that $\lambda_i > 0$ for all $i>m$. Following the computation in \cref{eq:orthprop} we obtain
\begin{align}\label{eq:orthpropnull}
\norm{\ud - u} \leq \delta \lambda_m^{-1},
\end{align}
which means that nullspace components do not amplify the reconstruction error.

\begin{exercise}{}{}
Prove the estimates in \cref{eq:orthprop,eq:orthpropnull}.
\end{exercise}
}

\section{Generalized Inverses}

Let us first observe for which operators $\mcA$ the inverse problem  is ill-posed. 
\begin{itemize}
\item If $\range(\mcA) \neq \mcV$, i.e., the range of $\mcA$ is not the full image space, then \eqref{eq:gloe} is not solvable for $f\in\mcV\backslash(\range(\mcA))$. Thus \labelcref{it:H1} is violated! 
    
    In this case it seems reasonable to look for $u$ such that $\mcA u$ has minimal distance to $f$.
    \item On the other hand, if $\mcA$ has a nontrivial null space, \eqref{eq:gloe} may have multiple solutions. Thus (H2) is violated!

    A popular remedy to obtain uniqueness is to select the solution with minimal norm.
\end{itemize} 

The above considerations lead to the following definition:
\begin{definition}{Least-Squares and Minimal-Norm Solution}{}
An element $u^*\in\mcU$ is called
\begin{enumerate}
\item \strut \alert{least-squares solution} (LSS) of \eqref{eq:gloe} if
\beq
\| \mcA u^* - f\|_\mcV = \inf_{u\in\mcU} \ls \| \mcA u - f\|_\mcV \rs,
\eeq
\item \alert{minimal-norm solution} (MNS) of \eqref{eq:gloe} if
\beq
\|u^*\| = \inf \ls \|u\|_\mcU : u\in\mcU \text{ is a least-squares solution of } \eqref{eq:gloe} \rs.
\eeq
\end{enumerate}
\end{definition} 

\begin{remark}{Similarity to MLE}{}
Note that the least-squares solution corresponds to the maximum-likelihood estimator in \cref{ex:mle}.
\end{remark}
\begin{remark}{Existence of least-squares solutions}{}
In general, there might not exist a least squares solution. In fact, the existence for arbitrary data $f$ is equivalent to the closedness of the range. If we assume that $\range(\mcA)$ is closed, we know that for every $f\in\mcV$ the space $Z= \range(\mcA) - f$ is closed and convex. In Hilbert spaces for closed and convex subsets, there always exist elements of minimal norm, i.e., there exists $z^*\in Z$ such that %
\begin{align}
\norm{z^*} = \inf_{z\in Z} \norm{z}_\mcV = \inf_{u\in\mcU}\norm{\mcA u - f}_\mcV.
\end{align}
On the other hand, if the range is not closed, there exists $f\in\overline{\range(\mcA)}\setminus\range(\mcA)$ and a sequence $\{u_n\}_{n\in\N}\subset \mcU$ with $\lim_{n\to\infty}\norm{\mcA u_n - f}_\mcV = 0$. Therefore, $\inf_u \norm{\mcA u -f}_\mcV = 0$, but since $\norm{\mcA u -f}_\mcV>0$ for all $u\in\mcU$ this yields
\begin{align}
\argmin_{u\in\mcU} \norm{\mcA u - f}_\mcV = \emptyset.
\end{align}
Again, we note that in the finite dimensional case, the range is always closed and, therefore, a least-squares solution always exists.
\end{remark}
\comment{%
\begin{remark}{Uniqueness of minimum-norm solution}{}
If we assume the existence of least-squares solutions, then the minimal-norm solution is unique. This is due to the fact that $Z = \argmin_{u\in\mcU} \norm{f - \mcA u}$ is a convex and closed set. This can be shown as follows: Denote by $c=\min_{u} \norm{f - \mcA u}$, then
\begin{align}
u,v \in Z, t\in [0,1] &\Rightarrow \norm{f - \mcA (t u + (1-t) v)} = \norm{f - t \mcA u - (1-t) \mcA v}\\ &= \norm{t(f-\mcA u) + (1-t) (f-\mcA v)}\leq 
t \norm{f - \mcA u} + (1-t) \norm{f - \mcA v}\\& = 
c \Rightarrow t u + (1-t) v \in Z.
\end{align}
Assuming that $u_n\in Z$ with $u_n\to u\in\mcU$ then by the continuity of the norm and $\mcA$ (we assumed boundedness) we have that
\begin{align}
\norm{\mcA u - f} = \lim_{n\to \infty} \norm{\mcA u_n - f} = c.
\end{align}

and 
\begin{align}
u \mapsto \norm{\mcA u - f}^2
\end{align}
is a strictly convex function. Again, a [....]
\end{remark}}

It is easy to see that if a least-squares solution exists, then the minimal-norm solution is unique, because it is the minimizer of a strictly convex (quadratic) functional on a linear subspace.
For those $f$, where it exists, the minimal-norm solution can (at least in theory) be computed via the Moore--Penrose generalized inverse, which is defined as follows:
\begin{definition}{Moore--Penrose Generalized Inverse}{}
    For $\mcA\in\mcL(\mcU, \mcV)$ let $\tilde{\mcA} : \kernel(\mcA)^\perp \to \range(\mcA)$ denote its \emph{restriction}. Then the \alert{Moore-Penrose generalized inverse} $\mcAdag$ is defined as the unique \emph{linear extension} of $\tilde{\mcA}^{-1}$ to
    \beq
        \domain(\mcAdag) := \range(\mcA) + \range(\mcA)^\perp
    \eeq
    with $\kernel(\mcAdag) = \range(\mcA)^\perp$.
\end{definition}

Note that the Moore--Penrose inverse $\mcAdag$
is well-defined: First of all, due to the restriction
to $\kernel(\mcA)^\perp$ and $\range(\mcA)$, the operator $\tilde{\mcA}$ is injective and surjective, and hence, $\tilde{\mcA}^{-1}$ exists. As a consequence, $\mcAdag$ is well-defined on $\range(\mcA)$. For arbitrary $f\in \domain(\mcAdag)$ we can find unique $f_1 \in \range(\mcA)$ and $f_2\in\range(\mcA)^\perp$ such that $f=f_1 + f_2$. From the linearity of $\mcA$ and from $\kernel(\mcAdag
) = \range(\mcA)^\perp$ we finally obtain
\beq
    \mcAdag f = \mcAdag f_1 + \mcAdag f_2 = \tilde{\mcA}^{-1} f_1.
\eeq

It can be shown that $\mcAdag$
is characterized by the \alert{Moore--Penrose equations}
\begin{align}
    \mcA\mcAdag\mcA &= \mcA \\
    \mcAdag\mcA\mcAdag &= \mcAdag \\    
    \mcAdag\mcA &= I - \mcP\\
    \mcA\mcAdag &= \mcQ|_{\domain(\mcAdag)}  
\end{align}
where $\mcP: \mcU \to \kernel(\mcA)$ and $\mcQ: \mcV \to \overline{\range(\mcA)}$ are the orthogonal projectors onto $\kernel(\mcA)$ and $\overline{\range(\mcA)}$, respectively.
We have announced above that minimal-norm solutions can be computed using the Moore--Penrose generalized inverse, which we make precise by the following result:
\begin{theorem}{}{thm:existence}
    The following can be said about existence, uniqueness and explicit form of least-squares and minimal-norm solutions:
    \begin{itemize}
        \item The set of least-squares solutions associated to the general linear operator equation \eqref{eq:gloe} is non-empty if and only if $f\in\domain(\mcAdag)$. 
        \item The general linear operator equation \eqref{eq:gloe} has a unique minimal-norm solution if and only if $f\in\domain(\mcAdag)$. It is given by
        \beq
            u^\dagger := \mcAdag f
        \eeq
        and the set of all least-squares solutions is given by $\ls u^\dagger \rs + \kernel(\mcA)$.
    \end{itemize}
\end{theorem}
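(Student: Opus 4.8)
The plan is to reduce everything to the orthogonal decomposition $\mcV = \overline{\range(\mcA)}\oplus\range(\mcA)^\perp$ (equivalently $\kernel(\mcA^*)\oplus\overline{\range(\mcA)}$) together with the basic geometry of orthogonal projections in a Hilbert space. The only genuinely delicate point will be keeping track of the difference between $\range(\mcA)$ and its closure. Recall that $\mcP$ and $\mcQ$ are well defined, since $\kernel(\mcA)$ is closed (boundedness of $\mcA$) and $\overline{\range(\mcA)}$ is closed by construction.

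First I would characterize the least-squares solutions. Splitting $f = \mcQ f + (f - \mcQ f)$ with $\mcQ f\in\overline{\range(\mcA)}$ and $f - \mcQ f\in\overline{\range(\mcA)}^\perp = \range(\mcA)^\perp$, and noting that for every $u\in\mcU$ the element $\mcA u - \mcQ f$ lies in $\overline{\range(\mcA)}$ and is hence orthogonal to $f - \mcQ f$, the Pythagorean identity gives
\[
\|\mcA u - f\|_\mcV^2 = \|\mcA u - \mcQ f\|_\mcV^2 + \|f - \mcQ f\|_\mcV^2 .
\]
So $u^*$ minimizes $u\mapsto\|\mcA u - f\|_\mcV$ iff it minimizes $u\mapsto\|\mcA u - \mcQ f\|_\mcV$; since $\mcQ f\in\overline{\range(\mcA)}$ the latter infimum is $0$, hence $u^*$ is a least-squares solution exactly when $\mcA u^* = \mcQ f$. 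Such a $u^*$ exists iff $\mcQ f\in\range(\mcA)$, and I would then check that this is equivalent to $f\in\domain(\mcAdag) = \range(\mcA) + \range(\mcA)^\perp$: if $\mcQ f\in\range(\mcA)$ then $f = \mcQ f + (f-\mcQ f)$ exhibits the required decomposition, while conversely any decomposition $f = f_1 + f_2$ with $f_1\in\range(\mcA)\subseteq\overline{\range(\mcA)}$ and $f_2\in\range(\mcA)^\perp$ must, by uniqueness of the orthogonal splitting, satisfy $\mcQ f = f_1\in\range(\mcA)$. This proves the first bullet, and simultaneously the "only if" part of the second, since a minimal-norm solution can only exist when the set of least-squares solutions is non-empty.

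Next, assuming $f\in\domain(\mcAdag)$, I would identify the minimal-norm solution. The set of least-squares solutions is $S = \{u\in\mcU : \mcA u = \mcQ f\}$, which is non-empty by the above and equals $u_0 + \kernel(\mcA)$ for any fixed $u_0\in S$. Decomposing $u_0 = \mcP u_0 + (u_0 - \mcP u_0)$, the element $u_0 - \mcP u_0$ lies in $\kernel(\mcA)^\perp$ and still belongs to $S$, and for any $v\in\kernel(\mcA)$ one has $\|(u_0 - \mcP u_0) + v\|_\mcU^2 = \|u_0 - \mcP u_0\|_\mcU^2 + \|v\|_\mcU^2$, so $u_0 - \mcP u_0$ is the unique minimal-norm element of $S$, i.e.\ the unique least-squares solution in $\kernel(\mcA)^\perp$. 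Finally I would match this with $\mcAdag f$: writing $f = f_1 + f_2$ with $f_1\in\range(\mcA)$, $f_2\in\range(\mcA)^\perp$, so that $f_1 = \mcQ f$, the definition of the Moore--Penrose inverse gives $\mcAdag f = \tilde{\mcA}^{-1} f_1\in\kernel(\mcA)^\perp$ and $\mcA\,\mcAdag f = \tilde{\mcA}\,\tilde{\mcA}^{-1} f_1 = f_1 = \mcQ f$ (equivalently, one may invoke the Moore--Penrose identities $\mcA\mcAdag = \mcQ|_{\domain(\mcAdag)}$ and $\mcAdag\mcA = I - \mcP$). Hence $\mcAdag f$ is precisely the least-squares solution lying in $\kernel(\mcA)^\perp$, so $u^\dagger = \mcAdag f$ is the minimal-norm solution and $S = \{u^\dagger\} + \kernel(\mcA)$.

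The main obstacle is the bookkeeping between the range and its closure: the infimum $\inf_{u}\|\mcA u - f\|_\mcV$ always equals $\operatorname{dist}(f,\overline{\range(\mcA)}) = \|f - \mcQ f\|_\mcV$, but it is \emph{attained} — so that a genuine least-squares solution exists — exactly when $\mcQ f$ already lies in $\range(\mcA)$, which is the content of $f\in\domain(\mcAdag)$. Once this equivalence is isolated, the remaining steps are routine projection-theorem computations.
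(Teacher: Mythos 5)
Your proof is correct, and it follows exactly the route the paper intends: the paper leaves this theorem as an exercise, but its own proof of the Gaussian-normal-equation theorem already uses your key characterization that $u^*$ is a least-squares solution iff $\mcA u^* = \mcQ f$, and your identification of $\mcAdag f$ as the unique least-squares solution in $\kernel(\mcA)^\perp$ is precisely the standard Engl--Hanke--Neubauer argument built on the decompositions $\mcV = \overline{\range(\mcA)}\oplus\range(\mcA)^\perp$ and $\mcU = \kernel(\mcA)\oplus\kernel(\mcA)^\perp$ that the paper sets up. The attention you pay to the distinction between $\range(\mcA)$ and its closure is exactly the right place to be careful, and the argument is complete.
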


\begin{exercise}{}{}
    Proof Theorem~\ref{thm:existence}.
\end{exercise}

For non-symmetric matrices, it is well-known from linear algebra, that the Gaussian normal equation can be considered to obtain least-squares solutions. We now verify that this assertion is true in the general case:
\begin{theorem}{}{thm:gne}
    For given $f\in \domain(\mcAdag)$, $u\in\mcU$ is a least-squares solution of \eqref{eq:gloe} if and only if $u$ satisfies the Gaussian normal equation
    \beq
        \mcA^*\mcA u = \mcA^* f.
        \label{eq:gne}\tag{GNE}
    \eeq
\end{theorem}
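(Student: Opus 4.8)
The plan is to show the two-sided equivalence by characterizing least-squares solutions via the orthogonal projection $\mcQ$ onto $\overline{\range(\mcA)}$. First I would recall that, since $f\in\domain(\mcAdag)$, a least-squares solution exists (by Theorem \ref{thm:existence}); more importantly, $\mcQ f\in\range(\mcA)$, so the infimum $\inf_{u}\|\mcA u - f\|_\mcV$ is attained and equals $\|\mcQ f - f\|_\mcV = \|(I-\mcQ)f\|_\mcV$. The key geometric fact is that $u$ is a least-squares solution if and only if $\mcA u = \mcQ f$, i.e.\ if and only if $\mcA u$ is the best approximation to $f$ from $\overline{\range(\mcA)}$ (and this best approximation happens to lie in $\range(\mcA)$). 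This follows from the projection theorem in Hilbert spaces: the unique closest point to $f$ in the closed convex set $\overline{\range(\mcA)}$ is $\mcQ f$, and since $\mcQ f\in\range(\mcA)$ it is actually attained by some $\mcA u$.

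Next I would convert the condition $\mcA u = \mcQ f$ into the normal equation. Using $\mcA u - f = (\mcA u - \mcQ f) - (I-\mcQ)f$ together with $(I-\mcQ)f\in\range(\mcA)^\perp = \kernel(\mcA^*)$, I get that $u$ is a least-squares solution iff $\mcA u - f \in \range(\mcA)^\perp$. Indeed, if $\mcA u = \mcQ f$ then $\mcA u - f = -(I-\mcQ)f\in\range(\mcA)^\perp$; conversely, if $\mcA u - f\in\range(\mcA)^\perp$, then decomposing $\mcA u - f = (\mcA u - \mcQ f) + (\mcQ f - f)$, the second summand lies in $\range(\mcA)^\perp$, so the first summand $\mcA u - \mcQ f$ lies in $\range(\mcA)\cap\range(\mcA)^\perp = \{0\}$. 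Finally, $\mcA u - f\in\range(\mcA)^\perp = \kernel(\mcA^*)$ is, by definition of the kernel, exactly the statement $\mcA^*(\mcA u - f) = 0$, i.e.\ $\mcA^*\mcA u = \mcA^* f$, which is \eqref{eq:gne}.

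Alternatively, and perhaps more cleanly, one can argue directly via a variational/first-order-condition argument: the function $\varphi(u) := \tfrac12\|\mcA u - f\|_\mcV^2$ is convex and Fr\'echet differentiable with $\nabla\varphi(u) = \mcA^*(\mcA u - f)$, so $u$ minimizes $\varphi$ (equivalently, is a least-squares solution) iff $\nabla\varphi(u)=0$, which is precisely \eqref{eq:gne}; convexity guarantees that the first-order condition is both necessary and sufficient. The main obstacle, if any, is the "only if" direction in the infinite-dimensional setting: one must be careful that $\range(\mcA)$ need not be closed, so a minimizer need not exist for general $f$ — this is exactly why the hypothesis $f\in\domain(\mcAdag)$ is needed, ensuring $\mcQ f\in\range(\mcA)$ so that the orthogonality condition $\mcA u - f\perp\range(\mcA)$ is actually achievable. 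Everything else is routine Hilbert space geometry.
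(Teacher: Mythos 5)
Your argument is correct and follows essentially the same route as the paper's proof: characterize least-squares solutions by the condition that $\mcA u$ is the orthogonal projection of $f$ onto $\range(\mcA)$, translate this to $\mcA u - f \in \range(\mcA)^\perp = \kernel(\mcA^*)$, and read off \eqref{eq:gne}. You supply more detail than the paper (in particular, why $f\in\domain(\mcAdag)$ guarantees $\mcQ f\in\range(\mcA)$, which the paper leaves implicit), and the variational first-order-condition argument you sketch as an alternative is also a valid shortcut, but the core mechanism is identical.
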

\begin{proof}
    An element $u\in\mcU$ is a least-squares solution if and only if $\mcA u$ is the projection of $f$ onto $\range(\mcA)$, which is equivalent to $\mcA u - f \in \range(\mcA)^\perp$. Since $\range(\mcA)^\perp = \kernel(\mcA^*)$, this is equivalent \eqref{eq:gne}.
\end{proof}
Since $\mcAdag f$ is the least-squares solution of minimal-norm, we obtain from Theorem~\ref{thm:gne} that $\mcAdag f$ is a solution of \eqref{eq:gne} with minimal norm, i.e.,
\beq
    \mcAdag f = (\mcA^* \mcA)^{-1} \mcA^* f.
\eeq
This means that in order to approximate $\mcAdag f$ we may as well compute an approximation to the minimal-norm solution in \eqref{eq:gne}, a fact we will heavily use in the construction of regularization methods.

\begin{memo}{}{mem:generalized_inverse}
    In cases where \ref{it:H1} or \ref{it:H2} are violated, one can enforce existence and uniqueness of the solution by considering the generalized inverse $\mcAdag$.
\end{memo}

\section{Compact Operators}

Let us now take a look at \ref{it:H3}, i.e., the continuity of $A^\dagger$. In particular, we consider the special case of \emph{compact linear operators}, which play an important role in imaging applications:

\begin{definition}{Compact Linear Operators}{}
    Let $\mcA:\mcU\to\mcV$ be a continuous linear operator between the Banach spaces $\mcU$ and $\mcV$. Then $\mcA$ is said to be \emph{compact} if for any bounded set $U\subset\mcU$ the image $\mcA(U) \subset \mcV$ is pre-compact, i.e., has a compact closure  $\overline{\mcA(U)}$.
\end{definition}

\begin{exercise}{}{}
    Which of the operators discussed in section \ref{sec:examples_for_ip} are compact linear operators?
\end{exercise}

While compactness is quite common for operators considered in imaging applications it is also a source of ill-posedness
for the general linear operator equation \eqref{eq:gloe}:
\begin{theorem}{Ill-Posedness in Case of Compact Operators}{}
    Let $\mcA:\mcU\to\mcV$ be a compact linear operator between the infinite-dimensional Hilbert spaces $\mcU$ and $\mcV$, such that the dimension of $\range(\mcA)$ is infinite. Then the problem \eqref{eq:gloe} is ill-posed, as $\mcAdag$ is discontinuous.
\end{theorem}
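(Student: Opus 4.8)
The plan is to exhibit a sequence that witnesses the failure of continuity of $\mcAdag$, using the spectral structure of compact operators. First I would invoke the spectral theorem for compact self-adjoint operators applied to $\mcA^*\mcA$ (or equivalently the singular value decomposition of $\mcA$): since $\mcA$ is compact with infinite-dimensional range, there exist orthonormal systems $\{v_n\}\subset\mcU$, $\{w_n\}\subset\mcV$ and singular values $\sigma_n>0$ with $\sigma_n\to 0$ such that $\mcA v_n = \sigma_n w_n$ and $\mcA^* w_n = \sigma_n v_n$ for all $n\in\N$. The fact that there are infinitely many nonzero $\sigma_n$ is exactly the assumption that $\range(\mcA)$ is infinite-dimensional, and the fact that $\sigma_n\to 0$ is the key consequence of compactness (an infinite orthonormal set cannot have its image bounded away from $0$, else $\mcA$ would map the unit ball to a non-pre-compact set).

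Next I would compute the action of $\mcAdag$ on the $w_n$. Since $w_n\in\range(\mcA)$ and $v_n\in\kernel(\mcA)^\perp$ with $\tilde\mcA v_n = \sigma_n w_n$, the definition of the Moore--Penrose inverse gives $\mcAdag w_n = \sigma_n^{-1} v_n$. Now consider the data points $f_n := w_n$, which satisfy $\|f_n\|_\mcV = 1$, and more tellingly the perturbations $\fd_n := \sigma_n w_n \to 0$ in $\mcV$. Then $\mcAdag \fd_n = v_n$, so $\|\mcAdag \fd_n\|_\mcU = 1 \not\to 0 = \|\mcAdag 0\|_\mcU$, which shows $\mcAdag$ is not continuous at $0$. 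Alternatively, and perhaps cleaner, one directly estimates the operator norm: $\|\mcAdag\|_{\mcV,\mcU} \geq \|\mcAdag w_n\|_\mcU / \|w_n\|_\mcV = \sigma_n^{-1} \to \infty$, so $\mcAdag$ is unbounded, hence discontinuous, hence \ref{it:H3} fails. I would also note that $\mcAdag$ is genuinely a linear operator on its domain (so that unboundedness does correspond to discontinuity in the usual sense), which is already established in the excerpt.

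I expect the main obstacle to be the justification of the spectral/singular-value decomposition and in particular the claim $\sigma_n\to 0$ — this is the analytic heart of the argument and relies on the spectral theorem for compact self-adjoint operators, which the notes may or may not want to assume as black-box. If one wants to avoid invoking the full SVD, an alternative route is more elementary: one argues by contradiction that if $\mcAdag$ were bounded, then $\tilde\mcA^{-1}:\range(\mcA)\to\kernel(\mcA)^\perp$ would be bounded, so $\range(\mcA)$ would be closed (a Banach space under the inherited norm with $\tilde\mcA$ a bounded bijection onto it with bounded inverse), making $\mcA$ a compact operator with closed, infinite-dimensional range — contradicting the standard fact, noted already in the excerpt, that a compact operator never has closed range unless that range is finite-dimensional. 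This second approach is appealing because it reuses a fact the paper has already stated, so I would likely present the contradiction argument as the main line and mention the explicit singular-sequence construction as the constructive illustration of why it must fail.
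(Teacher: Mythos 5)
Your argument is correct, but it is not the route the notes take. The notes' proof is deliberately SVD-free: since $\kernel(\mcA)^\perp$ is infinite-dimensional, one picks \emph{any} orthonormal sequence $(u_n)$ there, observes that by compactness the image sequence $(\mcA u_n)$ is pre-compact and hence contains pairs $v_k, v_l$ with $\|v_k - v_l\|_\mcV$ arbitrarily small, while $\|\mcAdag v_k - \mcAdag v_l\|_\mcU^2 = \|u_k - u_l\|_\mcU^2 = 2$ by orthonormality; this immediately shows $\mcAdag$ is unbounded using nothing beyond the definition of compactness and the Pythagorean identity. Your main line instead front-loads the singular value decomposition (which in these notes is only stated \emph{after} this theorem, as \cref{thm:svd}) to produce the explicit witnesses $\fd_n = \sigma_n w_n \to 0$ with $\mcAdag \fd_n = v_n$; that is perfectly valid and buys you the quantitative statement $\|\mcAdag\|_{\mcV,\mcU} \geq \sigma_n^{-1} \to \infty$, which the notes in fact derive separately later in \cref{eq:bound_norm_adag} to motivate spectral filtering — but it makes the theorem depend on the spectral theorem for compact self-adjoint operators, which the direct argument avoids. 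Your alternative contradiction argument (bounded $\mcAdag$ $\Rightarrow$ $\tilde{\mcA}^{-1}$ bounded $\Rightarrow$ $\range(\mcA)$ closed, contradicting the fact that a compact operator never has closed infinite-dimensional range) is also sound, and it is appealing that it reuses a fact stated in the chapter preamble; just be aware that that fact is itself asserted without proof in the notes, and its standard justification (Riesz's lemma or the open mapping theorem) is no more elementary than the two-line orthonormal-sequence argument the notes actually give. One small caveat on your heuristic for $\sigma_n \to 0$: the bare bound $\|\mcA v_n\| \geq c$ for an orthonormal set does not by itself preclude pre-compactness of the image; you need the orthogonality of the images ($\|\mcA v_n - \mcA v_m\|^2 = \sigma_n^2 + \sigma_m^2$) to rule out Cauchy subsequences, which is exactly what the SVD supplies.
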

\begin{proof}
Since $\mcU$ and $\range(\mcA)$ are infinite-dimensional and the dimension of $\range(\mcA)$ is always smaller than or equal to the dimension of $\kernel(\mcA)^\perp$, it follows that $\kernel(\mcA)^\perp$ is also infinite-dimensional. Hence, we can find a sequence $(u_n)$ with $u_n\in\kernel(\mcA)^\perp$, $\|u_n\|_\mcU = 1$ and $\ll u_n, u_k \rr_\mcU = 0$ for $n\neq k$. Since $\mcA$ is a compact operator, the sequence $(v_n) := (\mcA u_n)$ is compact and hence for each $\delta > 0$ we can find $k, l$ such that $\| v_k - v_l\|_\mcV < \delta$, but
\beq
    \| \mcAdag v_k -  \mcAdag  v_l\|_\mcU^2 = \| u_k - u_l \|_\mcU^2 = \|u_k\|_\mcU^2 - 2 \ll u_k, u_l \rr_\mcU + \|u_l\|_\mcU^2 = 2.
\eeq
It follows that $\mcAdag$ is unbounded.
\end{proof}

\begin{memo}{}{mem:compact}
    The generalized inverse $\mcAdag$ of a compact linear operator $\mcA$ is discontinuous.
\end{memo}

Another approach to discuss instability uses the singular value decomposition of compact linear operators. For a deeper understanding of the instability problem and the latter introduced regularization approach, we note and define:
\begin{theorem}{}{thm:svd}
Let $\mcA:\mcU\to\mcV$ be a compact linear operator. Then there exists
\begin{itemize}
    \item a not-necessarily infinite null sequence $\ls \sigma_i\rs_{i\in\N}$ with $\sigma_1 \geq  \sigma_2 \geq  ... > 0$,
    \item an orthonormal basis $\ls u_i \rs_{i\in \N}\subset \mcU$ of $\kernel(\mcA)^\perp$,
    \item an orthonormal basis $\ls v_i\rs_{i\in\N}\subset \mcV$ of $\overline{\range(\mcA)}$
\end{itemize}
with
\beq
    \mcA u_i = \sigma_i v_i, \quad \mcA^*v_i = \sigma_i u_i, \quad \forall i\in\N.
\eeq
Moreover, for all $u\in\mcU$ we have the representation
\beq
    \mcA u = \sum_{i=1}^\infty \sigma_i \ll u, u_i \rr_\mcU v_i.
\eeq
\end{theorem}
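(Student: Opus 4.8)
The plan is to establish the spectral decomposition of the compact linear operator $\mcA$ by passing to the self-adjoint compact operator $\mcA^*\mcA:\mcU\to\mcU$ and invoking the spectral theorem for self-adjoint compact operators. First I would note that $\mcA^*\mcA$ is self-adjoint (indeed $(\mcA^*\mcA)^* = \mcA^*\mcA^{**} = \mcA^*\mcA$), positive semidefinite (since $\ll\mcA^*\mcA u, u\rr_\mcU = \|\mcA u\|_\mcV^2 \geq 0$), and compact (the composition of a bounded operator with a compact one is compact). The spectral theorem then yields a countable (possibly finite) set of strictly positive eigenvalues $\lambda_1 \geq \lambda_2 \geq \cdots > 0$, repeated according to multiplicity, with an associated orthonormal system $\ls u_i\rs$ of eigenvectors spanning $\overline{\range(\mcA^*\mcA)}$, and $\mcA^*\mcA$ acts as $0$ on the orthogonal complement of this span.

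Next I would set $\sigma_i := \sqrt{\lambda_i}$ and define $v_i := \sigma_i^{-1}\mcA u_i$. One checks directly that the $v_i$ form an orthonormal system: $\ll v_i, v_j\rr_\mcV = (\sigma_i\sigma_j)^{-1}\ll\mcA u_i, \mcA u_j\rr_\mcV = (\sigma_i\sigma_j)^{-1}\ll\mcA^*\mcA u_i, u_j\rr_\mcU = (\sigma_i\sigma_j)^{-1}\lambda_i\ll u_i, u_j\rr_\mcU = \delta_{ij}$. The relations $\mcA u_i = \sigma_i v_i$ is then immediate from the definition, and $\mcA^* v_i = \sigma_i^{-1}\mcA^*\mcA u_i = \sigma_i^{-1}\lambda_i u_i = \sigma_i u_i$ follows likewise. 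It remains to identify the spans: the closure of the span of $\ls u_i\rs$ is $\kernel(\mcA^*\mcA)^\perp$, and one shows $\kernel(\mcA^*\mcA) = \kernel(\mcA)$ (the inclusion $\supseteq$ is trivial, and $\mcA^*\mcA u = 0$ implies $\|\mcA u\|_\mcV^2 = \ll\mcA^*\mcA u, u\rr_\mcU = 0$), so $\ls u_i\rs$ is an orthonormal basis of $\kernel(\mcA)^\perp$. That $\ls v_i\rs$ is an orthonormal basis of $\overline{\range(\mcA)}$ follows because $\range(\mcA) = \mcA(\kernel(\mcA)^\perp)$ is contained in the closed span of the $v_i$ (using the series representation below on the dense subspace), and conversely each $v_i \in \range(\mcA)$.

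For the final representation formula, I would take arbitrary $u\in\mcU$, decompose $u = \mcP u + \sum_i \ll u, u_i\rr_\mcU u_i$ where $\mcP u \in \kernel(\mcA)$ accounts for the component in $\kernel(\mcA)$ (using that $\ls u_i\rs$ is an orthonormal basis of $\kernel(\mcA)^\perp$ and $\mcU = \kernel(\mcA)\oplus\kernel(\mcA)^\perp$), apply $\mcA$, use continuity of $\mcA$ to interchange it with the sum, use $\mcA\mcP u = 0$, and use $\mcA u_i = \sigma_i v_i$ to obtain $\mcA u = \sum_{i=1}^\infty \sigma_i\ll u, u_i\rr_\mcU v_i$. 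The main obstacle is really the invocation of the spectral theorem for self-adjoint compact operators, which is the substantive analytic input; if that is to be proven from scratch rather than cited, the key steps are showing that $\|\mcA^*\mcA\|$ or $-\|\mcA^*\mcA\|$ is an eigenvalue (via a maximizing sequence and weak compactness, or a Rayleigh-quotient argument), then iterating on the orthogonal complement, and finally showing the eigenvalues form a null sequence by a compactness/contradiction argument — but since the excerpt is at the level of citing such structural results, I would simply reference the spectral theorem and focus the written proof on the construction of the $v_i$ and the verification of the stated identities.
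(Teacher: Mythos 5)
Your proposal is correct: reducing to the compact, self-adjoint, positive semidefinite operator $\mcA^*\mcA$, invoking the spectral theorem, setting $\sigma_i=\sqrt{\lambda_i}$ and $v_i=\sigma_i^{-1}\mcA u_i$, and verifying orthonormality, the intertwining relations, the identification $\kernel(\mcA^*\mcA)=\kernel(\mcA)$, and the series representation is the canonical argument, and all of your verifications check out. The paper states this theorem without proof (it is cited as a standard structural result), so there is nothing to compare against; your route is exactly the one a complete write-up would take.
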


\begin{definition}{Singular Value Decomposition}{}
    A sequence $\ls(\sigma^2_i , u_i , v_i )\rs$ as in \cref{thm:svd} is called \alert{singular system} or \alert{singular value decomposition} (SVD) of the compact linear operator $\mcA$.
\end{definition}
Building on the SVD of the operator $\mcA$ one can show that for any $f\in\domain(\mcAdag)$ the following decomposition of the Moore--Penrose generalized inverse holds
\beq
    \mcAdag f = \sum_{i=1}^\infty \frac{1}{\sigma_i} \ll f, v_i \rr_\mcV u_i.
\eeq
With this decomposition we can show the following bound
\beq
    \|\mcAdag\|_{\mcU, \mcV} \geq \frac{\|\mcAdag v_j \|_\mcU}{\|v_j \|_\mcV} = \sqrt{\sum_{i=1}^\infty \frac{|\ll v_j , v_i \rr_\mcV|^2}{\sigma_i^2}} = \frac{1}{\sigma_j}.
    \label{eq:bound_norm_adag}
\eeq
Thus $\|\mcAdag\|_{\mcU, \mcV}$ may become arbitrarily large if the singular values of $\mcA$ tend to 0. 

\begin{exercise}{SVD of $\mcAdag$}{}
    Let $\mcA:\mcU\to\mcV$ be a compact linear operator and let $\ls\sigma_i, u_i, v_i\rs_{i\in\N}$ denote its SVD. Show that the following representation holds for any $f\in\domain(\mcAdag)$:
    \beq
        \mcAdag f = \sum_{i=1}^\infty \frac{1}{\sigma_i} \ll f, v_i \rr_\mcV u_i.
    \eeq
\end{exercise}

\begin{exercise}{Picard Criterion}{}
    Let $\mcA:\mcU\to\mcV$ be a compact linear operator, let $\ls\sigma_i, u_i, v_i\rs_{i\in\N}$ denote its SVD and let $f\in\overline{\range(\mcA)}$. Show that $f\in\range(\mcA)$ if and only if the so-called \emph{Picard criterion} 
\beq
    \| \mcAdag f \|^2 = \sum_{i=1}^\infty \frac{|\ll f, v_i \rr_\mcV|^2}{\sigma_i^2} < \infty 
\eeq
is satisfied. What does this imply?
\end{exercise}

\section{Regularization Methods}
\label{sec:reg}



In this section, we introduce the notion of regularization in a rigorous way for the infinite dimensional setting and discuss some basic properties. In general, we describe a \emph{linear regularization method} by a \emph{family of continuous linear operators} $\mfRa: \mcV \to \mcU$ for $\alpha \in I \subset (0, a)$ where the index set $I$ includes at least one sequence $(\alpha_n)_{n\in\N}$ such that $\alpha_n \to 0$. Of course, the regularization operator should converge to the generalized inverse in some sense as soon as $\alpha \to 0$, i.e., we expect for $f\in\domain(\mcAdag)$ that $\mfRa f \overset{\alpha\to 0}{\longrightarrow} \mcAdag f$. If, however, $f\in \mcV\backslash \domain(\mcAdag)$, we have to expect that $\|\mfRa f\|_\mcU \overset{\alpha\to 0}{\longrightarrow} +\infty$ due to the unboundedness of the generalized inverse. Concerning the \emph{regularization parameter}  $\alpha$, we expect that it should be possible to make a choice which depends on the noise level $\noiselvl$ and input data $\fd$ such that $\mfR_{\bs{\alpha}(\noiselvl, \fd)} f^\delta \overset{\noiselvl\to 0}{\longrightarrow} \mcAdag f$ for $f\in\domain(\mcAdag)$. This definition is based on \cite{Engl2000}.

\begin{definition}{Regularization}{def:regularization}
    A family $\ls\mfRa\rs_{\alpha\in I}$ of continuous linear operators is called \alert{regularization} (or regularization operator) for $\mcAdag$, if for all $f\in\domain(\mcAdag)$ there exists a \alert{parameter choice rule} $\bs{\alpha}:\R^+ \times \mcV \to I$ such that
    \beq
        \lim_{\noiselvl\to 0}\left(\sup \ls \| \mfR_{\bs{\alpha}(\noiselvl, \fd)}\fd - \mcAdag f \|_\mcU  \quad \middle| \quad \fd\in\mcV, \quad \| f-\fd\|_\mcV \leq \noiselvl \rs\right) = 0
        \label{eq:def_reg_cond_a}\tag{R1}
    \eeq
    and
    \beq
        \lim_{\noiselvl\to 0}\left(\sup \ls \bs{\alpha}(\noiselvl, \fd) \quad \middle| \quad \fd \in \mcV, \quad \|f-\fd\|_\mcV \leq \noiselvl \rs \right)= 0.
        \label{eq:def_reg_cond_b}\tag{R2}
    \eeq
    For a specific $f\in\domain(\mcAdag)$, the pair $(\mfRa, \bs{\alpha})$ is called \alert{(convergent) regularization method} of \labelcref{eq:gloe} if \labelcref{eq:def_reg_cond_a} and \labelcref{eq:def_reg_cond_b} hold.
\end{definition}
\labelcref{eq:def_reg_cond_a} intuitively means that even for the worst kind of noise the regularization method should be able to approximate the true solution when the noise level goes to zero.

Sometimes, the parameter choice rule might only depend on the noise level:
\begin{itemize}
    \item If the choice only depends on the noise level $\noiselvl$ it is called an \alert{a priori parameter choice rule} and we write it as $\bs{\alpha}(\noiselvl)$.
    \item If the choice also depends on the given noisy data $\fd$ it is called an \alert{a posteriori parameter choice rule} and we write it as $\bs{\alpha}(\noiselvl, \fd)$.
\end{itemize}
In practice, it might be tempting to choose $\alpha$ in dependence of the known noisy data $\fd$ disregarding the noise level $\noiselvl$ which might be difficult to determine. The following result shows that such an approach \emph{cannot} result in a convergent regularization method for ill-posed problems, or, in other words, such a strategy can only work for well-posed problems,
which could also be solved without regularization:
\begin{theorem}{{
\href{https://doi.org/10.1016/0041-5553(84)90253-2}{\color{white}{Bakushinskii}} \cite{bakushinskii1984remarks}}
}{thm:choice_rule}
Let $\mcA:\mcU\to\mcV$ be a bounded linear operator and let $\ls\mfRa\rs$ be a regularization for $\mcAdag$, such that the regularization method converges for every $f\in\domain(\mcAdag)$ and such that the parameter choice $\bs{\alpha}$ depends on $\fd$ only (and not on $\noiselvl$). Then $\mcAdag$ can be extended to a continuous operator on $\mcV$.
\end{theorem}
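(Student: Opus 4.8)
The plan is to manufacture the continuous extension directly out of the regularization family. Since by hypothesis the parameter choice is a function of the data alone, it induces a single map $\bs{\alpha}\colon\mcV\to I$, and one can define
\beq
    R\colon\mcV\to\mcU,\qquad R g := \mfR_{\bs{\alpha}(g)}\,g .
\eeq
A priori $R$ need not be linear, because the index $\bs{\alpha}(g)$ moves with $g$, but it is defined on all of $\mcV$. The first step is to show that $R$ coincides with $\mcAdag$ on $\domain(\mcAdag)$. Fix $f\in\domain(\mcAdag)$ and apply \labelcref{eq:def_reg_cond_a} with the \emph{noise-free} family $\fd:=f$: this is admissible for every $\noiselvl>0$ since $\|f-\fd\|_\mcV=0\le\noiselvl$, so the value $\|\mfR_{\bs{\alpha}(f)}f-\mcAdag f\|_\mcU$ — which does not depend on $\noiselvl$, the parameter choice being a function of the data alone — is bounded above by the supremum appearing in \labelcref{eq:def_reg_cond_a} and hence tends to $0$ as $\noiselvl\to0$. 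A constant null sequence vanishes, so $Rf=\mfR_{\bs{\alpha}(f)}f=\mcAdag f$.

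The second step extracts a quantitative continuity bound from the same condition evaluated at the point $f=0$, which lies in $\domain(\mcAdag)$ with $\mcAdag 0=0$ by linearity of $\mcAdag$. For $f=0$, \labelcref{eq:def_reg_cond_a} reads
\beq
    \lim_{\noiselvl\to0}\;\sup\ls\, \|R g\|_\mcU \;\middle|\; g\in\mcV,\ \|g\|_\mcV\le\noiselvl \,\rs = 0 ,
\eeq
so $R$ is continuous at $0\in\mcV$ with $R(0)=0$. Combining this with the first step: whenever $(g_n)\subset\domain(\mcAdag)$ with $g_n\to0$ in $\mcV$, we get $\mcAdag g_n=Rg_n\to0$. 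Since $\domain(\mcAdag)=\range(\mcA)+\range(\mcA)^\perp$ is a linear subspace of $\mcV$ and $\mcAdag\colon\domain(\mcAdag)\to\mcU$ is linear, continuity at $0$ upgrades to boundedness of $\mcAdag$ on $\domain(\mcAdag)$ by the standard rescaling argument.

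It remains to pass from $\domain(\mcAdag)$ to all of $\mcV$. Using $\overline{\range(\mcA)}=\kernel(\mcA^*)^\perp$ and the orthogonal decomposition $\mcV=\range(\mcA)^\perp\oplus\overline{\range(\mcA)}$, any $v\in\mcV$ writes as $v=p+w$ with $p\in\range(\mcA)^\perp$ and $w\in\overline{\range(\mcA)}$; approximating $w$ by a sequence in $\range(\mcA)$ shows that $\domain(\mcAdag)$ is dense in $\mcV$. A bounded linear operator on a dense subspace of a normed space, with values in a Banach (hence complete) space such as $\mcU$, admits a unique bounded linear extension to the whole space; applying this to $\mcAdag$ yields the claimed continuous extension of $\mcAdag$ to $\mcV$.

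The heart of the argument — and the only place that really uses the hypothesis — is the first step: a parameter choice that ignores $\noiselvl$ is pinned down, by the zero-noise instance of \labelcref{eq:def_reg_cond_a}, to reproduce $\mcAdag$ \emph{exactly} on $\domain(\mcAdag)$, after which the boundedness estimate drops out of the $f=0$ instance and everything else is routine functional analysis. It is worth flagging in the writeup that \labelcref{eq:def_reg_cond_b} is never used, and checking carefully that the passage "continuity at $0$ plus linearity $\Rightarrow$ boundedness" is legitimate for the merely densely defined $\mcAdag$ (it is, since $\domain(\mcAdag)$ is a subspace).
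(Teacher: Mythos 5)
Your proof is correct. The paper leaves this result as an exercise, so there is no in-text proof to compare against, but your argument is the standard one for Bakushinskii's theorem (cf.\ Engl--Hanke--Neubauer): set $Rg:=\mfR_{\bs{\alpha}(g)}g$, use the noise-free instance $\fd=f$ of \labelcref{eq:def_reg_cond_a} to force $Rf=\mcAdag f$ on $\domain(\mcAdag)$, extract continuity from \labelcref{eq:def_reg_cond_a}, and extend by density of $\range(\mcA)+\range(\mcA)^\perp$. Your only deviation from the textbook route is to invoke \labelcref{eq:def_reg_cond_a} solely at $f=0$ and upgrade continuity at the origin to boundedness via linearity, rather than reading off sequential continuity of $\mcAdag$ at every $f\in\domain(\mcAdag)$ directly from \labelcref{eq:def_reg_cond_a}; both are legitimate, and your observation that \labelcref{eq:def_reg_cond_b} is never needed is accurate.
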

\begin{exercise}{}{}
    Proof \cref{thm:choice_rule}.
\end{exercise}

\comment{This result rules out error-free parameter choices $\alpha=\bs{\alpha}(\noiselvl)$ as convergent regularization
methods. Of course, it does not mean that such strategies (which are actually used as heuristic
approaches in practice) do not behave well for finite $\noiselvl$, but at least it indicates that the results
have to be considered with care in such cases.}

Let us  derive
some basic properties to be satisfied by regularization methods.

\begin{proposition}{}{}
    Let $\mcA:\mcU\to\mcV$ be a bounded linear operator and let $\mfRa:\mcV\to\mcU$ for $\alpha\in\R^+$ be a family of continuous operators. The family $\ls\mfRa\rs$ is a regularization of $\mcAdag$ if
    \beq
        \mfRa \to \mcAdag \quad \text{pointwise on } \domain(\mcAdag) \quad \text{as } \alpha\to 0.
    \eeq
    In particular, in this case there exists an a-priori parameter choice rule $\bs{\alpha}$ such that $(\mfRa, \bs{\alpha})$ is a convergent regularization method for \labelcref{eq:gloe}.
\end{proposition}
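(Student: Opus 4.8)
The plan is to show that pointwise convergence $\mfRa f \to \mcAdag f$ for every $f \in \domain(\mcAdag)$ allows us to construct an a priori parameter choice rule $\bs{\alpha}(\noiselvl)$ verifying both \labelcref{eq:def_reg_cond_a} and \labelcref{eq:def_reg_cond_b}. The key mechanism is a triangle-inequality split of the reconstruction error: for a candidate parameter $\alpha$ and noisy data $\fd$ with $\|f - \fd\|_\mcV \leq \noiselvl$,
\beq
    \| \mfRa \fd - \mcAdag f \|_\mcU \leq \| \mfRa \fd - \mfRa f \|_\mcU + \| \mfRa f - \mcAdag f \|_\mcU \leq \|\mfRa\|_{\mcV,\mcU}\, \noiselvl + \| \mfRa f - \mcAdag f \|_\mcU,
\eeq
using linearity and continuity of $\mfRa$. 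The second term is the \emph{approximation error}, controlled by the hypothesis; the first is the \emph{propagated noise error}, controlled by $\noiselvl$ times the operator norm $\|\mfRa\|$, which will in general blow up as $\alpha \to 0$ (consistent with the remarks preceding the proposition). The art is to let $\alpha \to 0$ slowly enough, as a function of $\noiselvl$, that $\noiselvl \|\mfR_{\bs\alpha(\noiselvl)}\| \to 0$ while still $\bs\alpha(\noiselvl) \to 0$.

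Concretely, I would first note that by pointwise convergence, for each fixed $f$ and each $\eps > 0$ there is some $\alpha$ small with $\|\mfRa f - \mcAdag f\|_\mcU \leq \eps/2$. To make this uniform in the noise I would define, for each $n \in \N$, a threshold via the quantity $\|\mfRa\|_{\mcV,\mcU}$ (finite since $\mfRa$ is continuous): pick $\alpha_n \in I$ with $\alpha_n \to 0$ such that $\|\mfR_{\alpha_n} f - \mcAdag f\|_\mcU \leq 1/n$, and then choose a decreasing sequence of noise levels $\noiselvl_n \to 0$ with $\noiselvl_n \|\mfR_{\alpha_n}\|_{\mcV,\mcU} \leq 1/n$ (possible since $\alpha_n$ and hence $\|\mfR_{\alpha_n}\|$ are now fixed numbers, so $\noiselvl_n$ can simply be taken small enough; also arrange $\noiselvl_n$ strictly decreasing). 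Then define the parameter choice rule $\bs\alpha(\noiselvl) := \alpha_n$ for $\noiselvl \in (\noiselvl_{n+1}, \noiselvl_n]$ (and, say, $\bs\alpha(\noiselvl) := \alpha_1$ for $\noiselvl > \noiselvl_1$). With this rule, for $\noiselvl \leq \noiselvl_n$ we get $\bs\alpha(\noiselvl) = \alpha_m$ for some $m \geq n$, so $\bs\alpha(\noiselvl) \to 0$ as $\noiselvl \to 0$, giving \labelcref{eq:def_reg_cond_b}; and the displayed estimate gives $\sup\{\|\mfR_{\bs\alpha(\noiselvl)}\fd - \mcAdag f\|_\mcU : \|f-\fd\| \leq \noiselvl\} \leq 2/n \to 0$, giving \labelcref{eq:def_reg_cond_a}.

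The main obstacle — really the only subtle point — is that the construction of $\bs\alpha$ depends on the fixed element $f$ through the rate of approximation $\|\mfRa f - \mcAdag f\|$, so a priori the rule is $f$-dependent, whereas Definition~\ref{def:regularization} asks for a parameter choice rule for each $f\in\domain(\mcAdag)$ (which is in fact what is allowed), so this is consistent; one just has to be careful to state it that way and not claim a single universal rule. A second point to handle cleanly is ensuring $\bs\alpha$ genuinely takes values in the index set $I$ and is well-defined on all of $\R^+$; since $I$ contains a null sequence we may pick the $\alpha_n$ from it, and extending the definition to large $\noiselvl$ is harmless since the limits are as $\noiselvl \to 0$. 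I would close by remarking that this shows pointwise convergence is \emph{sufficient}; it is not necessary in general, and the quantitative interplay between $\|\mfRa\|$ and the approximation error is exactly what later parameter choice analyses (a priori and a posteriori, e.g.\ Morozov's discrepancy principle) make precise.
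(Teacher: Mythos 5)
Your proof is correct and follows essentially the same route as the paper: split the error by the triangle inequality into an approximation term (controlled by the pointwise convergence hypothesis) and a propagated-noise term (controlled by continuity of the fixed operator $\mfR_{\alpha}$), then couple $\alpha$ to $\noiselvl$ so both vanish. The only cosmetic difference is that you build a piecewise-constant rule from sequences $(\alpha_n,\noiselvl_n)$ and bound the noise term by $\|\mfR_{\alpha_n}\|_{\mcV,\mcU}\,\noiselvl$ (using linearity), whereas the paper constructs a continuous rule $\bs{\alpha}(\noiselvl)=\sigma(\rho^{-1}(\noiselvl))$ from monotone functions and only invokes continuity of $\mfR_{\sigma(\epsilon)}$ at the point $f$; both are valid, and your remark that the rule may depend on $f$ is exactly consistent with Definition~\ref{def:regularization}.
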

\begin{proof}
    Let $f\in\domain(\mcAdag)$ be arbitrary, but fixed. Due to the pointwise convergence, we can find a monotone function $\sigma:\R^+\to\R^+$ such that for every $\epsilon>0$
    \beq
        \| \mfR_{\sigma(\epsilon)}f - \mcAdag f\|_\mcU \leq \frac{\epsilon}{2}.
    \eeq
    The operator $\mfR_{\sigma(\epsilon)}$ is continuous for fixed $\epsilon$ and hence, there exists $\rho(\epsilon)\in I$ such that
    \beq
        \| \mfR_{\sigma(\epsilon)}g - \mfR_{\sigma(\epsilon)}f \|_\mcU \leq \frac{\epsilon}{2} \quad \text{if } \| g- f\|_\mcV \leq \rho(\epsilon).
    \eeq
    Without loss of generality, we can assume that $\rho$ is monotone increasing, continuous, and 
    \beq
        \lim_{\epsilon\to 0}\rho(\epsilon) = 0.
    \eeq 
    Hence, there exists a well-defined inverse function $\rho^{-1}$ on the range of $\rho$ with the same properties. Now we extend $\rho^{-1}$ to a continuous, strictly monotone function on $\R^+$ and define the parameter choice rule as
    \beq
        \bs{\alpha}:\R^+ \to \R^+, \quad \noiselvl \mapsto \sigma(\rho^{-1}(\noiselvl)).
    \eeq
    By our construction, we have for $\noiselvl := \rho(\epsilon)$ that
    \beq
        \| \mfR_{\bs{\alpha}(\noiselvl)} \fd - \mcAdag f \|_\mcU \leq \| \mfR_{\bs{\alpha}(\noiselvl)} \fd - \mfR_{\bs{\alpha}(\noiselvl)} f \|_\mcU + \| \mfR_{\bs{\alpha}(\noiselvl)} f - \mcAdag f \|_\mcU \leq \epsilon,
    \eeq
    if $\| f -\fd\|_\mcV\leq\noiselvl$, since we have $\bs{\alpha}(\noiselvl)=\sigma(\epsilon)$. Hence, $(\mfRa, \bs{\alpha})$ is a convergent regularization method for \labelcref{eq:gloe}.
\end{proof}

We now know that any family of continuous operators that converges pointwise to the generalized inverse defines a regularization method. Vice versa, we can conclude from \eqref{eq:def_reg_cond_b} that
\beq
    \lim_{\noiselvl\to 0} \mfR_{\bs{\alpha}(\noiselvl,\fd)}f = \mcAdag f, \quad \forall f\in\domain(\mcAdag),
\eeq
and thus, if $\bs{\alpha}$ is continuous in $\noiselvl$, this implies
\beq
    \lim_{\sigma\to 0} \mfR_\sigma f = \mcAdag f,
\eeq
i.e., a convergent regularization method with continuous parameter choice rule implies pointwise convergence of the regularization operators. Now we turn our attention to the behavior of the regularization operators on $\mcV \backslash \domain(\mcAdag)$. Since the generalized inverse is not defined on this set, we cannot expect that $\mfRa$ remains bounded on this set as $\alpha\to 0$. This is indeed confirmed by the next result:
\begin{proposition}{}{}
    Let $\mcA:\mcU\to\mcV$ be a continuous linear operator and $\mfRa:\mcV\to\mcU$ be a family of continuous linear regularization operators. Then, $u_\alpha := \mfRa f$ converges to $\mcAdag f$ as $\alpha \to 0$ for $f\in\domain(\mcAdag)$. Moreover, if $\sup_{\alpha > 0} \| \mcA \mfRa\|_{\mcU, \mcU} < \infty$, then $\|u_\alpha\|_\mcU\to\infty $ for $f\notin \domain(\mcAdag)$.
\end{proposition}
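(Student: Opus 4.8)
The plan is to split the statement into its two assertions and handle them separately. The first assertion---that $u_\alpha := \mfRa f \to \mcAdag f$ for $f\in\domain(\mcAdag)$---is not automatic from the definition of a regularization, since \labelcref{def:regularization} only guarantees convergence along the noisy-data paths $\fd$ with a suitable parameter choice. However, taking $\fd = f$ (i.e. $\noiselvl = 0$) in \labelcref{eq:def_reg_cond_a}, together with continuity of the parameter choice rule $\bs\alpha$ in $\noiselvl$ and \labelcref{eq:def_reg_cond_b}, gives $\mfR_{\bs\alpha(\noiselvl, f)} f \to \mcAdag f$; combined with the preceding discussion in the text (which already establishes that a convergent regularization method with continuous parameter choice rule forces $\lim_{\sigma\to 0}\mfR_\sigma f = \mcAdag f$ pointwise on $\domain(\mcAdag)$), this is exactly the claim. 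So the first half is essentially a pointer to the remark immediately preceding the proposition.

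For the second half, suppose $f \notin \domain(\mcAdag)$ and $M := \sup_{\alpha>0}\|\mcA\mfRa\|_{\mcU,\mcU} < \infty$, and argue by contradiction: assume $\|u_\alpha\|_\mcU \not\to \infty$, so there is a sequence $\alpha_n \to 0$ and a constant $C$ with $\|u_{\alpha_n}\|_\mcU \le C$ for all $n$. Since bounded sets in a Hilbert space are weakly sequentially precompact, pass to a subsequence (not relabeled) so that $u_{\alpha_n} \rightharpoonup u^*$ for some $u^* \in \mcU$. The next step is to identify $\mcA u^*$. First I would use the boundedness hypothesis: $\mcA u_{\alpha_n} = \mcA\mfR_{\alpha_n} f$ is bounded by $M\|f\|_\mcV$, and $\mcA$ is weak--weak continuous (being bounded linear), so $\mcA u_{\alpha_n} \rightharpoonup \mcA u^*$. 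Then I would show $\mcA u_{\alpha_n} \to \mcQ f$ strongly, where $\mcQ$ is the orthogonal projection onto $\overline{\range(\mcA)}$: this should follow because on $\domain(\mcAdag)$ the operators converge to $\mcAdag$ with $\mcA\mcAdag = \mcQ|_{\domain(\mcAdag)}$, and a density/uniform-boundedness argument (Banach--Steinhaus on the family $\mcA\mfR_{\alpha_n}$, which is uniformly bounded by $M$, combined with pointwise convergence to $\mcQ$ on the dense subspace $\domain(\mcAdag) = \range(\mcA) \oplus \range(\mcA)^\perp$) extends the convergence $\mcA\mfR_{\alpha_n} g \to \mcQ g$ to all $g \in \mcV$, in particular to $g = f$. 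Hence $\mcA u^* = \mcQ f$. But $f \notin \domain(\mcAdag) = \range(\mcA) + \range(\mcA)^\perp$ means precisely that $\mcQ f \notin \range(\mcA)$ (if $\mcQ f \in \range(\mcA)$, then $f = \mcQ f + (f - \mcQ f)$ with $f - \mcQ f \in \range(\mcA)^\perp$ would put $f$ in the domain), so $\mcQ f = \mcA u^*$ is a contradiction. Therefore no such bounded subsequence exists, i.e. $\|u_\alpha\|_\mcU \to \infty$.

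The main obstacle is the step that upgrades pointwise convergence $\mcA\mfR_{\alpha_n} \to \mcQ$ on the dense set $\domain(\mcAdag)$ to convergence at the specific point $f$: this is where the hypothesis $M < \infty$ is essential, and the clean way to run it is the Banach--Steinhaus-type argument that a uniformly bounded family converging on a dense subspace converges everywhere. One should double-check the direction of the weak-limit argument---that $\mcA u^* = \mcQ f$ really does follow, rather than merely $\mcA u^* \in \overline{\range(\mcA)}$---and that the characterization "$f \notin \domain(\mcAdag) \iff \mcQ f \notin \range(\mcA)$" is applied correctly; both reduce to the orthogonal decomposition $\mcV = \overline{\range(\mcA)} \oplus \range(\mcA)^\perp$ and the definition $\domain(\mcAdag) = \range(\mcA) + \range(\mcA)^\perp$ already recorded in the excerpt.
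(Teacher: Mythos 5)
Your proposal is correct and follows essentially the same route as the paper: the first assertion is dispatched by the pointwise-convergence discussion preceding the proposition, and the second by extracting a weakly convergent subsequence from a putative bounded one, using weak continuity of $\mcA$ together with the uniform bound on $\mcA\mfRa$ to extend the pointwise convergence $\mcA\mfR_{\alpha_n} \to \mcQ$ from the dense subspace $\domain(\mcAdag)$ to all of $\mcV$, and concluding that $\mcQ f \in \range(\mcA)$ would force $f \in \domain(\mcAdag)$. Your write-up is in fact slightly more explicit than the paper's at the final step, where you spell out the equivalence $f \notin \domain(\mcAdag) \Leftrightarrow \mcQ f \notin \range(\mcA)$ via the orthogonal decomposition of $\mcV$.
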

\begin{proof}
    The convergence of $u_\alpha$ for $f\in\domain(\mcAdag)$ has been verified above. Now, let $f\notin\domain(\mcAdag)$ and assume there exists a sequence $\alpha_n \to 0$ such that $\|u_{\alpha_n}\|_\mcU$ is uniformly bounded. Then there exists a weakly convergent subsequence (denoted by $u_{\alpha_m}$) with some limit $u\in\mcU$, and since continuous linear operators are weakly continuous, too, we have $\mcA u_{\alpha_m} \to \mcA u$. On the other hand, it holds for any $f\in\domain(\mcAdag)$ that $\mcA \mfR_{\alpha_m}f \to \mcA\mcAdag f = \mcQ f.$ Since $\mcA\mfRa$ are uniformly bounded operators, this also holds for any $f\in\mcV$ (see. \href{https://books.google.de/books?hl=de&lr=&id=abvkBwAAQBAJ&oi=fnd&pg=PA1&dq=Applied+Functional+Analysis:+Main+Principles+and+Their+Applications&ots=PUg90qi7W1&sig=Z7nq2MUGZ88iBnVxGuANytr0mW8&redir_esc=y#v=onepage&q=Applied%20Functional%20Analysis%3A%20Main%20Principles%20and%20Their%20Applications&f=false}{Zeidler p.~174 Prop.~2}), i.e., in particular also for $\tilde{f}\notin\domain(\mcAdag)$ it holds $\mcA u_{\alpha_m} = \mcA\mfR_{\alpha_m} \tilde{f} \to Q \tilde{f}$.
    Together with the first part, we get $\mcA u = Q\tilde{f}$ -- this however can only ever hold for $\tilde{f}\in\domain(\mcAdag)$, thus we reach a contradiction.
\end{proof}
Next, we consider the properties of a-priori parameter choice rules:
\begin{proposition}{}{}
    Let $\mcA:\mcU\to\mcV$  be a continuous linear operator and $\mfRa:\mcV\to\mcU$ be a family of continuous linear regularization operators, with a-priori parameter choice rule $\alpha=\bs{\alpha}(\noiselvl)$. Then, $(\mfRa, \bs{\alpha})$ is a convergent regularization method if and only if
    \beq
        \lim_{\noiselvl\to 0} \bs{\alpha}(\noiselvl) = 0, \quad \lim_{\noiselvl\to 0} \noiselvl\| \mfR_{\bs{\alpha}(\noiselvl)}\|_{\mcV, \mcU} = 0
        \label{eq:prop_a_lim}
    \eeq
    hold.
\end{proposition}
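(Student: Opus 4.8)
The plan is to reduce both implications to the familiar splitting into a \emph{data‑propagation} term and an \emph{approximation} term. Fixing $f\in\domain(\mcAdag)$ and any $\fd\in\mcV$ with $\|f-\fd\|_\mcV\le\noiselvl$, linearity and boundedness of $\mfR_{\bs{\alpha}(\noiselvl)}$ together with the triangle inequality give
\[
\|\mfR_{\bs{\alpha}(\noiselvl)}\fd - \mcAdag f\|_\mcU
\;\le\; \noiselvl\,\|\mfR_{\bs{\alpha}(\noiselvl)}\|_{\mcV,\mcU} \;+\; \|\mfR_{\bs{\alpha}(\noiselvl)}f - \mcAdag f\|_\mcU ,
\]
and, crucially, the right-hand side does not depend on $\fd$, so it also bounds the supremum over admissible $\fd$ appearing in \labelcref{eq:def_reg_cond_a}. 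I would also use throughout that, since $\bs{\alpha}$ is an a priori rule, the supremum over $\fd$ in \labelcref{eq:def_reg_cond_b} is simply $\bs{\alpha}(\noiselvl)$, and that, by the preceding proposition, $\mfR_\alpha f \to \mcAdag f$ as $\alpha\to0$ for every $f\in\domain(\mcAdag)$ (pointwise convergence of the regularization operators).

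\textbf{Sufficiency.} Assuming \labelcref{eq:prop_a_lim}, in the displayed estimate the first summand tends to $0$ by the second condition in \labelcref{eq:prop_a_lim}, and the second summand tends to $0$ because $\bs{\alpha}(\noiselvl)\to0$ (first condition) and $\mfR_\alpha f\to\mcAdag f$ as $\alpha\to0$. Hence the supremum in \labelcref{eq:def_reg_cond_a} vanishes as $\noiselvl\to0$, while \labelcref{eq:def_reg_cond_b} holds since it reduces to $\bs{\alpha}(\noiselvl)\to0$. By \cref{def:regularization}, $(\mfRa,\bs{\alpha})$ is then a convergent regularization method.

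\textbf{Necessity.} Conversely, suppose $(\mfRa,\bs{\alpha})$ is a convergent regularization method. Then \labelcref{eq:def_reg_cond_b} is exactly $\bs{\alpha}(\noiselvl)\to0$. For the remaining condition I would argue by contradiction: if it failed, there would be $c>0$ and a null sequence $\noiselvl_n$ with $\noiselvl_n\|\mfR_{\bs{\alpha}(\noiselvl_n)}\|_{\mcV,\mcU}\ge c$ for all $n$. Using the definition of the operator norm, choose $g_n\in\mcV$ with $\|g_n\|_\mcV\le1$ and $\|\mfR_{\bs{\alpha}(\noiselvl_n)}g_n\|_\mcU\ge\tfrac12\|\mfR_{\bs{\alpha}(\noiselvl_n)}\|_{\mcV,\mcU}$. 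Take the admissible datum $f:=0\in\domain(\mcAdag)$, so $\mcAdag f=0$ by linearity of $\mcAdag$, and the perturbations $\fd_n:=\noiselvl_n g_n$, which satisfy $\|f-\fd_n\|_\mcV\le\noiselvl_n$. Then
\[
\|\mfR_{\bs{\alpha}(\noiselvl_n)}\fd_n - \mcAdag f\|_\mcU
= \noiselvl_n\,\|\mfR_{\bs{\alpha}(\noiselvl_n)}g_n\|_\mcU
\;\ge\; \tfrac12\,\noiselvl_n\,\|\mfR_{\bs{\alpha}(\noiselvl_n)}\|_{\mcV,\mcU}
\;\ge\; \tfrac{c}{2},
\]
so the supremum in \labelcref{eq:def_reg_cond_a} stays bounded below by $c/2$ along $\noiselvl_n\to0$, contradicting convergence of the method for the datum $f=0$. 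This forces $\noiselvl\,\|\mfR_{\bs{\alpha}(\noiselvl)}\|_{\mcV,\mcU}\to0$ and finishes the argument.

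\textbf{Main obstacle.} The only step with real content is this last contradiction: from a sequence along which $\noiselvl\,\|\mfR_{\bs{\alpha}(\noiselvl)}\|$ does not vanish one must \emph{manufacture} admissible noisy data producing a non-vanishing reconstruction error — a Banach--Steinhaus-type construction. The two ingredients that make it work are selecting near-maximizers of the operator norm on the unit ball of $\mcV$ and choosing $f=0$ so that the approximation term disappears and the operator-norm lower bound is realized by an explicit perturbation. Everything else is the triangle inequality together with the pointwise convergence recalled from the previous proposition.
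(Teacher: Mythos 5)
Your proof is correct and follows essentially the same route as the paper: the triangle-inequality split into the propagated-noise term $\noiselvl\,\|\mfR_{\bs{\alpha}(\noiselvl)}\|_{\mcV,\mcU}$ and the approximation term for sufficiency, and a contradiction via near-maximizers $g_n$ of the operator norm with perturbations $\noiselvl_n g_n$ for necessity. Your choice of $f=0$ in the necessity part is a small, clean simplification over the paper (which uses a general $f\in\domain(\mcAdag)$ and lets the approximation term converge separately), but the argument is the same Banach--Steinhaus-style construction.
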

\begin{proof}
    If \cref{eq:prop_a_lim} hold, then for all $\fd\in\mcV$ with $\|f-\fd\|\leq\noiselvl$,
    \begin{align}
    \|\mfR_{\bs{\alpha}(\noiselvl)} \fd -\mcAdag f\|_\mcU &\leq \|u_{\bs{\alpha}(\noiselvl)} - \mcAdag f\|_\mcU + \|u_{\bs{\alpha}(\noiselvl)} - \mfR_{\bs{\alpha}(\noiselvl)} \fd \|_\mcU \\
    &\leq \|u_{\bs{\alpha}(\noiselvl)} - \mcAdag f\|_\mcU + \noiselvl \| \mfR_{\bs{\alpha}(\noiselvl)}\|_{\mcV, \mcU} 
    \end{align}
Because of \cref{eq:prop_a_lim} and since regularization operators converge pointwise, the right-hand side tends to zero as $\noiselvl\to 0$, i.e., $(\mfRa, \bs{\alpha})$ is a convergent regularization method. Assume vice versa that $(\mfRa, \bs{\alpha})$ is a convergent regularization method. Assume that there
exists a sequence $\noiselvl_n\to 0$ such that $\noiselvl_n \|\mfR_{\bs{\alpha}(\noiselvl_n)}\|_{\mcV, \mcU} \geq C > 0$ for some constant $C$. Then we can find a sequence  $g_n$ with $\|g_n\| = 1$ such that $\noiselvl_n \|\mfR_{\bs{\alpha}(\noiselvl_n)}g_n\|_{\mcV, \mcU} \geq \frac{C}{2}$. Then, for any $f\in\domain(\mcAdag)$ and
$f_n := f + \noiselvl_n g_n$ we obtain $\|f-f_n\|_\mcV \leq \noiselvl_n$, but
\beq
    (\mfR_{\bs{\alpha}(\noiselvl_n)} g_n - \mcAdag f) = (\mfR_{\bs{\alpha}(\noiselvl_n)} g - \mcAdag f) + \noiselvl_n \mfR_{\bs{\alpha}(\noiselvl_n)}g_n
\eeq
does not converge, since the second term is unbounded. Hence, for $\noiselvl_n$ sufficiently small, \labelcref{eq:def_reg_cond_b}
is not satisfied.
\end{proof}

\begin{memo}{}{}
    Regularization is a method for approximating a possibly discontinuous generalized inverse $\mcAdag$ by a family of continuous linear operators. 
\end{memo}

A violation of \ref{it:H3}, i.e. instability, arises if the spectrum of the operator $\mcA$ is not bounded away from zero -- see \cref{eq:bound_norm_adag}. Thus, it seems natural to construct regularizing approximations such that the smallest singular values are modified. Using the singular value decomposition of the generalized inverse, this suggests the use of regularization operators of the form
\beq
    \mfRa f = \sum_{i=1}^\infty r_\alpha(\sigma_i) \ll f, v_i \rr_\mcV u_i
    \label{eq:mfR_svd}
\eeq
with some function $r_\alpha:\R^+\to\R^+$ such that $r_\alpha(\sigma) \overset{\alpha\to 0}{\longrightarrow} \frac{1}{\sigma}$. Such an operator $\mfRa$ is a regularization operator if
\beq
    r_\alpha(\sigma) \leq C_\alpha < \infty, \quad \forall \sigma\in\R^+.
\eeq
If this bound holds, we directly get
\beq
    \|\mfRa f\|_\mcU^2 
    = \sum_{i=1}^\infty(r_\alpha(\sigma_i))^2 |\ll f, v_n\rr_\mcV|^2 
    \leq C_\alpha^2 \sum_{i=1}^\infty  |\ll f, v_n\rr_\mcV|^2 
    \leq C_\alpha^2 \| f \|_\mcV^2,
    \eeq
i.e., $C_\alpha$ is a bound for the norm of $\mfRa$. From the pointwise convergence of $r_\alpha$ we immediately conclude the pointwise convergence of $\mfRa$ to $\mcAdag$. Moreover, condition \cref{eq:prop_a_lim} on the choice of the regularization parameter can be replaced by
\beq
    \lim_{\delta\to 0} \delta C_{\bs{\alpha}(\delta)} = 0.
    \label{eq:prop_b_lim}
\eeq
since $\|\mfRa \|_{\mcV, \mcU} \leq C_\alpha$. Hence, if \cref{eq:prop_b_lim} is satisfied, we know that $(\mfRa, \bs{\alpha})$ with $\mfRa$ defined by \cref{eq:mfR_svd} is a convergent regularization method.

\begin{example}{Tikhonov Regularization}{}
    Defining $r_\alpha(\sigma) = \frac{\sigma}{\sigma^2+\alpha}$ we can recover once more Tikhonov regularization. The regularized solution for some $f\in\mcV$ and $\alpha\in\R^+$ is 
    \beq
        u_\alpha := \mfRa f = \sum_{i=1}^\infty \frac{\sigma_i}{\sigma_i^2 + \alpha} \ll f, v_i \rr_\mcV u_i.
    \eeq    
    Note, that we can compute $u_\alpha$ without knowledge of the singular system by solving
    \beq
        (\mcA^* \mcA + \alpha I) u_\alpha = \mcA^* f.
    \eeq
    This can be shown using the representations 
    \beq 
        \mcA^* f = \sum_{j=1}^\infty \sigma_j \ll f, v_j \rr_\mcV u_j \quad \text{ and } \quad u = \sum_{j=1}^\infty \ll u, u_j\rr u_j.
    \eeq 
    \comment{
    and calculating
    \begin{align}
    (\mcA^* \mcA + \alpha I) u_\alpha 
    &= \mcA^* \sum_{i=1}^\infty \sigma_i \ll  u_\alpha, u_i \rr_\mcU v_i + \alpha I u_\alpha \\
    &= \sum_{j=1}^\infty \sigma_j \ll v_j, \sum_{i=1}^\infty \sigma_i \ll  u_\alpha, u_i \rr_\mcU v_i \rr_\mcV u_j + \alpha I u_\alpha \\ 
    &= \sum_{j=1}^\infty \sigma_j^2 u_j u_j^\top u_alpha + \alpha  u_\alpha \\ 
    &= \sum_{j=1}^\infty (\sigma_j^2 +\alpha) u_\alpha.
    \end{align}
    }

    Recalling the binomial theorem, we can estimate $\sigma^2 + \alpha \geq 2\sigma\sqrt{\alpha}$ and hence, 
    \beq
    r_\alpha(\sigma) = \frac{\sigma}{\sigma^2+\alpha} \leq \frac{\sigma}{2\sigma\sqrt{\alpha}} = \frac{1}{2\sqrt{\alpha}}.
    \eeq
    Thus, the condition for a convergent regularization method in this case becomes 
    \beq
        \lim_{\delta\to 0} \delta C_\alpha = \lim_{\delta\to 0} \frac{\delta}{2\sqrt{\alpha}} = 0.
    \eeq
\end{example}

\section{Optimization and Sparsity-Based Regularization}\label{sec:opt}

In the last section, we have considered Tikhonov regularization, which can either be expressed via the solution of the linear system
\beq
(A^* A + \alpha I) u_\alpha = A^* \fd.
\label{eq:GNE_Tik}\tag{Tikhonov}
\eeq
or via a filtering of the singular values with the function
\beq
r_\alpha(\sigma) = \frac{\sigma}{\sigma^2+\alpha}.
\eeq
In practice, when we deal with very large matrices $A$, computing the SVD directly can become infeasible. Moreover, even building the matrix $A$ or its conjugate and performing matrix-matrix multiplications might be either too slow or even impossible due to memory constraints. Therefore, we now want to explore algorithms that can efficiently solve linear systems of the form \eqref{eq:GNE_Tik}. Most of the theory and concepts presented in the following can be applied to the case where $\mcU, \mcV$ are infinite dimensional. However, since we now deal with a concrete application, we will be content to consider $U=\R^n$. The exposition in the following is loosely based on the lecture notes \cite{Brandt22}. We give the following references for further introductions to the topic of convex optimization, \cite{barbu2012convexity,Bauschke2017,ekeland1999convex,rockafellar2015convex}.
\subsection{Preliminaries on Convex Analysis}
We first notice that the linear system \labelcref{eq:GNE_Tik} is the first order optimality condition of a quadratic optimization problem, which is further specified in the following lemma:
\begin{lemma}{}{}
The functional $\obj_\alpha:\mcU\to[-\infty,+\infty]$
\begin{align}
\obj_\alpha(u) = \frac{1}{2}\| A u - \fd \|^2 + \frac{\alpha}{2} \|u \|^2,
\end{align}
has a unique global minimum, which fulfills \labelcref{eq:GNE_Tik}.
\end{lemma}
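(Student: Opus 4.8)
The plan is to show three things in turn: that $\obj_\alpha$ is bounded below and attains a minimum, that the minimizer is unique, and that the first-order optimality condition for the minimizer is precisely \labelcref{eq:GNE_Tik}. Since $\mcU = \R^n$ and $\obj_\alpha$ is a finite-valued quadratic, all of this is elementary, so the exposition should stay light.

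First I would establish strict convexity. Writing out the squared norms, $\obj_\alpha(u) = \tfrac12\langle (A^*A + \alpha I) u, u\rangle - \langle A^*\fd, u\rangle + \tfrac12\|\fd\|^2$, so $\obj_\alpha$ is a quadratic form with Hessian $A^*A + \alpha I$. Since $\alpha > 0$, for any $u \neq 0$ we have $\langle (A^*A + \alpha I) u, u\rangle = \|Au\|^2 + \alpha\|u\|^2 \geq \alpha\|u\|^2 > 0$, hence $A^*A + \alpha I$ is symmetric positive definite and $\obj_\alpha$ is strictly convex. Strict convexity immediately gives uniqueness of any minimizer.

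Next, existence: from the positive definiteness just noted, $\obj_\alpha(u) \geq \tfrac{\alpha}{2}\|u\|^2 - \|A^*\fd\|\,\|u\|$, which tends to $+\infty$ as $\|u\|\to\infty$, so $\obj_\alpha$ is coercive. A continuous coercive function on $\R^n$ attains its infimum (sublevel sets are closed and bounded, hence compact), so a global minimizer exists; combined with the previous paragraph it is unique. Call it $u_\alpha$.

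Finally, the optimality condition. Since $\obj_\alpha$ is differentiable with $\nabla \obj_\alpha(u) = (A^*A + \alpha I)u - A^*\fd$, and it is convex, $u_\alpha$ is the global minimizer if and only if $\nabla\obj_\alpha(u_\alpha) = 0$, i.e.\ $(A^*A + \alpha I)u_\alpha = A^*\fd$, which is exactly \labelcref{eq:GNE_Tik}. Alternatively one can avoid calculus entirely: expand $\obj_\alpha(u_\alpha + h) - \obj_\alpha(u_\alpha) = \langle (A^*A+\alpha I)u_\alpha - A^*\fd, h\rangle + \tfrac12\langle(A^*A+\alpha I)h,h\rangle$ and observe that the quadratic term is strictly positive for $h\neq 0$, so the difference is nonnegative for all $h$ precisely when the linear term vanishes for all $h$, i.e.\ when $(A^*A+\alpha I)u_\alpha = A^*\fd$; positive definiteness of $A^*A+\alpha I$ also re-confirms that this linear system has a unique solution, closing the loop. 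There is no real obstacle here; the only thing to be a little careful about is phrasing the existence argument (coercivity plus continuity on $\R^n$) cleanly rather than appealing to infinite-dimensional machinery that the finite-dimensional setting makes unnecessary.
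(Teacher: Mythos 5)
Your proof is correct and complete: strict convexity from positive definiteness of $A^*A+\alpha I$, coercivity for existence, and the vanishing gradient condition recovering the regularized normal equation. The paper itself leaves this lemma as an exercise with no written proof, so there is nothing to compare against; your argument is the standard one the authors clearly intend.
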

\begin{proof}
 Exercise.
\end{proof}
This means that in order to find a solution to \labelcref{eq:GNE_Tik}, we have to solve an optimization problem. In the following, we will thus consider optimization problems  
\beq\label{eq:opt}\tag{Opt}
\text{find}\qquad u^* = \argmin_{u\in C} \obj(u),
\eeq
where $\obj:C \to \Rc$ is assumed to be a convex functional mapping to the extended real line $\Rc:=(-\infty,+\infty]$ and $C$ is assumed to be a convex set. Whenever $C=U$, \labelcref{eq:opt} is called \alert{global} optimization. Whenever $C$ is a convex subset of $U$ \labelcref{eq:opt} is called \alert{constrained} optimization. Most of our applications deal with global optimization.
A well known result about the existence of minimizers is the following: If $C$ is compact and $\obj$ is continuous, the existence of a minimizers can be inferred by the Weierstrass theorem\footnote{Karl Weierstraß (1815--1897) was a German mathematician.}. As we are interested in global optimization for possibly unbounded $C$ and possibly discontinuous $\obj$, we recall some notations and facts from convex analysis. 
\begin{definition}{}{}
Let $\obj:C\to \Rc$ be a function, defined over the convex set $C$.
\begin{enumerate}
\item The effective domain of a function is defined as 
\beq
\dom(\obj) =\{u\in C: \obj(u) < +\infty\}.
\eeq
A function is called \alert{proper} if $\dom(\obj)\neq \emptyset$.
\item A function is called \alert{(strictly) convex} if for every $u, \tilde{u} \in C$ and every $t\in(0,1)$ we have
\beq
\obj(t u + (1-t) \tilde{u}) \overset{(<)}{\leq} t\obj(u) + (1-t) \obj(\tilde{u}).
\eeq
For $\nu>0$, a function $\obj$ is called  \alert{$\nu$-strongly convex} , if $\obj - \frac{\nu}{2} \norm{\cdot}^2$ is convex.

\item A point $u^*\in C$ is called \alert{local minimizer}, if there exists $\epsilon>0$ such that
\beq
\obj(u^*) \leq \obj(u)\qquad \text{for all } u\in C \text{ with } \quad\norm{u - u^*}\leq \epsilon. 
\eeq
\item A point $u^*\in C$ is called \alert{global minimizer} if
\beq
\obj(u^*) \leq \obj(u)\qquad \text{for all}\quad u\in C. 
\eeq
\end{enumerate}
\end{definition}

\begin{lemma}{}{}
A differentiable function is convex iff %
for all $u,\tilde{u}\in C$, we have
\begin{align}
\obj(u) \geq \obj(\tilde{u}) + \langle \nabla \obj(\tilde{u}), u - \tilde{u}\rangle.
\end{align}
Furthermore, it is $\nu$-strongly convex iff
\begin{align}
\obj(u) \geq \obj(\tilde{u}) + \langle \nabla \obj(\tilde{u}), u - \tilde{u}\rangle + \frac{\nu}{2} \norm{u-\tilde{u}}^2
\quad\Leftrightarrow\quad
\langle 
\nabla \obj(u) - \nabla \obj(\tilde{u}), u - \tilde{u} \rangle \geq \nu \norm{u - \tilde{u}}^2
.
\end{align}
\end{lemma}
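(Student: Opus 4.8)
The plan is to establish the convexity characterization first and then bootstrap both strong-convexity statements from it, using only elementary manipulations together with differentiability of $\obj$ and convexity of $C$.

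\textbf{Step 1: convexity $\Leftrightarrow$ first-order inequality.} For ``convex $\Rightarrow$ inequality'' I would fix $u,\tilde u\in C$ and exploit convexity along the segment $\tilde u + t(u-\tilde u)\in C$ to get $\obj(\tilde u + t(u-\tilde u)) - \obj(\tilde u) \le t\bigl(\obj(u)-\obj(\tilde u)\bigr)$ for $t\in(0,1)$; dividing by $t$ and letting $t\to 0^+$, the left-hand side converges to the directional derivative $\langle \nabla\obj(\tilde u),u-\tilde u\rangle$, which yields $\obj(u)\ge \obj(\tilde u)+\langle\nabla\obj(\tilde u),u-\tilde u\rangle$. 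For the converse I would fix $u,\tilde u\in C$ and $t\in(0,1)$, set $w:=tu+(1-t)\tilde u\in C$, apply the assumed inequality twice with base point $w$ (evaluated at $u$ and at $\tilde u$), and take the convex combination with weights $t$ and $1-t$: the gradient terms drop out because $t(u-w)+(1-t)(\tilde u-w)=0$, leaving $t\obj(u)+(1-t)\obj(\tilde u)\ge\obj(w)$, i.e.\ convexity.

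\textbf{Step 2: strong convexity $\Leftrightarrow$ first-order inequality with quadratic term.} By definition $\obj$ is $\nu$-strongly convex iff $g:=\obj-\tfrac{\nu}{2}\norm{\cdot}^2$ is convex, and $g$ is differentiable with $\nabla g(u)=\nabla\obj(u)-\nu u$. Applying Step 1 to $g$ and substituting $\nabla g$, I would rearrange the inequality $g(u)\ge g(\tilde u)+\langle\nabla g(\tilde u),u-\tilde u\rangle$; the purely quadratic remainder collapses to $\tfrac{\nu}{2}\norm{u-\tilde u}^2$ via the identity $\norm{u}^2-\norm{\tilde u}^2-2\langle\tilde u,u-\tilde u\rangle=\norm{u-\tilde u}^2$, giving exactly the stated inequality. (Note Step 1 is the case $\nu=0$.)

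\textbf{Step 3: equivalence with the monotonicity inequality.} For ``$\Rightarrow$'' I would write the inequality of Step 2 twice, once for the pair $(\tilde u\to u)$ and once with the roles of $u$ and $\tilde u$ swapped, and add them: the function values cancel and one gets $0\ge\langle\nabla\obj(\tilde u)-\nabla\obj(u),u-\tilde u\rangle+\nu\norm{u-\tilde u}^2$, that is $\langle\nabla\obj(u)-\nabla\obj(\tilde u),u-\tilde u\rangle\ge\nu\norm{u-\tilde u}^2$. For ``$\Leftarrow$'' I would fix $u,\tilde u\in C$, set $\varphi(t):=\obj(\tilde u+t(u-\tilde u))$ so that $\varphi'(t)=\langle\nabla\obj(\tilde u+t(u-\tilde u)),u-\tilde u\rangle$, apply the monotonicity assumption to the pair $\tilde u+t(u-\tilde u)$ and $\tilde u$ and divide by $t>0$ to obtain $\varphi'(t)\ge\varphi'(0)+\nu t\norm{u-\tilde u}^2$; integrating over $t\in[0,1]$ and using the fundamental theorem of calculus gives $\obj(u)-\obj(\tilde u)=\int_0^1\varphi'(t)\,\dint t\ge\langle\nabla\obj(\tilde u),u-\tilde u\rangle+\tfrac{\nu}{2}\norm{u-\tilde u}^2$.

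I do not expect a genuine obstacle here; the only points needing care are the passage to the limit $t\to 0^+$ in Step 1 (where differentiability of $\obj$ and convexity of $C$, so the segment stays in $C$, are used) and the application of the fundamental theorem of calculus along a segment in Step 3, which again relies on $C$ being convex and $\obj$ differentiable on that segment. All remaining manipulations are routine algebra.
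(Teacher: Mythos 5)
Your proof is correct and follows the standard textbook route; the paper itself states this lemma without any proof, so there is nothing to diverge from. All three steps (the first-order characterization via the difference quotient along a segment, the reduction of strong convexity to convexity of $\obj-\tfrac{\nu}{2}\norm{\cdot}^2$, and the symmetrization/integration argument for the monotonicity equivalence) are the expected ones and check out. One small refinement for the ``$\Leftarrow$'' direction of Step 3: rather than invoking the fundamental theorem of calculus for $\varphi'$ (whose Riemann integrability is not guaranteed from mere differentiability of $\obj$), you can set $\psi(t):=\varphi(t)-t\varphi'(0)-\tfrac{\nu t^2}{2}\norm{u-\tilde u}^2$, note $\psi'\geq 0$, and conclude $\psi(1)\geq\psi(0)$ by the mean value theorem; in the finite-dimensional $C^1$ setting of these notes this is a cosmetic point only.
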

\begin{lemma}{}{}
If $\obj:C \to \Rc$ is a convex function, then we have the following results:
\begin{enumerate}
\item If $u^*\in C$ is a local minimizer, then it is a global minimizer.
\item If $\obj$ is strictly convex, then there exists at most one global minimizer.
\end{enumerate}
\end{lemma}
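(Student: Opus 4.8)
The plan is to prove the two assertions separately, since they concern different aspects of convex functions. The first one — every local minimizer is a global minimizer — should follow from a standard convexity-and-contradiction argument. Suppose $u^*$ is a local minimizer but not a global one. Then there exists $v \in C$ with $\obj(v) < \obj(u^*)$. I would consider the segment $u_t := t v + (1-t) u^*$ for $t \in (0,1)$, which lies in $C$ by convexity of $C$. Convexity of $\obj$ gives
\[
\obj(u_t) \leq t \obj(v) + (1-t)\obj(u^*) < t\obj(u^*) + (1-t)\obj(u^*) = \obj(u^*),
\]
the strict inequality coming from $\obj(v) < \obj(u^*)$ and $t > 0$. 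Since $\|u_t - u^*\| = t\|v - u^*\| \to 0$ as $t \to 0$, for small enough $t$ the point $u_t$ lies in the $\epsilon$-ball around $u^*$, contradicting that $u^*$ is a local minimizer. (If $\obj(u^*) = +\infty$ there is nothing to prove, since then $u^*$ cannot be a local minimizer unless $\obj \equiv +\infty$; one should remark on the proper case, but this is routine.)

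For the second assertion — uniqueness under strict convexity — I would again argue by contradiction: suppose $u_1 \neq u_2$ are both global minimizers, so $\obj(u_1) = \obj(u_2) =: m$, and note $m < +\infty$ (otherwise the minimizer set is empty or all of $C$ with $\obj \equiv +\infty$; in the proper case a global minimizer has finite value). Take the midpoint $\tfrac{1}{2}u_1 + \tfrac{1}{2}u_2 \in C$. Strict convexity with $t = \tfrac12$ yields
\[
\obj\!\left(\tfrac{1}{2}u_1 + \tfrac{1}{2}u_2\right) < \tfrac{1}{2}\obj(u_1) + \tfrac{1}{2}\obj(u_2) = m,
\]
which contradicts $m$ being the minimal value. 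Hence there is at most one global minimizer.

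Neither part is technically difficult; the only points requiring a little care are the bookkeeping around the value $+\infty$ (properness) and making sure the convex-combination point stays in $C$, which is exactly why $C$ is assumed convex. The main "obstacle," such as it is, is simply being careful that the strict inequality in the convexity estimate is genuinely strict — in part one this needs $t \in (0,1)$ and $\obj(v) < \obj(u^*)$ together, and in part two it needs $u_1 \neq u_2$ so that strict convexity applies. I would present both arguments compactly, as the statement is essentially a warm-up lemma.
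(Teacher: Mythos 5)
Your proof is correct and is exactly the standard argument expected here: the paper states this lemma without supplying a proof, and your line-segment argument for the first part and the midpoint argument under strict convexity for the second are the canonical route. The only (very pedantic) caveat is that with the paper's literal definition of local minimizer, a point $u^*$ with $\obj(u^*)=+\infty$ and $\obj\equiv+\infty$ on a neighborhood could count as a local but not global minimizer, so one implicitly restricts attention to minimizers in $\dom(\obj)$ --- your parenthetical remark gestures at this and is fine for the level of the lemma.
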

%
%
\begin{definition}{}{}
\begin{enumerate}
\item A function $\obj:C\to\Rc$ is called \alert{lower-semicontinuous}, if for every sequence $u^{(k)} \to u,$ with $ u^{(k)},u\in C$ we have that
\begin{align}
\obj(u) \leq \liminf_{k\to\infty}\obj(u^{(k)}).
\end{align}
\item A function is called \alert{coercive} if for every sequence $(u^{(k)})_{k\in\N}$ with $\lim_{k\to\infty} \norm{u^{(k)}}=+\infty$ we have that
\begin{align}
\lim_{k\to\infty} \obj(u^{(k)}) = +\infty.
\end{align}
\end{enumerate}
\end{definition}
We can now formulate the following lemma about the existence of solutions:
\begin{lemma}{Existence of Solutions}{}
Let $C\neq\emptyset$ be a closed set and $\obj$ be a proper, coercive, lower-semicontinuous function, then there exists a global minimizer $u^*\in C$ of $\obj$.
\end{lemma}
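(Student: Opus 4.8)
The plan is to use the standard \emph{direct method}: extract a minimizing sequence, use coercivity to show it is bounded (hence to pass to a convergent subsequence), and use lower-semicontinuity together with closedness of $C$ to conclude that the limit is a minimizer. Since we are working in $U=\R^n$, boundedness is enough to guarantee a convergent subsequence via Bolzano--Weierstrass, which makes the argument clean; I will phrase it so that it would also go through in a reflexive Banach space with weak sequential compactness and weak lower-semicontinuity, but that refinement is not needed here.

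First I would set $m := \inf_{u\in C}\obj(u)$. Because $\obj$ is proper, there is some $u_0\in C$ with $\obj(u_0)<+\infty$, so $m<+\infty$; a priori $m$ could be $-\infty$, and ruling this out will come out of the boundedness argument. Pick a minimizing sequence $(u^{(k)})_{k\in\N}\subset C$ with $\obj(u^{(k)})\to m$. The key step is to show $(u^{(k)})$ is bounded: if not, there is a subsequence with $\|u^{(k_j)}\|\to+\infty$, and then coercivity forces $\obj(u^{(k_j)})\to+\infty$, contradicting $\obj(u^{(k_j)})\to m<+\infty$. Hence $(u^{(k)})$ is bounded, and by Bolzano--Weierstrass it has a subsequence $u^{(k_j)}\to u^*$ for some $u^*\in\R^n$. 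Since $C$ is closed and $u^{(k_j)}\in C$, we get $u^*\in C$.

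Finally, lower-semicontinuity of $\obj$ along this convergent sequence gives
\[
\obj(u^*)\leq \liminf_{j\to\infty}\obj(u^{(k_j)}) = \lim_{j\to\infty}\obj(u^{(k_j)}) = m.
\]
On the other hand $\obj(u^*)\geq m$ by definition of the infimum (and $u^*\in C$), so $\obj(u^*)=m$; in particular $m>-\infty$ and $u^*$ is a global minimizer.

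The only mild subtlety — the ``hard part'', though it is not really hard here — is making sure coercivity is used correctly: coercivity is stated for \emph{every} sequence with norm tending to infinity, so it applies in particular to any such subsequence of the minimizing sequence, and this is exactly what excludes unboundedness and simultaneously prevents $m=-\infty$. Convexity of $\obj$ and of $C$ is not actually needed for this existence statement; it becomes relevant only for uniqueness (under strict convexity) and for the local-implies-global property proved in the preceding lemma.
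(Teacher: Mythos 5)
Your proof is correct and complete: the paper itself leaves this lemma as an exercise, and your argument is exactly the intended one, namely the direct method (minimizing sequence, boundedness from coercivity, Bolzano--Weierstrass in $\R^n$, closedness of $C$, and lower semicontinuity to pass to the limit). You also correctly handle the subtle point that $m=-\infty$ is excluded a posteriori because the function takes values in $(-\infty,+\infty]$, so $\obj(u^*)\leq m$ forces $m>-\infty$.
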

\begin{proof}
Exercise.
\end{proof}
\noindent%
If the function $\obj$ is convex and differentiable, there are so-called \alert{optimality conditions}, which allow us to test, whether a point $u$ is a global minimizer.
\begin{lemma}{}{}
\begin{enumerate}
\item Let $C\neq\emptyset$ be a convex set and let $\obj$ be a convex and differentiable function. Then $u^*$ is a global minimizer over $C$ iff
\begin{align}
\langle \nabla \obj(u^*), u - u^* \rangle = 0 \qquad\forall u\in C.
\end{align}
\item In the case of global optimization, this reduces to $\nabla \obj(u^*) = 0$.
\end{enumerate}
\end{lemma}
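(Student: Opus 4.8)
The plan is to derive part~1 directly from the first-order characterization of convexity proved in the preceding lemma, and then obtain part~2 by specializing $C$ to the whole space. (I note in passing that the condition in part~1 should presumably read $\langle \nabla\obj(u^*), u-u^*\rangle \geq 0$ for all $u\in C$ — a variational inequality — since with equality it is only consistent after the specialization in part~2; accordingly I would prove the inequality form, from which the stated equality in part~2 follows.)

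For the ``if'' direction I would argue as follows: assuming $\langle \nabla\obj(u^*), u-u^*\rangle \geq 0$ for all $u\in C$, the earlier first-order lemma gives $\obj(u) \geq \obj(u^*) + \langle \nabla\obj(u^*), u-u^*\rangle \geq \obj(u^*)$ for every $u\in C$, so $u^*$ is a global minimizer over $C$. This step is immediate once the first-order characterization is available.

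For the ``only if'' direction, I would fix $u\in C$ and exploit the convexity of $C$: the point $u^* + t(u - u^*)$ is feasible for every $t\in[0,1]$, so optimality of $u^*$ forces the difference quotient $\big(\obj(u^* + t(u-u^*)) - \obj(u^*)\big)/t$ to be nonnegative for $t\in(0,1]$. Letting $t\to 0^+$ and invoking differentiability of $\obj$ at $u^*$, this quotient converges to the directional derivative $\langle \nabla\obj(u^*), u - u^*\rangle$, which is therefore $\geq 0$; since $u$ was arbitrary this is the desired inequality.

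For part~2 I would take $C = U$: applying the variational inequality from part~1 to $u = u^* + v$ and to $u = u^* - v$ for arbitrary $v\in U$ yields both $\langle \nabla\obj(u^*), v\rangle \geq 0$ and $\langle \nabla\obj(u^*), v\rangle \leq 0$, hence $\langle \nabla\obj(u^*), v\rangle = 0$ for all $v$, i.e. $\nabla\obj(u^*) = 0$; the reverse implication is trivial since $\nabla\obj(u^*) = 0$ satisfies the inequality vacuously. I do not anticipate a genuine obstacle here — the only point needing a little care is the limit of the difference quotient, which is precisely the definition of the derivative of $\obj$ along the direction $u - u^*$ and relies on convexity of $C$ to keep the perturbed points feasible.
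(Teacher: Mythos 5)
The paper leaves this proof as an exercise, so there is nothing to compare against; your argument is the standard one and is correct in every step. You are also right that the statement as printed is a misprint: the characterization must be the variational inequality $\langle \nabla\obj(u^*), u-u^*\rangle \geq 0$ for all $u\in C$, since for example minimizing $\obj(u)=u$ over $C=[0,1]$ gives $u^*=0$ with $\langle \nabla\obj(u^*), u-u^*\rangle = u > 0$ for $u>0$, so the stated equality fails in the ``only if'' direction for boundary minimizers; equality is recovered exactly in the unconstrained case of part~2, as your $\pm v$ argument shows.
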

\begin{proof}
Exercise.
\end{proof}

\subsection{Gradient Descent}

For general functions $\obj$ it is difficult to find the global minimum, therefore, we want to derive an iterative algorithm to approximate it. We first introduce the so-called \alert{gradient descent algorithm}, which can be traced back to Cauchy\footnote{Augustin-Louis Cauchy (1789--1857) was a French mathematician.} \cite{cauchy1847methode}. In the following, we want to offer two illustrative motivations for this algorithm:
\begin{enumerate}[label=(M\arabic*)]
\item\label{it:M1} Assume you start hiking on a mountain at a point $u^{(0)}$ and you want to get to the valley. Unfortunately, it's completely dark so you can't see anything, and you have no map available. How do you get down to the valley? A possible approach is to examine your surroundings and take a step into a direction, that leads to a point deeper than your current spot. This is referred to as a \alert{descent direction}, where for a differentiable function $\obj$ the steepest descent at $u$ is given by $-\nabla \obj(u)$. Assuming you have a step size of $\tau>0$ this gives you the update rule
\begin{align}
 u^{(k+1)} = u^{(k)} - \tau\, \nabla \obj(u^{(k)}).
\end{align}
\item\label{it:M2} The second motivation uses the idea or replacing the function $\obj$ with something easier. If we are at a point $u^{(k)}$, we can use a first order Taylor approximation, i.e., a \alert{linearization} around $u^{(k)}$, to infer (neglecting the necessary properties on $\obj$ for this to hold)
\begin{align}
\obj(u) \approx \obj(u^{(k)}) + \langle \nabla\obj(u^{k}), u - u^{(k)} \rangle + \frac{1}{2\tau}\norm{u - u^{(k)}}^2 =: \tilde{\obj}(u; u^{(k} \tau)
\end{align}
for some $\tau>0$. We can then define the new iterate as the global minimizer of the surrogate, i.e.,
\begin{align}
u^{(k+1)} = \argmin_{u} \tilde{\obj}(u; u^{(k)} \tau).
\end{align}
Since $\tilde{\obj}(u; u^{(k)} \tau)$ is convex, we can use the optimality condition to infer that
\begin{align}
\nabla \tilde{\obj}(u^{(k+1)}; u^{(k)} \tau) = 0 
\qquad &\Leftrightarrow\qquad 
\nabla\obj(u^{(k)}) + \frac{1}{\tau}( u^{(k+1)} - u^{(k)})= 0\\
& \Leftrightarrow\qquad
u^{(k+1)} = u^{(k)} - \tau\, \nabla\obj(u^{(k)}).
\end{align}
\end{enumerate}
This motivates the definition of gradient descent:
\begin{definition}{Gradient Descent}{}
Let $\obj$ be a differentiable function and let the initial point $u^{(0)}_\GD$ and step size $\tau$ be given, then the iterates of the gradient descent algorithm are given as
\begin{align}
u^\kk_\GD := u^\k_\GD - \tau\, \nabla \obj(u^\k_\GD).
\end{align}
\end{definition}
A typical question for a given optimization algorithm is if and how fast the iterates converge to a global minimum. To answer this, we need another assumption on the functional $\obj$, namely \alert{Lipschitz continuity} of its gradient with constant $L$, i.e.,
\begin{align}
\norm{\nabla\obj(u) - \nabla\obj(\tilde{u})} \leq L \norm{u - \tilde{u}}\quad \forall u, \tilde{u}.
\end{align}
Then we have the following result.
\begin{theorem}{Convergence of GD}{}
Let $\obj$ be a convex function with global minima $S\neq \emptyset$, and $L$-Lipschitz continuous gradient. For a step size $\tau= 1/L$ we have that the iterates of gradient descent fulfill
\begin{align}
\obj(u^\k) - \obj(u^*) \leq \frac{\norm{u^{(0)} - u^*}^2}{2 \tau\, k}\qquad\text{ for any } u^*\in S.
\end{align}
If additionally $\obj$ is $\nu$-strongly convex, then for any step size $\tau < 1/L$ we have
\beq
\norm{u^\k - u^*}^2 \leq (1- \nu \tau)^k \norm{u^{(0)} - u^*}^2.
\eeq
\end{theorem}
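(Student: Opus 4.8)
Here is my plan to prove the convergence theorem for gradient descent.

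\textbf{Setup and the descent lemma.} The crucial starting point is the standard \emph{descent lemma}: if $\obj$ has an $L$-Lipschitz continuous gradient, then for all $u,v$ we have $\obj(v) \leq \obj(u) + \langle \nabla\obj(u), v-u\rangle + \frac{L}{2}\norm{v-u}^2$. I would prove this by writing $\obj(v) - \obj(u) = \int_0^1 \langle \nabla\obj(u + t(v-u)), v-u\rangle \dint t$, subtracting $\langle\nabla\obj(u),v-u\rangle$, and estimating the integrand using Cauchy--Schwarz and the Lipschitz bound. Applying this at $u = u^\k$, $v = u^\kk = u^\k - \tau\nabla\obj(u^\k)$ with $\tau = 1/L$ yields
\begin{align}
\obj(u^\kk) \leq \obj(u^\k) - \tau\Big(1 - \frac{L\tau}{2}\Big)\norm{\nabla\obj(u^\k)}^2 = \obj(u^\k) - \frac{\tau}{2}\norm{\nabla\obj(u^\k)}^2,
\end{align}
so the objective is non-increasing and the gradient norms are summable.

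\textbf{The convex case.} Next I would combine the descent estimate with the convexity inequality $\obj(u^\k) \leq \obj(u^*) + \langle\nabla\obj(u^\k), u^\k - u^*\rangle$ (from the earlier lemma characterizing convexity). The standard trick is to expand $\norm{u^\kk - u^*}^2 = \norm{u^\k - u^*}^2 - 2\tau\langle\nabla\obj(u^\k), u^\k - u^*\rangle + \tau^2\norm{\nabla\obj(u^\k)}^2$ and feed in both the convexity bound and the descent bound. This gives $\obj(u^\kk) - \obj(u^*) \leq \frac{1}{2\tau}(\norm{u^\k - u^*}^2 - \norm{u^\kk - u^*}^2)$, i.e. the per-step objective decrease telescopes against the squared distances. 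Summing from $0$ to $k-1$ and using that $\obj(u^{(j)})$ is non-increasing (so $k(\obj(u^\k) - \obj(u^*))$ is bounded by the sum) produces $\obj(u^\k) - \obj(u^*) \leq \frac{\norm{u^{(0)} - u^*}^2}{2\tau k}$, which is the claim.

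\textbf{The strongly convex case.} For the second part I would use $\nu$-strong convexity in the form $\langle\nabla\obj(u^\k), u^\k - u^*\rangle \geq \obj(u^\k) - \obj(u^*) + \frac{\nu}{2}\norm{u^\k - u^*}^2$, together with $\obj(u^\k) - \obj(u^*) \geq 0$ (or a sharper bound if needed). Plugging this into the same expansion of $\norm{u^\kk - u^*}^2$ and using the descent bound to control the $\tau^2\norm{\nabla\obj}^2$ term (when $\tau < 1/L$ the coefficient $-\tau(1 - L\tau)\norm{\nabla\obj}^2$ is negative and can be dropped) yields a contraction $\norm{u^\kk - u^*}^2 \leq (1 - \nu\tau)\norm{u^\k - u^*}^2$. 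Iterating gives the geometric rate.

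\textbf{Main obstacle.} The routine calculations are all standard; the one point requiring care is the bookkeeping in the convex case — specifically making sure the telescoping sum is handled correctly and that one legitimately passes from the averaged decrease to the bound on $\obj(u^\k) - \obj(u^*)$ via monotonicity of the objective values (which is exactly why the descent lemma with $\tau = 1/L$, not merely convexity, is needed). I would also need to be slightly careful about uniqueness/nonemptiness issues if $S$ has more than one point, but since the statement only asserts the bound for an arbitrary fixed $u^* \in S$, no extra argument is required there.
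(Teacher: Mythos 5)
Your outline is correct and would yield a complete proof: the descent lemma with $\tau=1/L$ gives $\obj(u^\kk)\leq\obj(u^\k)-\tfrac{\tau}{2}\norm{\nabla\obj(u^\k)}^2$, and combining this with the convexity inequality in the expansion of $\norm{u^\kk-u^*}^2$ produces the per-step estimate $\obj(u^\kk)-\obj(u^*)\leq\tfrac{1}{2\tau}(\norm{u^\k-u^*}^2-\norm{u^\kk-u^*}^2)$, which telescopes and, by monotonicity of the objective values, gives the $1/k$ rate; in the strongly convex case the leftover gradient term carries the coefficient $\tau^2(L\tau-1)\leq 0$ (note the extra factor of $\tau$ compared with what you wrote, which does not affect the sign argument) and can be dropped, giving the contraction. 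The paper, however, does not argue this way: it dispatches the first claim as the special case $G=0$ of the convergence theorem for proximal gradient descent (\cref{thm:convpgd}, which is itself stated without proof in these notes) and leaves the second claim as an exercise. Your route is therefore genuinely more self-contained, and it is essentially the argument that would be needed to prove \cref{thm:convpgd} anyway; it also mirrors the structure of the one convergence proof the notes do carry out in full, namely that of the proximal point method, where the same three-point identity and telescoping against non-increasing objective values appear. What your direct approach buys is an explicit appearance of the smoothness constant (via the descent lemma) that is hidden in the prox formulation; what the paper's reduction buys is a single proof covering GD, ISTA and the wavelet variants at once.
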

\begin{proof}
The first part is a special case of \cref{thm:convpgd}. The second part is an exercise.
\end{proof}
\begin{remark}{}{}
In practice, one often chooses step sizes that are much larger than the bound given above. Furthermore, usually one also chooses an adaptive, non-constant step size $\tau^\k$. Consider for example, $\obj(u):=u^4$ for $U=\R$. Then given any step size, if we start at the point $u^{(0)} = \frac{1}{\sqrt{2\tau}}$ we see that the algorithm does not converge, thus indicating, that we cannot prove a convergence statement for every initial condition.
\end{remark}
\begin{figure}[t]
\begin{subfigure}{.32\textwidth}
\includegraphics[width=\textwidth]{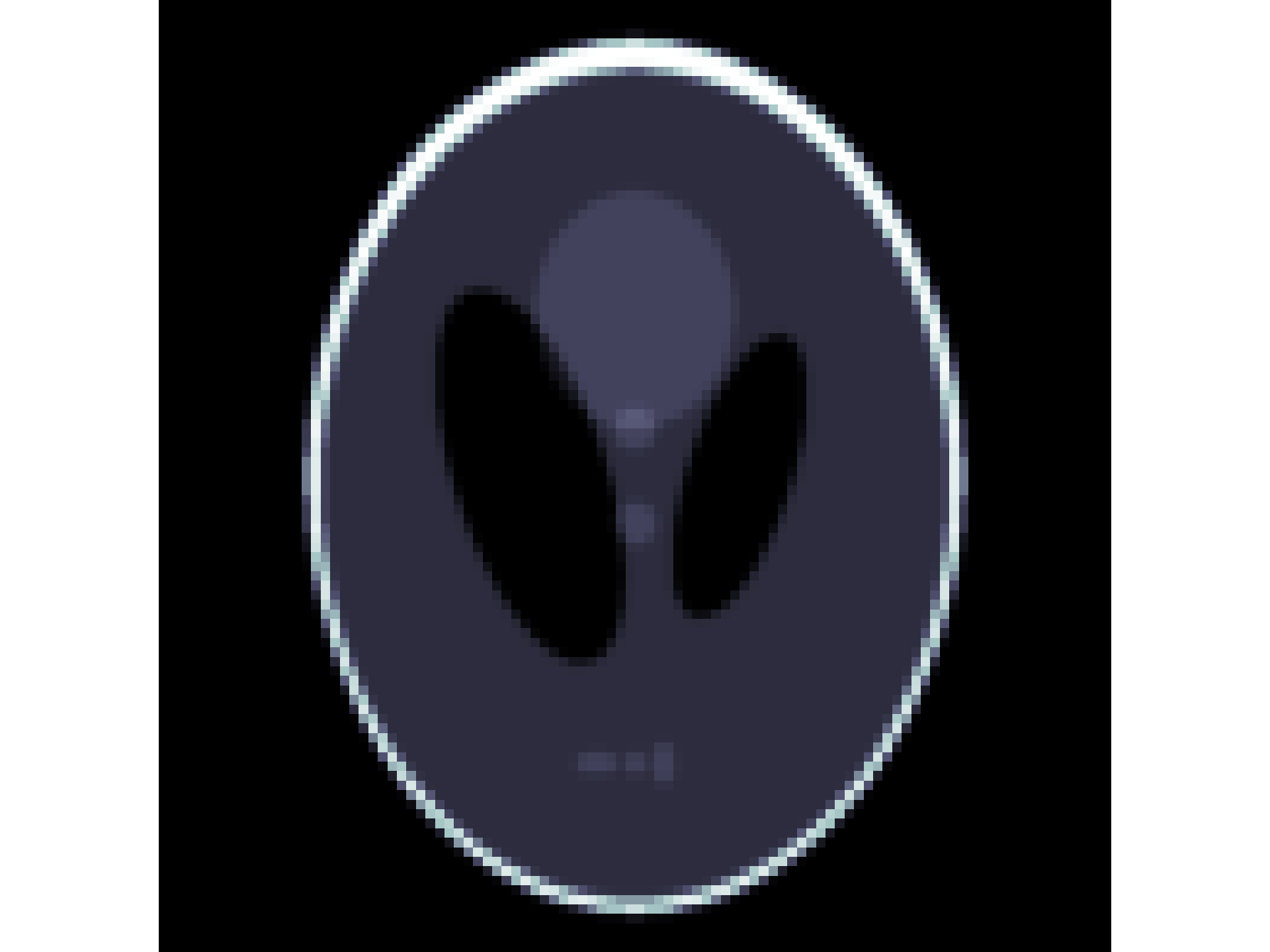}
\caption{Ground truth $u$.}\label{fig:Tika}
\end{subfigure}\hfill%
\begin{subfigure}{.32\textwidth}
\includegraphics[width=\textwidth]{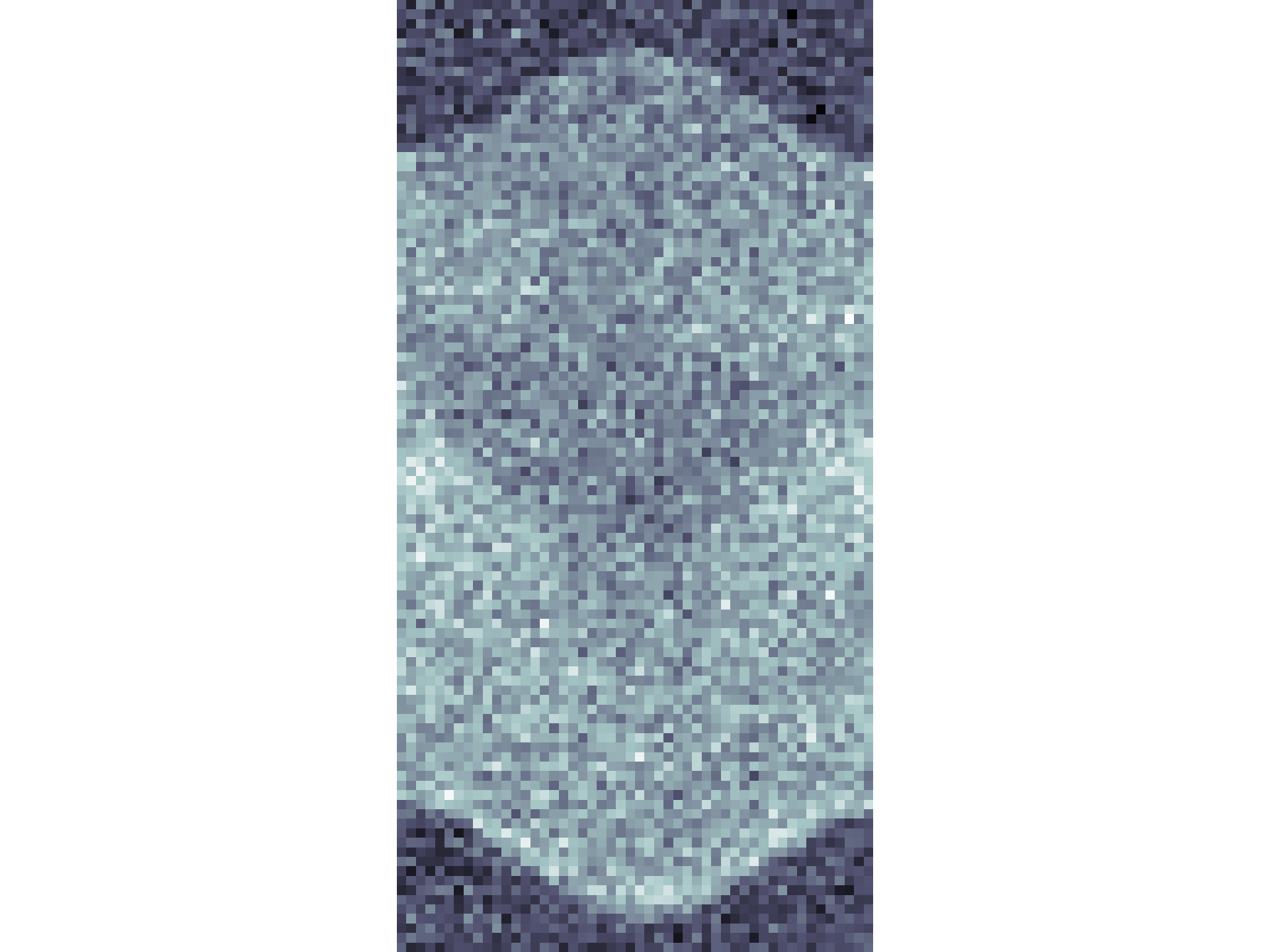}
\caption{Noisy sinogram data $\fd$.}\label{fig:Tikb}
\end{subfigure}\hfill%
\begin{subfigure}{.32\textwidth}
\includegraphics[width=\textwidth]{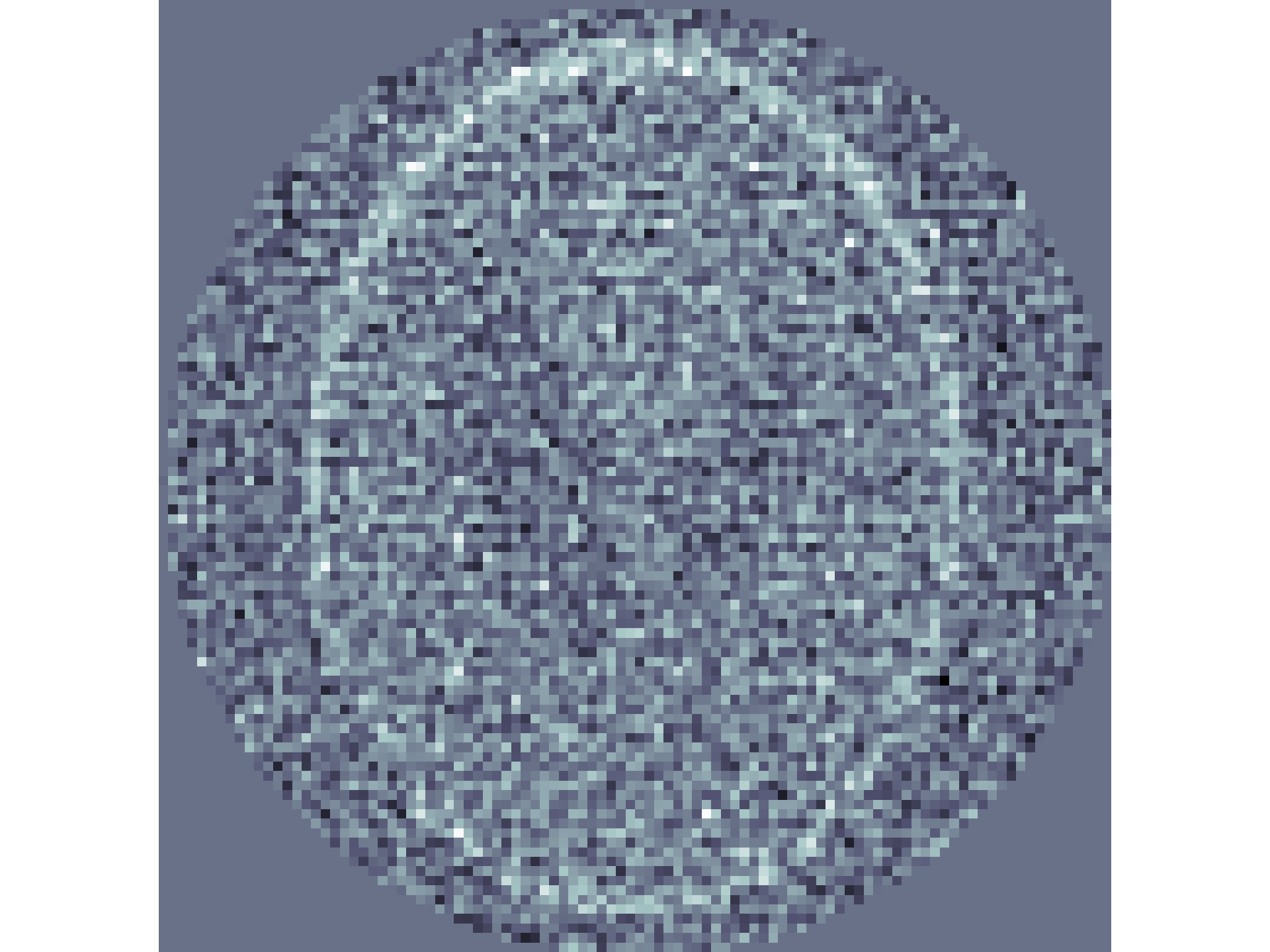}
\caption{Pseudo inverse $A^\dagger \fd$.}\label{fig:Tikc}
\end{subfigure}\\
\begin{subfigure}{.32\textwidth}
\includegraphics[width=\textwidth]{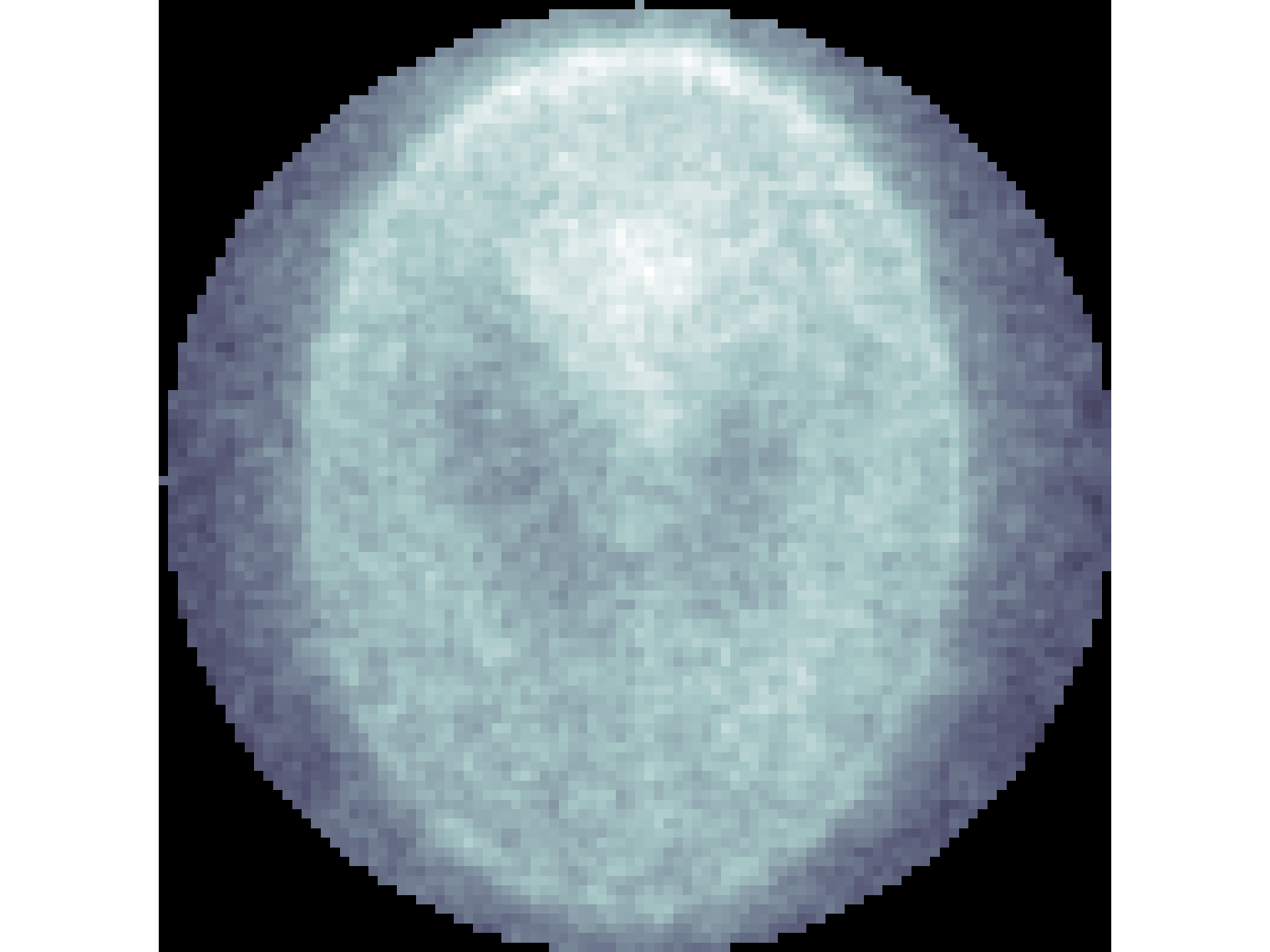}
\caption{$\ell^2$-regularization with $\regp=0$.}\label{fig:Tikd}
\end{subfigure}\hfill%
\begin{subfigure}{.32\textwidth}
\includegraphics[width=\textwidth]{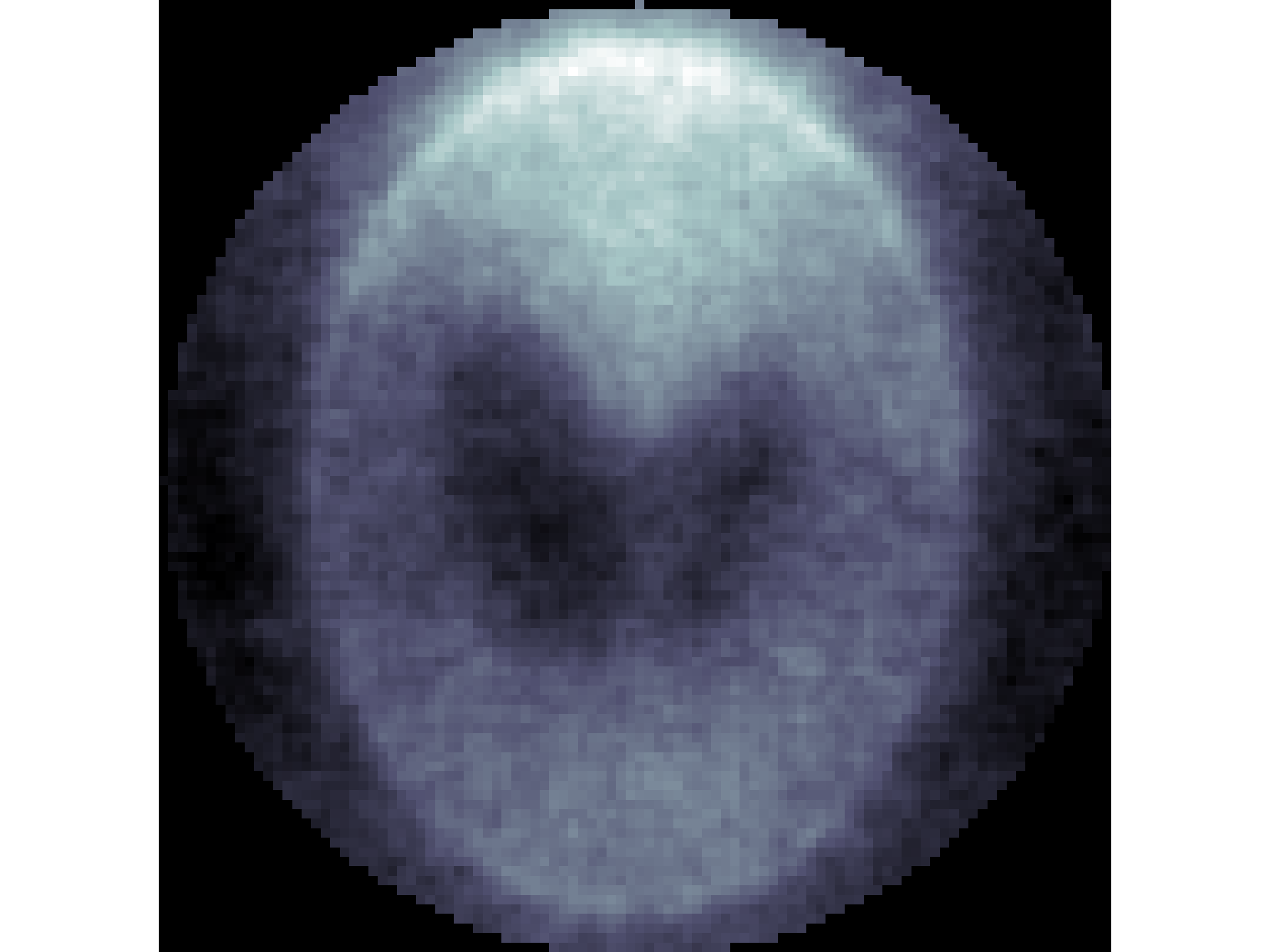}
\caption{$\ell^2$-regularization with $\regp=0.55$.}\label{fig:Tike}
\end{subfigure}\hfill%
\begin{subfigure}{.32\textwidth}
\includegraphics[width=\textwidth]{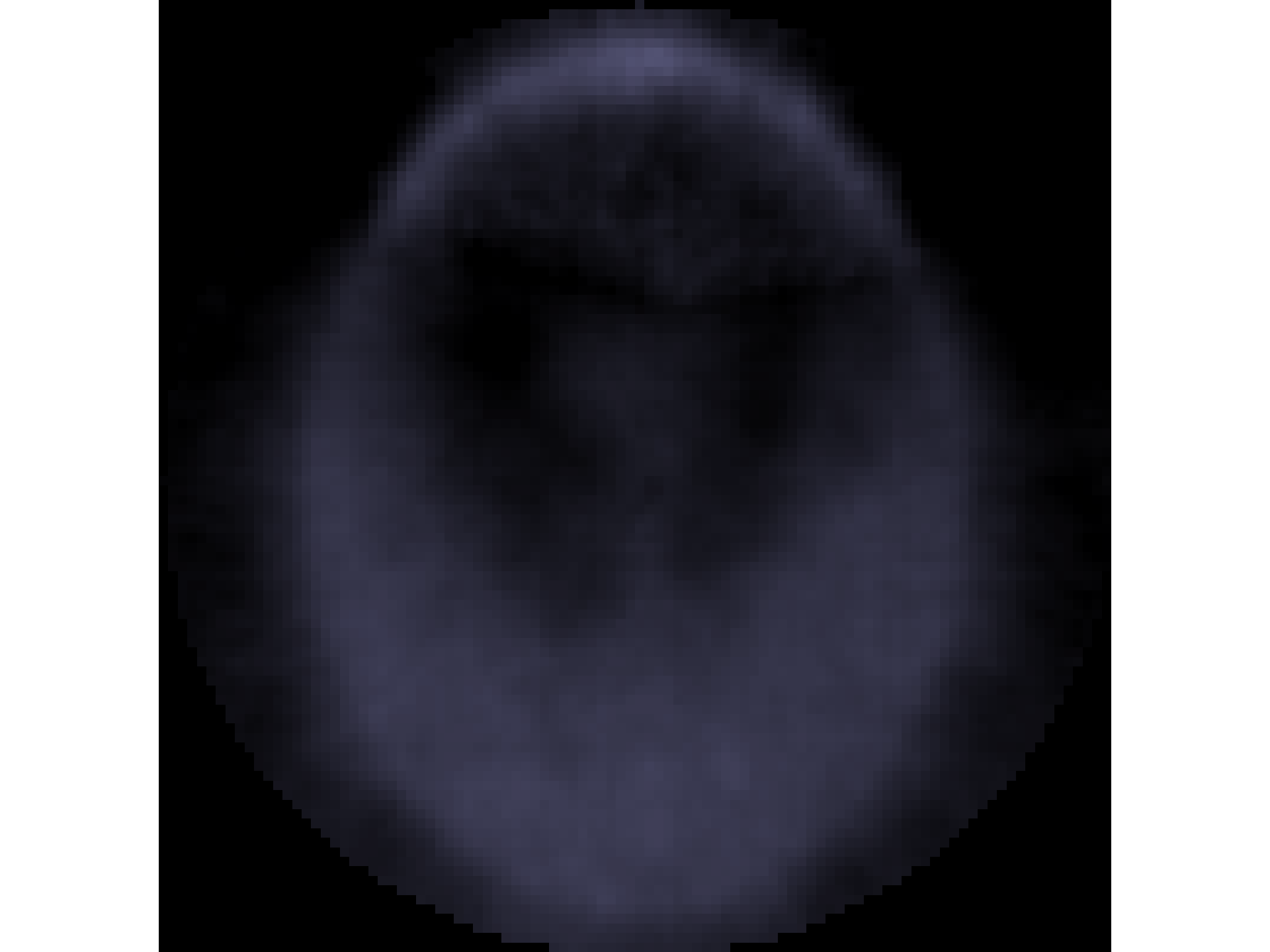}
\caption{$\ell^2$-regularization with $\regp=2$.}\label{fig:Tik}\label{fig:Tikf}
\end{subfigure}
\caption{We compare the reconstruction results of a computed tomography toy example (a)-(b) achieved using the pseudo-inverse (c) and GD applied on $\ell^2$-based Tikhonov regularization for different regularization parameters $\alpha$ (d)-(f). We can observe that the GD iteration does not recover $A^\dagger f^\delta$, although it is an admissible solution to the problem GD tries to solve.}\label{fig:Tik}
\end{figure}
In \cref{fig:Tik} we consider the example of computed tomography for an artificial object $u$ (\cref{fig:Tika}) and simulated noisy measurement $\fd$ (\cref{fig:Tikb})\footnote{The code for the experiments can be found here \href{https://github.com/TimRoith/LectureNotesIP/tree/main}{github.com/TimRoith/LectureNotesIP}}. We show the results of the reconstruction using the pseudo inverse (\cref{fig:Tikc}) as well as GD applied to \labelcref{eq:GNE_Tik} for different regularization parameters $\alpha\in\ls0, 0.55, 2\rs$. While we can suppress heavy artifacts and instabilities with Tikhonov regularization, the quality of the reconstruction performance depends on the choice of regularization parameter $\regp$. If there is no a-priori parameter choice rule available, one can employ the following a-posteriori rule, which is known as \alert{Morozov's discrepancy principle}. Assuming that we know the noise level $\noiselvl= \norm{\noise} = \norm{Au_{\text{true}} - f^\noiselvl}_2$, it is not meaningful to look for a reconstruction with $\norm{Au^* - f^\noiselvl}_2 < \delta$. This motivates the following definition.
\begin{definition}{Morozov's Discrepancy Principle}{}
Let $u_\alpha$ denote the solution of \labelcref{eq:GNE_Tik}. For a parameter $\mu>0$
Morozov's discrepancy principle is the a-posteriori choice rule defined as
\begin{align*}
\bs{\alpha}(\noiselvl, f^\noiselvl) :=
\sup\ls\alpha > 0 \, \middle| \, \norm{A u_\alpha - f^\noiselvl} \leq \mu\, \noiselvl\rs.
\end{align*}
\end{definition}

\begin{remark}{}{}
In practice, this can be realized via running the optimization multiple times, with different $\alpha$. Alternatively, one can update $\alpha$ adaptively, e.g., during the iteration of gradient descent.
\end{remark}
If we intend to solve a system of linear equations, a method that often outperforms plain gradient descent is the so-called \alert{conjugate gradient} method. Consider the matrix equation
\begin{align}
C u = b
\end{align}
for some symmetric positive definite matrix $C$. The idea is, that we consider the descent directions given by the functional
\begin{align*}
u\mapsto\langle C u, u\rangle + \langle b, u\rangle.
\end{align*}
Instead of using the gradient direction directly, we construct an orthonormal system of descent directions. I.e., at step $k$ we modify the gradient of the above functional, such that it is orthogonal w.r.t. all previous descent directions. In this scenario, we can also compute the optimal step size in each iteration. For a given start point $u^{(0)}$, initializing the so-called \alert{residual} $r^{(0)} = b - C u^{(0)}$ and the first descent direction as $p^{(0)} = r^{(0)}$, we have the following scheme:

\begin{align}
\alpha^\k &= \frac{\langle r^\k, r^\k\rangle}{\langle p^\k, C p^\k\rangle},  \\
u^\kk &= u^\k + \alpha^\k\, p^\k, \\
r^\kk &= r^\k - \alpha^\k\, C p^\k, \\
\beta^\k &= \frac{\langle r^\kk, r^\kk\rangle}{\langle r^\k, r^\k\rangle}, \\
p^\kk &= r^\kk + \beta^\k p^\k.
\end{align}
In our case, we can apply the CG iteration to the regularized normal equations in \labelcref{eq:GNE_Tik}, i.e., for $C=(A^*A + \alpha I)$ and $b=A^*\fd$.

\subsection{Sparsity-Based Regularization}

\begin{figure}[t]
\begin{subfigure}{.24\textwidth}
\includegraphics[width=\textwidth]{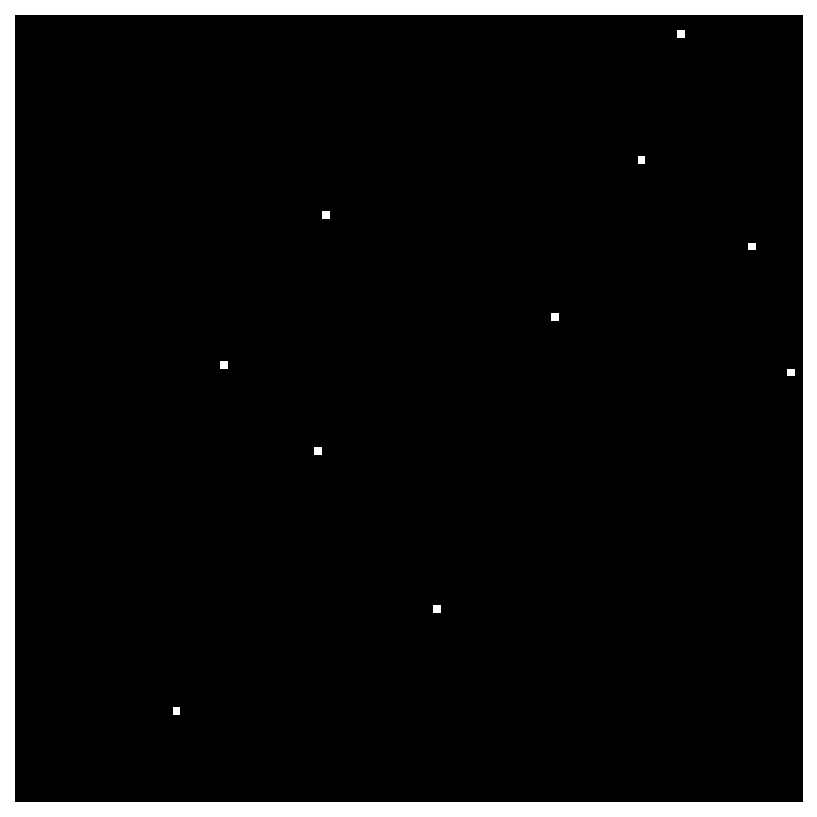}
\caption{Ground truth $u$.}
\end{subfigure}
\begin{subfigure}{.24\textwidth}
\includegraphics[width=\textwidth]{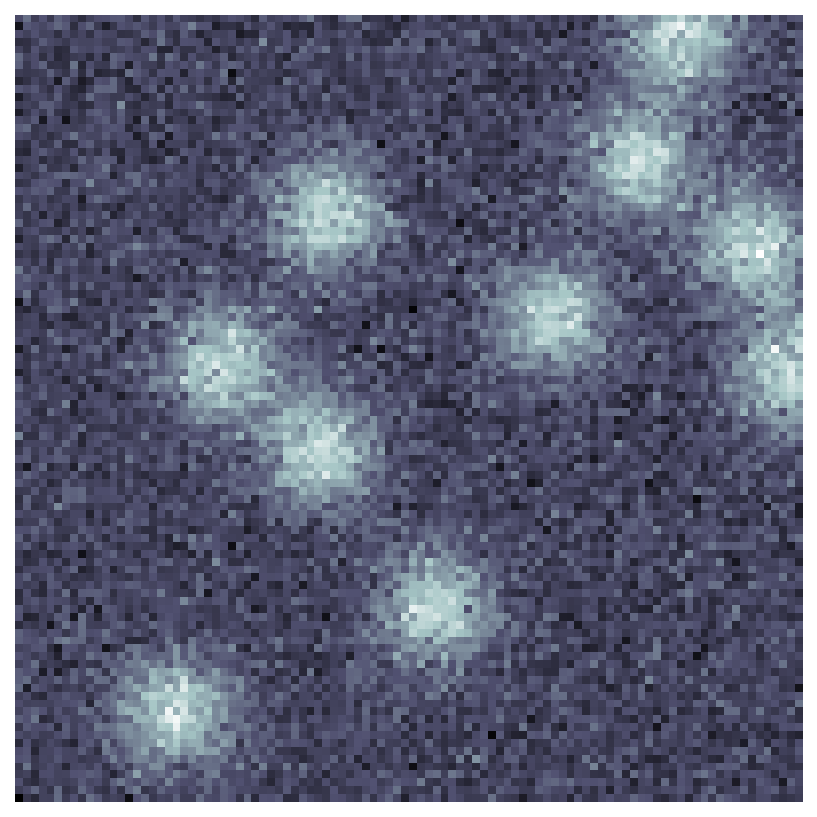}
\caption{Noisy data $\fd$.}
\end{subfigure}
\begin{subfigure}{.24\textwidth}
\includegraphics[width=\textwidth]{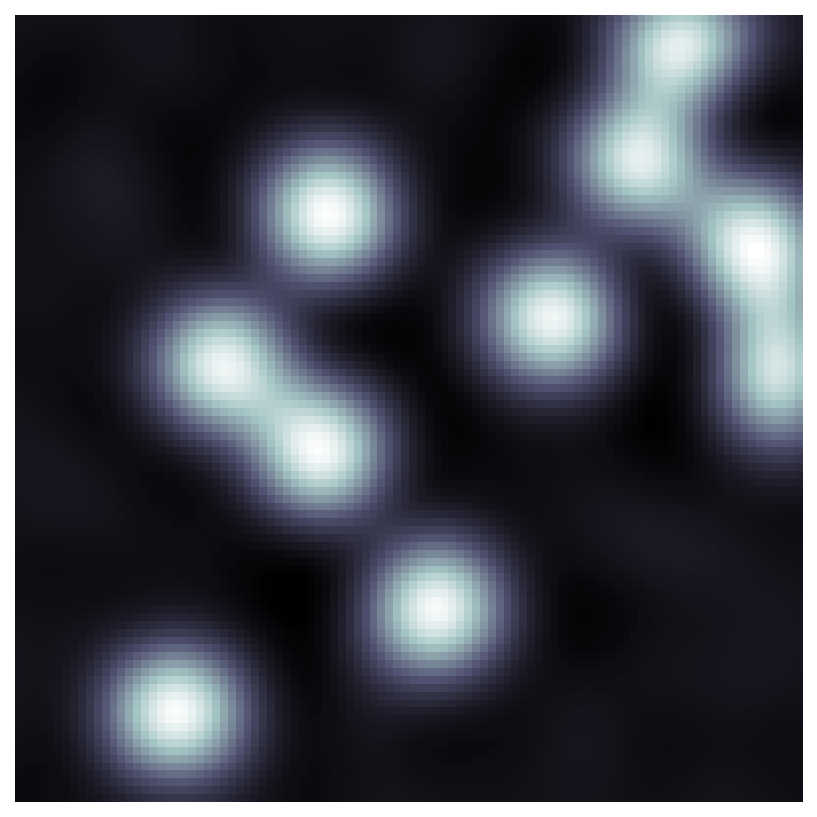}
\caption{$\ell^2$-regularization.}
\end{subfigure}
\begin{subfigure}{.24\textwidth}
\includegraphics[width=\textwidth]{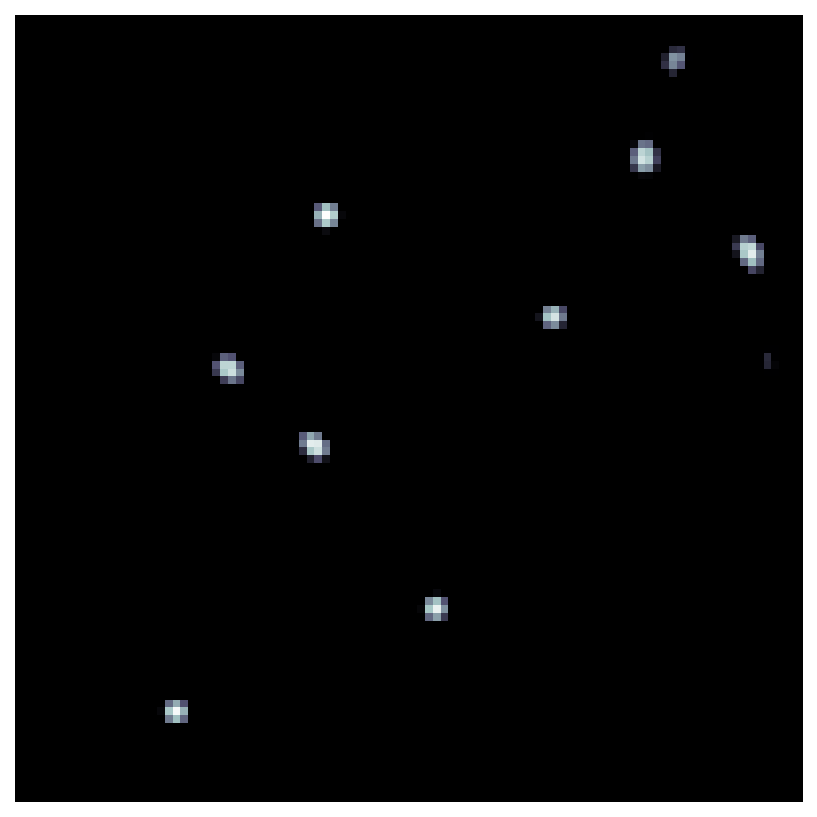}
\caption{$\ell^1$-regularization.}
\end{subfigure}
\caption{We compare the reconstruction results of a deconvolution toy example (a)-(b) achieved using GD applied on $\ell^2$-based Tikhonov regularization (c) and PGD applied on $\ell^1$-based sparsity regularization, i.e., ISTA (d).
}\label{fig:ISTA}
\end{figure}

In many applications, we have prior information about our solution. As an introductory example, we consider a situation, where we know that our true image is sparse. This means that the value of the so-called $\ell^0$-pseudonorm
\begin{align}
\norm{u}_0 = \#\{ u_i \neq 0, \, i=1,\ldots, n\},
\end{align}
should be small. Assume that our forward  model is a convolution with a given kernel $\kappa$, i.e., 
\begin{align}
\fd = \kappa \ast u + \noise.
\end{align}
We observe the setup and the performance of $\ell^2$-regularization in \cref{fig:ISTA}\footnote{The code for the experiments can be found here \href{https://github.com/TimRoith/LectureNotesIP/tree/main}{github.com/TimRoith/LectureNotesIP}}. We see that while this regularization removes the noise, the reconstruction performance is not great. This indicates, that we need to choose a better regularization strategy that also incorporates our prior knowledge about the sparsity of $u$. A first idea could be to add an $\ell^0$-regularization term instead. However, this yields an unfavorable optimization strategy. Mainly due to the fact, that it is non-convex.
\begin{exercise}{}{}
Show, that the $\ell^0$-functional as defined above is a pseudonorm, but not a norm. Furthermore, show that it is non-convex.
\end{exercise}\noindent%
A popular alternative is to choose an $\ell^1$-based regularization instead, which has motivated the so-called basis pursuit \cite{chen2001atomic},
\begin{align}
\min_{u}\norm{u}_1 \, \text{ such that } \, Au = f.
\end{align}
In \cref{fig:l1vis} we observe the \alert{sparsity-promoting} property of the $\ell^1$-norm. Therefore, we consider the following regularization objective:
\beq
\min_{u} \frac{1}{2}\norm{A u - f}^2_2 + \regp \norm{u}_1.
\eeq
\begin{memo}{}{}
The $\ell^1$-norm is sparsity promoting.
\end{memo}
\begin{figure}%
\centering
\includegraphics[width=0.5\linewidth]{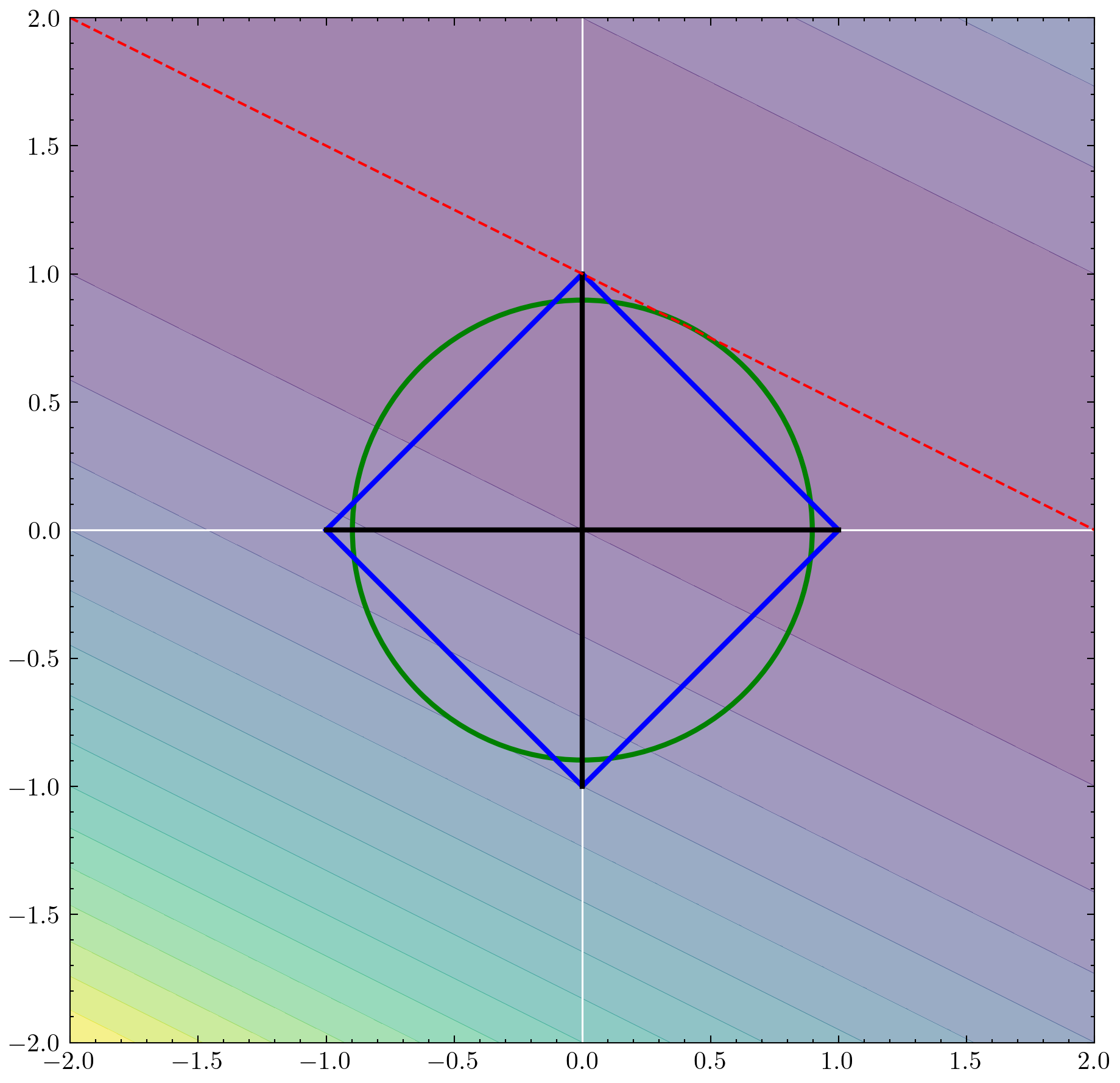}
\caption{Sparsity-promoting property of $\ell^1$-norm. The dotted red line visualizes the space of solutions to a linear problem $Au=f$. The solution with smallest $\ell^1$-norm (top of blue square) is sparse while, in comparison, the solution with the smallest $\ell^2$-norm (top-right of green circle) is not.}
\label{fig:l1vis}
\end{figure}\noindent%
While the $\ell^1$-norm is convex, it is unfortunately not differentiable; thus we can not apply the gradient descent algorithm. Let us go back to the initial motivation of gradient descent to motivate a  more general algorithm which can handle non-differentiability. In the second motivation \cref{it:M2}, we have defined the iterates via a linearization around the current iterate
\begin{align}
\argmin_u \obj(u^{(k)}) + \langle \nabla\obj(u^{k}), u - u^{(k)} \rangle + \frac{1}{2\tau}\norm{u - u^{(k)}}^2.
\end{align}
We want to keep this idea while still making it applicable in the non-differentiable case. To do so, we revert the linearization but keep the distance penalty to the previous iterate, i.e.,
\beq
u^{(k+1)} = \argmin_{u} \obj(u) + \frac{1}{2\tau}\norm{u - u^{(k)}}^2.
\eeq
In convex analysis, the above object is referred to as the \alert{proximal operator} or \alert{proximal point}.
\begin{definition}{Proximal Operator}{}
Given a proper, convex and lower-semicontinuous function $\obj:U\to(-\infty, +\infty]$, and a parameter $\tau> 0$,  we define the proximal operator as 
\beq
\prox_{\tau \obj}(\tilde{u}) := \argmin_{u} \obj(u) + \frac{1}{2\tau}\norm{u - \tilde{u}}^2.
\eeq
The functional $\obj^{\tiny \tilde{u}, \tau}(u)= \obj(u) + \frac{1}{2\tau}\norm{u - \tilde{u}}^2$ is called \alert{Moreau--Yosida regularization}\footnote{\href{https://en.wikipedia.org/wiki/Jean-Jacques_Moreau}{Jean Jacques Moreau} (1923--2014) was a French mathematician and mechanician. 
\href{https://en.wikipedia.org/wiki/K\%C5\%8Dsaku_Yosida}{Kōsaku Yosida} (1909--1990) was a Japanese mathematician.}.
\end{definition}
\begin{lemma}{Properties of the Proximal Operator}{}
Let $\obj$ be a proper, convex, lsc function.
\begin{enumerate}
    \item The proximal operator $\prox_{\tau\obj}$ is single-valued.
    \item The proximal operator is non-expansive, i.e., $1$-Lipschitz continuous.
\end{enumerate}%
\end{lemma}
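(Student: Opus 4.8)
The plan is to treat the two claims separately, but in both cases everything reduces to elementary properties of the Moreau--Yosida functional $\Phi_{\tilde u}(w) := \obj(w) + \frac{1}{2\tau}\|w - \tilde u\|^2$, whose minimizers are by definition the points of $\prox_{\tau\obj}(\tilde u)$.

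For single-valuedness I would first check that $\Phi_{\tilde u}$ satisfies the hypotheses of the Existence-of-Solutions lemma on the closed set $C = U$. It is proper, since $\dom(\Phi_{\tilde u}) \supseteq \dom(\obj) \neq \emptyset$; it is lower-semicontinuous, being the sum of the lsc function $\obj$ and the continuous quadratic term; and it is $\tfrac1\tau$-strongly convex, because $\Phi_{\tilde u} - \tfrac{1}{2\tau}\|\cdot\|^2$ differs from $\obj$ only by an affine function and is therefore convex. For coercivity I would use the standard fact that a proper convex lsc function on $\R^n$ admits a continuous affine minorant $\obj(w) \geq \langle a, w\rangle + b$, so that $\Phi_{\tilde u}(w) \geq \langle a, w\rangle + b + \tfrac{1}{2\tau}\|w - \tilde u\|^2 \to +\infty$ as $\|w\| \to \infty$. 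The Existence-of-Solutions lemma then gives at least one minimizer, and the strict-convexity part of the lemma on convex functions (strong convexity implies strict convexity) gives at most one; hence $\prox_{\tau\obj}(\tilde u)$ is a single, well-defined point.

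For non-expansiveness I would isolate one elementary inequality: if $g$ is $\nu$-strongly convex with minimizer $p$, then $g(w) \geq g(p) + \tfrac{\nu}{2}\|w - p\|^2$ for every $w$. This follows by applying the strong-convexity inequality along the segment $[p, w]$, using $g((1-t)p + tw) \geq g(p)$, dividing by $t$, and letting $t \downarrow 0$; crucially this needs no differentiability of $g$, which matters since $\obj$ need not be smooth. Writing $p = \prox_{\tau\obj}(u)$ and $q = \prox_{\tau\obj}(v)$, I would then apply this bound (with $\nu = \tfrac1\tau$) to $\Phi_u$ evaluated at $q$ and to $\Phi_v$ evaluated at $p$, add the two resulting inequalities so that the $\obj$-terms cancel, multiply through by $2\tau$, and expand the squared norms. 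After the cancellations the inequality collapses to $\langle p - q,\, u - v\rangle \geq \|p - q\|^2$, and Cauchy--Schwarz yields $\|p - q\| \leq \|u - v\|$ (the case $p = q$ being trivial), which is exactly the $1$-Lipschitz estimate.

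The routine parts are the norm-expansion bookkeeping in the last step and the semicontinuity/convexity closure checks. The one place where I expect to have to be careful is the coercivity argument in the first part: it relies on the existence of an affine minorant for a proper convex lsc function, a fact the notes do not prove explicitly, so I would either cite it from the referenced convex-analysis texts or instead invoke directly that a proper, lsc, $\tfrac1\tau$-strongly convex function on $\R^n$ is automatically coercive.
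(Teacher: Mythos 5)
Your proof is correct; the paper itself leaves this lemma as an exercise, so there is no in-text argument to compare against, but what you give is the standard and complete route. Part 1 correctly reduces existence to the paper's Existence-of-Solutions lemma via the $\tfrac{1}{\tau}$-strong convexity of the Moreau--Yosida functional (coercivity via an affine minorant is legitimate in the finite-dimensional setting the section works in, and your fallback --- that a proper, lsc, strongly convex function is automatically coercive --- is equally acceptable), and uniqueness from strict convexity. Part 2 is the classical monotonicity argument; note that the inequality $\langle p - q, u - v\rangle \geq \|p - q\|^2$ you obtain is in fact \emph{firm} non-expansiveness, which is strictly stronger than the $1$-Lipschitz claim being asked for, so you prove more than required.
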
%
\begin{proof}%
Exercise.
\end{proof}%
\noindent%
Using this notion, we define the so-called \alert{proximal point algorithm}.
\begin{definition}{Proximal Point Algorithm}{}
Let $\obj$ be a proper, convex, lsc function and let the initial point $u^{(0)}$ as well as the step size $\tau$ be given. Then the iterates of the proximal point algorithm are defined as
\begin{align}
u^\kk_\PP = \prox_{\tau \obj}(u^\k_\PP).
\end{align}
\end{definition}%
\noindent%
When $\obj$ is differentiable, we can derive an expression for the proximal point using its gradient. In the non-differentiable convex case, we need a generalization of the gradient:
\begin{definition}{Subdifferential}{}
Let $\obj$ be a convex function. The \alert{subdifferential} of $\obj$ at a point $u$ is given as the set
\beq
\partial \obj(u):= 
\{
p\in U^*: \obj(\tilde{u}) - \obj(u) \geq \langle p, \tilde{u} - u\rangle\quad\forall \tilde{u}\in U\}
\eeq
and $p\in\partial \obj(u)$ is called \alert{subgradient} of $\obj$ at $u$.
\end{definition}
\begin{remark}{}{}
Subgradients are elements of the \alert{dual space}. In the finite-dimensional setting, i.e., $U=\R^d$, it holds $U^* = U$. Note that the definition also works in the infinite-dimensional setting; in this case we would have to consider the definition of the dual space more carefully as 
it does not coincide with the primal space in general.
\end{remark}
\begin{lemma}{Properties of the Subdifferential}{lem:propsubdiff}
Let $\obj$ be a proper, convex function.
\begin{enumerate}
\item $\partial \obj(u)$ is a closed convex set for every $u\in U$.
\item If $\obj$ is differentiable at $u$, then the subdifferential at $u$ is single-valued and $\partial\obj(u) = \{\nabla\obj(u)\}$. In this case, we also write $\partial \obj(u) = \nabla \obj(u)$.
\item For scalar $\alpha$, we have $\partial (\alpha \obj) = \alpha\, \partial \obj$.
\item Let $H$ be proper and convex . Then we have
\beq\partial H + \partial J\subset\partial (\obj + H).\eeq
If there exists $u^s\in\dom(\obj)\cap \dom(H)$ such that $\obj(u^s), H(u^s) < +\infty$ and at most one of $\obj$ and $H$ are discontinuous at $u^s$, 
then we have
\begin{align}
\partial (\obj + H) = \partial H + \partial J.
\end{align}
\item For any $\tau>0$ we have
\begin{align}
p\in \partial \obj(u) \Leftrightarrow 
u = \prox_{\tau \obj}(u+\tau p).
\end{align}
\end{enumerate}
\end{lemma}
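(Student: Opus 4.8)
The plan is to dispatch items~1--3 and~5 by direct manipulation of the defining inequality together with the existence and uniqueness facts recalled above, and to reserve the real work for the equality in item~4. For item~1 I would write $\partial\obj(u)=\bigcap_{\tilde u\in U}\ls p\in U^*: \ll p,\tilde u-u\rr\le\obj(\tilde u)-\obj(u)\rs$; each member of the intersection is either all of $U^*$ (when $\obj(\tilde u)=+\infty$) or a closed half-space, and an intersection of closed convex sets is closed and convex. For item~2, the differentiable-convexity characterization stated earlier gives $\nabla\obj(u)\in\partial\obj(u)$ immediately; conversely, for $p\in\partial\obj(u)$ and any direction $v$, inserting $\tilde u=u+tv$ with $t>0$, dividing by $t$ and letting $t\downarrow 0$ yields $\ll\nabla\obj(u),v\rr\ge\ll p,v\rr$, and replacing $v$ by $-v$ forces $p=\nabla\obj(u)$. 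For item~3 with $\alpha>0$ (the case $\alpha=0$ being immediate), dividing the inequality defining $\partial(\alpha\obj)(u)$ by $\alpha$ shows $p\in\partial(\alpha\obj)(u)$ iff $\alpha^{-1}p\in\partial\obj(u)$.

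For item~4, the inclusion $\partial H(u)+\partial\obj(u)\subseteq\partial(\obj+H)(u)$ follows by adding the subgradient inequalities for $\obj$ and $H$ at $u$. The reverse inclusion under the stated qualification is the Moreau--Rockafellar theorem, and I expect it to be the main obstacle, being the only step that needs a genuine separation argument rather than bookkeeping. The approach: fix $w\in\partial(\obj+H)(u)$; after subtracting the linear functional $\ll w,\cdot\rr$ from $\obj$ and normalizing both functions by constants, we may assume $w=0$ and $\obj(u)=H(u)=0$, so that $u$ minimizes $\obj+H$ and hence $-H(v)\le\obj(v)$ for all $v$. Work in $U\times\R$ with the convex sets $C_1=\ls(v,s):s\ge\obj(v)\rs$ and $C_2=\ls(v,s):s\le -H(v)\rs$. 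Since $H$ is continuous at the common point $u^s$, the point $(u^s,-H(u^s)-1)$ lies in $\operatorname{int} C_2$, so $C_2$ has nonempty interior, while $C_1\cap\operatorname{int} C_2=\emptyset$ because $(v,s)\in C_1$ forces $s\ge\obj(v)\ge -H(v)$. Hahn--Banach separation then yields a nonzero $(q,\beta)\in U^*\times\R$ and $\gamma\in\R$ separating $C_1$ and $C_2$; letting $s\to+\infty$ in $C_1$ gives $\beta\ge 0$, the case $\beta=0$ is excluded using $u^s\in\operatorname{int}\dom H$, and after normalizing $\beta=1$ and using $(u,0)\in C_1\cap C_2$ to get $\gamma=0$, the two separation inequalities evaluated along $(v,\obj(v))\in C_1$ and $(v,-H(v))\in C_2$ give $-q\in\partial\obj(u)$ and $q\in\partial H(u)$. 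Thus $0=(-q)+q\in\partial\obj(u)+\partial H(u)$, and undoing the reductions gives $w\in\partial\obj(u)+\partial H(u)$.

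For item~5, set $g(v):=\obj(v)+\tfrac{1}{2\tau}\norm{v-\tilde u}^2$; a proper convex lsc function is minorized by an affine map, so $g$ is proper, lower-semicontinuous, coercive and strictly convex, and by the existence lemma recalled above it has a minimizer, unique by strict convexity, which is $\prox_{\tau\obj}(\tilde u)$ by definition. From the definition of the subdifferential, $v^*$ minimizes $g$ iff $0\in\partial g(v^*)$. Since the quadratic term is differentiable everywhere, item~4 applies and $\partial g(v^*)=\partial\obj(v^*)+\tfrac{1}{\tau}(v^*-\tilde u)$, so $v^*=\prox_{\tau\obj}(\tilde u)$ iff $\tfrac{1}{\tau}(\tilde u-v^*)\in\partial\obj(v^*)$. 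Taking $\tilde u=u+\tau p$ and $v^*=u$, this is precisely $p\in\partial\obj(u)$, as claimed.
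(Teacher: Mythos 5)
Your argument is correct, but note that the notes themselves do not actually prove this lemma: the stated proof delegates the fourth item to \cite[Thm.~47.B]{zeidler2013nonlinear} and declares the remaining items exercises, so what you supply is a genuinely self-contained treatment rather than a variant of the paper's. Items 1--3 and 5 are handled the standard way and are fine: item 1 as an intersection of closed half-spaces, item 2 by the directional-derivative squeeze, item 3 by rescaling, and item 5 by combining existence and uniqueness of the minimizer of the Moreau--Yosida functional with the optimality condition $0\in\partial g(v^*)$ and the sum rule of item 4 (the quadratic term is finite and continuous everywhere, so the qualification is automatic; you do implicitly use lower semicontinuity of $\obj$, but that is needed anyway for $\prox_{\tau\obj}$ to be well defined, and the optimality condition you invoke is immediate from the definition of the subdifferential even though the notes only state it after this lemma). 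The real content is your epigraph/hypograph separation proof of the Moreau--Rockafellar equality in item 4, which is the classical route behind the cited theorem and is executed correctly: the reduction to $w=0$, the disjointness of $C_1$ and $\operatorname{int} C_2$, and the exclusion of a vertical separating functional via $u^s\in\dom(\obj)\cap\operatorname{int}\dom(H)$ all go through. One cosmetic slip: $(u,0)\in C_1\cap C_2$ gives $\gamma=\langle q,u\rangle$ rather than $\gamma=0$ (unless you also translate $u$ to the origin), but evaluating the two separation inequalities along $(v,\obj(v))$ and $(v,-H(v))$ still yields $-q\in\partial\obj(u)$ and $q\in\partial H(u)$, so the conclusion is unaffected.
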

\begin{proof}
For a proof of the fourth item, we refer to \cite[Thm. 47.B]{zeidler2013nonlinear}. The other statements are part of the exercises.
\end{proof}
For the sparsity promoting regularization we are interested in the subdifferential of the $\ell^1$-norm:
\begin{exercise}{}{}
The subdifferential of the $\ell^1$-norm is given by
\beq
p\in \partial\obj(u) \Leftrightarrow 
p_i \in 
\begin{cases}
\{\sign(u_i)\} &\text{if } u_i\neq 0,\\
[-1,1]&\text{if } u_i=0,
\end{cases}
\qquad \forall i=1,\ldots,d.
\eeq
\end{exercise}\noindent%
Using the notion of the subdifferential, we can generalize the optimality conditions which we have seen before.
\begin{lemma}{Optimality Condition}{}
Let $\obj$ be a proper, convex function, then $u^*$ is a global minimizer, iff
\beq
0\in \partial \obj(u^*).
\eeq
\end{lemma}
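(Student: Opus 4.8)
The statement is a direct consequence of the definition of the subdifferential, specialized to the subgradient $p = 0$. The plan is to simply unwind the definition in both directions; no auxiliary results are needed, and in fact neither convexity nor properness of $\obj$ is truly used for this particular equivalence — they merely form the standing context in which $\partial\obj$ is the ``right'' object.

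First I would prove the implication ``$u^*$ global minimizer $\Rightarrow 0\in\partial\obj(u^*)$''. By definition of a global minimizer, $\obj(\tilde u)\ge\obj(u^*)$ for all $\tilde u\in U$, hence
\[
\obj(\tilde u) - \obj(u^*) \ge 0 = \langle 0, \tilde u - u^*\rangle \qquad \forall \tilde u\in U,
\]
which is exactly the condition for $0$ to lie in $\partial\obj(u^*)$.

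Conversely, assuming $0\in\partial\obj(u^*)$, the defining inequality of the subdifferential with subgradient $p=0$ reads $\obj(\tilde u) - \obj(u^*) \ge \langle 0, \tilde u - u^*\rangle = 0$ for every $\tilde u\in U$, i.e. $\obj(\tilde u)\ge\obj(u^*)$ for all $\tilde u$, so $u^*$ is a global minimizer.

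There is no real obstacle here; the only thing to be slightly careful about is that the pairing $\langle 0, \tilde u - u^*\rangle$ genuinely vanishes (it does, for the zero functional in $U^*$), and that the quantifier ``$\forall\tilde u$'' in the definition of $\partial\obj$ matches the quantifier ``$\forall\tilde u$'' in the definition of global minimizer — which it does verbatim. This also makes transparent why $0\in\partial\obj(u^*)$ is the natural generalization of the smooth optimality condition $\nabla\obj(u^*)=0$: by item~2 of Lemma~\ref{lem:propsubdiff}, when $\obj$ is differentiable at $u^*$ the subdifferential is the singleton $\{\nabla\obj(u^*)\}$, and then $0\in\partial\obj(u^*)$ reduces to $\nabla\obj(u^*)=0$.
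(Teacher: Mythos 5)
Your proof is correct: the paper leaves this as an exercise, and your argument is exactly the intended one, namely specializing the defining inequality of $\partial\obj(u^*)$ to the subgradient $p=0$ and observing that it coincides verbatim with the definition of a global minimizer. Your side remark that neither convexity nor properness is actually used in this equivalence is also accurate.
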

\begin{proof}
Exercise.
\end{proof}\noindent%
Using this fact together with the sum rule we obtain the following expression for the proximal point,
\begin{align}
u = \prox_{\tau\obj}(\tilde{u})
&\qquad\Leftrightarrow\qquad
0 \in \partial \lb\obj(u) + \frac{1}{2\tau}\norm{u - \tilde{u}}^2\rb \\
&\qquad\Leftrightarrow\qquad
0 \in \frac{1}{\tau} (u - \tilde{u}) + \partial \obj(u)\\
&\qquad\Leftrightarrow\qquad
u\in (I + \tau \partial \obj)^{-1}(\tilde{u}).
\end{align}
For any set-valued operator $A:U\to2^U$ the operator $(I+ A)^{-1}$ is called the \alert{resolvent}.
\begin{exercise}{}{}
Show that the proximal operator for the following functionals is given as specified below.
\begin{enumerate}
\item For the \alert{indicator} function of a convex set $C$
\beq
\chi_C(u) := \begin{cases}
    0 &\text{if } u \in C\\ +\infty &\text{else},
\end{cases}
\eeq
the proximal point is given by
\beq
\prox_{\chi_C}(u) = \Pi_C(u),
\eeq
where $\Pi$ denotes the projection operator.
\item For the $\ell^1$-norm the proximal point is given as the \alert{soft shrinkage} operator
\beq
\prox_{\tau \norm{\cdot}_1}(u)_i = 
\operatorname{shrink}_\tau(u)_i = 
\begin{cases}
u_i - \tau &\text{if } u_i> \tau,\\
u_i + \tau &\text{if } u_i< \tau,\\
0 &\text{else}.
\end{cases}
\eeq
See also \cref{fig:shrink}.
\end{enumerate}
\end{exercise}
\begin{figure}
\centering
\includegraphics[width=0.5\linewidth]{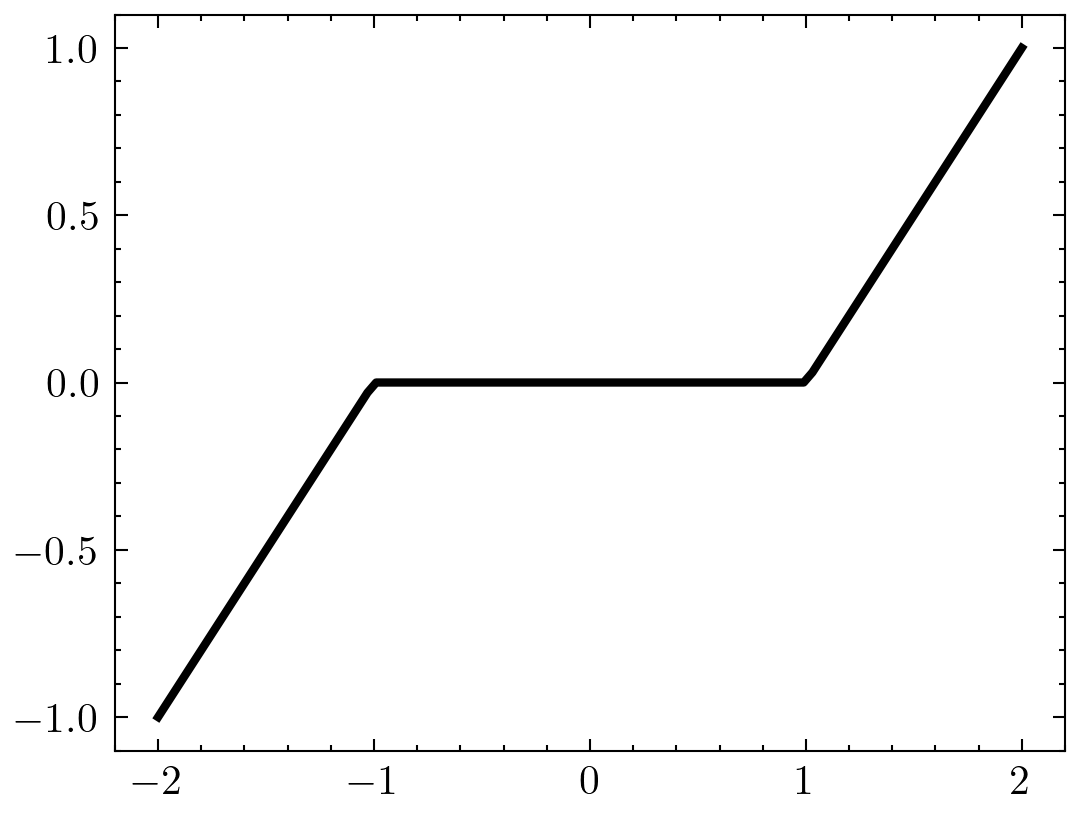}
\caption{The shrinkage operator for $\tau=1$.}
\label{fig:shrink}
\end{figure}
In the following, we give a convergence result for the proximal point algorithm.
\begin{theorem}{Convergence of the Proximal Point Method}{}
Let $\obj$ be a proper, convex, lsc functional with global minima in the set $S\neq\emptyset$.
\begin{enumerate}
\item For any $u^*\in S$ we have that 
\beq
\obj(u^\k_\PP) - \obj(u^*) \leq \frac{\norm{u^{(0)} - u^*}^2}{2\tau k}.
\eeq
\item If additionally $\obj$ is $\nu$-strongly convex, we have
\beq
\norm{u^\k - u^*} = (1+\nu \tau)^k \norm{u^{(0)} - u^*}.
\eeq
\end{enumerate}
\end{theorem}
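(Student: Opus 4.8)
\emph{Proof plan.} The plan is to run the standard telescoping argument for proximal-type methods. First, the iteration is well-defined: by the preceding lemma on properties of the proximal operator, $\prox_{\tau\obj}$ is single-valued on all of $U$, so $u^{(k+1)} = \prox_{\tau\obj}(u^{(k)})$ exists and is unique for every $k$. The structural fact driving everything is the optimality condition for the proximal subproblem: since $u^{(k+1)}$ minimizes $u \mapsto \obj(u) + \frac{1}{2\tau}\|u - u^{(k)}\|^2$, the optimality condition together with the sum rule from \cref{lem:propsubdiff} gives
\beq
p^{(k+1)} := \tfrac{1}{\tau}\bigl(u^{(k)} - u^{(k+1)}\bigr) \in \partial\obj\bigl(u^{(k+1)}\bigr).
\eeq

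For part (1), I would test the subgradient inequality for $p^{(k+1)} \in \partial\obj(u^{(k+1)})$ against $u^*$, obtaining $\obj(u^{(k+1)}) - \obj(u^*) \leq \frac{1}{\tau}\langle u^{(k)} - u^{(k+1)}, u^{(k+1)} - u^*\rangle$. Then I apply the polarization identity $\langle a-b, b-c\rangle = \tfrac12(\|a-c\|^2 - \|a-b\|^2 - \|b-c\|^2)$ with $a=u^{(k)}$, $b=u^{(k+1)}$, $c=u^*$ and discard the nonpositive term $-\|u^{(k)}-u^{(k+1)}\|^2$ to get
\beq
\obj\bigl(u^{(k+1)}\bigr) - \obj(u^*) \leq \frac{1}{2\tau}\Bigl(\|u^{(k)} - u^*\|^2 - \|u^{(k+1)} - u^*\|^2\Bigr).
\eeq
Summing over $j = 0, \ldots, k-1$ telescopes the right-hand side to at most $\frac{1}{2\tau}\|u^{(0)} - u^*\|^2$. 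To turn the sum into $k$ copies of the final gap I use that $\obj(u^{(k)})$ is non-increasing: comparing $u^{(k+1)}$ with the feasible competitor $u^{(k)}$ in the proximal subproblem yields $\obj(u^{(k+1)}) + \frac{1}{2\tau}\|u^{(k+1)} - u^{(k)}\|^2 \leq \obj(u^{(k)})$, hence $\obj(u^{(k+1)}) \leq \obj(u^{(k)})$. Therefore $k\bigl(\obj(u^{(k)}) - \obj(u^*)\bigr) \leq \sum_{j=0}^{k-1}\bigl(\obj(u^{(j+1)}) - \obj(u^*)\bigr) \leq \frac{1}{2\tau}\|u^{(0)} - u^*\|^2$, which is the claimed rate.

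For part (2), $\nu$-strong convexity upgrades the subgradient inequality to $\obj(u^*) \geq \obj(u^{(k+1)}) + \langle p^{(k+1)}, u^* - u^{(k+1)}\rangle + \frac{\nu}{2}\|u^* - u^{(k+1)}\|^2$. Since $\obj(u^{(k+1)}) \geq \obj(u^*)$, inserting $p^{(k+1)}$ and again using the polarization identity (dropping $-\|u^{(k)} - u^{(k+1)}\|^2 \leq 0$) gives $(1 + \nu\tau)\|u^{(k+1)} - u^*\|^2 \leq \|u^{(k)} - u^*\|^2$, and induction on $k$ yields $\|u^{(k)} - u^*\|^2 \leq (1+\nu\tau)^{-k}\|u^{(0)} - u^*\|^2$, i.e. linear convergence with per-step contraction factor $(1+\nu\tau)^{-1/2}$ for the distance itself — note that the exponent in the displayed statement should carry a minus sign, as the distance to the minimizer contracts. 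The argument is otherwise routine; the step I would be most careful about is the monotone decrease of $\obj(u^{(k)})$ (needed to convert the telescoped sum into the $1/k$ bound) and verifying that the sum rule of \cref{lem:propsubdiff} applies to $\obj(\cdot) + \frac{1}{2\tau}\|\cdot - u^{(k)}\|^2$, which it does since the quadratic term is finite and continuous everywhere.
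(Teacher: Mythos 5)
Your proof of part (1) is correct and is essentially identical to the paper's argument: the same optimality condition $\tfrac{1}{\tau}(u^{(k)}-u^{(k+1)})\in\partial\obj(u^{(k+1)})$, the same polarization identity, the same discarding of $-\norm{u^{(k)}-u^{(k+1)}}^2$, and the same combination of telescoping with the monotone decrease of $\obj(u^{(k)})$. You also correctly flag the sign typo in the statement of part (2) (the exponent must be negative, and the ``$=$'' should be ``$\leq$'').

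For part (2), however, your argument proves a strictly weaker rate than the one claimed. You obtain $\norm{u^{(k+1)}-u^*}^2\leq(1+\nu\tau)^{-1}\norm{u^{(k)}-u^*}^2$, i.e.\ a per-step contraction of $(1+\nu\tau)^{-1/2}$ for the distance, whereas the intended bound is $\norm{u^{(k+1)}-u^*}\leq(1+\nu\tau)^{-1}\norm{u^{(k)}-u^*}$. You lose the factor in two places. First, pairing the strong-convexity inequality at $u^{(k+1)}$ with the crude bound $\obj(u^{(k+1)})\geq\obj(u^*)$ throws away the strong-convexity term at $u^*$; using the two-sided strong monotonicity of $\partial\obj$ (equivalently, also invoking $\obj(u^{(k+1)})\geq\obj(u^*)+\tfrac{\nu}{2}\norm{u^{(k+1)}-u^*}^2$, valid since $0\in\partial\obj(u^*)$) yields
\beq
\ll u^{(k)}-u^{(k+1)},\, u^{(k+1)}-u^*\rr \;\geq\; \nu\tau\,\norm{u^{(k+1)}-u^*}^2,
\eeq
with coefficient $\nu$ rather than $\nu/2$. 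Second, you discard $\norm{u^{(k)}-u^{(k+1)}}^2$; instead, Cauchy--Schwarz applied to the displayed inequality gives $\norm{u^{(k)}-u^{(k+1)}}\geq\nu\tau\norm{u^{(k+1)}-u^*}$, so keeping that term and inserting this lower bound produces $\norm{u^{(k)}-u^*}^2\geq(1+2\nu\tau+(\nu\tau)^2)\norm{u^{(k+1)}-u^*}^2=(1+\nu\tau)^2\norm{u^{(k+1)}-u^*}^2$, which is exactly the completed square the paper uses and which delivers the full rate $(1+\nu\tau)^{-k}$ for the (unsquared) distance.
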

\begin{proof}
\textbf{Ad 1.}: Directly from the definition, we have that
\beq
\frac{1}{2\tau}\norm{u^\kk_\PP - u^\k_\PP}^2 + \obj(u_\PP^\kk) \leq \frac{1}{2\tau}\norm{u_\PP^\k - u_\PP^\k}^2 + \obj(u^\k_\PP) = \obj(u^\k_\PP),
\eeq
which yields that $J(u^\k)$ is a non-increasing sequence. By the optimality condition we know that
\beq
0 \in \partial \obj^{u^\k_\PP,\tau}(u^\kk_\PP) 
\Leftrightarrow
-\frac{u^\kk_\PP - u^\k_\PP}{\tau} \in \partial \obj(u^\kk_\PP)
\eeq
and, therefore, using the definition of the subgradient, we get
\beq
\obj(u) \geq \obj(u^\kk_\PP) - 
\left\langle
\frac{u^\kk_\PP - u^\k_\PP}{\tau},
u - u^\kk_\PP
\right\rangle,\qquad \forall u \in U.
\eeq
This yields
\begin{align}
2\tau(\obj(u^\kk_\PP) - \obj(u))
&\leq 
2\langle
u^\kk_\PP - u^\k_\PP,
u - u^\kk_\PP
\rangle
\\
&=
\norm{u - u^\k_\PP}^2 - \norm{u - u^\kk_\PP}^2 - \norm{u^\kk_\PP - u^\k_\PP}^2\\ &\leq
\norm{u - u^\k_\PP}^2 - \norm{u - u^\kk_\PP}^2.
\end{align}
Summing the above inequality up to step $K$ we arrive at
\begin{align}
2\tau \sum_{k=0}^K (\obj(u^\kk_\PP) - \obj(u)) \leq \norm{u - u^{(0)}}^2
\end{align}
and using that $\obj(u^\k)$ is non-increasing we obtain
\beq
\obj(u^K_\PP) - \obj(u) \leq 
\frac{\norm{u - u^{(0)}}^2}{2\tau K}.
\eeq
Since $u$ was arbitrary, the statement follows.\\
\textbf{Ad 2.}: Let $u^*$ be the global minimizer with $0\in\partial \obj(u^*)$. From the strong convexity and the fact that $1/\tau(u^\kk_\PP-u^\k_\PP)\in\partial J(u^\kk_\PP$ we have that
\begin{align}\label{eq:strconv}
\langle 
u^\kk_\PP-u^\k_\PP - 0,
u^\kk_\PP - u^*
\rangle
\geq
\nu \tau 
\norm{u^\kk_\PP - u^*}^2,
\end{align}
which directly implies
\beq
\norm{u^\kk_\PP-u^\k_\PP} \geq 
\nu \tau 
\norm{u^\kk_\PP - u^*}.
\eeq
Using that $2\langle u, \tilde{u}\rangle = \norm{u+\tilde{u}}^2 - \norm{u}^2 - \norm{\tilde{u}}^2$, we obtain from \cref{eq:strconv}
\begin{align}
\norm{u^* - u^\k_\PP}^2  &\geq (1+ 2\nu \tau)\norm{u^\kk_\PP - u^*}^2
+
\norm{u^\kk_\PP-u^\k_\PP}^2\\
&\geq
(1+ 2\nu \tau + (\nu\tau)^2)\norm{u^\kk_\PP - u^*}^2\\
&=
(1+\nu \tau)^2\norm{u^\kk_\PP - u^*}^2,
\end{align}
which yields the desired inequality by induction.
\end{proof}\noindent%
Having introduced the abstract framework of convex optimization methods, we can go back to our actual problem, namely solving
\beq
\argmin_u \ls \frac{1}{2}\norm{Au - f}^2_2 + \regp \norm{u}_1 \rs.
\eeq
A possible approach would be to directly apply the proximal point method to the above functional. However, there is no guarantee that we can easily compute the prox of the sum. A better approach is to use so-called \alert{splitting} techniques. Assume that our objective is given as the sum
\begin{align*}
\obj(u) = H(u) + G(u),
\end{align*}
where $H$ is differentiable and $G$ is convex. We now present two strategies to derive the same splitting method, that yields so-called \alert{proximal gradient descent}:
\begin{enumerate}
\item While the prox of $H+G$ might not be directly computable, we can replace $H$ by a linearization around the current iterate $u\mapsto \langle \nabla H(u^\k), u\rangle$. Then we can compute
\begin{align}
&\argmin_{u} \ls H(u) + G(u) + \frac{1}{2\tau} \norm{u - u^\k}^2_2 \rs \\
\approx
&\argmin_{u} \ls G(u) + \ll \nabla H\lb u^\k\rb, u\rr + 
\frac{1}{2\tau} \norm{u}^2 - \frac{1}{\tau} \ll u, u^\k \rr 
+
\frac{1}{2\tau} \norm{u^\k}^2\rs \\
=
&\argmin_{u} \ls G(u) + 
\frac{1}{2\tau} \norm{u}^2 - \frac{1}{\tau} \ll u, u^\k -\tau \nabla H\lb u^\k\rb\rr 
+
\frac{1}{2\tau} \norm{u^\k}^2\rs \\
=&\argmin_{u} \ls G(u) + 
\frac{1}{2\tau} \norm{u}^2 
- 
\frac{1}{\tau} \ll u, u^\k -\tau \nabla H\lb u^\k\rb\rr 
+
\frac{1}{2\tau} \norm{u^\k -\tau \nabla H\lb u^\k\rb}^2\rs \\
=
&\argmin_{u} \ls G(u) + 
\frac{1}{2\tau} \norm{u}^2 
- 
\frac{1}{\tau} \ll u, u^\k -\tau \nabla H\lb u^\k\rb\rr 
+
\frac{1}{2\tau} \norm{u^\k}^2\rs \\
=&\argmin_{u} \ls G(u) + 
\frac{1}{2\tau} \norm{u - \lb u^\k -\tau \nabla H\lb u^\k\rb\rb}^2\rs \\
=&
\prox_{\tau G} (u^\k -\tau \nabla H(u^\k)).
\end{align}
\item We search for $u^*$ such that $0\in\partial \obj(u^*).$ Using the sum rule we can instead search for $u^*$ and $p^*$ such that
\begin{align}
p^*&\in \partial H(u^*)\qquad\text{and}\qquad -p^*\in\partial G(u^*)\\
\Leftrightarrow
p^*&\in \partial H(u^*)\qquad\text{and}\qquad u^* = \prox_{\tau G}(u^* - \tau p^*),
\end{align}
where we used \cref{lem:propsubdiff}. This yields the so-called \alert{forward-backward splitting} method. Note, that this derivation works also for non-differentiable $H$, where in each step we would have to choose a subgradient. Unfortunately, choosing appropriate subgradients can be computationally difficult and, therefore, we restrict ourselves to cases where $H$ is differentiable.
\end{enumerate}
\begin{definition}{Proximal Gradient Descent}{}
Given a differentiable functional $H$, a proper, convex, lsc functional $G$, a step size $\tau$ and an initial iterate $u^{(0)}_{\PGD}$, the iterates of proximal gradient descent are defined as
\beq
u^\kk_\PGD = \prox_{\tau G}\lb u_\PGD^\k - \tau \nabla H\lb u^\k_\PGD\rb\rb.
\eeq
\end{definition}
\begin{theorem}{Convergence of PGD}{thm:convpgd}
Let $H,G$ be proper, convex and lsc, and let $H$ be differentiable with $L$-Lipschitz continuous gradient. Then for a step size $\tau \leq 1/L$ and any global minimizer $u^*$ we have that
\begin{align}
\obj(u^k_\PGD) - \obj(u^*) \leq
\frac{\norm{u^{(0)} - u^*}^2}{2 \tau\, k}.
\end{align}
\end{theorem}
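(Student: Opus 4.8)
The plan is to follow the standard descent-lemma argument adapted to the proximal setting. First I would establish the \emph{descent lemma} for $H$: since $\nabla H$ is $L$-Lipschitz, for all $u,v$ we have $H(u) \leq H(v) + \langle \nabla H(v), u-v\rangle + \frac{L}{2}\norm{u-v}^2$. Applying this with $v = u^{(k)}_{\PGD}$ and $u = u^{(k+1)}_{\PGD}$, and using $\tau \leq 1/L$ so that $\frac{L}{2} \leq \frac{1}{2\tau}$, yields
\beq
H(u^{(k+1)}) \leq H(u^{(k)}) + \langle \nabla H(u^{(k)}), u^{(k+1)} - u^{(k)}\rangle + \frac{1}{2\tau}\norm{u^{(k+1)} - u^{(k)}}^2.
\eeq

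Next I would exploit the optimality characterization of the proximal step. By definition $u^{(k+1)} = \prox_{\tau G}(u^{(k)} - \tau\nabla H(u^{(k)}))$, so by the subdifferential characterization of the prox (item~5 of \cref{lem:propsubdiff}) we get $\frac{1}{\tau}(u^{(k)} - \tau\nabla H(u^{(k)}) - u^{(k+1)}) \in \partial G(u^{(k+1)})$. The subgradient inequality then gives, for every $u$,
\beq
G(u) \geq G(u^{(k+1)}) + \Big\langle \frac{1}{\tau}(u^{(k)} - u^{(k+1)}) - \nabla H(u^{(k)}),\, u - u^{(k+1)}\Big\rangle.
\eeq
Combining this (evaluated at an arbitrary $u$) with the descent lemma and the convexity inequality $H(u) \geq H(u^{(k)}) + \langle \nabla H(u^{(k)}), u - u^{(k)}\rangle$, the gradient terms telescope and cancel, leaving an estimate of the form
\beq
\obj(u^{(k+1)}) - \obj(u) \leq \frac{1}{2\tau}\Big(\norm{u - u^{(k)}}^2 - \norm{u - u^{(k+1)}}^2\Big) - \frac{1}{2\tau}\norm{u^{(k+1)} - u^{(k)}}^2,
\eeq
exactly as in the proximal point proof. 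In particular, setting $u = u^{(k)}$ shows $\obj(u^{(k)})$ is non-increasing.

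Finally, I would set $u = u^*$ a global minimizer, drop the nonpositive last term, and sum from $k=0$ to $k-1$; the right-hand side telescopes to $\frac{1}{2\tau}\norm{u^* - u^{(0)}}^2$. Since $\obj(u^{(j)})$ is non-increasing, $k\,(\obj(u^{(k)}) - \obj(u^*)) \leq \sum_{j=1}^{k}(\obj(u^{(j)}) - \obj(u^*)) \leq \frac{1}{2\tau}\norm{u^{(0)} - u^*}^2$, which is the claimed bound. The main obstacle is the careful bookkeeping in the combination step: one must verify that the inner-product terms involving $\nabla H(u^{(k)})$ and the $\frac{1}{\tau}(u^{(k)}-u^{(k+1)})$ term recombine cleanly into the squared-norm differences via the identity $2\langle a,b\rangle = \norm{a+b}^2 - \norm{a}^2 - \norm{b}^2$, together with correctly using $\tau \leq 1/L$ to absorb the quadratic remainder from the descent lemma. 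Everything else is routine.
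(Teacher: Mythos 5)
Your proposal is correct and is the standard argument; the paper in fact states \cref{thm:convpgd} without proof, and your derivation is exactly the natural extension of the paper's own proof of the proximal point convergence (replacing the pure prox optimality condition by the combination of prox optimality, convexity of $H$, and the descent lemma with $\tau\leq 1/L$). One small bookkeeping remark: once the descent-lemma remainder $\tfrac{1}{2\tau}\norm{u^{(k+1)}-u^{(k)}}^2$ is combined with the identity $2\langle a,b\rangle=\norm{a+b}^2-\norm{a}^2-\norm{b}^2$, it cancels the $-\tfrac{1}{2\tau}\norm{u^{(k+1)}-u^{(k)}}^2$ term exactly, so the combined estimate the argument actually delivers is $\obj(u^{(k+1)})-\obj(u)\leq\tfrac{1}{2\tau}\bigl(\norm{u-u^{(k)}}^2-\norm{u-u^{(k+1)}}^2\bigr)$ \emph{without} the additional negative term you display (that extra term only survives in the pure proximal point case, or in the weaker form $-(\tfrac{1}{2\tau}-\tfrac{L}{2})\norm{u^{(k+1)}-u^{(k)}}^2$ for $\tau<1/L$). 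Since you drop that term before telescoping anyway, and monotonicity of $\obj(u^{(k)})$ still follows by setting $u=u^{(k)}$ in the corrected inequality, the final bound is unaffected.
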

%
%
%
In our special case, where $H(u) = \frac{1}{2}\norm{Au -f}^2_2$ and $G(u)=\regp \norm{u}_1$, we obtain the so-called \textbf{I}terated \textbf{S}oft \textbf{T}hresholding \textbf{A}lgorithm (ISTA), see e.g. \cite{daubechies2004iterative,chambolle1998nonlinear}, where the iterates are given by
\beq
u^\kk_\ISTA = \operatorname{shrink}_{\tau\regp}(u^\k_\ISTA - \tau A^T(Au_\ISTA^\k - f)).
\eeq
In \cref{fig:ISTA} we use a deconvolution toy example to demonstrate the effect the choice of regularizer and optimization approach have on the result. While $\ell^2$-regularization leads to a noise-free but blurry result, $\ell^1$-regularization returns a noise-free but also much sparser result. 
\begin{remark}{}{}
The accelerated version of ISTA is called Fast ISTA or FISTA, see \cite{beck2009fast}.
\end{remark}
If it is reasonable to assume that our solution is sparse, applying ISTA directly can have a desirable regularization effect. However, in many scenarios, this prior belief might fail. If we consider the situation of the phantom in \cref{fig:Tik}, a pure sparsity prior does not seem sensible. However, often we can find another representation of our object in which it is sparse. One classical example for images are so-called \alert{wavelets}, which we briefly remark upon. We refer to \cite{daubechies1992ten} for a full introduction into the topic. From a high-level perspective, we consider a linear and orthogonal matrix $W$, such that we can make the assumption of $Wu$ being sparse. This yields the optimization problem
\begin{align}
\argmin_{u} \ls \frac{1}{2}\norm{Au - \fd}^2 + \regp \norm{Wu}_1 \rs.
\end{align}
In order to apply proximal gradient descent, we need to compute the prox of the regularizer, which is given as
\begin{align}
\prox_{\norm{W\cdot}_1}(\tilde{u}) 
&= 
\argmin_u \ls \frac{1}{2}\norm{u - \tilde{u}}^2 + \norm{Wu}_1 \rs
= W^T\left(\argmin_z \frac{1}{2}\norm{W^T z - \tilde{u}}^2 + \norm{z}_1\right)\\
&=
W^T\left(\argmin_z \ls \frac{1}{2}\norm{z - W\tilde{u}}^2 + \norm{z}_1 \rs \right)
=W^T\left( \operatorname{shrink}_1(W\tilde{u})\right),
\end{align}
where we used the substitution $z=Wu$. Equivalently, we can substitute the variable $z=Wu$ in the problem and consider
\begin{align}
W^T\left(\argmin_{z} \ls \frac{1}{2}\norm{AW^T z - \fd}^2 + \regp \norm{z}_1 \rs \right).
\end{align}
In this form, we can directly apply the ISTA algorithm. In \cref{fig:wavelet-tv}, we can observe the differences between plain $\ell^2$- and wavelet- regularization. The wavelet used in this example, belongs to the family of Daubechies wavelets\footnote{\href{https://en.wikipedia.org/wiki/Ingrid_Daubechies}{Ingrid Daubechies} (born in 1954) is a Belgian-American physicist and mathematician.}.
\begin{figure}
\begin{subfigure}{.32\textwidth}
\includegraphics[width=\textwidth]{fig/sparsity/Sino_Res_0-55_3.png}
\caption{$\ell^2$-regularization}
\end{subfigure}%
\begin{subfigure}{.32\textwidth}
\includegraphics[width=\textwidth]{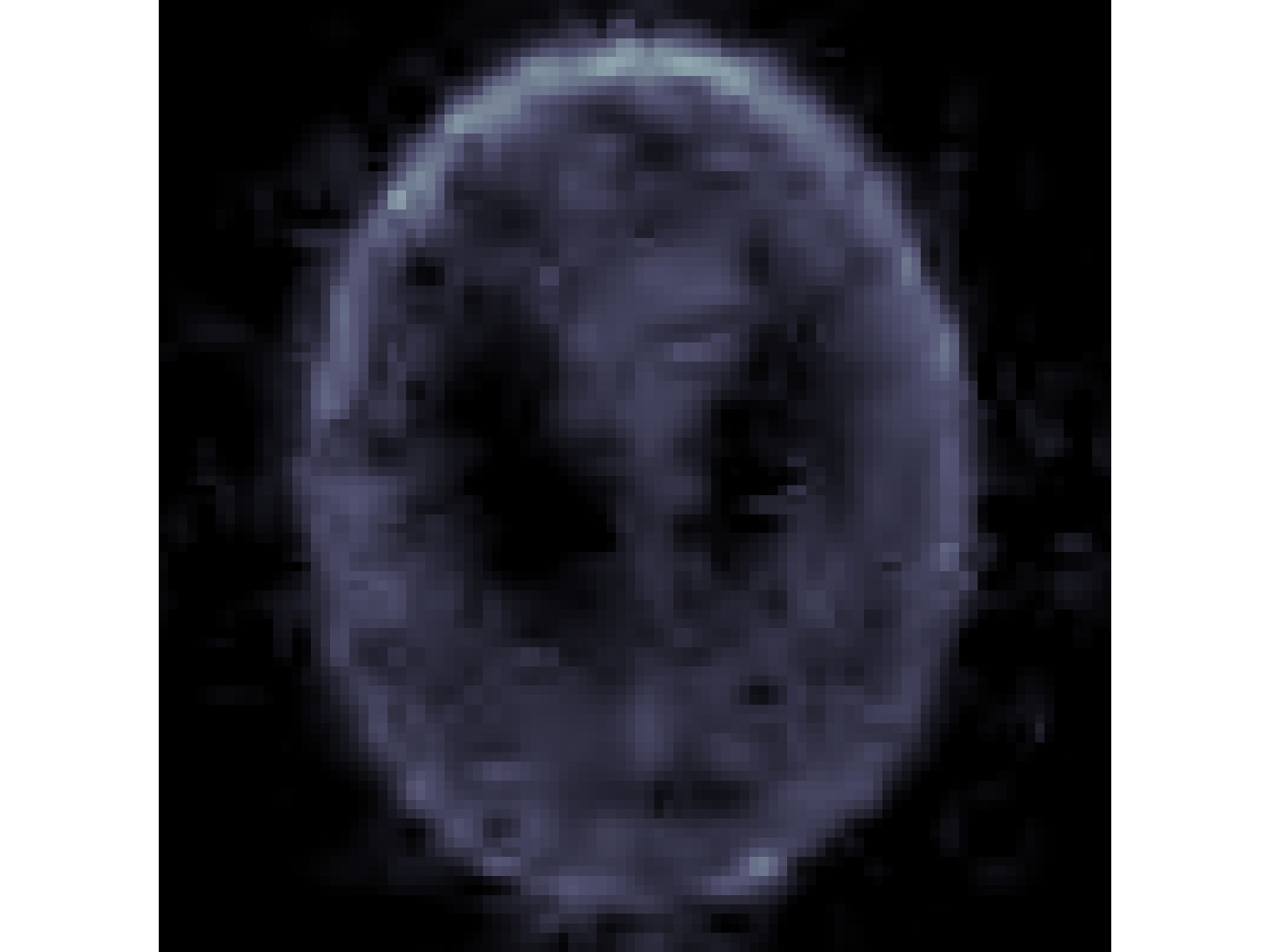}
\caption{wavelet-regularization}
\end{subfigure}%
\begin{subfigure}{.32\textwidth}
\includegraphics[width=\textwidth]{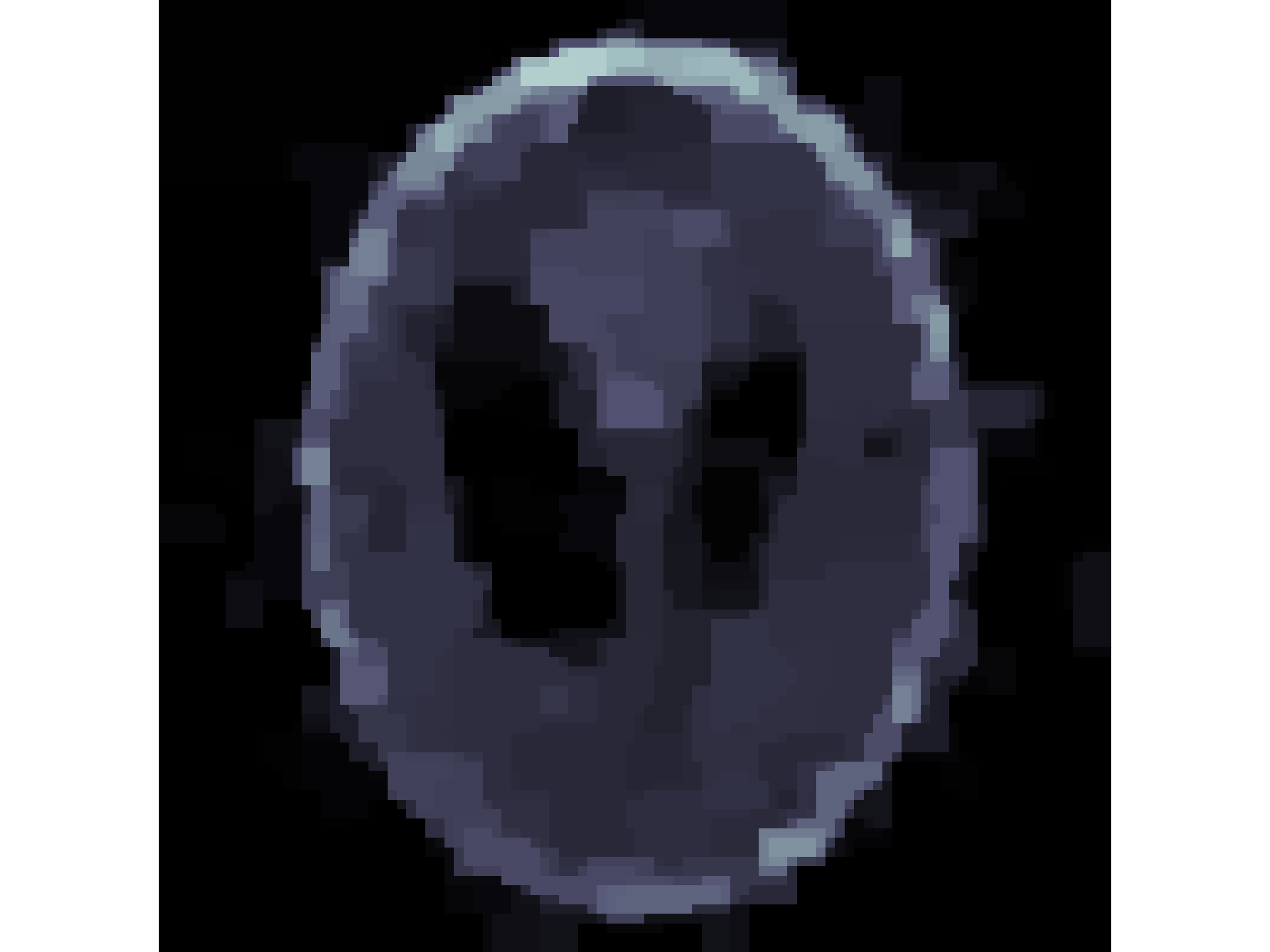}
\caption{TV-regularization}
\end{subfigure}%
\caption{We compare the reconstruction results of a computed tomography toy example \cref{fig:Tika}-\cref{fig:Tikb} achieved using GD applied on $\ell^2$-based Tikhonov regularization (a), PGD applied on $\ell^1$-based wavelet regularization (b) using wavelets belonging to the family of Daubechies wavelets and Chambolle--Pock applied to TV-regularization (c). }
\label{fig:wavelet-tv}
\end{figure}
\subsection{Total variation and the ROF model}
In the previous section, we have explored sparsity based regularization. In situations in which we could not assume sparsity of the image directly, we have considered sparsity in a different representation using the concepts of wavelets. We now want to further follow this approach and to consider a sparsity concept which is especially suited for images. Note that images are composed of different features: relative constant areas for different objects and surfaces and jumps around the edges. To construct a regularizer based on this insight, we introduce the gradient of an image $u\in\R^{m,n}$, 
\begin{gather}
\nabla: \R^{m,n} \to \R^{2,m,n}\\
(\nabla u)_{1, i, j} := u_{i+1,j} - u_{i, j}\quad\text{ for } 1\leq i < m, \quad (\nabla u)_{1, m, j} = 0\\ 
(\nabla u)_{2, i, j} := u_{i,j+1} - u_{i, j}\quad\text{ for } 1\leq j < n, \quad (\nabla u)_{2, i, n} = 0.
\end{gather}
%
%
Assuming piecewise constant areas in an image is equivalent to assuming sparsity in the image gradient. This leads us to the following regularization functional.
\begin{definition}{Total Variation}{}
For an image $u\in\R^{m,n}$ the discrete \alert{total variation} is defined as
\begin{align*}
\TV(u) := \norm{\nabla u}_1.
\end{align*}
\end{definition}
\begin{remark}{}{}
The TV functional is not only studied in the discrete setting. There exists a rich mathematical study, involving this functional applied to functions $u$ living in an infinite-dimensional space, see for example \cite{acar1994analysis}.
\end{remark}\noindent%
Employing this regularizer leads us to \alert{total variation} denoising, or for $A=Id$ the so-called \alert{Rudin--Osher--Fatemi (ROF)}\footnote{%
\href{}{Leonid Rudin} is an American computer scientist.
\href{https://en.wikipedia.org/wiki/Stanley_Osher}{Stanley Osher} (born in 1942) is an American mathematician.

} %
model \cite{rudin1992nonlinear},
\begin{align}\label{eq:ROF}\tag{ROF}
\argmin_u \frac{1}{2} \norm{A u - \fd}^2_2 
+ \regp \TV(u).
\end{align}
It turns out that computing the prox of the TV functional is a non-trivial task, and we do not have a closed-form expression. Therefore, we can not apply the same reasoning as for the wavelet case, to apply ISTA directly. In the following, we want to deduce an algorithm to solve this problem. To do so, we start with the abstract formulation 
\begin{align}
\argmin_u \ls G(u) + H(Au) \rs,
\label{eq:pproblem}\tag{P}
\end{align}
where $G,H$ are proper, convex functions, and $A$ is a linear operator.

To obtain a suitable algorithm for the solution of the above problem, we introduce the concept of \alert{duality}, which can be traced back to \cite{fenchel2013conjugate}\footnote{\alert{Werner Fenchel} (1905--1988) was a German-Danish mathematician.}.
\begin{definition}{Fenchel Conjugate}{}
Given a function $\obj:U\to\Rc$ defined on a topological vector space $U$, its \alert{convex conjugate} or also \alert{Fenchel conjugate}  $\obj^*:U^*\to\Rc$ is defined as
\begin{align*}
\obj^*(p) := 
\sup_{u} \ls \langle p, u \rangle - \obj(u) \rs.
\end{align*}
\comment{
Let $\obj:U\to(-\infty, +\infty]$ be a proper and convex functional, then we denote by
\begin{align*}
\obj^*(p) := 
\sup_{u} \langle p, u \rangle - \obj(u)
\end{align*}
the so-called \alert{Fenchel conjugate}.}
\end{definition}

\begin{example}{Fenchel Conjugates}{}
\begin{enumerate}
\item 
For any norm $\obj(u) = \norm{u}$ the Fenchel conjugate is a indicator function of the respective dual norm, i.e.,
\begin{align*}
\obj^* = \chi_{\norm{\cdot}_*\leq 1}.
\end{align*}
\item In case of the squared $\ell^2$-norm $\obj = \frac{1}{2}\norm{\cdot}^2$, it holds that
\begin{align*}
\obj^* = \obj.
\end{align*}
\end{enumerate}
\end{example}
In the following we list some important and well-known properties of the conjugate. The proof is left as an exercise.
\begin{lemma}{Properties of the Conjugate}{}
\begin{enumerate}
\item For any function $\obj:U\to(-\infty,\infty]$, the convex conjugate is convex.
\item It holds that $\obj(u) + \obj^*(p) \geq \langle p, u\rangle$ for any $p\in U^*, u \in U$.
\item For scalar $\regp>0$, it holds that $(\regp \obj)^*(p) = \regp\obj(p/\regp)$.
\item For a vector $\tilde{u}\in U$ we have that $[u\mapsto \obj(u-\tilde{u})]^*(p) = \obj^*(p) + \langle p, \tilde{u}\rangle.$
\item We have that $p\in\partial \obj(u)$ is equivalent to $\obj(u) + \obj^*(p) = \langle p, u \rangle$.
\end{enumerate}
\end{lemma}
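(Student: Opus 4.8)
The plan is to verify all five properties directly from the definition $\obj^*(p) = \sup_{u}\{\langle p, u\rangle - \obj(u)\}$; each one reduces to an elementary manipulation of the supremum, so the lemma is essentially bookkeeping and I would carry the items out in the order listed.

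For item~1, I would note that for each fixed $u$ the map $p \mapsto \langle p, u\rangle - \obj(u)$ is affine (hence convex) on $U^*$ --- identically $+\infty$ if $\obj(u)=+\infty$ --- and that a pointwise supremum of convex functions is convex. Item~2 is then immediate: by definition of the supremum, $\obj^*(p) \geq \langle p, u\rangle - \obj(u)$ for every $u$ and $p$, and rearranging gives the Fenchel--Young inequality $\obj(u) + \obj^*(p) \geq \langle p, u\rangle$.

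Items~3 and~4 follow by scaling and shifting the supremum. For item~3, since $\regp > 0$ one factors it out: $(\regp\obj)^*(p) = \sup_u\{\langle p, u\rangle - \regp\obj(u)\} = \regp\sup_u\{\langle p/\regp, u\rangle - \obj(u)\} = \regp\,\obj^*(p/\regp)$ (so the right-hand side is $\regp\,\obj^*(p/\regp)$). For item~4, the change of variables $v = u - \tilde u$ gives $[u\mapsto\obj(u-\tilde u)]^*(p) = \sup_v\{\langle p, v+\tilde u\rangle - \obj(v)\} = \langle p,\tilde u\rangle + \obj^*(p)$. Item~5 is the characterization of equality in Fenchel--Young: $p\in\partial\obj(u)$ says $\langle p,\tilde u\rangle - \obj(\tilde u) \leq \langle p,u\rangle - \obj(u)$ for all $\tilde u$, i.e. the quantity $\langle p,u\rangle - \obj(u)$ realizes the supremum defining $\obj^*(p)$, which is exactly $\obj(u)+\obj^*(p)=\langle p,u\rangle$; conversely, subtracting this equality from item~2 applied at an arbitrary $\tilde u$ recovers the subgradient inequality.

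The only point requiring mild care --- and the closest thing to an obstacle --- is the extended-real arithmetic: one should check that the identities still make sense when $\obj$ or $\obj^*$ takes the value $+\infty$, and note that $p\in\partial\obj(u)$ forces $\obj(u)<+\infty$ so that the rearrangement in item~5 is legitimate. Beyond that, every step is a routine substitution and I do not anticipate any real difficulty.
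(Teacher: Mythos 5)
Your proposal is correct. The paper leaves this lemma as an exercise, so there is no proof to compare against; your argument is the standard one (pointwise supremum of affine functions for item~1, the definition of the supremum for Fenchel--Young, scaling and translation of the supremum for items~3 and~4, and equality in Fenchel--Young for the subdifferential characterization in item~5), and your attention to the extended-real arithmetic is exactly the right point of care. Two small remarks: in item~1, when $\obj(u)=+\infty$ the map $p\mapsto\langle p,u\rangle-\obj(u)$ is identically $-\infty$, not $+\infty$ --- this does not affect the conclusion, since such $u$ can simply be dropped from the supremum (and if $\dom(\obj)=\emptyset$ then $\obj^*\equiv-\infty$, which is convex by convention); and you correctly identify that the right-hand side of item~3 should read $\regp\,\obj^*(p/\regp)$, the statement in the notes being missing the conjugation star.
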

\begin{proof}
See exercise.
\end{proof}
A natural question is how multiple applications of convex conjugation affect the function. The function $(\obj^*)^* = \obj^{**}$ is called the \alert{bi-conjugate} of $\obj$. By definition we obtain the inequality
\begin{align*}
J^{**}(u)\leq J(u).
\end{align*}
The following theorem, which is due to Moreau and Fenchel, shows that equality is attained iff the function is proper, convex and lsc. 
\begin{theorem}{Fenchel--Moreau}{}
A function $\obj$ is proper, convex and lsc iff $\obj=\obj^{**}$.
\end{theorem}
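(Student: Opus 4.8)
The plan is to prove the two implications separately; the forward direction (proper $+$ convex $+$ lsc $\Rightarrow \obj=\obj^{**}$) carries all the weight, while the converse is essentially a one-line observation. For the converse, note that for each fixed $p\in U^*$ the affine map $u\mapsto \langle p,u\rangle-\obj^*(p)$ is convex and lsc, and a pointwise supremum of convex lsc functions is again convex and lsc; hence $\obj^{**}$, and therefore $\obj$ whenever $\obj=\obj^{**}$, is convex and lsc. (Properness is understood as part of the hypothesis that $\obj$ takes a finite value somewhere; the only degenerate exception is $\obj\equiv+\infty$, which I will not dwell on.)

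For the forward direction, assume $\obj$ is proper, convex and lsc. Since we already have the inequality $\obj^{**}\le\obj$, it suffices to show $\obj^{**}(u_0)\ge\obj(u_0)$ for an arbitrary fixed $u_0\in U$. The main tool is the Hahn--Banach strict separation theorem applied in $U\times\R$ to the epigraph $\operatorname{epi}(\obj)=\{(u,t):\obj(u)\le t\}$, which is nonempty (properness), convex (convexity) and closed (lower-semicontinuity). As a preliminary I would first establish $\dom(\obj^*)\neq\emptyset$: picking $\bar u\in\dom(\obj)$ and $\beta<\obj(\bar u)$, the point $(\bar u,\beta)$ lies outside $\operatorname{epi}(\obj)$ and can be strictly separated from it; the epigraph being upward-closed in $t$ and containing $\bar u$ in its domain forces the vertical component of the separating functional to be strictly positive, and after normalising it one reads off an affine minorant $\obj(\cdot)\ge\langle p_0,\cdot\rangle+\gamma$, i.e. $p_0\in\dom(\obj^*)$.

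Now fix $u_0$ and suppose for contradiction that $\obj^{**}(u_0)<\obj(u_0)$; choose $r$ with $\obj^{**}(u_0)<r<\obj(u_0)$, so $(u_0,r)\notin\operatorname{epi}(\obj)$. Strict separation gives $(\phi,\sigma)\in U^*\times\R$ and $\alpha\in\R$ with
\[
\langle\phi,u_0\rangle+\sigma r<\alpha<\langle\phi,u\rangle+\sigma t\qquad\text{for all }(u,t)\in\operatorname{epi}(\obj),
\]
and letting $t\to+\infty$ forces $\sigma\ge0$. If $\sigma>0$, normalise $\sigma=1$ and set $p:=-\phi$; evaluating the right inequality at $t=\obj(u)$ yields $\obj(u)\ge\langle p,u\rangle-\langle p,u_0\rangle+r$ for all $u$, hence $\obj^*(p)\le\langle p,u_0\rangle-r$ and so $\obj^{**}(u_0)\ge\langle p,u_0\rangle-\obj^*(p)\ge r$, contradicting $r>\obj^{**}(u_0)$. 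If $\sigma=0$, then $\langle\phi,u\rangle>\alpha>\langle\phi,u_0\rangle$ for all $u\in\dom(\obj)$, so $u_0\notin\dom(\obj)$ and $\obj(u_0)=+\infty$; here I would tilt the vertical functional by the minorant found above, writing $q:=-\phi$ so that $\sup_{u\in\dom(\obj)}\langle q,u\rangle\le-\alpha$ while $\varepsilon:=\langle q,u_0\rangle+\alpha>0$, and estimate for $\lambda>0$
\[
\obj^*(p_0+\lambda q)\le\obj^*(p_0)-\lambda\alpha,\qquad
\obj^{**}(u_0)\ge\langle p_0+\lambda q,u_0\rangle-\obj^*(p_0+\lambda q)\ge\big(\langle p_0,u_0\rangle-\obj^*(p_0)\big)+\lambda\varepsilon\xrightarrow[\lambda\to\infty]{}+\infty,
\]
so $\obj^{**}(u_0)=+\infty=\obj(u_0)$. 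This finishes the proof.

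I expect the vertical-hyperplane case $\sigma=0$ to be the main obstacle: separation by itself produces no contradiction there, and one must add a genuine affine minorant of $\obj$ and send the tilt parameter to infinity — which is precisely the point where properness (through $\dom(\obj^*)\neq\emptyset$), and not merely convexity and closedness of the epigraph, enters. A secondary technical caveat is that the separation theorem needs the ambient space to be locally convex, so in a fully infinite-dimensional setting one tacitly assumes $U$ is a locally convex topological vector space; in the finite-dimensional case used throughout these notes this is automatic.
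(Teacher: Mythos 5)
Your proof is correct: the epigraph-separation argument, with the preliminary construction of an affine minorant and the tilting trick to dispose of the vertical-hyperplane case, is exactly the standard proof of the Fenchel--Moreau theorem, and it is the same argument found in the reference the notes cite for this result (\cite[Sec.~12]{rockafellar2015convex}, where closed convex functions are characterized as suprema of their affine minorants via Hahn--Banach separation). Your caveats about the degenerate case $\obj\equiv+\infty$ and about local convexity of the ambient space are both apt and do not affect the finite-dimensional or Hilbert-space setting used throughout these notes.
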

\begin{proof}
See \cite[Sec. 12]{rockafellar2015convex}.
\end{proof}
Another important concept is \alert{convex inversion}: It tells us that we can interpret the subgradient of the convex conjugate as the inverse of the gradient of the function $\obj$.%
\begin{theorem}{Convex Inversion}{thm:convinv}
Let $\obj$ be proper, convex and lsc, then we have that
\begin{align*}
p \in \partial \obj(u) \Leftrightarrow
u\in \partial\obj^*(p)
\end{align*}
\end{theorem}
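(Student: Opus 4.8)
The plan is to reduce the claimed equivalence to the \emph{Fenchel--Young equality}. Recall item 5 of the lemma on properties of the conjugate: for a proper convex function $\obj$, one has $p\in\partial\obj(u)$ if and only if the Fenchel--Young inequality holds with equality, i.e.
\[
\obj(u) + \obj^*(p) = \langle p, u\rangle .
\]
(This is essentially a restatement of the subgradient inequality: $p\in\partial\obj(u)$ means $\langle p,\tilde u\rangle - \obj(\tilde u)\le \langle p,u\rangle - \obj(u)$ for all $\tilde u$, i.e. $\obj^*(p)\le \langle p,u\rangle-\obj(u)$, which together with the always-valid reverse inequality gives equality.) The key observation is that this characterization is \emph{symmetric} in $\obj$ and $\obj^*$ once we know $\obj^{**}=\obj$, so the whole statement will follow by applying it twice.

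Concretely, I would proceed in three steps. First, rewrite the left-hand condition: by the characterization above, $p\in\partial\obj(u)$ is equivalent to $\obj(u)+\obj^*(p)=\langle p,u\rangle$. Second, apply the very same characterization to the function $\obj^*$ in place of $\obj$: since $\obj^*$ is convex (item 1 of the properties lemma), lower-semicontinuous (being a pointwise supremum of the continuous affine maps $p\mapsto\langle p,u\rangle-\obj(u)$), and proper (because $\obj$ is proper, convex and lsc, hence admits a continuous affine minorant), item 5 applies to it and yields
\[
u\in\partial\obj^*(p) \iff \obj^*(p) + \obj^{**}(u) = \langle u, p\rangle ,
\]
where we identify $U^{**}\cong U$ (immediate in the finite-dimensional setting of this section, and valid for reflexive spaces in general), so that $\langle u,p\rangle=\langle p,u\rangle$. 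Third, invoke the Fenchel--Moreau theorem: since $\obj$ is proper, convex and lsc, $\obj^{**}=\obj$, so the right-hand equality above becomes $\obj^*(p)+\obj(u)=\langle p,u\rangle$, which is exactly the equality characterizing $p\in\partial\obj(u)$ from the first step. Chaining the equivalences gives $p\in\partial\obj(u)\iff u\in\partial\obj^*(p)$.

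The proof is therefore almost a tautology built on two ingredients already available in the excerpt — the Fenchel--Young equality characterization of the subdifferential and the Fenchel--Moreau theorem $\obj^{**}=\obj$. The only genuine bookkeeping is (a) checking that $\obj^*$ inherits the standing hypotheses (proper, convex, lsc) so that the subdifferential characterization may be applied to it, and (b) being careful about the bidual pairing $\langle u,p\rangle$ versus $\langle p,u\rangle$ — both of which are routine here. I do not expect any real obstacle; the statement is the ``inversion'' counterpart of Fenchel--Moreau and falls out once the two tools are lined up in the right order.
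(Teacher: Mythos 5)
Your argument is correct and is exactly the standard proof of this statement (it is the proof of \cite[Thm.~23.5]{rockafellar2015convex}, which the paper cites instead of giving its own): characterize $p\in\partial \obj(u)$ via the Fenchel--Young equality $\obj(u)+\obj^*(p)=\langle p,u\rangle$, apply the same characterization to $\obj^*$, and use Fenchel--Moreau to replace $\obj^{**}$ by $\obj$. Your bookkeeping on the properness and lower semicontinuity of $\obj^*$ is also right, so there is nothing to add.
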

\begin{proof}
See \cite[Thm. 23.5]{rockafellar2015convex}.
\end{proof}
With this terminology, we can now introduce the concept of primal and dual problems. We start with the problem in \labelcref{eq:pproblem} and first notice
\begin{align}
\inf_u \ls G(u) + H(Au) \rs 
&= 
\inf_u \ls G(u) + H^{**}(Au) \rs = 
\inf_u \ls G(u) + \sup_p \ls -H^*(p) + \langle p, Au \rangle \rs \rs\\
&= 
\inf_u\sup_p \ls G(u) -  H^*(p) + \langle p, Au \rangle \rs. \label{eq:pproblemproblem}\tag{PD}
\end{align}
The latter is a \alert{primal-dual} formulation of the original problem: we have dualized the term $H(Au)$ and kept the first term unchanged in its primal form. Assuming that we can interchange the inf and sup operation, we further compute
\begin{align}
\sup_p \inf_u \ls G(u) -  H^*(p) + \langle p, Au \rangle \rs &
= 
\sup_p \ls -H^*(p) + \inf_u \ls G(u) + \langle A^* p, u \rangle \rs\rs
\\&= 
\sup_p \ls -H^*(p) - \sup_u \ls\langle -A^* p, u \rangle -   G(u) \rs\rs
\\&=
\sup_p \ls -H^*(p) - G^*(-A^* p) \rs \label{eq:dproblem}\tag{D}
\end{align}
which is called the \alert{dual} problem, since both terms have now been dualized. If both $G$ and $H$ are convex, one can ensure that the primal and dual problems correspond to each other, see, e.g., \cite[Theorem 4.15]{Brandt22} or \cite[Part II]{ekeland1999convex}. The following theorem gives us an equivalent characterization of the solution of the primal and dual problems.
\begin{theorem}{Fenchel--Rockafellar}{}
Let $G,H$ be proper, convex and lsc, then the following are equivalent for points $u^*, p^*$:
\begin{enumerate}
\item The points $u^*, p^*$ respectively solve the primal \labelcref{eq:pproblem} and dual \labelcref{eq:dproblem} problems.
\item It holds that
\beq
-A^* p^* \in \partial G(u^*) \qquad p^*\in \partial H(Au^*).
\eeq
\end{enumerate}
\end{theorem}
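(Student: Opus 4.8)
The plan is to prove the two implications separately, using only the Fenchel--Young (in)equality recorded among the properties of the conjugate above: for proper $\obj$ one has $\obj(u) + \obj^*(p) \ge \langle p, u\rangle$ for all $u, p$, with equality precisely when $p \in \partial \obj(u)$. Throughout, write $P := \inf_u\{G(u) + H(Au)\}$ for the primal value in \eqref{eq:pproblem} and $D := \sup_p\{-H^*(p) - G^*(-A^*p)\}$ for the dual value in \eqref{eq:dproblem}.

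\textbf{Direction (2) $\Rightarrow$ (1).} Assume $-A^* p^* \in \partial G(u^*)$ and $p^* \in \partial H(Au^*)$. The equality case of Fenchel--Young applied to $G$ at $u^*$ gives $G(u^*) + G^*(-A^*p^*) = \langle -A^*p^*, u^*\rangle = -\langle p^*, Au^*\rangle$, and applied to $H$ at $Au^*$ it gives $H(Au^*) + H^*(p^*) = \langle p^*, Au^*\rangle$. Adding the two identities, the cross terms cancel and
\[
G(u^*) + H(Au^*) = -H^*(p^*) - G^*(-A^*p^*).
\]
On the other hand, weak duality always holds: for any $u, p$, Fenchel--Young gives $G(u) + G^*(-A^*p) \ge \langle -A^*p, u\rangle$ and $H(Au) + H^*(p) \ge \langle p, Au\rangle$, which sum to $G(u) + H(Au) \ge -H^*(p) - G^*(-A^*p)$, whence $P \ge D$. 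Combined with the displayed identity this forces the left side to equal $P$ and the right side to equal $D$ (and $P = D$), so $u^*$ is primal optimal and $p^*$ is dual optimal, i.e.\ (1) holds.

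\textbf{Direction (1) $\Rightarrow$ (2).} Here I would invoke strong duality, $P = D$, which under the convexity hypotheses is exactly the assertion ``the primal and dual problems correspond to each other'' referenced just before the theorem (e.g.\ \cite[Theorem 4.15]{Brandt22}); note that some such qualification is genuinely needed, since in a duality-gap situation both problems can be solvable while no pair satisfies (2). Given $P = D$ and that $u^*$ solves the primal and $p^*$ the dual,
\[
G(u^*) + H(Au^*) = P = D = -H^*(p^*) - G^*(-A^*p^*),
\]
so, rearranging and using $\langle A^*p^*, u^*\rangle = \langle p^*, Au^*\rangle$,
\[
\bigl[\, G(u^*) + G^*(-A^*p^*) + \langle A^*p^*, u^*\rangle \,\bigr] + \bigl[\, H(Au^*) + H^*(p^*) - \langle p^*, Au^*\rangle \,\bigr] = 0.
\]
By Fenchel--Young each bracket is nonnegative, so each vanishes; the vanishing of the first bracket is the equality case of Fenchel--Young for $G$, giving $-A^*p^* \in \partial G(u^*)$, and that of the second gives $p^* \in \partial H(Au^*)$. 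Hence (2).

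The only nontrivial ingredient is the zero-gap statement $P = D$ used in (1) $\Rightarrow$ (2); the reverse direction is purely algebraic and even yields $P = D$ as a byproduct. An equivalent and perhaps cleaner packaging is via saddle points of $(u,p) \mapsto G(u) - H^*(p) + \langle p, Au\rangle$: minimizing in $u$ shows $-A^*p^* \in \partial G(u^*)$, maximizing in $p$ shows $Au^* \in \partial H^*(p^*)$ which by the convex inversion theorem (\cref{thm:convinv}) is the same as $p^* \in \partial H(Au^*)$, so (2) says exactly that $(u^*, p^*)$ is a saddle point; the equivalence with (1) is then the standard fact that a saddle point exists iff both problems are solvable with matching optimal values.
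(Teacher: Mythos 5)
Your proposal is correct. Note that the paper does not actually prove this theorem: it defers entirely to \textcite[Sec.~31]{rockafellar2015convex}, so your self-contained argument via the Fenchel--Young inequality is a genuine addition rather than a variant of the paper's route. The direction (2) $\Rightarrow$ (1) is exactly right: the equality cases of Fenchel--Young for $G$ at $u^*$ and for $H$ at $Au^*$ sum to give equality of the primal and dual objective values at $(u^*,p^*)$, and weak duality (itself just the sum of the two Fenchel--Young inequalities) then forces optimality of both points. The direction (1) $\Rightarrow$ (2) is also correctly handled, and you are right to flag that it is \emph{not} free: it requires the zero-duality-gap statement $P=D$, which in turn needs a constraint qualification (e.g.\ continuity of $H$ at some point of $A\,\dom(G)$, or a relative-interior condition in finite dimensions) that the theorem as stated in the notes silently omits --- the notes only gesture at this via the remark that ``one can ensure that the primal and dual problems correspond to each other.'' Without such a qualification both problems can be solvable with a gap, and then no pair satisfies (2), so your caveat identifies a real imprecision in the statement rather than a gap in your own argument. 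Your closing reformulation in terms of saddle points of $(u,p)\mapsto G(u)-H^*(p)+\langle p, Au\rangle$, using the convex inversion theorem to pass between $Au^*\in\partial H^*(p^*)$ and $p^*\in\partial H(Au^*)$, matches the discussion immediately following the theorem in the text and is a clean way to package the result.
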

\begin{proof}
See \cite[Sec. 31]{rockafellar2015convex}.
\end{proof}

\noindent%
The intuition behind the second identity is that under the above assumptions, we have that%
\begin{align}
\partial (u\mapsto H(Au)) = 
A^*\partial H(u).
\end{align}
Using the theorem of convex inversion, this condition can be reformulated to
\beq
Au^*\in\partial H^*(-p^*).
\eeq
Using \cref{lem:propsubdiff} this is equivalent to 
\begin{align}
u^* &= \prox_{\tau G}(u^* - \tau A^* p^*),\\
p^* &= \prox_{\sigma H^*}(p^* +\sigma A u).
\end{align}
This leads us to the so called \alert{Chambolle--Pock} \cite{chambolle2011first} or \alert{primal-dual hybrid gradient} (PDHG) method.
\begin{definition}{Chambolle-Pock (CP)}{}
Given proper and convex functions $G, H$, a matrix $A$, a scalar $\theta$ and step sizes $\tau,\sigma > 0$. The iterates of the Chambolle--Pock method are given as
\begin{align}
u_\CP^{(k+1)} &= \prox_{\tau G}(u_\CP^{(k)} - \tau A^* p_\CP^{(k)})\\
v_\CP^{(k+1)} &= u_\CP^{(k+1)} + \theta (u_\CP^{(k+1)} - u_\CP^{(k)})\\
p^{(k+1)}_\CP &=
\prox_{\sigma H^*}(p_\CP^{(k)} +\sigma A v_\CP^{(k+1)}).
\end{align}
\end{definition}
Above, $\theta$ is referred to as overrelaxation parameter and is often chosen as $\theta=1$. For the operators
\beq
M = 
\begin{pmatrix}
\tau^{-1} Id & -A^*\\
A & \sigma^{-1} Id
\end{pmatrix}
\qquad
T = 
\begin{pmatrix}
\partial F & A^* \\
-A &\partial G^*
\end{pmatrix}
\eeq
we can write the update in terms of the variable $v_\CP^{(k)}$ as
\beq
v^{(k+1)}_\CP = (M+T)^{-1} M v^{(k)}_\CP.
\eeq
\begin{lemma}{}{}
The operator $M$ is bounded and self-adjoint, furthermore, if $\tau\sigma\norm{A}^2 <1$ we know that $M$ is bounded and invertible.
\end{lemma}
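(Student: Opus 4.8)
The plan is to treat $M$ as a $2\times 2$ block operator on the product Hilbert space $\mcU\times\mcV$, equipped with the canonical inner product $\langle(u,p),(u',p')\rangle := \langle u,u'\rangle_{\mcU} + \langle p,p'\rangle_{\mcV}$, and to verify the claimed properties one at a time. Boundedness requires no real work: each of the four blocks $\tfrac1\tau I$, $-A^*$, $A$, $\tfrac1\sigma I$ is a bounded linear operator (the diagonal ones trivially, and $A$, $A^*$ because $A$ is bounded with $\|A^*\|=\|A\|$), and a finite block matrix of bounded operators is bounded on the product space; one even gets the crude bound $\|M\|\le\max\{\tfrac1\tau,\tfrac1\sigma\}+\|A\|$ immediately.

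For self-adjointness I would compute the adjoint blockwise: the diagonal entries are self-adjoint, and the two off-diagonal entries, $-A^*$ and $-A$, are the adjoints of one another, so $M^*=M$. Equivalently, one expands $\langle M(u,p),(u',p')\rangle$ and $\langle(u,p),M(u',p')\rangle$ and compares them using the defining identity $\langle Au,p\rangle_{\mcV}=\langle u,A^*p\rangle_{\mcU}$. The one thing to be careful about is the sign convention: the lower-left block has to be read as $-A$ (the adjoint of the upper-right block $-A^*$) for $M$ to be genuinely self-adjoint with respect to the standard product inner product, and I would state this explicitly.

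For invertibility under the hypothesis $\tau\sigma\|A\|^2<1$, the approach is to show that $M$ is bounded below and then use the standard fact that a bounded self-adjoint operator satisfying $\langle Mz,z\rangle\ge c\|z\|^2$ for some $c>0$ is bijective with $\|M^{-1}\|\le 1/c$: the lower bound forces $\|Mz\|\ge c\|z\|$, hence injectivity and closed range, and density of the range follows since the orthogonal complement of the range equals $\ker M^*=\ker M=\{0\}$ (one may instead cite Lax--Milgram). To obtain the lower bound, for $z=(u,p)$ write
\[
\langle Mz,z\rangle=\tfrac1\tau\|u\|^2+\tfrac1\sigma\|p\|^2-2\langle Au,p\rangle_{\mcV}\ \ge\ \tfrac1\tau\|u\|^2+\tfrac1\sigma\|p\|^2-2\|A\|\,\|u\|\,\|p\|,
\]
and apply Young's inequality in the weighted form $2\|u\|\,\|p\|\le\sqrt{\tau\sigma}\,\bigl(\tfrac1\tau\|u\|^2+\tfrac1\sigma\|p\|^2\bigr)$ to get
\[
\langle Mz,z\rangle\ \ge\ \bigl(1-\sqrt{\tau\sigma}\,\|A\|\bigr)\Bigl(\tfrac1\tau\|u\|^2+\tfrac1\sigma\|p\|^2\Bigr)\ \ge\ \bigl(1-\sqrt{\tau\sigma}\,\|A\|\bigr)\min\{\tfrac1\tau,\tfrac1\sigma\}\,\|z\|^2.
\]
Since $\tau\sigma\|A\|^2<1$ is equivalent to $\sqrt{\tau\sigma}\,\|A\|<1$, the constant $c:=\bigl(1-\sqrt{\tau\sigma}\,\|A\|\bigr)\min\{\tfrac1\tau,\tfrac1\sigma\}$ is strictly positive, which closes the argument.

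I do not expect a genuine obstacle; the statement is a packaging of routine Hilbert-space facts. The two points that need a little attention are the sign/adjoint bookkeeping in the self-adjointness step, and choosing the weights in Young's inequality so that exactly the threshold $\tau\sigma\|A\|^2<1$ appears rather than a suboptimal one. As an alternative to the coercivity estimate, one can read off positive definiteness — and hence invertibility — from the Schur complement of $M$ with respect to its lower-right block, namely $\tfrac1\tau I-\sigma A^*A$, which is positive definite precisely when $\tau\sigma\|A\|^2<1$; the coercivity form is, however, the one that plugs directly into the convergence analysis of the Chambolle--Pock iteration $v^{(k+1)}=(M+T)^{-1}Mv^{(k)}$.
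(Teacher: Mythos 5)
Your argument is correct and complete, but note that the paper itself gives no proof here --- it simply defers to \cite[Lemma 5.32]{Brandt22} --- so there is nothing internal to compare against; what you have written is the standard self-contained verification. Two remarks. First, your sign observation is a genuine catch rather than mere bookkeeping: as printed, $M$ has $+A$ in the lower-left block, and that operator is \emph{not} self-adjoint for the canonical product inner product (its adjoint flips the signs of both off-diagonal blocks). The operator intended in the Chambolle--Pock analysis, and the one for which the lemma is true, is
\begin{equation*}
M=\begin{pmatrix}\tau^{-1}I & -A^*\\ -A & \sigma^{-1}I\end{pmatrix},
\end{equation*}
so reading the lower-left entry as $-A$, as you do, is the right repair of what is evidently a typo. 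Second, your coercivity estimate is exactly right: the weighted Young inequality $2\|u\|\,\|p\|\le\sqrt{\tau\sigma}\bigl(\tau^{-1}\|u\|^2+\sigma^{-1}\|p\|^2\bigr)$ yields $\langle Mz,z\rangle\ge\bigl(1-\sqrt{\tau\sigma}\,\|A\|\bigr)\min\{\tau^{-1},\sigma^{-1}\}\|z\|^2$, and the condition $\tau\sigma\|A\|^2<1$ is precisely what makes this constant positive, so the threshold in the statement is recovered sharply rather than up to a constant. The passage from coercivity of a bounded self-adjoint operator to bounded invertibility ($\|Mz\|\ge c\|z\|$ gives injectivity and closed range, and $(\operatorname{ran}M)^\perp=\ker M^*=\ker M=\{0\}$ gives density) is standard and correctly invoked; the Schur-complement alternative you mention is equivalent. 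No gaps.
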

\begin{proof}
See \cite[Lemma 5.32]{Brandt22}.
\end{proof}
\noindent%
This allows us to write $M=(M^{-1})^{-1}$ and thus
\beq
v^{(k+1)}_\CP = (Id+M^{-1}T)^{-1}v^{(k)}_\CP = 
\prox_{M^{-1} T} (v^{(k)}_\CP).
\eeq
Thus the Chambolle--Pock algorithm is a proximal point method. So far, we have only considered proximal mappings for convex functions, now we explicitly deal with the resolvent of an operator. Here, we need the notion of a \alert{maximal monotone} operator. For a set valued operator $M:U\to2^U$, sometimes denoted as $M:U\rightrightarrows U$ we denote the graph of $M$ as 
\beq
\operatorname{graph}(M):=\{(u,v): u\in U, v\in Mu\}.
\eeq%
\begin{definition}{}{Maximal monotone operator}
\begin{enumerate}
\item An operator $M$ is called \alert{monotone} if 
\begin{align*}
\langle v - \tilde{v}, u - \tilde{u}\rangle\geq 0  \qquad\forall 
(u,v),(\tilde{u},\tilde{v})\in\operatorname{graph}(M).
\end{align*}
\item An operator is called \alert{maximal monotone} if for $u,v\in U$
\begin{align*}
\langle v - \tilde{v}, u - \tilde{u}\rangle\geq 0 \qquad\forall(\tilde{u},\tilde{v})\in\operatorname{graph}(M)
\end{align*}
implies that $(u,v)\in \operatorname{graph}(M)$.
\end{enumerate}
\end{definition}\noindent%
Maximal monotone operators can be characterized via the following theorem, which was proposed by Minty\footnote{\href{https://en.wikipedia.org/wiki/George_J._Minty}{George James Minty Jr.} (1929--1986) was an American mathematician.}.
\begin{theorem}{Minty's Theorem}{thm:minty}
A monotone operator $M:U\to 2^U$ is \alert{maximal monotone} iff $I + M$ is surjective. 
\end{theorem}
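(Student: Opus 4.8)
The plan is to prove the two implications separately; the direction ``$M$ maximal monotone $\Rightarrow$ $I+M$ surjective'' is by far the substantial one, so I would dispatch the converse first. For the converse: assume $I+M$ is surjective and take any pair $(u,v)$ with $\langle v-\tilde v,\,u-\tilde u\rangle\ge 0$ for every $(\tilde u,\tilde v)\in\operatorname{graph}(M)$; I want $(u,v)\in\operatorname{graph}(M)$. Using surjectivity I pick $(\bar u,\bar v)\in\operatorname{graph}(M)$ with $\bar u+\bar v=u+v$, so $u-\bar u=-(v-\bar v)$, and then testing the assumed inequality on $(\bar u,\bar v)$ gives $0\le\langle v-\bar v,\,u-\bar u\rangle=-\norm{v-\bar v}^2$, whence $v=\bar v$, $u=\bar u$, so $(u,v)\in\operatorname{graph}(M)$. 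This half is a two-line bookkeeping argument.

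For the forward direction I would start with two reductions. For fixed $w\in U$ the operator $M_w:=M(\cdot)-w$ is again maximal monotone and $w\in u+Mu$ is equivalent to $0\in u+M_wu$, so it suffices to prove $0\in\operatorname{ran}(I+M)$ for an arbitrary maximal monotone $M$; and $\operatorname{graph}(M)\neq\emptyset$, since otherwise the maximality condition would be vacuously satisfied by every pair, forcing $\operatorname{graph}(M)=U\times U$. Now fix a base point $(u_0,v_0)\in\operatorname{graph}(M)$ and, for $R\ge\norm{u_0}$, set $G_R:=\operatorname{graph}(M)\cap(\overline{B_R}\times U)$, a monotone set whose domain lies in the compact convex ball $\overline{B_R}$. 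The key input I would invoke is the Debrunner--Flor extension lemma (a consequence of Brouwer's fixed-point theorem): for a monotone set $G$ with domain inside a nonempty compact convex set $C$ and a continuous map $\varphi:C\to U$, there exists $\bar u\in C$ with $\langle v-\varphi(\bar u),\,u-\bar u\rangle\ge 0$ for all $(u,v)\in G$. Applying this to $G_R$ with $\varphi(u)=-u$ produces $\bar u_R\in\overline{B_R}$ with
\[
\langle v+\bar u_R,\,u-\bar u_R\rangle\ \ge\ 0\qquad\text{for all }(u,v)\in\operatorname{graph}(M)\text{ with }\norm{u}\le R .
\]
Testing this on $(u_0,v_0)$ yields a quadratic inequality in $\norm{\bar u_R}$ that bounds it by a constant independent of $R$, so I can choose $R$ larger than that bound and pass to a limit $\bar u_R\to\bar u$ (Bolzano--Weierstrass in $U=\R^n$; in a general Hilbert space a weak limit together with weak lower semicontinuity of the norm). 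The displayed inequality then survives for \emph{every} $(u,v)\in\operatorname{graph}(M)$, i.e.\ $\langle -\bar u-v,\,\bar u-u\rangle\ge 0$ for all such pairs, and maximality of $M$ forces $(\bar u,-\bar u)\in\operatorname{graph}(M)$, that is $0\in\bar u+M\bar u$, as desired.

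The hard part is precisely this existence step: neither monotonicity nor maximality alone produces a single solution, so some genuine fixed-point or compactness principle is unavoidable — Brouwer's theorem (equivalently the KKM or Fan inequality), repackaged as Debrunner--Flor, in finite dimensions, and weak compactness plus a Galerkin-type approximation in a general Hilbert space. A standard alternative that avoids the truncation is to minimise the coercive convex function $u\mapsto F_M(u,-u)+\norm{u}^2$ built from the Fitzpatrick function $F_M$ of $M$ and to read a solution of $0\in u+Mu$ off the optimality condition, trading the fixed-point theorem for convex-analytic machinery. Everything else — the converse, the two reductions, the uniform bound on $\bar u_R$, and the passage to the limit — is routine manipulation of the monotonicity inequality.
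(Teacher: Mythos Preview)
Your argument is correct. The paper, however, does not supply its own proof of Minty's theorem: it simply refers the reader to \cite[Thm.~21.1]{Bauschke2017}, so there is no in-text argument to compare against directly.

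For what it is worth, the proof in Bauschke--Combettes proceeds via the Fitzpatrick function rather than the Debrunner--Flor lemma: one shows that the Fitzpatrick function $F_M$ of a maximal monotone $M$ satisfies $F_M(u,v)\ge\langle u,v\rangle$ with equality precisely on $\operatorname{graph}(M)$, and then obtains surjectivity of $I+M$ by minimising a coercive convex functional built from $F_M$---essentially the alternative you sketch in your closing paragraph. Your primary route via Debrunner--Flor trades that convex-analytic machinery for a direct fixed-point argument; both are standard, and your mention of the Fitzpatrick alternative shows you see the landscape. One small caveat worth making explicit if you write the proof out in full: in the infinite-dimensional Hilbert setting the closed ball $\overline{B_R}$ is only weakly compact, so invoking Debrunner--Flor there requires either a weak-topology version of the lemma or the Galerkin approximation you allude to, together with the weak-lower-semicontinuity step you mention to pass the quadratic term $-\|\bar u_R\|^2$ through the limit.
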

\begin{proof}
See, e.g., \cite[Thm. 21.1]{Bauschke2017}.
\end{proof}
\noindent%
An important example of a maximal monotone operator is the subdifferential.
\begin{lemma}{}{}
The subdifferential of a proper, convex, lsc function is maximal monotone.
\end{lemma}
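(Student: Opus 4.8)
The plan is to verify monotonicity directly from the definition of the subgradient, and then to obtain maximality via Minty's theorem (\cref{thm:minty}) by showing that $I + \partial J$ is surjective. Surjectivity will follow because the proximal operator of $J$ is everywhere defined, which in turn is a consequence of the existence lemma for minimizers of proper, coercive, lsc functionals together with the sum rule from \cref{lem:propsubdiff}.

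\textbf{Monotonicity.} Let $(u,p),(\tilde u,\tilde p)\in\operatorname{graph}(\partial J)$, i.e.\ $p\in\partial J(u)$ and $\tilde p\in\partial J(\tilde u)$. By definition of the subdifferential,
\begin{align}
J(\tilde u) - J(u) &\geq \langle p, \tilde u - u\rangle,\\
J(u) - J(\tilde u) &\geq \langle \tilde p, u - \tilde u\rangle.
\end{align}
Adding these two inequalities gives $0 \geq \langle p - \tilde p, \tilde u - u\rangle$, i.e.\ $\langle p - \tilde p, u - \tilde u\rangle \geq 0$, which is exactly monotonicity.

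\textbf{Maximality.} By Minty's theorem it suffices to show that $I + \partial J$ is surjective. Fix $w\in U$ and consider the functional $\obj_w(u) := J(u) + \frac{1}{2}\norm{u - w}^2$. Since $J$ is proper, convex and lsc, so is $\obj_w$; moreover $J$ is bounded below by an affine function (being the supremum of its affine minorants), so $\obj_w$ is coercive. By the existence lemma for solutions, $\obj_w$ attains its minimum at some $u^*\in U$, which is precisely $u^* = \prox_{J}(w)$. The quadratic term $u\mapsto\frac12\norm{u-w}^2$ is finite and continuous everywhere, so in particular it is finite at any point of $\dom(J)$ and continuous there; hence the sum rule of \cref{lem:propsubdiff} applies and, combined with the optimality condition $0\in\partial\obj_w(u^*)$, yields
\begin{align}
0 \in \partial \obj_w(u^*) = \partial J(u^*) + (u^* - w).
\end{align}
Therefore $w - u^* \in \partial J(u^*)$, i.e.\ $w \in u^* + \partial J(u^*) = (I + \partial J)(u^*)$. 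Since $w$ was arbitrary, $I + \partial J$ is surjective, and Minty's theorem gives that $\partial J$ is maximal monotone.

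\textbf{Main obstacle.} The routine but essential point is checking the hypotheses for applying the sum rule to $J + \frac12\norm{\cdot - w}^2$ (common point of finiteness, at most one summand discontinuous there) and establishing coercivity of $\obj_w$ from the affine lower bound on $J$; both are straightforward given the earlier results, so there is no serious difficulty beyond assembling these pieces correctly.
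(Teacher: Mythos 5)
Your proof is correct and follows essentially the same route as the paper: monotonicity directly from the subgradient inequalities, and maximality via Minty's theorem by showing $I+\partial J$ is surjective through the everywhere-definedness of $\prox_J$. The only difference is that the paper invokes the prox characterization of subgradients (item 5 of \cref{lem:propsubdiff}) and simply asserts that the prox is defined everywhere, whereas you justify that fact explicitly via coercivity and the existence lemma and then unwind the optimality condition with the sum rule — a slightly more self-contained variant of the same argument.
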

\begin{proof}
From the definition of the subdifferential we see that it is a monotone operator. From \cref{thm:convinv} and the fifth item in \cref{lem:propsubdiff} we have that $p-u\in\partial J(u) \Leftrightarrow u=\prox_{J}(u + (p-u)) = \prox_J(p)$. Since the prox operator is defined for every $p\in U$ we obtain that for every $p$ there exists $u$ such that $p\in u + \partial J(u)$.
\end{proof}
The proximal operator was defined as the resolvent of the subdifferential. Minty's theorem allows us to define the resolvent for general maximal monotone operators.
\begin{lemma}{}{}
For a maximal monotone operator $M$, its resolvent $(I + M)^{-1}$ is well-defined and single-valued.
\end{lemma}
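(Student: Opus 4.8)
The plan is to obtain both assertions directly from the definition of maximal monotonicity together with Minty's theorem (\cref{thm:minty}), which is already at our disposal. The two halves of the statement correspond exactly to the two halves of "being a function $U\to U$": \emph{well-definedness} that the domain is all of $U$, and \emph{single-valuedness} that each value is unique.

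First I would settle well-definedness, i.e.\ that $(I+M)^{-1}(p)\neq\emptyset$ for every $p\in U$. Unfolding the definition of the resolvent, $u\in(I+M)^{-1}(p)$ means $p\in u+M(u)$, so the claim is precisely that $I+M$ is surjective. Since $M$ is maximal monotone, this is exactly the content of Minty's theorem, so this step requires nothing beyond citing \cref{thm:minty}.

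Next I would prove single-valuedness. Suppose $u_1,u_2\in(I+M)^{-1}(p)$, which by definition means $p-u_1\in M(u_1)$ and $p-u_2\in M(u_2)$; equivalently, $(u_1,p-u_1),(u_2,p-u_2)\in\operatorname{graph}(M)$. Applying the monotonicity inequality to this pair yields
\[
\langle (p-u_1)-(p-u_2),\, u_1-u_2\rangle \geq 0,
\]
and the left-hand side equals $\langle u_2-u_1,\, u_1-u_2\rangle=-\norm{u_1-u_2}^2$. Hence $\norm{u_1-u_2}^2\leq 0$, forcing $u_1=u_2$.

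I do not anticipate any genuine obstacle here: the argument is short and self-contained. The only points that need care are invoking Minty's theorem in the correct direction (maximal monotonicity $\Rightarrow$ surjectivity of $I+M$) for the existence part, and observing that single-valuedness in fact uses only plain monotonicity of $M$, with maximality entering solely through the existence half.
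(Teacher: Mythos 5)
Your proof is correct and follows essentially the same route as the paper, which simply cites Minty's theorem (\cref{thm:minty}); you additionally spell out the single-valuedness argument via plain monotonicity, a detail the paper leaves implicit.
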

\begin{proof}
Follows from Minty's theorem \cref{thm:minty}.
\end{proof}
In order to apply the convergence theorem of the proximal point methods, we need to show that $M^{-1} T$ is maximal monotone. This is provided by the following lemma.
\begin{lemma}{}{}
If $\tau\sigma\norm{A}^2 <1$ we have that $M^{-1}T$ is maximal monotone.
\end{lemma}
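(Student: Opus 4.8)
The plan is to reduce the statement to Minty's theorem (\cref{thm:minty}) by renorming the product space with the metric induced by $M$. First I would record that, under the standing assumption $\tau\sigma\|A\|^2 < 1$, the operator $M$ is not only bounded, self-adjoint and boundedly invertible (as in the preceding lemma) but in fact \emph{positive definite}: for $x=(u,p)$ its quadratic form is $\langle Mx,x\rangle = \tau^{-1}\|u\|^2 - 2\langle Au,p\rangle + \sigma^{-1}\|p\|^2$, and Young's inequality gives $\langle Mx,x\rangle \ge (\tau^{-1}-\lambda\|A\|)\|u\|^2 + (\sigma^{-1}-\lambda^{-1}\|A\|)\|p\|^2$ for every $\lambda>0$; picking $\lambda$ in the interval $[\sigma\|A\|,(\tau\|A\|)^{-1}]$, which is nonempty exactly because $\tau\sigma\|A\|^2<1$, makes both coefficients nonnegative, and together with invertibility of the self-adjoint $M$ this yields $c\|x\|^2 \le \langle Mx,x\rangle \le \|M\|\,\|x\|^2$ for some $c>0$. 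Hence $\langle\cdot,\cdot\rangle_M := \langle M\cdot,\cdot\rangle$ is an inner product on $\mcU\times\mcV$ whose norm is equivalent to the canonical one; in what follows, maximal monotonicity of $M^{-1}T$ is understood with respect to $\langle\cdot,\cdot\rangle_M$, which is legitimate since the two norms are equivalent.

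Next I would establish that $T$ itself is maximal monotone on the canonical Hilbert space. I split $T = T_0 + S$, where $T_0$ acts as $\partial F$ on the first component and $\partial G^*$ on the second, and $S$ is the bounded linear operator $(u,p)\mapsto(A^*p,-Au)$. The operator $T_0$ is the subdifferential of the separable function $(u,p)\mapsto F(u)+G^*(p)$, which is proper, convex and lsc (a sum of a proper convex lsc function and a conjugate), so $T_0$ is maximal monotone by the lemma recalled above. The operator $S$ is everywhere defined, continuous, and skew-adjoint ($S^*=-S$), so $\langle Sx,x\rangle = \langle A^*p,u\rangle - \langle Au,p\rangle = 0$ for all $x$, i.e. $S$ is monotone. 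By the standard perturbation theorem — the sum of a maximal monotone operator and a monotone, continuous, everywhere-defined operator is maximal monotone (see, e.g., \cite{Bauschke2017}) — $T$ is maximal monotone.

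Then I would transfer these facts to $M^{-1}T$ in the $M$-geometry. Monotonicity is immediate: if $\xi\in M^{-1}Tx$ and $\tilde\xi\in M^{-1}T\tilde x$, then $M\xi\in Tx$, $M\tilde\xi\in T\tilde x$, and $\langle\xi-\tilde\xi,x-\tilde x\rangle_M = \langle M\xi-M\tilde\xi,x-\tilde x\rangle \ge 0$ by monotonicity of $T$. For maximality, Minty's theorem applied in the space $(\mcU\times\mcV,\langle\cdot,\cdot\rangle_M)$ reduces the claim to surjectivity of $\Id + M^{-1}T$. Since $M$ is single-valued, linear and bijective, $\Id + M^{-1}T = M^{-1}(M+T)$ and $M^{-1}$ maps $\mcU\times\mcV$ onto itself, so it suffices to show that $M+T$ is surjective. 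Writing $M+T = c\,\Id + \big((M-c\,\Id)+T\big)$, the operator $M-c\,\Id$ is bounded, linear, everywhere defined and monotone (because $M\succeq c\,\Id$), so by the same perturbation theorem $(M-c\,\Id)+T =: N$ is maximal monotone; applying Minty's theorem to the maximal monotone operator $c^{-1}N$ shows that $\Id + c^{-1}N$, hence $c\,\Id + N = M+T$, is surjective. This completes the argument.

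I expect the only real points requiring care to be, first, pinning down the positive definiteness of $M$ from $\tau\sigma\|A\|^2<1$ (which is exactly the content of the preceding lemma's proof and the reason that hypothesis appears), and second, the bookkeeping around the two Hilbert-space structures: the conclusion "$M^{-1}T$ is maximal monotone" must be read in the $M$-inner product, and Minty's theorem must be invoked in that renormed space. The identity $\Id + M^{-1}T = M^{-1}(M+T)$ together with bijectivity of $M$ is the hinge that turns a surjectivity statement about $M^{-1}T$ into one about $M+T$, and that is where self-adjointness and invertibility of $M$ are actually used. Everything else — monotonicity of $S$, maximal monotonicity of $T_0$, and the perturbation theorem — is routine.
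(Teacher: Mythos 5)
Your proof is correct, and it is the standard argument (renorm the product space by the positive definite $M$, show $T$ is maximal monotone as a skew perturbation of $\partial F\times\partial G^*$, and reduce surjectivity of $\Id+M^{-1}T$ to that of $M+T$ via Minty); the paper itself only cites \cite[Lem.~5.35]{Brandt22}, which proceeds along essentially the same lines. Two small remarks: your quadratic-form computation for $\langle Mx,x\rangle$ presupposes the self-adjoint convention $M=\bigl(\begin{smallmatrix}\tau^{-1}\Id & -A^*\\ -A & \sigma^{-1}\Id\end{smallmatrix}\bigr)$ (the bottom-left block as printed in these notes carries the wrong sign, but the preceding lemma's claim that $M$ is self-adjoint confirms your reading), and under the \emph{strict} inequality $\tau\sigma\norm{A}^2<1$ you can pick $\lambda$ in the open interval to get both coefficients strictly positive, so positive definiteness of $M$ follows directly without appealing to its invertibility. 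You are also right that the conclusion must be read with respect to the $M$-inner product; that is indeed the sense in which the Chambolle--Pock iteration is a proximal point method.
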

\begin{proof}
See \cite[Lem. 5.35]{Brandt22}.
\end{proof}
In order to apply the primal-dual algorithm to the TV denoising problem, we must choose the roles of $H,G$ and $A$. One such choice is 
\beq
G = 0\qquad 
A = 
\begin{pmatrix}
\tilde{A}\\ \nabla
\end{pmatrix}
\qquad 
H(u_1, u_2) = 
\frac{1}{2}\norm{u_1 - \fd}_2^2 + 
\regp \norm{u_2}_1.
\eeq
In order to compute the conjugate of the $H$, we use that for \alert{separable} functions we know that
\begin{align*}
H^*(z_1, z_2) = 
\left(\frac{1}{2}\norm{\cdot-\fd}_2^2\right)^*(z_1) + 
(\regp \norm{\cdot}_1)^* (z_2) = 
\frac{1}{2}\norm{z_1 }_2^2 + \ll \fd, z_1 \rr + \regp \chi_{\norm{\cdot}_\infty\leq \regp}(z_2).
\end{align*}
Furthermore, the prox for separable functions can be computed as
\begin{align*}
\prox_{\sigma H^*}(z_1, z_2) = 
\begin{pmatrix}
\prox_{\sigma  \lb \frac{1}{2} \norm{\cdot }_2^2 + \ll \fd, \cdot \rr \rb}(z_1)\\
\prox_{\sigma\alpha\chi_{\norm{\cdot}_\infty\leq \regp}}(z_2)
\end{pmatrix}
=
\begin{pmatrix}
(z_1-\sigma\fd)/(\sigma+1)\\
\operatorname{Proj}_{B_\alpha^\infty}(z_2).
\end{pmatrix}
\end{align*}
This yields an implementable algorithm.
\paragraph{ADMM} A further variant to solve the TV denoising problem is the \alert{Alternating Directions Method of Multipliers} (ADMM), see e.g. \cite{parikh2014proximal,boyd2011distributed}. The idea here is to reformulate the initial problem as
\begin{align*}
\min_{u, v} \ls G(u) + H(v) \rs\quad \text{such that}\quad Au - v = 0.
\end{align*}
The \alert{augmented Lagrangian} then adds the constraint to the functional in the following way
\begin{align*}
L(u,v, p) :=
G(u) + H(v) + 
\langle p, Au - v\rangle 
+
\frac{\mu}{2}
\norm{Au - v}^2_2.
\end{align*}
In each step, The ADMM algorithm then minimizes the function $L$ in one of its variables. The minimum in $p$ can be expressed in closed form.
\begin{definition}{ADMM}{}
The iterates of the ADMM algorithm are given as
\begin{align}
u^{(k+1)}_\ADMM &= 
\argmin_{u} \ls G(u) + \frac{\mu}{2}
\norm{Au - v^{(k)}_\ADMM + q_\ADMM^{(k)}}_2^2 \rs\\
v^{(k+1)}_\ADMM &= 
\argmin_{v} \ls F(v) + \frac{\mu}{2}
\norm{Au^{(k+1)}_\ADMM - v + q_\ADMM^{(k)}}_2^2 \rs\\
q_\ADMM^{(k+1)} &= 
q_\ADMM^{(k)} + Au^{(k+1)}_\ADMM - v^{(k+1)}_\ADMM.
\end{align}
\end{definition}
\chapter{Learning for Inverse Problems}

So far, we have considered the classical setting of regularization theory. The distinctive feature of the methods which we have considered so far is that the regularizers are \enquote{hand-crafted}. This means that some intuition, a calculation, or some other idea has lead us to writing down the concrete form of the regularization functional. In this chapter, we assume that there is some distribution $\mathbb{P}_U$ which describes how our desired quantities are distributed. Assuming that this distribution is available to us in a certain form, we want to incorporate this knowledge into our regularizer. The concepts of \alert{learning} will provide the tools we need for that.

\section{Introduction to Machine Learning}
We introduce the basic concepts of machine learning. Parts of this exposition are taken from \cite{roith2024consistency}. In the following, our goal is to \enquote{learn} a function $\net:\Inp\to\Oup$ from given data. We begin with two examples that introduce typical learning scenarios.
\begin{itemize}
\item \textbf{Classification}: The function $\net$ assigns to each $\inp\in 
\Inp$ a label $\oup$ out of $C\in\N$ possible classes, i.e., $\Oup=\{1,\ldots,C\}$. Typically, as an intermediate step, one infers a probability vector 
\begin{align*}
\bs{u}\in \Delta^C := \left\{\bs{u}\in[0,1]^C: \sum_{i=1}^C \bs{u}_i = 1\right\}.
\end{align*}
This can be enforced via the softmax function $\operatorname{soft max}:\R^C\to\R^C$
\begin{align}\label{eq:softmax}\tag{Softmax}
\operatorname{soft max}(z)_i := \frac{\exp{z_i}}{\sum_{j=1}^C \exp{z_j}}.
\end{align}
This allows the interpretation that the $i$th entry of $\net(\inp)\in\Delta^C$ models the probability that $\inp$ belongs to class $i$. In order to obtain a label, one can simply choose the maximum entry, i.e.  $\argmax_{i=1,\ldots,C} \net(\inp)_i$.
\item \textbf{Image denoising}: The function $\net$ outputs a denoised version of an input image. Here we have, e.g., $\Inp = \Oup=\R^{d_c\times d_h\times d_w}$, where
%
$d_c\in\N$ is the number of color channels,
and $d_h,d_w\in\N$ denote the height and width of the image.
\end{itemize}

In order to \alert{learn} a function $\net:\Inp\to\Oup$, we typically need so-called training data. Here, we want to introduce three possible scenarios:
\begin{enumerate}
\item \alert{Unsupervised learning}: As training data, we are given a set of inputs $\mathcal{T}=\{\inp_1, \ldots, \inp_T\} \subset \Inp$. The goal here can be to find patterns in the data (clustering) or to learn the distribution of the data.
\item \alert{Semi-supervised learning}: As before, we have a dataset $\{\inp_1, \ldots, \inp_T\} \subset \Inp$. Additionally, we now have outputs, or labels $\{\oup_1,\ldots, \oup_t\}$ for $t\leq T$ of the data points. I.e. we have pairs $(\inp_1, \oup_1),\ldots, (\inp_t,\oup_t)$.
A typical goal is to extend the labeling to the whole dataset, i.e., to also obtain pairs $(\inp_{t+1}, \oup_{t+1}), \ldots, (\inp_T,\oup_T)$.
\item \alert{Supervised learning}: Here, we are given a full set of input-output pairs 
\beq
\mathcal{T} = \{(\inp_1,\oup_1),\ldots, (\inp_T,\oup_T)\} \subset\Inp\times\Oup.
\eeq 
In the setting of image classification one then wants to infer a function $\net:\Inp\to\Oup$ acting on the whole space $\Inp$. However, there are many settings beyond classification, that employ supervised learning.
\end{enumerate}
\begin{figure}
\begin{subfigure}{.32\textwidth}%
\includegraphics[width=\textwidth]{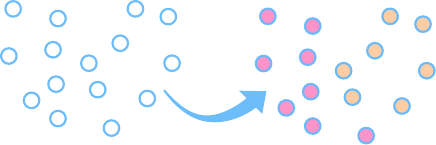}%
\caption{Unsupervised learning.}%
\end{subfigure}\hfill%
\begin{subfigure}{.32\textwidth}%
\includegraphics[width=\textwidth]{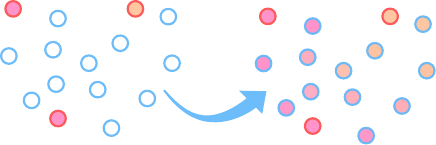}%
\caption{Semi-supervised learning.}%
\end{subfigure}\hfill%
\begin{subfigure}{.32\textwidth}%
\includegraphics[width=\textwidth]{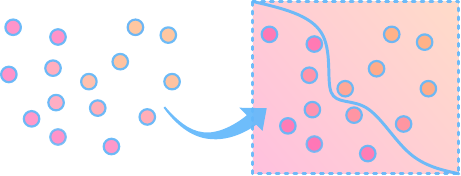}%
\caption{Supervised learning.}%
\end{subfigure}%
\end{figure}
For our introduction, we first focus on the supervised setting. Given the training dataset $\mathcal{T}$, we want to find the function $\net$ that matches this data best. This means that given $(\inp,\oup)\in\mathcal{T}$, we would like to find $\net$ such that $\net(\inp)=\oup$. To this end, it is useful to consider a function $\ell:\Oup\times\Oup\to \R^+_0$ which measures the difference between two outputs. In this interpretation we then would like to obtain $\ell(\net(\inp), \oup) = 0$ or at least as small as possible. Training consists of solving the minimization problem
\begin{align*}
\min_{\net\in\mathcal{H}} \ls\mathcal{L}(\net)\rs := \min_{\net\in\mathcal{H}} \ls 
\sum_{(\inp,\oup)\in\mathcal{T}} \ell(\net(\inp), \oup)\rs,
\end{align*}
where $\mathcal{H}$ denotes the class of possible functions we consider. We proceed by introducing two important examples of such classes.

\subsection{Spectral Architecture}

Let $\mcA:\mcU\to\mcV$ denote a compact linear operator. From \cref{thm:svd} we know that using the SVD $(U, \Sigma, V)$, we have that 
\begin{align}
\mcA u = \sum_{i=1}^\infty \sigma_i \langle u, u_i\rangle v_i.
\end{align}
In particular, the pseudo inverse is given as
\beq
\mcA^\dagger f = \sum_{i=1}^\infty \frac{1}{\sigma_i} \langle f, v_i\rangle u_i.
\eeq
We have already introduced the concept of obtaining a regularization, by filtering the coefficients $\frac{1}{\sigma_i}$. We will employ this strategy in the learning setting, where we make the following definition, which was for example employed in \cite{burger2023learned}.
\begin{definition}{Spectral Architecture}{}
For a compact linear operator $\mcA$ with SVD $(U,\Sigma, V)$ the spectral architecture is a reconstruction operator defined as $H:V\to U$,
\beq
\net(f; \Theta) := \sum_{i=1}^\infty \theta_n \langle f, v_i\rangle u_i.
\eeq
\end{definition}
In the above setting, the coefficients $\theta_n$ can be seen as the learnable parameters of this model. We will see later, that this definition allows us to extend the concept of convergent regularizers to the learning setting.

\subsection{Neural Networks}
A popular choice for a family of functions $\mathcal{H}$ are so-called \alert{neural networks}. Here, we mainly consider \alert{feedforward} neural networks, where we make the following definition, which is motivated by Rosenblatt's perceptron \cite{rosenblatt1958perceptron}\footnote{Frank Rosenblatt (1928 -- 1971) was an American psychologist.}.
\begin{definition}{Fully Connected Layer}{}
We denote by $W\in\R^{\tilde{d},d}$ the so-called \alert{weight matrix}, by $b\in\R^{\tilde{d}}$ the so-called \alert{bias vector} and call $\sigma:\R^{\tilde{d}}\to\R$ an \alert{activation} function. Then the mapping
\beq
\Lambda^\text{FC}(z; W, b, \sigma) := \sigma(Wz + b)
\eeq
is called \alert{fully-connected layer} with parameters $\theta=(W,b)$.
\end{definition}
The motivation of the above layer comes from a simplified model of how a brain processes information. In this interpretation, each entry of a vector $z$ is seen as a \alert{neuron}. Its value represents the signal it has received from a previous time step. In the next time step, the $i$th neuron passes its signal to all neurons it connects to; $W_{ji}$ gives a weight to the connection from $i$ to $j$. The activation then modifies this signal, e.g., by only passing the signal on if it is above a certain threshold. Here, a bias vector can help us specify this threshold individually for each neuron $j$. In the following section we will explore different types of \enquote{layers}. In general these are mappings $\Lambda:\R^d\to\R^{\tilde{d}}$ parameterized by some parameters $\theta$.
Often, the activation function is defined as a map from $\R$ to $\R$ and, with a slight abuse of notation, we write for $z\in\R^d$
\beq
\sigma(z)_i := \sigma(z_i).
\eeq
Typically, the activation function is set a-priori and is therefore part of the architecture. However, there also exist parametrized activation functions.
\begin{example}{Activation functions}{}
Here, we collect typical examples of activation functions used in literature.
\begin{enumerate}
\item \alert{ReLU}: the so-called rectified linear unit is defined as \cite{fukushima2007visual}
\beq\tag{ReLU}
\sigma(z) := \max\{0,z\}.
\eeq
\item \alert{PReLU}: the so-called parametric ReLU function is a defined as 
\beq\tag{PReLU}
\sigma(z):=
\begin{cases}
z &\text{if } z\geq 0,\\
az &\text{else},
\end{cases}
\eeq
where $a\geq 0$ is a learnable parameter.
\item \alert{Sigmoid}: the sigmoid function is defined as 
\beq\tag{Sigmoid}
\sigma(z) := \frac{1}{1+ \exp{-z}}
\eeq
\item \alert{Softmax}: the softmax function is defined in \labelcref{eq:softmax}.
\end{enumerate}
\end{example}
A feedforward neural network is then defined as the composition of multiple layers. Stacking multiple layers behind each other also motivates the name \alert{deep} learning.
\begin{definition}{Neural Networks}{}
A composition of layers
\begin{align*}
H_\theta = \Lambda^L(\cdot; \theta^L)\circ\cdots\circ\Lambda^1(\cdot; \theta^1)
\end{align*}
is referred to as a \alert{neural network}. We denote by $\theta=(\theta^L, \ldots, \theta^1)$ the collection of parameters and by $\Theta\ni\theta$ the space of all possible parameters.
\end{definition}
A typical question that arises with the above definition, is how expressive the family $\mathcal{H}(\Theta):=\{\net_\theta:\theta\in\Theta\}$ is. In other words, which functions are contained in $\mathcal{H}$ and which can be at least approximated arbitrarily well, i.e., are contained in $\overline{\mathcal{H}(\Theta)}$. Results that try to answer this type of question are known as \alert{universal approximation theorems}. Here, we only state the most popular result, which is attributed to Cybenko\footnote{George Cybenko is a mathematician.}.
\begin{theorem}{Universal Approximation}{}
Consider the family of neural networks defined as
\begin{align}
\mathcal{H}(\Theta) &:= \{H(\inp)=
W^2\sigma(W^1\inp + b^1): (W^2, W^1, b^1) \in \Theta\},\\
\Theta&:= 
\left\{(W^2, W^1, b^1): W^1\in \R^{d,1}, b\in\R^{d}, W^2\in\R^{1,d} \text{ for some } d\in\N
\right\}.
\end{align}
Iff $\sigma$ is not a polynomial, then $\mathcal{H}(\Theta)$ is dense in the space of continuous functions $C(\R)$ w.r.t.~ to the supremum norm.
\end{theorem}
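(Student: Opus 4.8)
The plan is to read ``dense in $C(\R)$ w.r.t.\ the supremum norm'' as uniform approximation on every compact $K\subset\R$ (the only sensible reading on an unbounded domain) and to prove the two implications separately. The ``only if'' direction is elementary: if $\sigma$ is a polynomial of degree $m$, then $\inp\mapsto W^2\sigma(W^1\inp+b^1)$ is a polynomial in $\inp$ of degree at most $m$, and this degree bound is stable under the finite linear combinations occurring in $\mathcal{H}(\Theta)$; hence $\overline{\mathcal{H}(\Theta)}$ is contained in the finite-dimensional space of polynomials of degree $\le m$, which cannot uniformly approximate $\inp\mapsto\inp^{m+1}$ on $[0,1]$.

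For the ``if'' direction I would first reduce to a smooth activation. Fix a mollifier $\phi\in C_c^\infty(\R)$ with $\int\phi=1$ and put $\sigma_\phi:=\sigma*\phi\in C^\infty$. For fixed $w,b$ and on a fixed compact $K$, the function $\inp\mapsto\sigma_\phi(w\inp+b)=\int\sigma(w\inp+b-y)\phi(y)\dint y$ is a uniform limit of Riemann sums $\sum_j\sigma(w\inp+b-y_j)\phi(y_j)\Delta y_j$, each of which is a finite linear combination of ridge functions $\inp\mapsto\sigma(w\inp+b_j)$ and hence belongs to $\mathcal{H}(\Theta)$. Thus $\overline{\mathcal{H}(\Theta)}$ contains every network built with $\sigma_\phi$. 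Moreover, if $\sigma_\phi$ were a polynomial for every admissible $\phi$, an approximate-identity argument would force $\sigma$ itself to be a polynomial; so we may choose $\phi$ with $\sigma_\phi$ non-polynomial, and it suffices to treat a $C^\infty$, non-polynomial activation, which I again call $\sigma$.

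Now let $\sigma\in C^\infty(\R)$ be non-polynomial. The crucial analytic input is that there exists $b_0\in\R$ with $\sigma^{(k)}(b_0)\neq0$ for all $k\ge0$: otherwise $\R=\bigcup_{k\ge0}\{b:\sigma^{(k)}(b)=0\}$ is a countable union of closed sets, so by Baire's theorem some $\sigma^{(k)}$ vanishes on an open interval, i.e.\ $\sigma$ agrees with a polynomial there, and a standard propagation argument (the set of points having a polynomial neighbourhood is open, and a second Baire argument on its complement shows it is all of $\R$) upgrades this to $\sigma$ being globally polynomial, a contradiction. Fixing such $b_0$, the scaled finite differences
\beq
\frac{1}{h^{k}}\sum_{j=0}^{k}\binom{k}{j}(-1)^{k-j}\,\sigma(jh\,\inp+b_0)
\eeq
lie in $\mathcal{H}(\Theta)$ (combinations of $k+1$ ridge functions) and, by the mean-value form of finite differences applied to $t\mapsto\sigma(t\inp+b_0)$, equal $\inp^{k}\sigma^{(k)}(\xi_h\inp+b_0)$ for some $\xi_h\in[0,kh]$; letting $h\to0$ and using uniform continuity of $\sigma^{(k)}$ near $b_0$ together with compactness of $K$, these converge uniformly on $K$ to $\inp\mapsto\inp^{k}\sigma^{(k)}(b_0)$. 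Since $\sigma^{(k)}(b_0)\neq0$, the monomial $\inp^{k}$ lies in $\overline{\mathcal{H}(\Theta)}$ for every $k$, hence so does every polynomial, and the Weierstrass approximation theorem concludes.

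The main obstacle is the third step: the existence of a point $b_0$ at which all derivatives are nonzero is exactly where non-polynomiality is used, and making the ``locally polynomial $\Rightarrow$ globally polynomial'' propagation rigorous requires some care with Baire category. By comparison, the differentiation-by-finite-differences step is routine once $b_0$ is in hand — it needs only continuity (hence local uniform continuity) of $\sigma^{(k)}$ — and the mollification reduction is standard.
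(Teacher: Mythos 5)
The paper does not actually prove this theorem; it only points to the literature (and in fact cites Cybenko, whose result assumes a sigmoidal activation, whereas the ``dense iff $\sigma$ is not a polynomial'' characterization you are proving is due to Leshno, Lin, Pinkus and Schocken). Your proposal reconstructs the standard proof of that stronger statement, essentially as in Pinkus's survey: reduce to a smooth activation by mollification, use Baire category to produce a point $b_0$ with $\sigma^{(k)}(b_0)\neq 0$ for all $k$, recover the monomials $\inp\mapsto\inp^k$ as uniform limits of divided differences of ridge functions, and finish with Weierstrass. The outline is correct, including the two delicate Baire arguments (locating $b_0$, and propagating ``locally polynomial'' to ``globally polynomial''), and the ``only if'' direction is fine; note the argument tacitly assumes $\sigma$ is continuous, which should be stated as a hypothesis.

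One step is stated too loosely to stand as written: the claim that if $\sigma\ast\phi$ is a polynomial for every mollifier $\phi$, then ``an approximate-identity argument'' forces $\sigma$ to be a polynomial. A locally uniform limit of polynomials need not be a polynomial (the Taylor polynomials of $e^x$ converge locally uniformly), so letting $\phi$ shrink to a Dirac mass proves nothing unless you first control the degrees. The standard repair is a third Baire argument, this time on the Fr\'echet space of test functions supported in a fixed compact interval: the sets $V_m=\{\phi:\ \deg(\sigma\ast\phi)\leq m\}$ are closed linear subspaces whose union is the whole space, so some $V_m$ has nonempty interior and hence equals the whole space; then $(\sigma\ast\phi)^{(m+1)}=\sigma\ast\phi^{(m+1)}=0$ for every test function $\phi$, i.e.\ $\sigma^{(m+1)}=0$ in the sense of distributions, and $\sigma$ is a polynomial of degree at most $m$, the desired contradiction. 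With that repair your argument is complete.
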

\begin{proof}
See, e.g., \cite{cybenko1989approximation}.
\end{proof}
In the above statement, we allow varying the width $d$ of the network. This is why the above statement is also referred to as arbitrary width limit.
\paragraph{Convolutional layers}
Another important type of layers is induced by convolutions, which we define in the following.
\begin{definition}{Convolutional Layer}{}
A convolutional layer \cite{fukushima2007visual} is defined as
\beq
\Lambda^\text{Conv}(z; \kappa, b, \sigma) := \sigma(z \ast \kappa + b)
\eeq
with parameters $\theta=(\kappa, b)$.  
\end{definition}
This type of layer is especially attractive in imaging applications. Compared to a fully-connected layer, this still leads to a linear operation in the input variable. However, the number of learnable parameters is drastically reduced. Recall the definition of the convolution operation given in \cref{def:convolution}. In practice, the quantities $z,\kappa$ are only defined on a finite grid, or equivalently, have finite support and can be written as matrices. In the following, we assume that the kernel has an odd width and height, i.e., $\kappa\in\R^{2M+1, 2M+1}$. The number of trainable parameters is then given by $(2M+1)^2$. Notably, it does not depend on the size of $z\in\R^{d_h, d_w}$, where $d_h,d_w$ denote the height and width of the image. Since kernels are usually much smaller then the images, convolutional layers require much fewer parameters than fully connected layers. 

In matrix notation, the convolution can then be written as
\beq\label{eq:conv}
(\kappa \ast z)_{n,m} = 
\sum_{i=-M}^M \sum_{j=-M}^M \kappa_{i,j}\cdot z_{n-i, m-j}.
\eeq
In this discrete setting, we need to carefully consider the boundary cases of $z$. There are different ways to treat the boundary:
\begin{itemize}
\item \textbf{Padding}: We can extend the image to the size ${(d_h+2M)\times(d_w + 2M)}$ such that the sum in \cref{eq:conv} is well-defined at each boundary. Popular choices of padding methods are zero- or circular padding.
\item \textbf{No-padding}: If we do not perform the padding operation, the output of the convolution has a reduced dimension of $(d_h-M)\times(d_w-M)$. As a linear operator, the convolution is then a mapping $K:\R^{d_h\times d_w}\to \R^{(d_h-M)\times(d_w-M)}$.
\end{itemize}
Furthermore, we typically have multiple channels $d_c$, i.e. $z\in\R^{d_c, d_h, d_w}$. The convolution operation can be adapted to
\beq
z\mapsto \sigma\left(\sum_{c=1}^{d_c} \kappa^c \ast z + b\right)
\eeq
where every channel $c$ now has its own kernel. The expressiveness of a convolutional layer can be drastically improved by increasing the number of channels in one layer, such that we in fact have the layer
\begin{gather}
\Lambda^\text{Conv}:\R^{d_c,d_h,d_w}\to \R^{\tilde{d_c},d_h,d_w}\\
\Lambda^\text{Conv}(z)_{\tilde{c},:,:} := 
\sigma\left(\sum_{c=1}^{d_c} \kappa^{\tilde{c},c} \ast z + b^{\tilde{c}}\right).
\end{gather}
\paragraph{Dimension reduction}
To account for this increase in dimension, it is desirable to decrease the height and width of the images with each layer. Here, we list the following popular choices:
\begin{itemize}
\item Choosing a \enquote{no-padding} convolutional layer, the image size is decreased by half the kernel size~$M$.
\item One often introduces so-called \alert{pooling} layers which reduce the resolution of the image. A basic example is a nearest neighbor downsampling which -- under the assumption that  $d_h,d_w$ are even -- can be written as 
\begin{align}
P:\R^{d_h,d_w}\to \R^{d_h/2, d_w/2}, \quad
(P(z))_{i,j} := z_{2i, 2j}.
\end{align}
This can often be efficiently implemented as a \enquote{strided convolution}.
\end{itemize}
\paragraph{Dimension increase} We saw above that the \enquote{no-padding} convolution effectively reduces the dimension of the output. Interpreting the convolution as a linear operator $\Lambda^{\text{Conv}}:\R^{d_h, d_w} \to \R^{\tilde{d}_h, \tilde{d}_w}$, with $\tilde{d}_h\leq d_h, \tilde{d}_w\leq d_w$, we observe that its transpose can be used as an upsampling operation, since $\left(\Lambda^{\text{Conv}}\right)^T:\R^{\tilde{d}_h, \tilde{d}_w}\to\R^{d_h, d_w}$. This is often referred to as a \alert{transposed convolution}.
\paragraph{Residual layers}
A further enhancement is to transform any layer into its residual variant. In addition to the actual layer evaluation, one adds the original input to the output.
\begin{definition}{Residual layer}{}
Given a layer $\Lambda(\cdot ;\theta)$, then its residual version is defined as
\beq
\Lambda^\text{Res}(z;\theta) := z+
\Lambda(z;\theta).
\eeq
\end{definition}
\begin{example}{U-Nets}{}
One specific architecture, which is especially relevant for inverse problems, is the so-called \enquote{U-Net}. It was introduced in \cite{ronneberger2015u} and employs multiple convolutional and residual layers. Together with pooling and upsampling operations, the resulting architecture forms a U-shape which motivates the name. A visual explanation desribing this architecture can be found in the original paper \cite[Figure 1]{ronneberger2015u}.
\end{example}

\subsection{Training Neural Networks}

In this section, we derive the necessary ingredients to train a neural network. Recall that training means solving the following problem
\begin{align*}
\min_{\theta\in\Theta} \mathcal{L}(\theta) = 
\min_{\theta\in\Theta} \frac{1}{\abs{\mathcal{T}}}
\sum_{(v,u)\in\mathcal{T}} \ell(H_\theta(v), u),
\end{align*}
where $H_\theta$ is a neural network parameterized by $\theta\in\Theta$. In \cref{ch:reg}, we have introduced different optimization strategies. Ignoring (for now) the question of differentiability,  we focus in the following on gradient descent-based optimizers. In order to apply this algorithm, we need to evaluate the gradient
\beq
\nabla \mathcal{L}(\theta),
\eeq
which we deal with in the following. 

\paragraph{Backpropagation and automatic differentiation}

Neural networks can be understood as the composition of multiple different operations. Typically, for each of these operations, we can easily compute an analytic gradient. Therefore, the difficulty that remains is to calculate the gradient of the composition. In a more general context, that is not limited to neural networks, this leads to the field of \alert{automatic differentiation}. We focus only on a very simplified model, and refer to \cite{baydin2018automatic} for a more in-depth study of this topic. From a mathematical perspective, backpropagation, which we describe in the following, is an application of the chain rule. From this point of view it could either be attributed to Leibniz \cite{leibniz2007early}, L’Hospital \cite{lhopital}, Lagrange \cite{lagrange1813theorie} or Cauchy \cite{cauchy1823resume}\footnote{See for example \cite{boyer1959history} for the history of calculus.}. In the context of training neural networks a first appearance of the algorithm can be found in \cite{rosenblatt1958perceptron}. It was later popularized in \cite{rumelhart1986learning}. The following presentation is partially motivated by \cite{parr2018matrix}.

We consider a simple fully-connected neural network without bias vectors:
\beq
H_\theta(v) = \Lambda^L(\cdot, \theta^L)\circ\cdots\circ \Lambda^1(\cdot; \theta^1)(v) = 
\sigma(W^L\, \sigma(\ldots \sigma(W^1 v))).
\eeq
In order to compute the gradient of this function, we can employ the chain rule. Here, it is convenient to think in terms of the total derivative $D_a f$ of a function $f$ evaluated at a point $a$. Namely, consider the function $p(z):= \ell(z, u)$ where $(v,u)$ is a given data point. Then we have
\begin{align}
D_v \lb p \circ H_\theta \rb = D_{H_\theta(v)} p \circ D_v H_\theta.
\end{align}
We can introduce some notation that makes this clearer:
\begin{align}
a^0 := v, \qquad  w^l := W^l a^{l-1}, \qquad a^l := \sigma(w^l).
\end{align}
In particular, it thus holds $a^L = \sigma(w^L) = H_\theta(v)$.
We want to compute the gradient of the loss function w.r.t. all parameters. We can iteratively derive a rule by first considering
\begin{align}
\delta^L = \frac{\partial \ell(H_\theta(v), u)}{\partial w^L} = 
(D_{w^L} \ell(\sigma(\cdot), u))^T = 
(D_{a^L} \ell(\cdot, u) \circ D_{w^L}^T \sigma)^T = 
\sigma'(w^L) \odot D_{a^L} \ell(\cdot, u).
\end{align}
For the next layer, we have that
\begin{align}
\delta^{L-1} &= (D_{w^{L-1}} \ell(\sigma(W^L\sigma(\cdot)), u))^T = 
(D_{w^{L}} \ell(\sigma(\cdot), u) \circ D_{w^{L-1}} (W^L\sigma(\cdot)))^T\\
&= (\delta^L \circ W^L \odot \sigma'(w^{L-1}))^T
= (W^L)^T \delta^L \odot \sigma'(w^{L-1}).
\end{align}
Analogously, we can derive
\begin{align}
\delta^l = (W^{l+1})^T \delta^{l+1} \odot \sigma'(w^{l})
\end{align}
for any $l=1, \ldots, L-1$, which motivates the name backpropagation. The gradient w.r.t. the weight matrices can then further be computed as 
\beq
\frac{\partial\ell}{{\partial W^l}} = \delta^l a^{l-1}.
\eeq

\paragraph{Stochastic Gradient Descent}
Our loss function does not only consist of $\ell(H_\theta(v), u)$ at a single data point $(v,u)$ but rather of the sum of all data points. Since we typically have a large number of data points, it is computationally unfeasible to store them all in memory at the same time. To overcome this issue, we consider so-called \alert{minibatching}. This consists of randomly sub-sampling the data, i.e., extracting a smaller subset $B\subset\mathcal{T}$ and then only considering the loss
\beq
\mathcal{L}(\theta; B) := \frac{1}{\abs{B}}\sum_{(v,u)\in B} \ell(H_\theta(v), u).
\eeq
This yields the following algorithm, which is known as \alert{(minibatching) Stochastic Gradient Descent (SGD)}. For the first appearance of SGD in the context of neural network training, we again refer to \cite{rosenblatt1958perceptron}.
\begin{definition}{SGD}{}
Let the initial point $\theta^{(0)}$ and step size $\tau$ be given, then the iterates of the minibatching stochastic gradient descent algorithm are given as
\beq%
\begin{gathered}
\text{Sample}\quad B\subset \mathcal{T}\\
\theta^\kk_\SGD = \theta^\k_\SGD - \tau\, \nabla \mathcal{L}(\theta^\k_\SGD; B).
\end{gathered}
\eeq%
\end{definition}
The main difference to standard gradient descent is that we now do not evaluate the true gradient, but rather a stochastic estimator. An important property is the unbiasedness, namely
\beq\label{eq:unbias}
\E{}{\nabla \mathcal{L}(\theta; B)} = \nabla \mathcal{L}(\theta).
\eeq
\begin{exercise}{}{}
Show \cref{eq:unbias}.
\end{exercise}

\section{Convergent Data-Driven Regularization}

In \cref{sec:reg} we have introduced the terms \emph{regularization}, \emph{parameter choice rule} and \emph{convergent regularization} as they are defined in the classic setting; we now want to re-consider and re-define these terms in a data-driven setting. This chapter is based on \cite{burger2023learned,kabri2024convergent}.

Previously, regularization described a family $\ls \mfRa \rs_{\alpha\in I }$ of continuous linear operators parametrized by some real variable $\alpha$. The optimal choice of  $\alpha:\R^+\times \mcV \to I$ was formulated either in dependence of only the assumed noise level $\delta\in\R^+$ or additionally in dependence of a given noisy measurement $\fd\in\mcV$. 
One specific example we have considered is the spectral regularization
\beq
    \mfRa f := \sum_{i=1}^\infty r_\alpha(\sigma_i)\ll f, v_i\rr_\mcV u_i,
\eeq
which uses the SVD $\ls(U, \Sigma, V)\rs$ of the forward operator $\mcA$. Previously, we have handcrafted $r_\alpha$.

\newcommand{\priordis}{\pi}
\newcommand{\priorfam}{\tilde{\bs{\pi}}}

\newcommand{\inoisedis}{\nu}
\newcommand{\inoisefam}{\tilde{\bs{\nu}}}

\newcommand{\tnoisedis}{\mu}
\newcommand{\tnoisefam}{\tilde{\bs{\mu}}}

Now, in the data-driven setting, we want to consider the spectral architecture
\beq
    H(f; \theta) := \sum_{i=1}^\infty \theta_i \ll f, v_i \rr u_i
\eeq
and determine the parameters $\theta = \ls \theta_i\rs_{i\in\mathbb{N}}$ with $\theta_i := r_\alpha(\sigma_i)$ by minimizing a risk function which can depend on the distribution of the measurements and (for supervised approaches) also on the distribution of the ground truth data. E.g., for $\ell_\mcU:\mcU\times\mcU\to\R$ denoting some loss function
\beq
    \theta^* \in \argmin_{\theta \in \ls\theta_i\rs_{i\in\N}} \E{u\sim\priordis, \noise\sim\tnoisedis}{\ell_\mcU(u, H_\theta(\mcA u + \noise))}.
\eeq
Here, the parameters are learned from pairs $(u, Au+\varepsilon)$ where $u\sim\priordis$ (the prior $\priordis$ comes from a family of distributions $\priorfam$) and $\varepsilon\sim\tnoisedis$ (the training noise $\tnoisedis$ comes from a family of distributions $\tnoisefam$). Note, that $\tnoisedis$ might differ from the ideal noise model $\inoisedis$ which might even come from a different family of distributions $\inoisefam$! For a fixed risk functional and fixed method of obtaining the minimizer to this functional the optimal parameters are greatly influenced by the distributions $\pi$ and $\mu$ -- or in the empirical setting by the choice of training samples which reflect said distributions. 
 
 In the following, we mark the dependence on the choice of training noise $\tnoisedis$ and prior $\priordis$ by using the notation $\ddreg$ for the respective spectral architectures. While the resulting regularization operator may be non-unique in practice due to local minima in the training process, stochastic optimization or initialization, we here discard this non-uniqueness.

\begin{definition}{}{}
    A family of operators $\ddreg:\mcV\to\mcU$ is called a \alert{(data-driven) family of regularization operators} if $\ddreg$ is continuous for each training noise model $\tnoisedis\in\tnoisefam$ and prior model $\priordis \in \priorfam$.
\end{definition}

The notion of convergence now can be defined in a statistical sense, where we use the following definition for the statistical noise level of a distribution:
\beq
    \boldsymbol{\delta}(\tnoisedis) := \sqrt{\sup_{n\in\mathbb{N}} \E{\varepsilon\sim\tnoisedis}{\ll \varepsilon, v_n\rr^2}}.
\eeq

\begin{definition}{}{}
    A family of regularization operators $\ddreg$ is called \alert{convergent} if there exists a parameter choice rule $\tnoisedis(\delta, \nu)$ and $\priordis(\delta, \nu)$ such that
    \beq
        \lim_{\delta \to 0} \sup_{\boldsymbol{\delta}(\nu^\delta)  \leq \delta} \ls  \E{\varepsilon\sim \nu^\delta}{\| \ddregdep (\mcA u+\varepsilon) - \mcAdag \mcA u \|_\mcU } \rs = 0
    \eeq
    for each $u\in \mcU$. We call the regularization operators \alert{convergent over a prior distribution} $\priordis^*$ if 
    \beq
        \lim_{\delta \to 0} \sup_{\boldsymbol{\delta}(\nu^\delta) \leq \delta)}\ls  \E{u\sim\priordis^*, \varepsilon\sim \nu^\delta}{ \| \ddregdep (\mcA u+\varepsilon) - \mcAdag \mcA u \|_\mcU } \rs = 0.
    \eeq    
\end{definition}

\paragraph{Supervised Learning of Spectral Regularization.} Let us now look at a concrete example. In a supervised setting, the optimal parameters can be learned such that they minimize the mean squared error, i.e.,
\beq
    \theta^{mse}(\tnoisedis, \priordis) = \argmin_{\theta = \ls \theta_i \rs_{i\in\N} } \ls \E{u\sim\priordis, \varepsilon\sim\tnoisedis }{ \| \ddregdep(\mcA u + \varepsilon; \theta) - \mcAdag \mcA u \|^2 }\rs.
    \label{eq:thetamse}
\eeq
For noise which is uncorrelated to the data one can show that the optimal parameters are given by
\beq
    \theta_i^{mse}(\tnoisedis, \priordis) = \frac{\sigma_i}{\sigma_i^2 + \frac{ \Delta_i(\tnoisedis)}{\Pi_i(\priordis)}}, 
    \label{eq:thetaimse}
\eeq
where $\Delta_i(\tnoisedis) := \E{\varepsilon \sim \tnoisedis}{\ll \varepsilon, v_i\rr^2}$ and $\Pi_i(\priordis) := \E{u\sim\priordis}{\ll u, u_i\rr^2}$.

\begin{exercise}{}{}
    Show that the parameters given in \cref{eq:thetaimse} are indeed optimal for \cref{eq:thetamse} under the assumption that noise and data are uncorrelated.
\end{exercise}

Under the assumption that the optimal prior distribution is used, one can show that the associated reconstruction operators are continuous under reasonable conditions on noise and data.

\begin{lemma}{Continuity \cite[Lemma 1]{burger2023learned}}{}
    The reconstruction operator $\ddregdepef^{mse}: \mcV \to \mcU$, with
    \beq
        \ddregdepef^{mse}(f) = \sum_{i=1}^\infty \theta_i^{mse}(\tnoisedis, \priordis^*) \ll f, v_i \rr_\mcV u_i
    \eeq
    is continuous if and only if there exists a constant $c> 0$ such that
    \beq
        \Delta_n(\tnoisedis) \geq c \sigma_n \Pi^*_n
    \eeq
    for almost any $n\in\mathbb{N}$. In particular, this condition is fulfilled if there exists $c> 0$ and $n_0\in\mathbb{N}$ such that
    \beq
        \Delta_n(\tnoisedis) \geq c \Pi^*_n
    \eeq
    for all $n\geq n_0$.
\end{lemma}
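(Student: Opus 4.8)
The plan is to characterize continuity of the linear reconstruction operator $\ddregdepef^{mse}$ via boundedness of its operator norm, since a linear operator between Hilbert spaces is continuous if and only if it is bounded (cf. Thm.~\ref{thm:bounded_linear_operators}). Writing $\ddregdepef^{mse}(f) = \sum_{i=1}^\infty \theta_i^{mse}(\tnoisedis, \priordis^*)\ll f, v_i\rr_\mcV u_i$ on the orthonormal system $\{v_i\}$, the operator is a diagonal (spectral) multiplier, and by the same computation as in \eqref{eq:bound_norm_adag} its norm equals $\sup_i |\theta_i^{mse}(\tnoisedis, \priordis^*)|$. Hence the operator is continuous if and only if the sequence of filter coefficients $\theta_i^{mse}(\tnoisedis, \priordis^*) = \frac{\sigma_i}{\sigma_i^2 + \Delta_i(\tnoisedis)/\Pi_i^*}$ is bounded; note all coefficients are nonnegative, so we need a uniform upper bound.

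First I would rewrite the $i$-th coefficient as $\theta_i = \frac{\sigma_i \Pi_i^*}{\sigma_i^2 \Pi_i^* + \Delta_i(\tnoisedis)}$ and estimate it under the hypothesis $\Delta_i(\tnoisedis) \geq c\,\sigma_i \Pi_i^*$. Substituting this lower bound on $\Delta_i$ into the denominator gives
\beq
\theta_i \leq \frac{\sigma_i \Pi_i^*}{\sigma_i^2 \Pi_i^* + c\,\sigma_i \Pi_i^*} = \frac{1}{\sigma_i + c} \leq \frac{1}{c},
\eeq
so $\sup_i \theta_i \leq 1/c < \infty$ (if the inequality only holds for almost every $n$, the finitely many exceptional indices still contribute only finitely many finite values $\sigma_i/(\sigma_i^2 + \Delta_i/\Pi_i^*)$, each of which is finite as long as $\Delta_i/\Pi_i^* > 0$ or $\sigma_i > 0$, hence the sup over all indices is still finite). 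This proves the "if" direction. For the "only if" direction, I would argue by contraposition: if no such constant $c$ exists, then $\liminf_n \Delta_n(\tnoisedis)/(\sigma_n \Pi_n^*) = 0$, so along a subsequence $\Delta_{n_k}/(\sigma_{n_k}\Pi_{n_k}^*) \to 0$, and then
\beq
\theta_{n_k} = \frac{\sigma_{n_k}}{\sigma_{n_k}^2 + \Delta_{n_k}/\Pi_{n_k}^*} = \frac{1}{\sigma_{n_k} + \Delta_{n_k}/(\sigma_{n_k}\Pi_{n_k}^*)} \geq \frac{1}{\sigma_{n_k} + o(1)} \to \frac{1}{0^+}
\eeq
wait — this needs $\sigma_{n_k}\to 0$ as well; since $\{\sigma_n\}$ is a null sequence (Thm.~\ref{thm:svd}), any subsequence has a further subsequence with $\sigma_{n_k}\to 0$, along which $\theta_{n_k}\to\infty$, so the operator norm is unbounded and the operator discontinuous.

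Finally, for the "in particular" claim, I would observe that if $\Delta_n(\tnoisedis) \geq c\,\Pi_n^*$ for all $n \geq n_0$, then since $\sigma_n \to 0$ (Thm.~\ref{thm:svd}) there exists $n_1 \geq n_0$ with $\sigma_n \leq 1$ for all $n \geq n_1$, whence $\Delta_n(\tnoisedis) \geq c\,\Pi_n^* \geq c\,\sigma_n \Pi_n^*$ for all $n \geq n_1$, i.e. the main condition holds for almost every $n$. The main obstacle I anticipate is being careful about the exceptional indices and degenerate cases (e.g. $\Pi_i^* = 0$, where the $i$-th term drops out of the expansion entirely and should be excluded from the index set, or $\sigma_i$ not tending to zero only along part of a subsequence); these are bookkeeping issues rather than conceptual ones, but they need to be handled cleanly so that the equivalence is genuinely sharp.
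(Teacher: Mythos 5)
Your proposal is correct and follows exactly the argument the paper defers to in \cite[Lemma 1]{burger2023learned}: identify the operator norm of the spectral multiplier with $\sup_i \theta_i^{mse}$, rewrite $\theta_i^{mse} = \bigl(\sigma_i + \Delta_i(\tnoisedis)/(\sigma_i\Pi_i^*)\bigr)^{-1}$ to get the bound $1/c$ under the stated condition, and use $\sigma_n\to 0$ both for the contrapositive of the converse and for the \enquote{in particular} reduction. The bookkeeping caveats you flag (indices with $\Pi_i^*=0$, the finitely many exceptional indices, the finite-dimensional case) are handled correctly and do not affect the equivalence.
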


\begin{proof}
    See \cite[Lemma 1]{burger2023learned}.
\end{proof}

Thus the approach defines a data-driven family of regularization operators. Furthermore, one can show convergence:

\begin{theorem}{Convergence \cite[Theorem 1]{burger2023learned}}{}
    The family of learned spectral reconstruction operators $\ls \ddregdepef^{mse}\rs_{\tnoisedis\in\tnoisefam}$ with
    \beq
        \tnoisefam = \ls \tnoisedis \, \middle |\, \ddregdepef^{mse}:\mcV \to \mcU \text{ is continuous }\rs
    \eeq
    is a convergent data-driven regularization for $\mcA^\dagger$ and in particular it holds for any $u\in\kernel(\mcA)^\perp$ that
    \beq
        \sup_{\inoisedis^\delta\in\inoisefam, \boldsymbol{\delta}(\inoisedis^\delta)\leq\delta} \E{\noise\sim\inoisedis^\delta}{\| H_{\inoisedis^\delta, \priordis(\delta,\inoisedis^\delta)}(\mcA u+\varepsilon) - u \|^2 } \to 0
    \eeq
    as $\delta\to 0$.
\end{theorem}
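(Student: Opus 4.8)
The plan is to convert the statement into an explicit bias--variance estimate for the learned spectral filter and then send the two contributions to zero separately. First I would make two reductions. For general $u\in\mcU$, write $u=\mcP u+(\Id-\mcP)u$; since $\mcA u=\mcA(\Id-\mcP)u$, $\mcAdag\mcA=\Id-\mcP$ and $\mcAdag\mcA u=(\Id-\mcP)u\in\kernel(\mcA)^\perp$, it suffices to treat $u\in\kernel(\mcA)^\perp$, which is the form in the statement. Also, by Jensen's inequality $\mathbb{E}\|X\|_\mcU\le(\mathbb{E}\|X\|_\mcU^2)^{1/2}$, so convergence of the squared error implies the $L^1$-form demanded by the definition of a convergent data-driven regularization. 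Next, using $\ll\mcA u,v_i\rr=\sigma_i\ll u,u_i\rr$ and that $\{u_i\}$ is an orthonormal basis of $\kernel(\mcA)^\perp$ (\cref{thm:svd}), together with the explicit coefficients in \eqref{eq:thetaimse}, a short computation for a centered noise model gives, with $\rho_i:=\Delta_i(\tnoisedis)/\Pi_i(\priordis)$,
\begin{equation}
\E{\varepsilon\sim\tnoisedis}{\|H_{\tnoisedis,\priordis}(\mcA u+\varepsilon)-u\|_\mcU^2}
=\underbrace{\sum_{i\ge1}\frac{\rho_i^2}{(\sigma_i^2+\rho_i)^2}\ll u,u_i\rr^2}_{\text{approximation error}}
+\underbrace{\sum_{i\ge1}\frac{\sigma_i^2\,\Delta_i(\tnoisedis)}{(\sigma_i^2+\rho_i)^2}}_{\text{noise error}}.
\end{equation}

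The parameter-choice rule for the prior is then chosen so that $\rho_i$ plays the role of a classical spectral regularization parameter: taking $\Pi_i(\priordis(\delta,\tnoisedis)):=\Delta_i(\tnoisedis)/\alpha(\delta)$ forces $\rho_i\equiv\alpha(\delta)$, so that $H_{\tnoisedis,\priordis(\delta,\tnoisedis)}$ becomes exactly spectral Tikhonov with parameter $\alpha(\delta)$; by the continuity lemma \cite[Lemma 1]{burger2023learned} (equivalently, since then $\theta_i^{mse}\le1/(2\sqrt{\alpha(\delta)})$) the operator is continuous, so the family is a genuine regularization and $\tnoisefam$ is non-empty. For the approximation error, $\frac{\rho_i^2}{(\sigma_i^2+\rho_i)^2}\le\min\{1,\rho_i^2/\sigma_i^4\}$ and $\sum_i\ll u,u_i\rr^2=\|u\|_\mcU^2<\infty$ (here it is essential that $u\in\kernel(\mcA)^\perp$), so letting $\alpha(\delta)\to0$ and using dominated convergence in $i$ makes it vanish; this bound is independent of $\tnoisedis$, hence uniform over all admissible noise models. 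For the noise error, $\Delta_i(\tnoisedis)\le\delta^2$ (this is precisely $\boldsymbol{\delta}(\tnoisedis)\le\delta$) gives the bound $\delta^2\sum_i\sigma_i^2/(\sigma_i^2+\alpha(\delta))^2$, which I would control by splitting the series at $\sigma_i^2\approx\alpha(\delta)$ (the low part $\le\delta^2\,\#\{i:\sigma_i^2\ge\alpha(\delta)\}/\alpha(\delta)$, the high part $\le\delta^2\alpha(\delta)^{-2}\sum_{\sigma_i^2<\alpha(\delta)}\sigma_i^2$) and by letting $\alpha(\delta)\downarrow0$ slowly enough relative to $\delta$ --- the analogue of the classical condition $\delta\,C_{\alpha(\delta)}\to0$; e.g., if $\mcA$ is Hilbert--Schmidt one may take $\alpha(\delta)=\sqrt{\delta}$, so the high part is $\le\sqrt{\delta}\,\|\mcA\|_{\mathrm{HS}}^2$. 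This makes the noise error vanish uniformly over $\{\tnoisedis:\boldsymbol{\delta}(\tnoisedis)\le\delta\}$. Adding the two uniform bounds yields the displayed limit for $u\in\kernel(\mcA)^\perp$, hence for all $u\in\mcU$ by the first reduction, so $(H_{\tnoisedis,\priordis},\priordis)$ is a convergent data-driven regularization for $\mcAdag$.

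The main obstacle is the noise error: one controls only the \emph{per-mode} noise energy $\sup_n\Delta_n(\tnoisedis)\le\delta^2$, not the total $\sum_n\Delta_n(\tnoisedis)$, so the naive estimate $\delta^2\sum_i\sigma_i^2/(\sigma_i^2+\alpha)^2$ need not even be finite for a general compact $\mcA$. Making the estimate work requires either a Hilbert--Schmidt (or comparable summability) assumption on $\mcA$ or a carefully tuned, $\delta$-dependent frequency cut-off, and at the same time the prior parameter-choice rule must be kept in the regime where \cite[Lemma 1]{burger2023learned} guarantees continuity. Verifying that these two requirements can be met simultaneously and uniformly over all noise models with $\boldsymbol{\delta}(\tnoisedis)\le\delta$ is the technical heart of the argument.
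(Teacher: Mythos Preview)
The paper does not actually prove this theorem: its entire proof reads ``See \cite[Theorem 1]{burger2023learned}.'' Hence there is no in-paper argument to compare your proposal against, and what you have written is in fact considerably more detailed than anything these notes provide.

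Your overall strategy is the natural one and is essentially what the cited source does: expand in the singular system, split the expected squared error into an approximation (bias) term and a data-propagation (variance) term, and then exhibit a parameter choice rule that drives both to zero. Your observation that choosing the prior so that $\Delta_i(\tnoisedis)/\Pi_i(\priordis)\equiv\alpha(\delta)$ collapses the learned filter to classical spectral Tikhonov is exactly the mechanism used in \cite{burger2023learned}. The reduction to $u\in\kernel(\mcA)^\perp$, the use of Jensen to pass from $L^2$ to $L^1$, and the dominated-convergence argument for the bias term are all sound. One small point: your bias--variance identity as written needs $\mathbb{E}\langle\varepsilon,v_i\rangle=0$; without centering there is a cross term, but since $(a+b)^2\le 2a^2+2b^2$ you get the same two summands up to a harmless factor of~$2$, so this is not a real gap.

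The concern you flag at the end is genuine and well diagnosed: with $\boldsymbol{\delta}(\cdot)$ defined as a \emph{supremum} over modes rather than a sum, the noise family admits white noise with $\Delta_i\equiv\delta^2$, and then $\sum_i\sigma_i^2\Delta_i/(\sigma_i^2+\alpha)^2$ need not be finite for a merely compact $\mcA$. This is not a flaw in your argument but a hidden hypothesis of the theorem as stated here. In \cite{burger2023learned} the result is proved under an additional summability condition on the singular values (equivalently, a Hilbert--Schmidt-type assumption on $\mcA$, or a restriction on the admissible noise family), precisely to make this series converge; that assumption has been suppressed in the lecture-note statement. Once such a condition is in place, your bound $\delta^2\sum_i\sigma_i^2/(\sigma_i^2+\alpha(\delta))^2$ together with a rule $\alpha(\delta)\to0$ with $\delta^2/\alpha(\delta)^2\to0$ finishes the proof exactly as you outline.
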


\begin{proof}
    See \cite[Theorem 1]{burger2023learned}.
\end{proof}



\section{Different Machine Learning Approaches for Inverse Problems}
\label{sec:mlforip}

In the context of inverse problems, machine learning approaches come in a variety of different flavors. Some approaches aim to directly map either the (noisy) measurement or a model-based noisy reconstruction of that measurement to the corresponding true quantity of interest (end-to-end approaches). Other approaches seek to find a data-adaptive regularizer instead of handcrafting it (learned regularization methods). Some of the approaches build on regularization and others on Bayesian inversion. For a detailed overview and introduction to the here discussed and also to other methods, we refer to \cite{arridge2019solving}, which we also lean on for the following exposition.

\paragraph{Learning in statistical regularization.} Some learning approaches are directly linked to the Bayesian approach of solving inverse problems. Let us quickly recall the Bayesian approach and fix some notation: Both $\urv$ and $\frv$ are considered to be random variables. We assume that there exists an $\mcU\times\mcV$--valued random variable $(\bs{u}, \bs{f})$ which follows an unknown joint distribution $\duf$.
The goal of the Bayesian approach is to approximate the posterior $\posterior$ which is a distribution of the $\mcU$--valued random variable $(\urv | \frv = f)$. The reconstruction in this setting then can be derived via different estimations of the posterior distribution. E.g., the typical \alert{Bayes estimator} for $\urv$ conditioned on $\frv$ amounts to finding a reconstruction operator $\mfR : \mcV \to \mcU$ which solves
\beq
    \mfR \in \argmin_{\mfR:\mcV\to\mcU} \E{(\bs{u}, \bs{f})\sim \duf}{\ell_\mcU (\bs{u}, \mfR (\bs{f}))},
    \label{eq:bayesian_estimator}
\eeq
for some fixed loss function $\ell_\mcU:\mcU\times\mcU\to\R$. By the law of total probability we have
\beq
    \duf (u, f) = \prior(u) \otimes \data(f).
\eeq
Since the data likelihood $\data$ is a distribution of $(\frv | \urv = u)$ and known for any $u\in\mcU$ via
\beq
    \frv = \mcA (\urv) + \erv
\eeq
with $\erv \sim \dnoise$ the joint distribution is known as soon as a prior $\prior$ and a noise model $\dnoise$ have been selected. Even with this, calculating \cref{eq:bayesian_estimator} comes with the following challenges:
\begin{itemize}
    \item The choice of an appropriate prior is challenging and hand-crafted choices often do not capture available a-priori information to full extent.
    \item Calculating the expectation over $\mcU\times\mcV$ is infeasible.
    \item Minimizing over the set $\mfR:\mcV\to\mcU$ is infeasible.
\end{itemize}
These issues can be addressed using supervised learning. First, we replace $\duf$ with supervised training data 
\beq
    \mcT := \ls (f_i, u_i) \,  \middle | \, (u_i, f_i) \text{ drawn from } (\urv, \frv) \sim \duf \text{ for } i=1,...,T \rs \subset \mcU\times\mcV. 
\eeq
Second, we replace $\mfR$ with a family of reconstruction methods $\ls H_\theta:\mcV\to\mcU \rs_{\theta\in\Theta}$ parametrized by a deep neural architecture. Then we can approximate $\mfR$ with $H_{\hat{\theta}}$ where 
\beq
    \hat{\theta} \in \argmin_{\theta \in \Theta} \frac{1}{T} \sum_{i=1}^T\ell_\mcU (u_i, H_{\theta}(f_i)).
    \label{eq:bayes_estimator_network}
\eeq
This, however, opens a plethora of new problems as \cref{eq:bayes_estimator_network} is in it-self ill-posed and solving it requires incorporation of a regularization, careful choice of network architecture, training algorithm and initialization, etc. In the following, we consider two approaches:

\begin{itemize}
    \item Fully learned Bayes estimation refers to approaches where one assumes there is enough supervised training data to learn the joint law. Any explicit knowledge about the data likelihood is disregarded.
    \item  Post-processing methods apply an initial reconstruction operator that maps noisy measurements to noisy reconstructions and afterwards approximate an estimator for the noise-free reconstruction conditioned on the initial noisy reconstruction.
\end{itemize}

\subsection{Fully Learned Bayes Estimation}

In these kind of approaches one assumes that enough supervised training data
\beq
    \mcT := \ls (f_i, u_i) \,  \middle | \, (u_i, f_i) \text{ drawn from } (\urv, \frv) \sim \duf \text{ for } i=1,...,T \rs
\eeq
is available to learn the joint distribution $\duf$. It disregards available information on $\data$, i.e., does not incorporate knowledge about the the forward process $\mcA$. Using this approach we learn an approximation
\beq
    \mfR_{\text{FLBE}}:\mcV\to\mcU, \quad \mfR_{\text{FLBE}}(\fd) := H_{\hat{\theta}}(\fd),
\eeq
of \cref{eq:bayesian_estimator} with 
\beq
    \hat{\theta} \in \argmin_{\theta \in \Theta} \frac{1}{T} \sum_{i=1}^T \ell_\mcU (u_i, H_{\theta}(f_i)).
\eeq
Typically, employed networks in this setting are composed of two parts, i.e.,
\beq
    H_{\theta} := H^{nn}_{\theta_1} \circ H^{fc}_{\theta_2}, \quad \theta= (\theta_1, \theta_2).
\eeq
$H^{fc}_{\theta_2} : \mcV \to \mcU$ has one or multiple fully connected layers and represents the pseudo-inverse operation. $H^{nn}_{\theta_1}: \mcU \to \mcU$ is, e.g., a deep CNN or U-Net and represents the denoising operation. A prominent example is the \alert{automated transform by manifold approximation} (AutoMap) \cite{zhu2018image}.

\paragraph{Pros and Cons.} AutoMap and other fully learned Bayes estimation approaches are attractive due to their simplicity; They bypass the need for an explicit forward model or data likelihood. Also, performing the inference step (the actual reconstruction using the trained model) is rather fast. The simplicity, however, comes with a cost. In order to be able to learn the reconstruction, one has to use neural networks with connected layers which always scale badly. While one can try to overcome this using adapted network architectures for a specific inverse problem, the second problem, namely the demand for a large amount of supervised training data, is more difficult to fix. Also, the networks require retraining as soon as, e.g., the acquisition protocol or instruments used for taking the measurements change. Altogether, these challenges make these method rather impractical.

\subsection{Post-Processing}

Now one might ask, if it is truly necessary to throw away available knowledge of the forward model and to have a network learn the inverse process instead. If the forward-model is well-known, one can obtain a partially-learned regularization which incorporates the domain knowledge by first attempting a naive reconstruction approach, which is then corrected by applying a learned denoiser. This approach is referred to as \emph{post-processing}. The first naive reconstruction step can, e.g., be the application of the pseudo-inverse $\mcAdag$ which maps the (noisy) measurement to a (noisy) reconstruction.  In a second step, a data-driven approach is then used to remove the noise from this initial reconstruction. This can be formulated as the reconstruction process
\beq
    \mfR_{\text{PP}}:\mcV\to\mcU, \quad \mfR_{\text{PP}} := \mcH_{\text{PP}} \circ \mcAdag.
\eeq
Let us take a closer look at how such an $\mcH_{\text{PP}}$ can be trained. If one has supervised training data $\mcH_{\text{PP}}$ is typically chosen as the Bayes estimator of $\bs{u}$ conditioned on the initial reconstruction $\mcAdag(\bs{f})$. If such data is not available, one has to use other non-Bayesian estimators which are less well understood.

\paragraph{Supervised Setting.} The Bayes estimator $\mcH_{\text{PP}}$ for $\bs{u}$ conditioned on the initial reconstruction $\mcAdag(\bs{f})$ is given as 
\beq
    \mcH_{\text{PP}} \in \argmin_{\mcH:\mcU\to\mcU} \E{(\bs{u},\bs{f})\sim\Pi}{\ell_\mcU (\bs{u}, \mcH (\mcAdag(\bs{f})))}
\eeq
for some fixed loss function $\ell_\mcU:\mcU\times\mcU\to\R$. In supervised post-processing we learn an approximation $H_{\hat{\theta}}:\mcU\to\mcU$ of $\mcH_{\text{PP}}$ from  supervised training data
\beq
    \mcT := \ls (\mcAdag f_i, u_i) \,  \middle | \, (u_i, f_i) \text{ drawn from } (\urv, \frv) \sim \duf \text{ for } i=1,...,T \rs
\eeq
via 
\beq
    \hat{\theta} \in \argmin_{\theta \in \Theta} \frac{1}{T} \sum_{i=1}^T \ell_\mcU (u_i, H_{\theta}(\mcAdag f_i)).
\eeq
Popular architectures for $H_\theta$ are CNNs and U-Nets.

Now one might ask if there is a connection between the fully learned Bayesian estimator $\mfR_{\text{FLBE}}$ and $\mfR_{\text{PP}}$ in the supervised setting. As stated in \cite[p.98]{arridge2019solving}: These two indeed coincide if the considered loss $\ell_\mcU$ is the squared $L^2$--norm and if the considered reconstruction method is a so-called \emph{linear sufficient statistic}, i.e., if it satisfies
\begin{equation}
    \E{}{\bs{u} | \bs{u}^\dagger = u^\dagger} = \E{}{\bs{u} | \mcAdag(\bs{\fd}) = \mcAdag(\fd)}. 
\end{equation}
This equivalence, however, only holds in the limit of infinite amount of training data and infinite model capacity. In fact, when applied to finite number of training data and finite model capacity, learned post-processing differs from deep direct Bayes estimations ... so basically in all real-life applications.

\paragraph{Semi-Supervised Setting.} Semi-supervised learning does not exclusively depend on samples from the joint distribution $\Pi$ -- which in practice one might not have enough of -- but also allows for unpaired samples from the respective marginals $\Pi_{\bs{u}} (= \prior)$ and  $\Pi_{\bs{f}}$. Estimators for the post-processing application can be formulated as 
\begin{align}
\small
\mcH \in\argmin_{\mcH:\mcU\to\mcU}  \Big\{ \E{(\bs{u}, \bs{f}) \sim \Pi_{\bs{u}} \otimes \Pi_{\bs{f}}} { 
\ell_{\mcU}   (\mcH(\mcAdag (\bs{f})), \bs{u}) 
+ \ell_{\mcV} (\mcA (\mcH(\mcAdag (\bs{f}))), \bs{f}) 
+ \lambda \ell_{\mathcal{P}_{\mcU}} ((\mcH\circ\mcAdag)_{\#}(\Pi_{\bs{f}}), \Pi_{\bs{u}})
} \Big\},
\end{align}
where
\begin{itemize}
    \item $\ell_\mcU:\mcU\times\mcU\to\R$ and $\ell_\mcV:\mcV\times\mcV\to\R$ are  loss functions on $\mcU$ and $\mcV$ respectively; Note that $\ell_\mcU$ may later be defined such that it is only evaluated on the labeled samples $(f_i, u_i)$ drawn from $(\bs{f},\bs{u})\sim\Pi$.
    \item $\ell_{\mathcal{P}_{\mcU}}:\mathcal{P}_{\mcU}\times\mathcal{P}_{\mcU}\to\R$ is a distance notion between two probability distributions on $\mcU$;
    \item $(\mcH\circ\mcAdag)_{\#}(\Pi_{\bs{f}})$ is the pushforward of $\Pi_{\bs{f}}$ by $\mcH\circ\mcAdag$, i.e.
    \beq
        \mfR_{\#}(\Pi_{\bs{f}}) (U) =\Pi_{\bs{f}}\left( \mfR^{-1} (U) \right), \quad \forall \, U\subset\mcU.
    \eeq
\end{itemize} 
Let us take a closer look at the term $\ell_{\mathcal{P}_{\mcU}}$. A popular choice is the \alert{Wasserstein-1 distance} \cite{villani2008optimal}:
\beq
    \ell_{\mathcal{P}_{\mcU}} (\mfR_{\#}(\Pi_{\bs{f}}), \Pi_{\bs{u}}) = \sup_{\substack{d:\mcU\to\R \\ d \text{ is } 1\text{-Lipschitz}}} \E{\bs{u}\sim\mfR_{\#}(\Pi_{\bs{f}})}{d(\bs{u})} - \E{\bs{u}\sim\Pi_{\bs{u}}}{d(\bs{u})}. 
    \label{eq:wasserstein}
\eeq
Since this is not a closed-form expression, evaluating it during training is not straightforward and it is common  to borrow techniques from \alert{generative adversarial networks (GANs)} \cite{goodfellow2014generative} to do so. The latter describes a competitive learning framework in which two networks are trained against each other. In the usual use case, one network (the generator) tries to create new images which are similar to images in a given training set, while the other network (the discriminator/critic) tries to discriminate these newly generated images from original (non-generated) images. To this end, the discriminator is trained to evaluate the distance between the probability distribution of the true images and the one of the generated images. Similar to the GAN setting, we can also train two networks. Given some semi-supervised training data
\beq
    \mcT := \ls u_i \, \middle| \, u_i \text{ drawn from } \bs{u}\sim\Pi_{\bs{u}} \rs \cup  \ls f_i \, \middle| \, f_i \text{ drawn from } \bs{f}\sim \Pi_{\bs{f}}  \rs.
\eeq
we can learn an approximation $\mcH_{\hat{\theta_1}}:\mcV\to\mcU$ with  $\mcH_{\hat{\theta_1}}:= H_{\hat{\theta_1}} \circ \mcAdag $ of $\mfR_{\text{PP}}$ and also an approximation $D_{\hat{\theta_2}}:\mcU\to\R$ of $d$ in \cref{eq:wasserstein}. $\mcH_{\hat{\theta_1}}$ takes the role of the \emph{generator} and $D_{\theta_2}$ the one of the \emph{discriminator}. The discriminator acts as a critic as to how likely it is that a sample comes from the prior $\Pi_{\bs{u}}$ or the distribution $(\mcH\circ\mcAdag)_{\#}(\Pi_{\bs{f}})$; the latter can be thought of as the distribution of the random variable $\tilde{\bs{u}} := \mcH_{\hat{\theta_1}}(\bs{f})$. The parameters $ \theta = (\theta_1, \theta_2)$ are determined such that 
\begin{align}
    \hat{\theta} \in \argmin_{\theta_1\in\Theta_1} \ls \frac{1}{T}\sum_{i=1}^T  
    \ell_{\mcU}   (H_{\theta_1} \circ \mcAdag (A(u_i)), u_i) 
    + \ell_{\mcV} (\mcA (H_{\theta_1} \circ \mcAdag (f_i)), f_i) \right.\\
    \left. + \lambda \argmax_{\theta_2\in\Theta_2} \ls 
    D_{\theta_2}(H_{\theta_1} \circ \mcAdag (f_i)) - D_{\theta_2}(u_i)
    \rs \rs.
\end{align}
In practice, the parameters are updated alternatively; usually for each update step of $\theta_1$ multiple update steps are taken for $\theta_2$.

\paragraph{Pros and Cons.} Post-processing methods are attractive, because they can be easily applied on top of traditional reconstruction methods. 

A potential drawback are the missing guarantees on the reconstruction quality. Artifacts introduced by the initial reconstruction (e.g. $\mcA^\dagger$) might not be suppressed appropriately, especially if they did not appear during training. This is especially severe in cases where $\mcH_{\text{PP}}$ is trained without knowledge of $\mcAdag$. For example in \cite{burger2023learned}, the continuity of the post-processing approach can only be ensured for restrictive noise assumptions; Namely when the training noise is bigger than the one used for unseen data. Apart from this theoretic insight, \cite{burger2024learning} also provides a practical example, where worst case perturbations lead to a complete failure of post-processing.

\subsection{Plug-and-Play}

In this section we consider so-called plug-and-play regularization\footnote{The exposition here is inspired by the following presentations \cite{LeClaire,Repetti2024}.}. This approach can be derived from the optimization approaches in \cref{ch:reg}. There, we considered the minimization problem
\beq\label{eq:optreg}
\min_u G(u) + H(u)
\eeq
where both $H$ and $G$ are proper, convex and lsc functionals. Typically, $H$ is a differentiable functional that relates to the data fidelity or -- from the Bayesian view point -- to the negative log likelihood, i.e., $H(u)= -\log p(f^\delta|u)$. In \cref{ch:reg}, $G$ was a \enquote{hand-crafted} regularizer that incorporated prior information about the data distribution, e.g., a TV denoising term which expects sharp edges in images. In order to solve \cref{eq:optreg}, we considered various different prox-based algorithms such as proximal gradient descent
\begin{align}
u^\kk_\PGD = \prox_{\tau G}(u^\k_\PGD - \tau \nabla H(u^\k_\PGD)).
\end{align}
Recall the definition of the prox operator:
\beq
\prox_{\tau G}(v) = \argmin_{z} \frac{1}{2} \norm{v - z}^2_2 + \tau G(z).
\eeq
We observe that the application of the prox operator can be interpreted as the solution of a denoising problem:
\begin{align*}
\fd = u + \noise, \qquad \noise\sim \mathcal{N}(0, \tilde{\delta}^2 I), \qquad u\sim\prior.
\end{align*}
This gives the main idea of plug-and-play, which can be summarized as follows:
\begin{center}
\itshape%
Replace the prox-operator in our algorithm by any denoiser $D$.    
\end{center}
For proximal gradient descent the iteration would thus read
\begin{align}
u^\kk_\PNPPGD = D(u^\k_\PNPPGD - \tau \nabla H(u^\k_\PNPPGD)).
\end{align}
Here, $D$ can be any denoising operation, e.g., a classical algorithm like BM3D \cite{dabov2007image} or a trained neural network.

However given this scheme the following questions arise:
\begin{itemize}
\item Is there a function $G$ such that $D = \prox_{\tau G}$?
\item Does the PnP-PGD scheme converge?
\end{itemize}
As seen in \cref{sec:opt} the proximal operator of $G$ can be expressed as the resolvent of its subdifferential,
\begin{align}
\prox_G = (I + G)^{-1}.
\end{align}
Furthermore, most of the convergence proofs we have considered are based on the theory of resolvents for maximally monotone operators. Therefore, instead of asking whether $D$ is the proximal operator of a functional $G$ we can instead ask if it is the resolvent of a maximally monotone operator. Here, we use the following lemma, which is a central ingredient for the plug-and-play schemes in \cite{pesquet2021learning}, see Proposition 2.1 therein.
\begin{lemma}{\cite[Proposition 2.1]{pesquet2021learning}}{}
An operator $A$ is maximally monotone iff there exists a non-expansive operator $Q$ (i.e. Lipschitz constant less than or equal to 1) such that $(I+A)^{-1} = \frac{1}{2}(I + Q)$.   
\end{lemma}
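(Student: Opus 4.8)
The statement is an equivalence, so I would prove both directions. The key fact I would use throughout is the correspondence between maximal monotonicity and firm nonexpansiveness of the resolvent: an operator $T$ is firmly nonexpansive if and only if $2T - I$ is nonexpansive, and $(I+A)^{-1}$ is firmly nonexpansive precisely when $A$ is maximally monotone. Setting $Q := 2(I+A)^{-1} - I$, the relation $(I+A)^{-1} = \tfrac{1}{2}(I+Q)$ is just a rearrangement, so the whole content of the lemma is: $A$ maximally monotone $\Leftrightarrow$ the associated $Q$ is nonexpansive.

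\textbf{Direction 1 ($\Rightarrow$).} Assume $A$ is maximally monotone. By Minty's theorem (\cref{thm:minty}) the operator $I+A$ is surjective, and monotonicity of $A$ gives that $(I+A)^{-1}$ is single-valued and well-defined on all of $U$ (cf.\ the resolvent lemma in the excerpt). Set $R := (I+A)^{-1}$ and $Q := 2R - I$. I would show $R$ is firmly nonexpansive: take $u, \tilde u \in U$ with images $x = Ru$, $\tilde x = R\tilde u$, so that $u - x \in Ax$ and $\tilde u - \tilde x \in A\tilde x$. Monotonicity of $A$ gives $\langle (u - x) - (\tilde u - \tilde x), x - \tilde x\rangle \geq 0$, i.e.\ $\langle u - \tilde u, x - \tilde x\rangle \geq \|x - \tilde x\|^2$. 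This is exactly the firm-nonexpansiveness inequality $\langle Ru - R\tilde u, u - \tilde u\rangle \geq \|Ru - R\tilde u\|^2$. A short computation with the identity $2\langle a,b\rangle = \|a\|^2 + \|b\|^2 - \|a-b\|^2$ then turns this into $\|Qu - Q\tilde u\| \leq \|u - \tilde u\|$, so $Q$ is nonexpansive, and by construction $R = \tfrac12(I+Q)$.

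\textbf{Direction 2 ($\Leftarrow$).} Suppose $(I+A)^{-1} = \tfrac12(I+Q)$ for some nonexpansive $Q$. Then $R := \tfrac12(I+Q)$ is single-valued and defined on all of $U$, and reversing the algebra from Direction 1 shows $R$ is firmly nonexpansive: $\|Qu - Q\tilde u\|^2 \leq \|u - \tilde u\|^2$ expands to $\langle Ru - R\tilde u, u - \tilde u\rangle \geq \|Ru - R\tilde u\|^2$. Now $A = R^{-1} - I$ as a set-valued operator. Monotonicity of $A$: if $v \in Au$ and $\tilde v \in A\tilde u$, then $u = R(u+v)$ and $\tilde u = R(\tilde u + \tilde v)$, so the firm-nonexpansiveness inequality applied to the points $u+v$, $\tilde u + \tilde v$ yields $\langle u - \tilde u, (u+v) - (\tilde u + \tilde v)\rangle \geq \|u - \tilde u\|^2$, which simplifies to $\langle v - \tilde v, u - \tilde u\rangle \geq 0$. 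Finally, $I + A = R^{-1}$ is surjective because $R$ is defined on all of $U$, so Minty's theorem (\cref{thm:minty}) upgrades monotonicity to maximal monotonicity.

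\textbf{Main obstacle.} There is no deep obstacle; the proof is essentially an exercise in bookkeeping. The one point requiring care is the handling of set-valued operators in Direction 2 — making sure that $A := R^{-1} - I$ has the right graph, that $v \in Au \iff u = R(u+v)$, and that "$I+A$ surjective" is correctly read off from "$R$ has full domain." The other mild subtlety is citing the correct form of Minty's theorem so that both the $\Rightarrow$ (maximal monotone $\Rightarrow$ $I+A$ surjective, used to get single-valuedness and full domain) and $\Leftarrow$ (monotone $+$ $I+A$ surjective $\Rightarrow$ maximal monotone) directions are covered; the excerpt's \cref{thm:minty} states exactly this equivalence, so it suffices.
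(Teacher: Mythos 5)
Your proof is correct and follows the canonical route — the paper itself leaves this as an exercise with a pointer to \cite[Proposition 2.1]{pesquet2021learning}, and the argument there (and in standard references such as \cite{Bauschke2017}) is exactly the one you give: identify $\frac{1}{2}(I+Q)$ nonexpansive with firm nonexpansiveness of the resolvent, use monotonicity for the inner-product inequality, and use Minty's theorem (\cref{thm:minty}) to convert between maximality and full domain/surjectivity of $I+A$. The set-valued bookkeeping in your second direction ($v\in Au \Leftrightarrow u = R(u+v)$, and surjectivity of $I+A=R^{-1}$ from $R$ having full domain) is handled correctly.
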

\begin{proof}
Exercise.
\end{proof}
Consequently, as in \cite{pesquet2021learning}, we could choose an architecture 
\begin{align*}
D_\theta = \frac{1}{2}(I + Q_\theta),
\end{align*}
where $Q$ is for example a U-Net with a constrained Lipschitz constant. The training objective can be formulated as
\beq
\min_\theta \sum_{v \in\mathcal{T}} \norm{D_\theta(v + \tilde{\delta}\, \xi) - v}^2, \text{ s.t. } Q_\theta\text{ has Lipschitz constant bounded by } 1
\eeq
where $\mathcal{T} = \{v_1, \ldots, v_T\}$ and $\xi\sim\mathcal{N}(0, I)$. The constraint that $Q_\theta$ should be non-expansive cannot be trivially enforced. If one doesn't want to restrict the architecture of $Q_\theta$ one could instead add a gradient penalty of the form 
\beq
\min_\theta \sum_{v \in\mathcal{T}} \norm{D_\theta(v + \tilde{\delta}\, \xi) - v}^2 + \lambda \max\{\norm{\nabla Q_\theta(\tilde{v})}^2, 1- \iota\},
\eeq
where the point $\tilde{v}$ is chosen depending on the data point $v$ and $\iota\in(0,1)$ is an additional parameter. For more details on the algorithm, we refer to \cite{pesquet2021learning}.

\begin{remark}{}{}
A different approach can be derived with so-called \alert{gradient-step denoisers}
\beq
D_\theta = I - \nabla q_\theta,
\eeq
where $q_\theta(v):=\frac{1}{2}\norm{v - Q_\theta(v)}^2$. We refer to \cite{hurault2021gradient} for more details on this approach.
\end{remark}

\ifpub%
\else%

\fi

\printbibliography

\end{document}